\newtheorem{theorem}{Theorem}[section]
\newenvironment{customthm}[1]
  {\innercustomthm}
  {\endinnercustomthm}
\newtheorem{corollary}[theorem]{Corollary}
\newtheorem{lemma}[theorem]{Lemma}
\newtheorem{prop}[theorem]{Proposition}
\newtheorem{fact}[theorem]{Fact}
\theoremstyle{remark}
\newtheorem{remark}[theorem]{Remark}
\newtheorem{notation}[theorem]{Notation}
\theoremstyle{definition}
\newtheorem{example}[theorem]{Example}
\newtheorem{examples}[theorem]{Examples}
\newtheorem{definition}[theorem]{Definition}
\newtheorem{question}[theorem]{Question}
\title{When invariance implies exchangeability \\ (and applications to invariant Keisler measures)}
\author[1]{Samuel Braunfeld}
\author[2]{Colin Jahel}
\author[3]{Paolo Marimon}
\affil[1]{Computer Science Institute, Charles University. Prague, Czechia \\ The Czech Academy of Sciences, Institute of Computer Science, Pod Vod\'{a}renskou v\v{e}\v{z}\'{\i} 2, 182 00 Prague, Czech Republic.}
\affil[2]{Institut f\"{u}r Algebra, TU Dresden. Dresden, Germany.}
\affil[3]{Institut f\"{u}r Diskrete Mathematik \& Geometrie, TU Wien. Vienna, Austria.}
\newcommand{\G}{\mathbf{G}}
\newcommand{\HH}{\mathbf{H}}
\newcommand{\A}{\mathbf{A}}
\newcommand{\B}{\mathbf{B}}
\renewcommand{\P}{\mathbb{P}}
\newcommand{\N}{\mathbb{N}}
\newcommand{\R}{\mathbb{R}}
\newcommand{\FF}{\mathcal F}
\newcommand{\CC}{\mathcal C}
\newcommand{\DD}{\mathcal D}
\newcommand{\LL}{\mathcal L}
\newcommand{\MM}{\mathcal M}
\newcommand{\K}{\mathbb K}
\newcommand{\closed}{$k$-overlap closed}
\newcommand{\abs}[1]{\left| #1 \right|}
\newcommand{\bs}{\backslash}
\newcommand{\indep}[2]{%
  \mathrel{
    \mathop{
      \vcenter{
        \hbox{\oalign{\noalign{\kern-.3ex}\hfil$\vert$\rlap{$^\mathrm{#2}$}\hfil\cr
              \noalign{\kern-.7ex}
              $\smile$\cr\noalign{\kern-.3ex}}}
      }
    }\displaylimits_{#1}
  }
}
\newcommand{\notindep}[2]{%
  \mathrel{
    \mathop{
      \vcenter{
        \hbox{\oalign{\noalign{\kern-.3ex}\hfil$\not\vert$\rlap{$^\mathrm{#2}$}\hfil\cr
              \noalign{\kern-.7ex}
              $\smile$\cr\noalign{\kern-.3ex}}}
      }
    }\displaylimits_{#1}
  }
}
\begin{document}

\maketitle

\begin{abstract} 
We study the problem of when, given a countable homogeneous structure $\mathcal{M}$ and a space $S$ of expansions of $\mathcal{M}$, every $\mathrm{Aut}(\MM)$-invariant probability measure on $S$ is exchangeable (i.e. invariant under all permutations of the domain). We show, for example, that if $\mathcal{M}$ is a finitely bounded homogeneous $3$-hypergraph with free amalgamation (including the generic tetrahedron-free $3$-hypergraph), all $\mathrm{Aut}(\MM)$-invariant random expansions by graphs are exchangeable. Moreover, we extend and recover both the work of Angel, Kechris, and Lyons on invariant random orderings and some of the work of Crane and Towsner, and Ackerman on relative exchangeability.\\

In the second part of the paper, we apply our results to the study of invariant Keisler measures, which we prove to be particular invariant random expansions. Thus, we describe the spaces of invariant Keisler measures of various homogeneous structures, obtaining the first results of this kind since the work of Albert and Ensley. We also show there are $2^{\aleph_0}$ supersimple homogeneous ternary structures for which there are non-forking formulas which are universally measure zero.

\end{abstract}
\textbf{Keywords:} partial exchangeability, random expansions, Keisler measures, ultrahomogeneous structures.\\

\textbf{MSC classes:} 0C315, 0C345, 60G09, 60C05, 37A05.

\tableofcontents
\section{Introduction}
\newcommand{\Aut}{\mathrm{Aut}}
\newcommand{\Q}{\mathbb{Q}}
\newtheorem{problem}{Problem}

We study random expansions of a countable structure $\MM$ {with domain $\N$, i.e. probability distributions on the space of expansions of $\MM$ to a chosen language,} whose distribution is invariant under the action of $\mathrm{Aut}(\MM)$. For example, given a hypergraph $\MM$, we want to understand in which ways one can build a random graph on the vertices of $\MM$ whose distribution is invariant under automorphisms of $\MM$. 
These expansions will include random expansions invariant under \textit{all} permutations of the domain, i.e., $S_\infty := \Aut(\mathbb{N},=)$, which we call \textbf{exchangeable structures}. The aim of this paper is to give criteria under which exchangeable structures are the only $\mathrm{Aut}(\MM)$-invariant random expansions of a prescribed type.\\

This is an instance of the following problem \cite[Problem 6.2]{Aldous}, which was inspired by the result of \cite{ryll1957stationary} that {$\mathrm{Aut}((\Q, <))$-}invariant random unary expansions of $(\Q, <)$ are exchangeable:

\begin{problem}\label{q1} What hypotheses \textit{prima facie} weaker than exchangeability do in fact imply exchangeability? 
\end{problem}

Once we know $\Aut(\MM)$-invariant random expansions are exchangeable, we may use the Aldous-Hoover Theorem \cite{AldousT, HooverT}, which gives a description of exchangeable structures (cf. \cite[Section 2.1]{CraneTown}). So we also answer instances of \cite[Problem 12.18]{Aldous} asking (in somewhat more general formalism, cf. Subsection~\ref{subsub:relationlit}) for characterizations of $\Aut(\MM)$-invariant random expansions for various $\MM$.\footnote{See also \cite[Question 5.3]{CraneTown}, \cite[Research Problem 8.4]{Cranebook}, and \cite[p.31]{Kallenbergsome} for related questions.}\\


Exchangeability first arose in probability with de Finetti's theorem \cite{deFinetti1}, which can be seen as describing the exchangeable unary structures. De Finetti's theorem and the Aldous-Hoover theorem have found many applications in probability and statistics, where suitable indistinguishability assumptions on a population are natural \cite{kingman1978uses, Aldous_2010}. Exchangeable graphs appear in extremal combinatorics as graphons \cite{janson2007graph}, the central objects of the theory of graph limits \cite{lovasz2006limits, lovasz2012large}, and exchangeable structures have appeared in model theory when studying random constructions of countable structures \cite{Oligomperm, PetrovVershik, AFP, AFKrP, AFKwP}. The generalization from $S_\infty$-invariant measures to $\mathrm{Aut}(\MM)$-invariant measures again naturally arises in the study of symmetry and invariance principles in probability \cite{Kallbooksym, Aldous}, with applications to statistical networks, where $\MM$ may encode some heterogeneity in the population in a model for a large network \cite[Chapter 8]{Cranebook}. In model theory, this generalization arises in the study of invariant Keisler measures, as we will show in the second part of this paper.\\

Our main interest lies in the case where $\MM$ is a homogeneous structure, i.e. every isomorphism between finite substructures of $\MM$ extends to an automorphism of the whole structure. Such structures have suitably rich automorphism groups, and questions about their invariant random expansions can be reduced to purely finitary questions about certain expansions of their age, i.e. their class of finite substructures.\\ Examples of homogeneous structures are $(\mathbb{N}, =), (\mathbb{Q}, <)$, the countable random graph, and countable random hypergraphs. More generally, Fra\"{i}ss\'{e}'s Theorem yields that for many interesting classes $\mathcal{C}$ of finite structures there is a countable homogeneous structure $\MM$ whose age is $\mathcal{C}$. Indeed, invariant random expansions of homogeneous structures naturally appear in many problems, and their study resulted in applications to tame hypergraph regularity \cite{chernikov2020hypergraph}, to spin-glass models in physics \cite{austin2014hierarchical, crane2018relative}, and to probabilistic programming \cite{jung2021generalization}. Moreover, if we consider homogeneous structures in a possibly infinite language, the study of $\mathrm{Aut}(\MM)$-invariant random expansions recovers in full generality the study of $\{0,1\}$-valued sequences and arrays of random variables invariant under some group $G\leq S_\infty$ (Subsection~\ref{subsub:relationlit}).\\ In this paper, we will be especially interested in homogeneous structures whose ages are obtained by omitting particular substructures, such as the universal homogeneous tetrahedron\-/free $3$-hypergraph, a ternary analogue of the universal homogeneous triangle\-/free graph (see Example \ref{ex:hypergraphs} and Figure \ref{fig:tet}).\\

{
 Towards Problem \ref{q1}, we prove the following results (restricted to special cases for the introduction):

\begin{customthm}{A}[Theorem \ref{thm:cre si}]\label{thm:Intro cre si}
Let $\MM$ be the universal homogeneous tetrahedron-free $3$-hypergraph. Then every $\mathrm{Aut}(\MM)$-invariant random expansion of $\MM$ by a graph is $S_\infty$-invariant. 

More generally, let $\MM$ be a $3$-hypergraph that is the Fra\"{i}ss\'{e} limit of a free amalgamation class (Definition~\ref{def:DAPFAP}) determined by finitely many forbidden structures. Then every $\mathrm{Aut}(\MM)$-invariant random expansion of $\MM$ by a graph is $S_\infty$-invariant.
 \end{customthm}
Let us give a heuristic for our result. When we try to randomly expand a graph $\MM$ (with edge relation $E$) by another graph $E'$, it is easy to see that we can use the graph structure of $\MM$ to build non-exchangeable $\mathrm{Aut}(\MM)$-invariant graph expansions of $\MM$: for example, we could place an $E'$-edge between two vertices if and only if they already form an $E$-edge. This yields an $\mathrm{Aut}(\MM)$-invariant probability measure on the space of graphs on top of $\MM$ which concentrates on a copy of $\MM$, and so it is non-exchangeable as long as $\MM$ is not the countable complete graph or its complement. However, in several cases when $\MM$ is a homogeneous $3$-hypergraph and we randomly expand it by a graph, there is no binary structure that our random expansion can cling on to build a non-exchangeable graph. 

This phenomenon is quite subtle, as there are examples of $2$-transitive homogeneous $3$-hypergraphs $\MM$ with non-exchangeable $\mathrm{Aut}(\MM)$-invariant graph expansions (Subsection \ref{sec:kaygraphs}) using some ``hidden'' binary structure. Hence, for our results, we need $\MM$ to satisfy some condition which implies that its relations do not give rise to such hidden structure. A sufficient condition is the combinatorial closure property that we call $2$-overlap closedness, and $k$-overlap closedness when working in $(k+1)$-ary languages (Definition~\ref{def:closed}). 

Inverse to the issue of hidden lower-arity structure in $\MM$, it may be that although the relations of the structures we are expanding by do not have lower arity than the relations of $\MM$, they are so constrained that they behave as if they did. For example, linear orders are so constrained that they almost behave like unary structures, and so for many binary structures $\MM$, it is still true that every $\mathrm{Aut}(\MM)$-invariant random expansion of $\MM$ by a linear order must be $S_\infty$-invariant \cite{AKL}. This extended applicability appears naturally in our proofs, which only require a condition on the labelled growth rate of the expanding structure (Definition~\ref{def:growthrate}) rather than on its maximum arity.

Combining these considerations leads to the following more general theorem (still restricted in scope for this introduction). Together with the result that the structures considered in Theorem \ref{thm:Intro cre si} have 2-overlap closed age (Lemma~\ref{lemma:k irred}), it implies Theorem \ref{thm:Intro cre si}.


 \begin{customthm}{B}[Theorem \ref{thm:appliedforIKM}] \label{thm:IntroappliedforIKM}
 Let $k \geq 1$ and $\mathcal{M}$ be a homogeneous structure with {\closed} age. Let $\CC'$ be a hereditary class of $\LL'$-structures with arity at most $k$ (or more generally, with labelled growth rate $O(e^{n^{k+\delta}})$ for every $\delta > 0$). Then every $\mathrm{Aut}(\MM)$-invariant random expansion of $\mathcal{M}$ by $\LL'$-structures whose age is in $\CC'$ is $S_\infty$-invariant.
\end{customthm}
In the main text, we work with the more general notion of consistent random expansion (Definition~\ref{CRE}), which describes randomly expanding a class of finite structures $\CC$ by another $\CC'$. In fact, as mentioned above, we can reduce the study of invariant random expansions of homogeneous structures to purely combinatorial statements about their ages.}\\

Existing results obtaining descriptions (or exchangeability) for $\mathrm{Aut}(\MM)$-invariant random expansions employ one of two strategies, which Theorem \ref{thm:IntroappliedforIKM} synthesizes. The first strategy consists in looking at a base structure $\MM$ which is well-behaved enough that we can understand $\mathrm{Aut}(\MM)$-invariant expansions of $\MM$ to arbitrary classes $\mathcal{C}'$. This is the philosophy underlying the original Aldous-Hoover Theorem, concerned with expansions of $(\mathbb{N}, =)$, while a theorem of Kallenberg \cite{Kallsym}, can be seen as giving a representation to the invariant random expansions of $(\mathbb{Q}, <)$. Meanwhile, Crane and Towsner \cite{CraneTown}, and Ackerman \cite{AckerAut} provide a more elaborate representation for invariant random expansions of structures with disjoint $n$-amalgamation ($n$-DAP) for all $n$. For the purposes of this introduction, these may be considered as structures whose age has ``no interesting omitted substructures'' such as the random graph and hypergraphs (Definition~\ref{def:nDAP}). The results of \cite{CraneTown, AckerAut} yield full exchangeability for the invariant random expansions of $k$-transitive structures with $n$-DAP for all $n$ by classes $\mathcal{C}'$ with relations of arity $\leq k$. This strategy, up to now, has been able to only deal with structures which are particularly well-behaved, either by having very few substructures in each size, such as $(\mathbb{Q}, <)$ and $(\mathbb{N}, =)$, or by 
forbidding no interesting substructures, i.e. the case of structures with $n$-DAP for all $n \in \N$. Theorem \ref{thm:IntroappliedforIKM} yields exchangeability results also for many 
base structures with an age which is both rich and 
has interesting classes of forbidden substructures.\\

The alternative approach is that of choosing a class of structures $\mathcal{C}'$ such that, for a large class of base structures $\MM$, all invariant random expansions of $\MM$ by structures with age in $\mathcal{C}'$ are exchangeable. This is the philosophy underlying the original theorem of de Finetti \cite{deFinetti1}, which offers a representation of unary exchangeable structures. In the same vein, Tsankov and the second author \cite[Corollary 3.5]{JahelT} prove exchangeability for invariant random unary expansions of $\omega$-categorical $\MM$ with mild additional properties. A countable structure is $\omega$-categorical if its automorphism group has finitely many orbits on $n$-tuples for each $n$ (Definition~\ref{def:omegacat}), and it follows from the definitions that homogeneous structures in a finite relational language are $\omega$-categorical. Strong results have also been obtained for invariant random expansions by linear orderings. It is easy to see that there is a unique exchangeable linear ordering, i.e. the $S_\infty$-invariant measure $\mu$ on the space of linear orderings of $\mathbb{N}$ concentrating on the isomorphism type of $(\mathbb{Q}, <)$, where for each $k$-tuple $(a_1, \dots, a_k)$ from $\mathbb{N}$, $\mu(a_1<\dots <a_k)=\frac{1}{k!}$.
The first significant results on random expansions by linear orderings are due to Angel, Kechris, and Lyons \cite{AKL} who prove that for many homogeneous $\MM$, the unique $\Aut(M)$-invariant random order-expansion is the exchangeable one. This was further explored in \cite{JahelT}, which provides general conditions on $\omega$-categorical $\MM$ to obtain the same conclusion, and in \cite{CRO}, which studies consistent random orderings of hereditary classes of graphs and conditions which imply uniqueness or non-uniqueness of such orderings.\\

Our approach largely follows that of \cite{AKL}. Indeed, they prove that an asymptotic combinatorial statement about a hereditary class implies uniqueness of the random linear ordering \cite[Lemma 2.1]{AKL}, and then prove said combinatorial  statement for a variety of hereditary classes using a quantitative version of arguments from \cite{nesetril1978probabilistic}. We adapt Lemma 2.1 of \cite{AKL} to get that a weaker asymptotic combinatorial statement implies exchangeability of all invariant random expansions by a class of structures with small enough growth (Lemma \ref{lem:si}) and then we prove this combinatorial statement for structures which are $k$-overlap closed (Theorem \ref{theorem:Chernoff}). Finally, we prove that many structures are indeed $k$-overlap closed in Section \ref{sec:koverlapclosed}. Several challenges appear in adapting techniques for linear orderings to the context of expansions by arbitrary hereditary classes. Notably, proving that a class is $k$-overlap closed is a problem about constructing certain hypergraphs (partial Steiner systems) with many edges that avoid certain configurations. This topic is intensively studied in extremal combinatorics \cite{delcourt2022finding, glock2024conflict}.\\

On top of yielding sufficient conditions for exchangeability, Theorem \ref{thm:IntroappliedforIKM} also suggests where to look for interesting homogeneous structures with non-exchangeable invariant random expansions. Subsection \ref{sec:kaygraphs} looks carefully at one such example: the universal homogeneous parity $k$-hypergraphs (for $k\geq 3$). These are reducts of the random $(k-1)$-hypergraphs \cite{Thomashyp}. For example, the universal homogeneous parity $3$-hypergraph $\mathcal{G}_3$ is a $3$-hypergraph which is a reduct of the random graph obtained by drawing a $3$-hyperedge whenever three vertices have an odd number of edges between them. We prove that there is a unique invariant random expansion of $\mathcal{G}_3$ by the space of its graph expansions which induce it as a reduct. The slow growth rate of the class of parity $3$-hypergraphs compared to the class of graphs plays a role in the non-exchangeability of this measure. 

\subsection{Invariant Keisler measures}
The final part of our paper is dedicated to applications of our results to the study of invariant Keisler measures for homogeneous structures. Given a first-order structure $\mathcal{M}$, a Keisler measure in the variable $x$ is a regular Borel probability measure on the type space $S_x(M)$. We say a Keisler measure is invariant if it is $\mathrm{Aut}(\MM)$-invariant with respect to the action of $\mathrm{Aut}(\MM)$ on $S_x(M)$. Keisler measures are an active topic of research in model theory with many fruitful connections to combinatorics, such as model-theoretic approaches to variants of Szemer\'edi regularity  \cite{MalliarisReg, CherStarchNIP, RegularityNIP, StarchNIP, PillayStar, AlexisARL}.\\

Our study begins with a problem addressed in some of the earliest work on Keisler measures by Albert \cite{Albert} and Ensley \cite{Ensley, EnsleyPhD}:
\begin{problem}
	Given a countable $\omega$-categorical (or homogeneous) structure $\MM$, can we describe its space of invariant Keisler measures (in the singleton variable $x$)?
\end{problem}
In many cases, our results on exchangeability of invariant random expansions for $\MM$  immediately yield a classification of its invariant Keisler measures. In Section \ref{sec:IKMconnection}, we show that (invariant) Keisler measures over $\MM$ are in natural correspondence with certain ($\Aut(\MM)$-invariant) random expansions. In particular, if the relations of $\MM$ have arity $(k+1)$, then the relations of the expansion will have arity $\leq k$ and so the expansion will satisfy the hypotheses of Theorem \ref{thm:IntroappliedforIKM}. In light of this correspondence, for example, Albert's classification of invariant Keisler measures for the countable random graph $\mathcal{R}$ in \cite{Albert} can be seen as equivalent to the result that invariant random unary expansions of $\mathcal{R}$ are exchangeable~\cite{JahelT} together with de Finetti's classification of exchangeable unary structures~\cite{deFinetti1}.\\

Our correspondence and Theorem \ref{thm:IntroappliedforIKM} allow us to describe the spaces of invariant Keisler measures for many classes of homogeneous structures which were beyond the reach of previous techniques. On one hand, Albert's work \cite{Albert} can at best be refined to deal with binary $\omega$-categorical structures (see Section \ref{sec:consequences} and \cite{me2}). Indeed, the difficulties that arise for the random $3$-hypergraph are discussed at length in the final part of Ensley's PhD thesis \cite{EnsleyPhD}. On the other hand, most results on Keisler measures are obtained in the context of NIP structures \cite{NIPinv, KeislerNIP}. These may be considered as "very non-random" structures for the purpose of our exposition since their Keisler measures can be locally approximated by types (Dirac Keisler measures) \cite{NIPinv}. Some stronger characterisations can be obtained for invariant Keisler measures in an $\omega$-categorical NIP context as we note in Section \ref{sec:nip}, elaborating on the work of Ensley \cite{Ensley}. Nevertheless, outside the NIP context, the consensus is that Keisler measures are poorly understood \cite{lots_of_authors, Keislerwild}. In this case, most positive results are obtained under additional assumptions on the measure (e.g. satisfying a version of Fubini's Theorem \cite{MS, EM, AER}) and under higher amalgamation properties (cf. \cite{Measam} and \cite[Theorem B.8, Theorem B.11]{AER}). See Section \ref{sec:consequences} for a discussion of this. Our work allows us to understand the spaces of invariant Keisler measures for many homogeneous structures of high arity outside of an NIP context for which such higher amalgamation properties may fail (e.g. the universal homogeneous tetrahedron-free $3$-hypergraph), yielding a picture where Keisler measures behave very differently from the previously understood examples.\\

As one application of these results, we give many examples of {\em simple} (a formal model-theoretic property) structures where two notions of smallness for definable sets disagree. These notions of smallness are the ideals of non-forking sets $\mathrm{F}(\emptyset)$ and of universally measure zero sets $\mathcal{O}(\emptyset)$. In general, $F(\emptyset)\subseteq \mathcal{O}(\emptyset)$, and equality holds for many natural examples of simple theories \cite{MS}, including the subclass of stable theories \cite{lots_of_authors}. Recently, it was shown this containment could be strict in simple theories \cite{lots_of_authors}, even assuming $\omega$-categoricity \cite{me2}. However, this phenomenon looked somehow rare in the simple world since it was only proved for purpose-built counterexamples, and it seemed plausible that additional tameness conditions could rule this pathological behavior out.\\

Our results flip this picture, suggesting that most homogeneous simple structures have non-forking formulas which are universally measure zero. Below we give a slightly weaker version of the result:

\begin{customthm}{C}[Theorem \ref{IKMdichotomy}] Let $\mathcal{M}$ be a simple $k$-transitive homogeneous structure in a finite $(k+1)$-ary language whose age has free amalgamation and is $k$-overlap closed. Then, any invariant Keisler measure for $\mathcal{M}$ in $x$ is $S_\infty$-invariant. Moreover, 
	\begin{enumerate}
		\item {EITHER: $\MM$ has $n$-DAP for all $n \in \N$}. In this case, \[F(\emptyset)= \mathcal{O}(\emptyset).\]
		\item {OR: $\MM$ does not have $n$-DAP for all $n \in \N$.} In this case,
		\[F(\emptyset)\subsetneq \mathcal{O}(\emptyset).\]
	\end{enumerate}  
\end{customthm}
In particular, for a fixed $(k+1)$-ary language with $k\geq 2$, there are continuum many structures satisfying the hypotheses and falling in the second case~\cite{Kopconstr}, while only finitely many fall in the first case. Furthermore, all these structures are supersimple of $SU$-rank 1 with trivial forking (thus one-based), so these additional properties cannot ensure $F(\emptyset)= \mathcal{O}(\emptyset)$.\\

\subsection{Structure of the paper and notation}

The paper has the following structure. In Section \ref{sec:prelim} we introduce the main notions and definitions relevant for the first half of this paper. In Subsection \ref{sec:fraisseprelim}, we introduce some basic definitions and results regarding Fra\"{i}ss\'{e} classes and homogeneous structures. Subsection \ref{sec:examples} is dedicated to introducing some examples of homogeneous structures relevant to this paper, which the reader may want to keep in mind moving forwards. In Subsection \ref{sec:randomexp}, we introduce the main concepts of this paper: consistent random expansions \cite{AKL, CRO}, and invariant random expansions \cite{IREs, CraneTown, Cranebook, AckerAut}. These definitions capture, respectively, the idea of randomly expanding a class of finite structures $\mathcal{C}$, or a structure $\mathcal{M}$, by a hereditary class of structures $\mathcal{C}'$.\\

Section \ref{sec:invexch} contains the main results of the paper, and is entirely finite combinatorics. In Subsection \ref{sec:wheninvimpliesexch}, we isolate the property of $k$-overlap closedness for a hereditary class of structures $\mathcal{C}$ and prove that it guarantees exchangeability of consistent random expansions by classes $\mathcal{C}'$ with sufficiently slow growth rate. The main results are Lemma \ref{lem:si} and Theorem \ref{theorem:Chernoff}. Subsection \ref{sec:koverlapclosed} is dedicated to proving that many homogeneous structures naturally have $k$-overlap closedness for some reasonable choice of $k$. These results are summarised in Theorem \ref{thm:cre si}. We proceed with Subsection \ref{sec:applications}, which gives several applications of our results and discusses various examples. We conclude with Subsection \ref{sec:kaygraphs}, where we study the universal homogeneous parity $k$-hypergraphs. In Theorem \ref{thm:kaygraphs}, we prove that for a particular space of $k$-hypergraph expansions, there is a unique invariant random expansion of the universal homogeneous parity $k$-hypergraph, which is not exchangeble.\\

Section \ref{sec:IKMconnection} marks the shift towards invariant Keisler measures in this paper. Subsection \ref{sec:modelprelims} gives some model theoretic preliminaries and basic facts relevant to the rest of the article. Subsection \ref{howto} shows how we may consider invariant Keisler measures as a special case of invariant random expansions. In particular, Definition \ref{Mpstar} gives a way to consider the type space of a given structure $\mathcal{M}$ as represented by a particular space of expansions of $\mathcal{M}$, and having such a representation yields a way to view invariant Keisler measures on $\mathcal{M}$ as a particular kind of invariant random expansions in Corollary \ref{meascorr}. Subsection \ref{homcont} studies this correspondence in a homogeneous context.\\

Section \ref{sec:consequences} explores the consequences of Section \ref{sec:invexch} and Section \ref{sec:IKMconnection} for invariant Keisler measures of homogeneous structures. In Subsection \ref{classtime}, we describe the spaces of invariant Keisler measures of various homogeneous structures in higher arity, giving the first such results since the work of Albert \cite{Albert} and Ensley \cite{Ensley, EnsleyPhD}. We also use each example to exhibit unusual behaviour of invariant Keisler measures which could not be observed in a binary or NIP context. In Subsection \ref{FOtime}, we apply our techniques to prove Theorem \ref{IKMdichotomy}, which gives $2^{\aleph_0}$ many examples of model theoretically tame simple homogeneous structures with $F(\emptyset)\subsetneq\mathcal{O}(\emptyset)$.\\

Section \ref{sec:nip} focuses on invariant Keisler measures in an $\omega$-categorical NIP setting. Building on the work of Ensley \cite{Ensley}, we isolate a property of NIP $\omega$-categorical theories, shared by some other examples with the independence property, which (under very mild assumptions) implies that all invariant Keisler measures are weighted averages of invariant types. We also prove that NIP $\omega$-categorical theories have $F(\emptyset)=\mathcal{O}(\emptyset)$. In spite of our positive results, the question of whether our mild assumptions can be removed remains open.\\

We conclude with Section \ref{sec:finq}, which points out several questions and directions for further research which our work suggests.\\

Regarding notation, throughout the paper we work with a "base" structure $\mathcal{M}$, or class of finite structures $\mathcal{C}$, in a language $\mathcal{L}$ which we expand by another class of structures $\mathcal{C}'$ in a disjoint language $\mathcal{L}'$. We will consistently use $'$ to denote what we are expanding by. We also use the supercript $*$ to denote the class, structure, or language obtained by considering both the "base" and "expanding" part. So $\mathcal{L}^*$ will denote $\mathcal{L}\cup\mathcal{L}'$, $\mathcal{C}^*$ denotes a class of finite $\mathcal{L}^*$-structures and $\mathcal{M}^*$ some $\mathcal{L}^*$-structure. With the exception of Subsection \ref{howto}, $\mathcal{M}$ will denote a countable structure with domain $\mathbb{N}$ (often referred to as $M$). We use the letters $A, B, H,$ and $ G$, to denote finite structures, possibly adding the superscripts $'$ or $^*$ following the conventions mentioned above. We use boldface to distinguish the structure $\HH$ from its domain $H$, when the distinction is important. We use the letters $E$ and $R$ to denote relations, usually in some graph or hypergraph. Whenever we work with hypergraphs we take them to be uniform. \\

This paper requires some background in model theory, especially for its second part, such as Chapters 1-4 in \cite{TZ}. Some more advanced knowledge may be needed for the folklore arguments in Subsection \ref{sec:modelprelims}, such as \cite{CasSimp}. The main arguments in the paper require some basic knowledge of probability, covered in the initial chapters of most books on the probabilistic method, such as Chapters 1-2 of \cite{alon2015probabilistic}.
\\

\textbf{Acknowledgements:}
The authors would like to thank Nate Ackerman, David Evans, Alberto Miguel G\'{o}mez, Jarik Ne\v{s}et\v{r}il, Anand Pillay, Lionel Nguyen Van Th\'e, Misha Tyomkyn, and Julia Wolf for helpful comments and discussions, and Douglas Ensley for finding and forwarding a copy of his thesis. Paolo Marimon is funded by the European Union (ERC, POCOCOP, 101071674). Views and opinions expressed are however those of the authors only and do not necessarily reflect those of the European Union or the European Research Council Executive Agency. Neither the European Union nor the granting authority can be held responsible for them. This paper is part of a project that has received funding from the 
	European Research Council (ERC) under the European Union's Horizon 2020 
	research and innovation programme (grant agreement No 810115 - Dynasnet). Samuel Braunfeld is further supported by Project 24-12591M of the Czech Science Foundation (GA\v{C}R), and by the long-term strategic development financing of the Institute of Computer Science (RVO: 67985807).

\section{Preliminaries}\label{sec:prelim}

\subsection{Fra\"{i}ss\'{e} classes, free amalgamation, and disjoint \texorpdfstring{$n$}{n}-amalgamation}\label{sec:fraisseprelim}
Our main interest in this paper are invariant random expansions of homogeneous structures with free amalgamation, such as the random $r$-hypergraphs or the generic tetrahedron-free $3$-hypergraph. In this section, we cover some basic facts and definitions regarding homogeneous structures and free amalgamation. We also define disjoint $n$-amalgamation. Structures with disjoint $n$-amalgamation for all $n$ include the random $r$-hypergraphs and are mainly relevant for the final part of the paper in Subsections \ref{sec:modelprelims} and \ref{FOtime}. Subsection~\ref{sec:examples} is dedicated to giving examples of homogeneous structures.\\

Throughout this paper we work with countable (often finite) relational languages.

\begin{definition} Let $\LL$ be a relational language. A \textbf{class} $\CC$ is a set of 
$\LL$-structures with domain some finite subset of $\mathbb{N}$ which is closed under isomorphism. {We will always think about these as \emph{labelled} structures, where each point is labelled by the element of $\mathbb{N}$ coming from the domain of the structure. Although we do not require embeddings to respect the labels, we will still consider isomorphic structures that differ only by their labellings as distinct.}

{We say $\CC$ is a \textbf{hereditary class} if it is additionally closed under taking substructures.}
\end{definition}

\begin{definition}\label{def:Fra} Let $\CC$ be  a hereditary class.
\begin{itemize}
    \item[(JEP)] We say $\CC$ has the \textbf{joint embedding property} (JEP) if for any $B_0, B_1\in\CC$, there is some $C\in\CC$ such that $B_0$ and $B_1$ both embed in $\CC$;
    \item[(AP)]\label{eq:ap}  We say $\CC$ has the \textbf{amalgamation property} (AP) if for every $A, B_0, B_1\in\CC$ with embeddings $f_i:A\to B_i$ for $i\in\{0,1\}$, there is $C\in\CC$ and embeddings $g_i:B_i\to C$ for $i\in\{0,1\}$ such that $g_1\circ f_1=g_2\circ f_2$. 
\end{itemize}
A \textbf{Fra\"{i}ss\'{e} class} is a countable hereditary class $\CC$ with the joint embedding property and the amalgamation property.
\end{definition}

\begin{definition} We say that a relational structure $\mathcal{M}$ is \textbf{homogeneous} if every isomorphism between finite substructures of $\mathcal{M}$ extends to an automorphism of $\mathcal{M}$.  
\end{definition}

In some of the literature the word "ultrahomogeneous" is used instead.

\begin{definition}\label{def:omegacat} A countable structure $\mathcal{M}$ in a countable language is $\bm{\omega}$\textbf{-categorical} if the action of $\mathrm{Aut}(\MM)$ on $n$-tuples of $M$ has finitely many orbits for each $n\in\mathbb{N}$.
\end{definition}

Being $\omega$-categorical has several desirable model-theoretic consequences due to the Ryll-Nardzewski Theorem \cite[Theorem 7.3.1]{Hodges}. In particular, if $\mathcal{M}$ is $\omega$-categorical for $A\subseteq M$ finite, any $\mathrm{Aut}(\MM/A)$-invariant set is also definable in $M$ using parameters from $A$. The term $\omega$-categoricity comes from the fact that the first-order theory of an $\omega$-categorical structure has a unique countable model up to isomorphism. Structures which are homogeneous in a finite relational language are $\omega$-categorical.\\

For any relational structure $\mathcal{M}$, we write $\mathrm{Age}(\MM)$ for its class of finite substructures. Fra\"{i}ss\'{e}'s Theorem tells us that Fra\"{i}ss\'{e} classes correspond to the ages of homogeneous structures. 

\begin{fact}[Fra\"{i}ss\'{e}'s Theorem] Let $\mathcal{M}$ be a countable homogeneous structure. Then, $\mathrm{Age}(\MM)$ is a Fra\"{i}ss\'{e} class. Moreover, for any Fra\"{i}ss\'{e} class $\mathcal{C}$, there is a homogeneous structure $\mathcal{M}$ whose age is $\mathcal{C}$, and this structure is unique up to isomorphism. We call this structure the Fra\"{i}ss\'{e} limit of $\mathcal{C}$, $\mathrm{Flim}(\mathcal{C})$.
\end{fact}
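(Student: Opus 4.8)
The plan is to establish the three assertions in turn — that $\mathrm{Age}(M)$ is a Fra\"{i}ss\'{e} class, that every Fra\"{i}ss\'{e} class is the age of some homogeneous structure, and uniqueness — with a single back-and-forth argument doing the essential work in the last two.

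For the first assertion, hereditariness of $\mathrm{Age}(M)$ is immediate, and countability up to isomorphism follows since $\LL$ is countable and relational. For JEP, given $B_0, B_1 \in \mathrm{Age}(M)$ I would fix embeddings $B_i \hookrightarrow M$ and take the (finite) substructure of $M$ spanned by the union of the two images. For AP, given embeddings $f_i : A \to B_i$, I would embed $B_0 \hookrightarrow M$; composing gives an embedding of $A$ into $M$, which together with an embedding of $B_1$ into $M$ and $f_1$ yields a second copy of $A$ inside $M$. Homogeneity supplies an automorphism of $M$ identifying the two copies of $A$, after which the substructure of $M$ spanned by the two (suitably adjusted) images of $B_0$ and $B_1$ is the required amalgam.

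For existence, I would isolate the \emph{extension property}: a countable structure $\mathcal{M}$ with $\mathrm{Age}(\mathcal{M}) \subseteq \CC$ is \emph{$\CC$-rich} if for every $A \subseteq B$ in $\CC$ and every embedding $A \hookrightarrow \mathcal{M}$ there is an embedding $B \hookrightarrow \mathcal{M}$ extending it. I would build such an $\mathcal{M}$ as the union of an increasing chain $A_0 \subseteq A_1 \subseteq \cdots$ of members of $\CC$: starting from some $A_0 \in \CC$, at stage $n$ I would consult a bookkeeping enumeration of pairs (a one-point extension $A \subseteq B$ in $\CC$, an embedding $A \hookrightarrow A_n$) and use AP to pass to some $A_{n+1} \in \CC$ realizing the scheduled extension, while also using JEP to ensure every member of $\CC$ (up to isomorphism) eventually embeds. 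Then $\mathrm{Age}(\mathcal{M}) = \CC$ — one inclusion by hereditariness, the other by the JEP steps — and $\mathcal{M}$ is $\CC$-rich. A standard back-and-forth then shows $\mathcal{M}$ is homogeneous: given a finite partial isomorphism, use richness to extend its domain by one point, then its range by one point, alternating to produce an automorphism.

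For uniqueness, I would observe that any countable homogeneous $\mathcal{N}$ with $\mathrm{Age}(\mathcal{N}) = \CC$ is itself $\CC$-rich (take an amalgam inside $\mathrm{Age}(\mathcal{N})$, then apply homogeneity), and then run the back-and-forth between two such structures to get an isomorphism. The step I would be most careful with is the bookkeeping in the construction — guaranteeing that every pair consisting of a finite configuration appearing in the growing structure together with a one-point extension over it is eventually scheduled for completion — since everything else is a routine unwinding of homogeneity, AP, and JEP.
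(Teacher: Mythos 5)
The paper states this as a classical \emph{Fact} and gives no proof of its own, so there is nothing internal to compare against; your outline is the standard textbook proof of Fra\"{i}ss\'{e}'s theorem and it is correct. The decomposition (age is a Fra\"{i}ss\'{e} class via homogeneity for AP; existence via a bookkept chain realizing one-point extensions to obtain a $\CC$-rich structure; uniqueness and homogeneity via back-and-forth between $\CC$-rich structures) is exactly the canonical argument, and you correctly flag the bookkeeping over configurations appearing at later stages as the one point needing care. The only microscopic quibble: countability of $\mathrm{Age}(M)$ is best justified by $M$ having countably many finite substructures (for an infinite relational language there are uncountably many labelled $\LL$-structures on a fixed finite domain), but this does not affect the argument.
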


\begin{definition}[disjoint amalgamation and free amalgamation]\label{def:DAPFAP} We say that a hereditary class $\mathcal{C}$ has the \textbf{disjoint amalgamation property} (DAP) if it has the amalgamation property and the functions $g_i:B_i\to C$ from Definition \ref{eq:ap} can be chosen so that $g_0(B_0)\cap g_1(B_1)=f_1(A)$. We say that $\mathcal{C}$ has \textbf{free amalgamation} if it has the disjoint amalgamation property and $C$ from Definition \ref{eq:ap} can be chosen so that it does not have any relation intersecting both $g_0f_0(B_0)\setminus f_0(A)$ and $g_1f_1(B_1)\setminus f_1(A)$. 
\end{definition}

The age of a homogeneous structure $\mathcal{M}$ has the disjoint amalgamation property if and only if $\mathcal{M}$ has \textbf{trivial algebraic closure}, in the sense that for every finite $A\subseteq M$ the $\mathrm{Aut}(\MM/A)$-orbit of any $b\in M\setminus A$ is infinite, where $\mathrm{Aut}(\MM/A)$ is the pointwise stabilizer of $A$.\\

Now and in the rest of the paper we often say that a homogeneous structure has a certain property when its age does. For example, we say that a homogeneous structure has free amalgamation when its age does, and so on\dots

\begin{definition} Let $k\geq 2$. A finite $\mathcal{L}$-structure $\mathcal{A}$ is $k$-irreducible if for every $a_1, \dots, a_k\in A$ there is some $\mathcal{L}$-relation $R$ on $A$ and a tuple $\overline{b}\in R$ such that $a_1, \dots, a_k\in\overline{b}$.
\end{definition}

\begin{definition} Let $\mathcal{F}$ be a class of finite $\mathcal{L}$-structures. We write $\mathrm{Forb}(\mathcal{F})$ for the class of finite $\mathcal{L}$-structures which do not embed structures from $\mathcal{F}$.
\end{definition}

\begin{remark} If $\mathcal{C}=\mathrm{Forb}(\mathcal{F})$, we have that $\mathcal{C}=\mathrm{Forb}(\mathcal{F}')$, where $\mathcal{F}'$ is minimal in the sense that for all $A\in\mathcal{F}'$, no proper substructure of $A$ is in $\mathcal{F}'$. Any hereditary class $\mathcal{C}$ is of the form $\mathrm{Forb}(\mathcal{F})$ for some (possibly infinite) $\mathcal{F}$.
\end{remark}

\begin{fact}\label{fact:2irrfree} Let $\mathcal{L}$ be a finite relational language. The class $\mathcal{C}=\mathrm{Forb}(\mathcal{F})$ with $\mathcal{F}$ minimal has free amalgamation if and only if every structure in $\mathcal{F}$ is $2$-irreducible.
\end{fact}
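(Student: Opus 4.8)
The plan is to prove both directions of the equivalence in Fact~\ref{fact:2irrfree} by reasoning directly from the definition of free amalgamation, using the minimality of $\mathcal{F}$ in an essential way.

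\textbf{($\Leftarrow$) Suppose every structure in $\mathcal{F}$ is $2$-irreducible; show $\mathcal{C} = \mathrm{Forb}(\mathcal{F})$ has free amalgamation.} First I would check that $\mathcal{C}$ has the disjoint amalgamation property by constructing the canonical free amalgam. Given $A, B_0, B_1 \in \mathcal{C}$ with embeddings $f_i : A \to B_i$, take $C$ to be the structure on the disjoint union $B_0 \sqcup_A B_1$ (identifying the copies of $A$) whose relations are exactly the union of the relations pulled back from $B_0$ and from $B_1$, with no relation having elements in both $B_0 \setminus A$ and $B_1 \setminus A$. The only thing to verify is that $C \in \mathcal{C}$, i.e.\ that no $F \in \mathcal{F}$ embeds into $C$. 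The key point: if $F$ embeds in $C$ via some map $e$, then since $F$ is $2$-irreducible, every pair of elements of $e(F)$ lies in a common relation of $C$; but relations of $C$ never cross between $B_0 \setminus A$ and $B_1 \setminus A$, so $e(F)$ cannot meet both sides, hence $e(F) \subseteq B_0$ or $e(F) \subseteq B_1$ (as subsets of $C$). One then checks that the induced substructure of $C$ on $e(F)$ agrees with that of $B_0$ (resp.\ $B_1$) — this uses that $C$'s relations restricted to $B_i$ are precisely $B_i$'s relations — contradicting $B_i \in \mathrm{Forb}(\mathcal{F})$. This simultaneously gives AP, DAP, and the no-crossing-relation condition, so $\mathcal{C}$ has free amalgamation.

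\textbf{($\Rightarrow$) Suppose $\mathcal{C} = \mathrm{Forb}(\mathcal{F})$ has free amalgamation with $\mathcal{F}$ minimal; show every $F \in \mathcal{F}$ is $2$-irreducible.} I would argue by contraposition: suppose some $F \in \mathcal{F}$ is \emph{not} $2$-irreducible, witnessed by $a_1, a_2 \in F$ lying in no common relation of $F$. Let $B_0 = F \restriction (F \setminus \{a_2\})$ and $B_1 = F \restriction (F \setminus \{a_1\})$, and let $A = F \restriction (F \setminus \{a_1, a_2\})$, with the natural inclusions $A \hookrightarrow B_i$. By minimality of $\mathcal{F}$, no proper substructure of $F$ lies in $\mathcal{F}$, so $B_0, B_1, A \in \mathrm{Forb}(\mathcal{F}) = \mathcal{C}$. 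Now form the free amalgam $C$ of $B_0$ and $B_1$ over $A$. On the underlying set, $C$ has the same elements as $F$ (namely $A \cup \{a_1\} \cup \{a_2\}$, with $a_1 \in B_0 \setminus A$ and $a_2 \in B_1 \setminus A$). I claim the identity map $F \to C$ is an embedding: for any relation $R$ and tuple $\bar b$ from $F$, since $a_1, a_2$ share no relation in $F$, the tuple $\bar b$ omits $a_1$ or omits $a_2$, hence lies entirely in $B_0$ or entirely in $B_1$, so $R(\bar b)$ holds in $C$ by construction of the free amalgam; conversely any relation in $C$ comes from $B_0$ or $B_1 \subseteq F$. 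Thus $F$ embeds in $C$, so $C \notin \mathcal{C}$, contradicting that the free amalgam lands in $\mathcal{C}$. Hence every $F \in \mathcal{F}$ is $2$-irreducible.

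\textbf{Main obstacle.} The delicate bookkeeping is in the $(\Leftarrow)$ direction, ensuring that the substructure of the free amalgam $C$ induced on a subset contained in (the copy of) $B_i$ is \emph{literally} the substructure induced in $B_i$ — this requires being careful that the free amalgam adds no relations beyond those forced, which is exactly the content of the free amalgamation definition, and that relations are preserved and reflected. One also must handle the edge case where $\mathcal{F}$ contains structures on $\le 1$ elements (then $\mathcal{C}$ is trivial) or the $2$-irreducibility condition degenerates, but these are routine. A secondary point is making sure the free amalgam is well-defined as stated (that taking the union of the two relation sets and nothing else is consistent), which is immediate since the languages involved are relational.
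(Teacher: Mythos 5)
Your proof is correct. The paper states this as a \textbf{Fact} and gives no proof of its own (it is folklore), so there is nothing to compare against; your two-directional argument is the standard one. Two small points worth tightening: in the ($\Rightarrow$) direction the amalgam $C$ furnished by the definition of free amalgamation need not equal $g_0(B_0)\cup g_1(B_1)$ as a set, so one should restrict to that (hereditarily closed) substructure before arguing it is isomorphic to $F$; and since the paper's definition of $k$-irreducibility quantifies over all $a_1,\dots,a_k\in A$ without requiring distinctness, the degenerate witnesses you flag (a single element lying in no relation, or $|F|\le 1$) do need the separate one-line treatment you indicate, but they are indeed routine.
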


Many natural examples of homogeneous structures have free amalgamation. For example, the random graph and the universal homogeneous $K_n$-free graphs all have free amalgamation. In Subsection \ref{sec:examples}, we give many other examples of homogeneous hypergraphs with free amalgamation.\\

Below, we introduce the notion of disjoint $n$-amalgamation. This can be thought of as a higher dimensional version of disjoint amalgamation. Structures whose age has disjoint $n$-amalgamation for all $n$ can be thought of as having essentially no ``interesting'' omitted substructures. Indeed, they are sometimes referred to in the literature as ``random structures'' \cite{Kopconstr, Palacinrandom}, since they can be built by a probabilistic construction similar to that yielding the random graph by tossing coins to decide whether each pair of vertices from a countable set forms an edge.

\begin{notation} For $n\in\mathbb{N}$, we write $[n]$ for the set $\{1,\dots, n\}$. For $\CC$ a hereditary class, we denote by $\CC[n]$ the class of structures in $\CC$ with domain $[n]$. For a set $C$, and $k\in\mathbb{N}$, we write $[C]^k$ for its set of $k$-element subsets.
\end{notation}

\begin{notation} We denote by $\mathcal{P}([n])^{-}$ the set of subsets $I\subsetneq [n]$. We call some $\mathcal{F}\subseteq\mathcal{P}([n])$ \textbf{downwards closed} if $I\in\mathcal{F}$ and $I'\subseteq I$ implies that $I'\in\mathcal{F}$. 
\end{notation}

\begin{definition}[Disjoint $n$-amalgamation] \label{def:nDAP} Let $\mathcal{C}$ be a hereditary class of relational structures. Given $n\geq 2, m\in\mathbb{N}$, and $A\in\mathcal{C}[m]$, a \textbf{partial disjoint} $\bm{n}$\textbf{-amalgamation problem} over $A$ is a class of structures $(A_I|I\in\mathcal{F})$ for a downwards closed family of subsets $\mathcal{F}\subseteq \mathcal{P}([n])^{-}$ such that there are disjoint finite subsets $K_1, \dots, K_n$ of $\mathbb{N}\setminus[m]$, satisfying that for each $I, J\in\mathcal{F}$,
\begin{itemize}
    \item  the domain of $A_I$ is $[m]\cup\bigcup\{K_i\vert i\in I\}$;
    \item $A_I\upharpoonright_{[m]}=A$;
    \item $A_I\upharpoonright {[m]\cup\bigcup\{K_i\vert i\in I\cap J\}}=A_J\upharpoonright {[m]\cup\bigcup\{K_i\vert i\in I\cap J\}}$.
\end{itemize}
A \textbf{solution} to a partial disjoint $n$-amalgamation problem over $A$ is some $B\in\mathcal{C}([m]\cup \bigcup\{K_i\vert i\in n\}$ such that for each $I\in\mathcal{F}$, $B\upharpoonright \{K_i\vert i\in I\}=A_I$. A \textbf{disjoint} $\bm{n}$\textbf{-amalgamation problem} is a partial disjoint $n$-amalgamation problem where  $\mathcal{F}=\mathcal{P}([n])^{-}$. In this case all of the information about the problem is given by  $\{I\vert I\in[n]^{n-1}\}$, and so we will write this set instead of the corresponding $\mathcal{F}$ for simplicity of notation. We say that a disjoint $n$-amalgamation problem is a $\bm{1}$\textbf{-point} disjoint $n$-amalgamation problem if  $|K_i|=1$ for all $i\leq n$. We say it is \textbf{basic} if it is a $1$-point disjoint $n$-amalgamation problem and $A=\emptyset$. We say that $\mathcal{C}$ has \textbf{disjoint} $\bm{n}$\textbf{-amalgamation} if every disjoint $n$-amalgamation problem has a solution. We say that that it has ($1$-point) basic disjoint $n$-amalgamation if every ($1$-point) basic disjoint $n$-amalgamation problem has a solution. Often, we abbreviate 'disjoint $n$-amalgamation' with '$n$-DAP'.
\end{definition}

 Note that disjoint $2$-amalgamation corresponds to disjoint amalgamation. 
 Below, we state some basic implications between different kinds of disjoint $k$-amalgamation problems having solutions:
 
 \begin{fact}[Basic facts about disjoint $n$-amalgamation, {\cite[cf. Lemma 3.5 \& Proposition 3.6]{Kruck}}]\label{basicndap} Let $\mathcal{C}$ be a hereditary class of relational structures. Suppose that $\mathcal{C}$ has disjoint $k$-amalgamation for all $2\leq k\leq n$. Then, every partial disjoint $n$-amalgamation problem has a solution.\\
Moreover, $\mathcal{C}$ has disjoint $n$ amalgamation for all $n\in\mathbb{N}$ if and only if it has basic disjoint $n$-amalgamation for all $n\in\mathbb{N}$.
\end{fact}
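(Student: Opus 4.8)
The plan is to prove the two halves separately, and for both to proceed by induction. Fix the hereditary class $\CC$ of relational structures and abbreviate a (partial) disjoint $n$-amalgamation problem by its data. For the first statement, assume $\CC$ has $k$-DAP for every $2 \leq k \leq n$, and let $(B_I \mid I \in \FF)$ be a partial disjoint $n$-amalgamation problem over some $A \in \CC[m]$, where $\FF \subseteq \mathcal{P}([n])^-$ is downwards closed. First I would induct on the number of maximal elements of $\mathcal{P}([n])^- \setminus \FF$, i.e. on how far $\FF$ is from being all of $\mathcal{P}([n])^-$; the base case $\FF = \mathcal{P}([n])^-$ is exactly $n$-DAP, which holds by hypothesis. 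For the inductive step, pick a subset $K \subsetneq [n]$ that is maximal among those not in $\FF$, so that $\FF \cup \{K\}$ is still downwards closed. Say $|K| = \ell < n$. The structures $\{B_I \mid I \in \FF, I \subseteq K\}$ together form a \emph{partial} disjoint $\ell$-amalgamation problem over $A$ restricted to the coordinates in $K$ (after relabelling $K$ to $[\ell]$), and — crucially — by the downward-closedness and the compatibility condition, the family $\{I \subseteq K \mid I \in \FF\}$ contains all proper subsets of $K$ except possibly $K$ itself, so this is in fact missing at most the top set; by a sub-induction (or directly, since $\ell \le n$ and we have $\ell$-DAP) this partial problem has a solution, which we may take as the new structure $B_K$. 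One checks the amalgamation compatibility conditions for $\FF \cup \{K\}$ are met, reducing the number of maximal missing sets by one, and we finish by the outer induction.

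For the second statement, the forward direction is immediate since a basic disjoint $n$-amalgamation problem is in particular a disjoint $n$-amalgamation problem. For the converse, assume $\CC$ has basic (that is, $1$-point, over $A = \emptyset$) disjoint $n$-amalgamation for all $n$, and we must deduce full disjoint $n$-amalgamation for all $n$. I would prove this by induction on $n$, and within fixed $n$ by a secondary induction that first removes the restriction $A = \emptyset$ and then removes the restriction $|k_i| = 1$. To pass from basic to $1$-point-over-$A$: given a $1$-point disjoint $n$-amalgamation problem over $A \in \CC[m]$, regard the $m$ points of $A$ together with the $n$ new points as a basic disjoint $(m+n)$-amalgamation problem, where the constraints coming from $A$ and from the $B_I$'s specify the required restrictions; here one uses that lower-arity disjoint amalgamation (available by the induction on $n$, via the first part of the Fact) lets us fill in all the faces not directly prescribed. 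To pass from $1$-point to arbitrary block sizes $|k_i|$: blow up each block one vertex at a time, using $1$-point disjoint $(n+1)$-amalgamation (or $1$-point $n$-amalgamation into an already-partially-built structure, legitimised by the partial-amalgamation statement just proved) to add vertices to block $i$ while keeping all prescribed restrictions intact.

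The main obstacle I expect is bookkeeping in the converse: correctly phrasing "a $1$-point disjoint $n$-amalgamation problem over $A$ is (essentially) a basic disjoint $(m+n)$-amalgamation problem" requires specifying which lower faces of the $(m+n)$-cube are prescribed and checking they are mutually compatible and downward-closed, and then invoking the partial-amalgamation result to solve it — so one is really running the two inductions (on $n$ and on the reduction steps) simultaneously, and must be careful that each invocation only uses disjoint $k$-amalgamation for $k \le n$ or basic $j$-amalgamation for $j \le m+n$, all of which are available. The block-blow-up step is routine once the partial-amalgamation machinery is in place. Since the paper cites \cite[Lemma 3.5 \& Proposition 3.6]{Kruck} for this Fact, I would ultimately lean on that reference for the detailed verification of the compatibility conditions rather than reproducing every case.
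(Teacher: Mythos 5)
The paper does not actually prove this Fact --- it is imported wholesale from Kruckman's thesis (the cited Lemma 3.5 and Proposition 3.6) --- so there is no in-paper proof to compare against; I can only assess your outline on its own terms. Your overall strategy (fill in missing faces bottom-up for the first half; reduce to basic problems by absorbing the base and splitting blocks for the second) is the right one, but both halves contain errors as written.

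In the first half, ``maximal'' must be ``minimal''. Since $\mathcal{F}$ is downwards closed, its complement in $\mathcal{P}([n])^{-}$ is upwards closed, so a $K$ that is \emph{maximal} among sets not in $\mathcal{F}$ has size $n-1$, and its proper subsets need not lie in $\mathcal{F}$; hence $\mathcal{F}\cup\{K\}$ need not be downwards closed and the family $\{B_I : I\subsetneq K\}$ need not be a complete $|K|$-amalgamation problem. (Concretely: $n=3$, $\mathcal{F}=\{\emptyset\}$, $K=\{1,2\}$.) What you want is a $K$ \emph{minimal} in the complement, for which all proper subsets of $K$ do lie in $\mathcal{F}$, so that $\{B_I : I\subsetneq K\}$ is a full disjoint $|K|$-amalgamation problem solvable by the hypothesis; and the induction should be on $|\mathcal{P}([n])^{-}\setminus\mathcal{F}|$, since adding a minimal element of the complement to $\mathcal{F}$ need not change the number of maximal elements of the complement. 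With those two repairs the first half is fine.

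In the second half the converse has a genuine circularity. When you recast a $1$-point problem over $A\in\mathcal{C}[m]$ as a basic $(m+n)$-point problem, the undetermined faces are exactly the sets containing all $n$ new points and a proper subset of $A$; these have up to $m+n-1$ points, so they cannot be filled in by ``lower-arity disjoint amalgamation available by the induction on $n$'' --- the first part of the Fact would demand full $k$-DAP for $k$ up to $m+n$, which is precisely what you are trying to prove. The same problem recurs in the block blow-up, where an invocation of ``$1$-point disjoint $(n+1)$-amalgamation ... legitimised by the partial-amalgamation statement'' again needs amalgamation in arities exceeding $n$. Two standard repairs: (i) notice that after refining, \emph{every} block is a singleton, so each undetermined face together with its (already determined or previously filled) proper faces is itself a \emph{basic} problem, and fill them in bottom-up using only the basic hypothesis; or (ii) run a strong induction on the total number of points $|A|+\sum_i|k_i|$ of the problem, observing that each undetermined face created by absorbing a base point or splitting a block is a full disjoint amalgamation problem on strictly fewer points. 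Either device closes the induction; as written, yours does not, and deferring to the reference does not substitute for fixing the induction measure.
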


\begin{remark} Note that we also get that $\mathcal{C}$ has disjoint $n$-amalgamation for all $n\in\mathbb{N}$ if and only if it has $1$-point disjoint $n$-amalgamation for all $n$.  
\end{remark}
 



\subsection{Some examples to keep in mind}\label{sec:examples}
It will be helpful to keep in mind some examples of homogeneous structures for which we will be using our techniques. Whilst most results in Subsections \ref{sec:wheninvimpliesexch} and \ref{sec:koverlapclosed} are phrased in general terms, some of the original motivation for our work was understanding random binary expansions of ternary homogeneous structures. Moreover, some of these examples will be used in later sections to showcase different kinds of behaviour of such invariant random expansions.\\

\begin{examples}[Some homogeneous $3$-hypergraphs]\label{ex:hypergraphs}
We define some homogeneous uniform $3$-hypergraphs as the Fra\"{i}ss\'{e} limits of classes $\mathcal{C}$ of $3$-hypergraphs omitting some family $\mathcal{F}$ of induced substructures.

\begin{itemize}
\item[($\mathcal{R}_3$)] The \textbf{universal homogeneous} $\bm{3}$\textbf{-hypergraph} $\mathcal{R}_3$ is the Fra\"{i}ss\'{e} limit of the class of finite $3$-hypergraphs. For $r\geq 2$ denote by $\mathcal{R}_r$ the universal homogeneous $r$-hypergraph;
\item[($\mathcal{H}_4^3$)] The \textbf{universal homogeneous tetrahedron-free} $\bm{3}$\textbf{-hypergraph} $\mathcal{H}_4^3$ is the is the Fra\"{i}ss\'{e} limit of the class of finite $3$-hypergraphs omitting a tetrahedron. A \textbf{tetrahedron} (which we also denote as $\mathcal{K}_4^3$) consists of four vertices such that all four triplets of vertices form a $3$-hyperedge, see Figure \ref{fig:tet}. More generally, for $2\leq r<n$, we write $\mathcal{K}_n^r$ for the complete uniform $r$-hypergraph on $n$-vertices and $\mathcal{H}_n^r$ for the universal homogeneous $\mathcal{K}_n^r$-free $r$-hypergraph. Note that $\mathcal{H}_n^2$ denotes the generic $K_n$-free graph;
\begin{figure}[ht]
    \centering
    \includegraphics[width=0.2\textwidth]{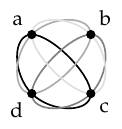}
    \caption{A pictorial representation of $K_4^3$, i.e. a tetrahedron. The different shades of gray for the hyperedges are aimed at making the picture more readable.}
    \label{fig:tet}
\end{figure}
\item[($\mathcal{H}_4^{-}$)] The \textbf{universal homogeneous} $\bm{\mathcal{K}_4^{-}}$\textbf{free} $\bm{3}$\textbf{-hypergraph} $\mathcal{H}_4^{-}$ is the is the Fra\"{i}ss\'{e} limit of the class of finite $3$-hypergraphs omitting a copy of $\mathcal{K}_4^{-}$, the unique (up to isomorphism) $3$-hypergraph on four vertices with three $3$-hyperedges. More generally, for $r\geq 3$, we denote by $\mathcal{K}_{r+1}^{-}$ the unique (up to isomorphism) $r$-hypergraph on $r+1$ vertices with $r$ many $r$-hyperedges and write $\mathcal{H}_{r+1}^{-}$ for the universal homogeneous  $\mathcal{K}_{r+1}^{-}$-free $r$-hypergraph;
\item[($\mathcal{P}_n^r$)] For $2<r\leq n$, we can consider the \textbf{universal homogeneous} $\bm{n}$\textbf{-petal free }$\bm{r}$\textbf{-hypergraph} $\mathcal{P}_n^r$. By $P_n^r$ we denote the $r$-hypergraph on $n+1$-many vertices consisting of $n$ vertices with no hyperedges between them and one distinguished vertex such that any $r-1$ vertices in the independent set form an $r$-hyperedge with it. Figure \ref{fig:Petal} gives a representation of $P_4^3$. By $\mathcal{P}_n^r$ we denote the universal homogeneous $r$-hypergraph omitting  $P_n^r$. Note that for $n=r$, $P_r^r=K_{r+1}^{-}$ and so $P_{r}^r$ is just $\mathcal{H}_{r+1}^{-}$;
\begin{figure}[ht]
    \centering
    \includegraphics[width=0.25\textwidth]{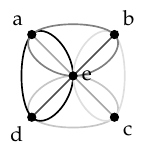}
    \caption{Pictorial representation of $P_4^3$. Note how the vertex $\mathrm{e}$ forms a relation with each pair of vertices in $\{\mathrm{abcd}\}$, but there are no relations containing only the latter four vertices.}
    \label{fig:Petal}
\end{figure}
\item[$(\mathcal{G}_3)$] A parity $3$-hypergraph\footnote{This is usually called a two-graph in the literature \cite{seidel1991survey}.} is a uniform $3$-hypergraph such that any four vertices have an even number of $3$-hyperedges. The \textbf{universal homogeneous parity $3$-hypergraph} $\mathcal{G}_3$ is the Fra\"{i}ss\'{e} limit of the class of parity $k$-hypergraphs. For $k>2$, a \textbf{parity $k$-hypergraph}\footnote{Similarily to the previous footnote, parity $k+1$-hypergraphs are sometimes called kay-graphs \cite{ApproxRamsey}.} is a $k$-uniform hypergraph such that the number of hyperedges on any $k+1$ many vertices has the same parity as $k+1$. The \textbf{universal homogeneous parity $k$-hypergraph} $\mathcal{G}_k$ is the Fra\"{i}ss\'{e} limit of the class of parity $k$-hypergraphs. 
\end{itemize} 
\end{examples}

\begin{remark}\label{rem:ndaphyper} The hypergraphs $\mathcal{R}_r$ satisfy disjoint $n$-amalgamation for all $n$. All of the structures from Example \ref{ex:hypergraphs} have free amalgamation except the parity $k$-hypergraphs, which do still have disjoint amalgamation. It is easy to see that $\mathcal{H}_4^3$ has free amalgamation and disjoint $3$-amalgamation, but fails disjoint $4$-amalgamation: the basic disjoint $4$-amalgamation problem where one specified that each triplet in a set of four vertices forms a $3$-hyperedge has no solution because tetrahedrons are omitted. A similar disjoint $4$-amalgamation problem fails for the universal homogeneous parity $3$-hypergraph $\mathcal{G}_3$, which still satisfies disjoint $3$-amalgamation. This is not entirely trivial and we refer the reader to \cite[Section 7.4.1]{myPhD} for a proof. A similar technique works to prove that the universal homogeneous parity $k$-hypergraph $\mathcal{G}_k$ satisfies disjoint $3$-amalgamation and fails disjoint $k+1$-amalgamation. Finally, the hypergraphs $\mathcal{P}_n^3$ fail disjoint $3$-amalgamation. We prove this later in Lemma \ref{lem:petalnsop}. 
\end{remark}

Whilst there are countably many homogeneous graphs, classified by Lachlan and Woodrow \cite{LachWood}, there are $2^{\aleph_0}$ homogeneous $3$-hypergraphs with free amalgamation, as shown in a construction by Akhtar and Lachlan \cite[Lemma 3]{Lachlanhyp}.


In the second part of the paper, we will need $2^{\aleph_0}$ homogeneous ternary structures satisfying some additional model theoretic tameness assumptions. We introduce basic model theoretic properties in Subsection \ref{sec:modelprelims}. The following examples are a slight modification from the $2^{\aleph_0}$ homogeneous ternary structures which are supersimple of $\mathrm{SU}$-rank $1$ and with free amalgamation constructed by Koponen in \cite[Section $7.3$]{Kopconstr}:

\begin{example}[$2^{\aleph_0}$ ternary homogeneous structures supersimple of $\mathrm{SU}$-rank $1$]\label{Kopconstr}
Let $\mathcal{L}$ consist of a single ternary relation $R$. For $n>3$, let $\mathcal{S}_n$ be the $\mathcal{L}$-structure on $[n]$ where $R$ is interpreted as a relation where for $a,b,c\in[n]$, $\mathcal{S}_n\vDash \neg R(a,b,c)$ if and only if either $a,b,c$ are not distinct or $a=1, b>1$ and either $b<n$ and $c=b+1$ or $b=n$ and $c=2$. For $3<n<m$, there is no embedding from $\mathcal{S}_n$ into $\mathcal{S}_m$ \cite[Lemma 7.4]{Kopconstr}. Let $S_0$ be a set of structures on one and two vertices such that no two of them embed in each other so that $\mathrm{Forb}(S_0)$ consists of all $\mathcal{L}$-structures where $R$ does not hold on tuples with repeated entries. Now, for every $I\subseteq \mathbb{N}\setminus [3], \mathcal{C}_I:=\mathrm{Forb}(S_0\cup\{S_n\vert n\in I\})$ forms a distinct Fra\"{i}ss\'{e} class with free amalgamation and whose Fra\"{i}ss\'{e} limit, $\mathbb{S}_I$ is supersimple with $\mathrm{SU}$-rank $1$. 
\end{example}

\subsection{Randomly expanding structures}\label{sec:randomexp}

In this section, we introduce two related notions for the study of random expansions of structures: consistent random expansions, which capture the idea of randomly expanding a class of finite structures; and invariant random expansions, which capture the idea of randomly expanding a given (usually infinite and highly symmetric) structure. Consistent random expansions have been previously studied in the context of expansions by linear orderings in \cite{AKL} and \cite{CRO}. Meanwhile, the notion of invariant random expansions was previously studied in \cite{CraneTown, AckerAut, Cranebook}, and \cite{IREs}, with the work of Crane and Towsner~\cite{CraneTown, crane2018relative, Cranebook} focusing on relatively exchangeable structures, which correspond to consistent random expansions of hereditary classes with the joint embedding property. The special case of invariant random expansions by colourings, graphs, or hypergraphs has a substantially older history, as these were studied as invariant sequences and arrays in probability \cite{deFinetti1, AldousT, HooverT, Aldous, Kallbooksym}.\\

Given a class $\CC$ (e.g., graphs) and a distinct $\CC'$ (e.g., linear orders), a \textbf{consistent random expansion} of $\mathcal{C}$ by $\mathcal{C}'$ is a family $(\P_\A \mid \A\in\CC)$ such that (i) each $\P_\A$ is a probability distribution on the space of structures in $\CC'$ with domain $A$, and (ii) the $\P_\A$ are compatible with respect to embeddings, so that if there is an embedding $\phi:\B\mapsto \A$, the probability distribution induced by $\P_\A$ on the domain $\phi(B)$ agrees with the probability distribution $\P_\B$. We give the formal definition of consistent random expansions later in Definition \ref{CRE}. Before that, we give some examples of consistent random expansions from \cite{AKL} in order to aid the reader's intuition.\\

\begin{examples} \label{ex:CREs}
$(a)$  For the first example, let $\CC$ be the class of all finite graphs. For each $n \in \mathbb{N}$, we randomly expand each graph $\G$ with domain of size $n$ by a linear order chosen uniformly at random, so each ordering is chosen with probability $\frac{1}{n!}$. We will see that this is a consistent random expansion of the class of graphs by that of linear orders.

    
    
 $(b)$   For a second example, let $\CC$ be the class of all finite path-graphs, i.e. connected graphs for which there are two vertices of degree one and every other vertex has degree two. Again, for each path-graph with domain of size $n$, we may take uniformly at random a linear ordering of the domain, and this would constitute a consistent random expansion. However, we may instead consider the random linear ordering obtained by picking uniformly at random  one of the degree-one vertices to be the ``leftmost'' vertex, and then ordering the vertices of the path from left to right. This is easily seen to be different from the uniformly random ordering, but will still satisfy the definition of a consistent random expansion.
\end{examples}

The second notion for randomly expanding a structure we will introduce in this section is an \textbf{invariant random expansion.} This term refers to an $\mathrm{Aut}(\MM)$-invariant measure on the space of expansions of a given deterministic (generally infinite) structure $\MM$. 
When $\MM$ is homogeneous, its invariant random expansions are in a natural bijection with consistent random expansions of $\mathrm{Age}(\MM)$. Relying on this correspondence, most of our results will be phrased in terms of consistent random expansions. \\

For this subsection, we will assume $\mathcal{L}$ and $\mathcal{L}'$ are disjoint relational signatures and let $\mathcal{L}^*=\mathcal{L}\cup\mathcal{L}'$.

 \begin{definition} 
Let $\A$ and $\B$ be respectively an $\mathcal{L}$-structure and and an $\mathcal{L}'$-structure with the same domain. We write $\A*\B$ for the $\mathcal{L}^*$-structure whose $\mathcal{L}$-reduct is $\A$ and whose $\mathcal{L}'$-reduct is $\B$. This is the \textbf{superposition of } $\A$ and $\B$.
\end{definition}

\begin{definition}
For two classes $\mathcal{C}, \mathcal{C}'$ of $\mathcal{L}$ and $\mathcal{L}'$-structures respectively, we write $\mathcal{C}*\mathcal{C}'$ for the class of structures $\mathbf{A}*\mathbf{B}$ where $\mathbf{A}\in\mathcal{C}$ and  $\mathbf{B}\in\mathcal{C}'$.
\end{definition}

\begin{notation} 
For a (finite or infinite) $\LL$-structure $\MM$, we write $\mathrm{Struc}_{\LL'}(M)$ for the space of $\LL'$-expansions of $\MM$. In particular, by $\mathrm{Struc}_{\LL'}[n]$ and $\mathrm{Struc}_{\LL'}(\mathbb{N})$ we mean the space of $\LL'$ structures on $[n]$ and $\mathbb{N}$ respectively.\\

Let $\CC$ and $\CC^*$ be classes of $\LL$ and $\LL^*$ structures respectively such that $\CC_{\upharpoonright\LL}^*=\CC$. We do not assume $\CC$ or $\CC^*$ to be hereditary.

For $\A\in\CC$, we let
\[\mathrm{Struc}(\A, \CC^*):=\{\A^*\in\CC^*\vert 
\A^*_{\upharpoonright\LL}=\A\}.\]
That is, $\mathrm{Struc}(\A, \CC^*)$ is the class of $\LL^*$-structures in $\CC^*$ which correspond to $\A$ when restricted to $\LL$.\\

We denote by $\mathcal{D}_\A^*$ the set of probability distributions on $\mathrm{Struc}_{\LL'}(\A)$. We write $\mathcal{D}_\A^*(\CC^*)$ for the subspace of $\mathcal{D}_\A^*$ of probability distributions concentrating on $\mathrm{Struc}(\A, \CC^*)$. For simplicity of notation, given $\P_\A\in\mathcal{D}_\A$ and $\A^*\in\mathrm{Struc}_{\mathcal{L}'}(\A)$, with $\A^*_{\upharpoonright \mathcal{L}'}=\A'$, we write $\P_\A(\A')$ instead of $\P_\A(\A^*)$.
\end{notation}

\begin{notation}\label{not:relabel} Given $\HH, \G\in\mathcal{C}$ with domains $H$ and $G$ respectively, $\HH'$ an $\mathcal{L}'$-structure with domain $H$, {and an injection $\phi \colon H \to G$}, we write $\HH'_\phi$ for the relabelling of the points of $\HH'$ by the map $i\mapsto \phi(i)$. So $\HH'$ and $\HH'_\phi$ are isomorphic, but their domains are possibly different subsets of $\N$.
\end{notation}

The following notation is only needed to define formally consistent random expansions and will not be needed later.
\begin{notation}
{Let $\A$ be an $\LL$-structure, let $\P_\A \in \mathcal{D}^*_\A$, and let $\B \subset \A$ with domain $B$. Then $(\P_{\A} \upharpoonright B) \in \mathcal{D}^*_\B$ denotes the distribution induced by $\P_\A$ on expansions of $\B$, i.e. 
\[(\P_{\A}\upharpoonright B)( \B') = \sum \{P_\A(\A') \vert  \;  \A' \upharpoonright B = \B'  \}.\]}

{
    Let $\A, \B$ be $\LL$-structures, let $\phi \colon B \to A$ be an injection on their domains, and let $\P_\B \in \mathcal{D}^*_\B$. Then we let $\phi(\P_\B) \in \mathcal{D}^*_\A$ denote the pushforward of $\P_\B$ by $\phi$, i.e., for $\A'$ with domain $A$, $\phi(\P_\B)(\A') = \P_\B( \A'_{\phi^{-1}})$.}
\end{notation}

\begin{definition}\label{CRE} Let $\mathcal{C}$ be a class of $\mathcal{L}$-structures. By a \textbf{consistent random expansion of} $\CC$ to $\LL'$, we mean a function assigning to each $\A\in\CC$ an element $\P_\A\in \mathcal{D}_\A^*$ such that for $\HH, \G\in\CC$ with domains $H$ and $G$ and $\phi \colon H \to G$ an embedding of $\HH$ into $\G$, we have that $\phi(\P_\HH)$ is the same distribution as $(\P_\G \upharpoonright \phi(H))$. That is, whenever $\HH'$  is an $\mathcal{L}'$-structure with domain $H$, we have 
    \[{\P_\HH(\HH')=(\P_\G {\upharpoonright \phi(H)})(\HH'_\phi).}\]
 In particular, note that for $\HH_1\cong\HH_2$ in $\mathcal{C}$, the isomorphism between them induces an isomorphism between the distributions $\P_{\HH_1}$ and $\P_{\HH_2}$.

\end{definition}
\begin{notation}
    A \textbf{consistent random expansion of} $\CC$ by $\CC'$ is a consistent random expansion of $\CC$ to $\LL'$ such that for all $A\in\CC$, $\P_A\in \mathcal{D}_A^*(\CC^*)$, where $\CC^*=\CC*\CC'$.
\end{notation}

{The reader can now revisit Example \ref{ex:CREs} and verify that the examples satisfy the conditions of Definition \ref{CRE}.}

\begin{definition}\label{def:strucl} 
For $\MM$ an $\LL$-structure on $\mathbb{N}$, we consider $\mathrm{Struc}_{\LL'}(M)$ as a Hausdorff compact topological space whose basis of clopen sets is given by 
\[ \llbracket \phi(\overline{a}) \rrbracket =\{N\in \mathrm{Struc}_{\mathcal{L}'}(M) \vert N\vDash \phi(\overline{a})\},\]
    where $\phi(\overline{x})$ is a quantifier-free $\mathcal{L}'$-formula and $\overline{a}$ is a tuple from $M$ of length $|\overline{x}|$.

There is a natural continuous group action of $\mathrm{Aut}(\MM)$ on $\mathrm{Struc}_{\LL'}(M)$: for $N\in \mathrm{Struc}_{\LL'}(M)$, and $g\in\mathrm{Aut}(\MM)$, the structure $g\cdot N$ is given by the $\LL'$-expansion of $\MM$, where for each $\LL'$-relation in the variable $\overline{x}$, $R(\overline{x})$ and tuple $\overline{a}$ from $M$ (of length $|\overline{x}|$),
\[g\cdot N\vDash R(\overline{a})\text{ if and only if } N\vDash R(g^{-1}(\overline{a})).\]

This is known as the \textbf{relativised logic action} \cite{IREs,Descriptive}, or just \textbf{logic action} when $\mathrm{Aut}(\MM)$ is $S_\infty$ \cite{AFP}.
\end{definition}

\begin{notation} For $\MM$ an $\LL$-structure on $\mathbb{N}$ and $\CC^*$ a hereditary class of $\LL^*$-structures such that $\CC^*_{\upharpoonright\LL}=\mathrm{Age}(\MM)$, we write
\[\mathrm{Struc}(M, \CC^*):=\{M^*\in \mathrm{Struc}_{\LL^*}(\mathbb{N})\vert M^*_{\upharpoonright\LL}=M, \mathrm{Age}(\MM^*)\subseteq \CC^*\}.\]
It is easy to see this is a closed (and hence compact) subspace of $\mathrm{Struc}_{\LL^*}(M)$. 

\end{notation}

\begin{remark}\label{Kolmocorr}
For $A\subset M$ finite and $A^*\in\mathrm{Struc}_{\LL'}(A)$, we write $\llbracket A^* \rrbracket$ for the closed set
\[\{N\in\mathrm{Struc}_{\LL'}(M)\vert N\vDash R(\overline{a}') \text{ if and only if } A^*\vDash R(\overline{a}') \text{ for all }R\in\LL', \overline{a}' \text{a tuple from } A\}.\]

Given a Borel probability measure $\mu$ on $\mathrm{Struc}_{\LL'}(M)$, for each finite substructure $\A\subset M$ $\mu$ induces a probability distribution $\P_\A^\mu\in\DD_\A^*$, where for $\A'\in\mathrm{Struc}_{\LL'}(A)$, and $A^*=A*A'$,
\begin{equation}\label{CREX}
  \P_\A^\mu(\A')=\mu(\llbracket \A^*\rrbracket).  
\end{equation}
Moreover, by the Hahn-Kolmogorov Theorem \cite[Theorem 1.7.8]{Taointro}, $\mu$ is entirely determined by the probability distributions it induces on each finite $A\subset M$.
\end{remark}


\begin{definition}\label{def:IRE} An \textbf{invariant random expansion} of $\mathcal{M}$ to $\LL'$ is a Borel probability measure on $\mathrm{Struc}_{\LL'}(M)$  invariant under the action of $\mathrm{Aut}(\MM)$. We write $\mathrm{IRE}_{\LL'}(M)$ for the space of invariant random expansions of $\mathcal{M}$ to $\LL'$. We say that an invariant random expansion of $\mathcal{M}$ is to $\CC^*$ if it concentrates on
$\mathrm{Struc}(M, \CC^*)$. We write $\mathrm{IRE}(M, \CC^*)$ to denote the space of invariant random expansions of $\mathcal{M}$ to $\CC^*$.
\end{definition}

Given (\ref{CREX}), it is easy to see that for a Fra\"{i}ss\'{e} class $\CC$, its consistent random expansions by $\CC'$ correspond to the invariant random expansions of its Fra\"{i}ss\'{e} limit $\mathrm{Flim}(\CC)$ to $\CC^*:=\CC*\CC'$.

\begin{notation} Invariant random expansions of $\mathbb{N}$ to $\LL'$ are called $\bm{S_\infty}$\textbf{-invariant measures on} $\bm{\mathrm{Struc}_{\LL'}(\mathbb{N})}$ or \textbf{exchangeable structures}. Hence, we write $S_\infty(\LL')$ for $\mathrm{IRE}_{\LL'}(\mathbb{N})$ and 
$S_\infty(\CC^*)$ for $\mathrm{IRE}(\mathbb{N}, \CC^*)$. 
\end{notation}

\begin{remark}\label{rem:finitaryperm}
{In the literature, exchangeable (hyper)graphs are usually required to be invariant only under the subgroup of $S_\infty$ consisting of finitely-supported permutations \cite{Aldous}. 
The latter condition is equivalent to being invariant under all permutations, as we require in our definition. Indeed, the group of finitely supported permutations is dense in $S_\infty$ and the action on the space of expansions is continuous; therefore open sets that are invariant under finitely supported permutation are also $S_\infty$-invariant. The regularity of the measure then ensures that the invariance of open sets implies invariance of all measurable sets.}
\end{remark}

\begin{examples}\label{ex:IREs} $(a)$ Consider the following invariant random expansion of $(\mathbb{N}, =)$ by graphs: toss a coin (i.e., independent and identically distributed $\mathrm{Bernoulli}(1/2)$ random variables) for each pair $\{a,b\}\in[\mathbb{N}]^2$ to decide whether it forms an edge or not, and consider the resulting probability distribution on the space of countable graphs. Clearly, this construction gives rise to an exchangeable structure since the probability of a given graph holding for some set of points does not depend at all on their placement on the natural numbers;\\
$(b)$ Consider instead a copy of $\mathcal{R}_2$ for $\mathcal{M}$. We can easily construct several non-exchangeable invariant random expansions of $\mathcal{R}_2$ by graphs. For example, we could toss a coin for each vertex, and place an edge (in the language $\mathcal{L}'$) between two vertices if and only if, both coin tosses for the vertices are heads AND those vertices already form an $\mathcal{L}$-edge in $\mathcal{R}_2$. The resulting probability distribution on the space of expansions of $\mathcal{R}_2$ by an $\mathcal{L}'$-graph relation is invariant under automorphisms of $\mathcal{R}_2$, but is clearly not exchangeable: if two vertices do not form an edge in $\mathcal{R}_2$, the probability that they form an $\mathcal{L}'$-edge is zero, but if they do form an edge, it is $1/4$.
\end{examples}

\begin{definition}[Exchangeability] A consistent random expansion $(\mathbb{P}_A\vert A\in\mathcal{C})\in\mathrm{CRE}_\mathcal{L'}(\mathcal{C})$ is \textbf{exchangeable} if for all $\HH'\in\mathrm{Struc}_{\mathcal{L}'}[r]$ and $\HH_1, \HH_2\in \mathcal{C}[r]$, 
\[\mathbb{P}_{\HH_1}(\HH')=\mathbb{P}_{\HH_2}(\HH').\]

\end{definition}

{Revisiting the expansions in Example \ref{ex:CREs}, we see that the first example is exchangeable while the second is not. In particular, take the path on the domain $[3]$ with two different labellings, first so that the vertex labelled 3 has degree 2 and second so that the vertex labelled 3 has degree 1. In the first labelling the vertex labelled 3 can never be the first point of the order expansion, while in the second labelling it is the first point of the order expansion with probability $\frac{1}{2}$.}

\subsubsection{Relations to other works in the literature}\label{subsub:relationlit}

We conclude this section discussing relations between our paper and other works in the literature.\\
 
Exchangeable structures and $S_\infty$-invariant measures have been heavily studied in probability, the theory of graph limits (graphons), and logic. As mentioned earlier, Aldous and Hoover \cite{AldousT, HooverT} give a representation for exchangeable graphs and hypergraphs (see Fact \ref{AldousHoover}), which can be adapted to arbitrary relational structures with some technical twists \cite{CraneTown, AFKwP}. There is a deep connection between the theory of exchangeable graphs and the theory of graph limits (i.e., graphons) developed by Lov\'{a}sz, Szegedy and coauthors \cite{lovasz2006limits, lovasz2012large, borgs2008convergent, borgs2012convergent} since ergodic (a.k.a. dissociated) exchangeable graphs correspond to graphons \cite{janson2007graph}. In logic, exchangeable structures have been heavily studied by Ackerman, Freer, and Patel \cite{AFP} and collaborators \cite{AFKrP, AFKwP}. In particular, \cite{AFP} describes for which countable structures we can find an exchangeable structure concentrating on its isomorphism type. We discuss this problem more in Section \ref{sec:consequences}. In the work of Albert and Ensley \cite{Albert, Ensley, EnsleyPhD}, which will be relevant to the second part of our paper, exchangeable structures appear under the name of Cameron measures, following \cite[Section 4.10]{Oligomperm}.\\

The generalisation from exchangeability to $\mathrm{Aut}(\MM)$-invariance is very natural, and was already discussed by Aldous \cite{Aldous}, who gives a representation for invariant random unary expansions for the infinitely branching infinite tree (see \cite{pemantle1992automorphism} for the case of the infinite $k$-ary branching tree). As mentioned above, our definition of consistent random expansion comes from generalising to arbitrary classes the notion of consistent random orderings from \cite{AKL} and  \cite{CRO}. To our knowledge, the work of Crane and Towsner \cite{CraneTown, crane2018relative, Cranebook} and Ackerman \cite{AckerAut} consitutes the first papers trying to systematically study invariant random expansions of homogeneous structures. In particular, the former authors \cite{CraneTown} study consistent random expansions of hereditary classes with the joint embedding property under the name of relatively exchangeable structures. We use the terminology ``invariant random expansions'' introduced in work of the second author and Joseph~\cite{IREs}.\\

As mentioned in the introduction, the formalism used in probability is somewhat different from the one we adopt (cf.~\cite[Section 12]{Aldous}, \cite{Kallbooksym}). Below we give a brief explanation of it and discuss how it relates to our study of invariant random expansions. Let $J$ be $[\mathbb{N}]^k$ or $\mathbb{N}^{(k)}$, where the former is the set of $k$-element subsets of $\mathbb{N}$ and the latter is the set of $k$-tuples from $\mathbb{N}$ consisting of distinct elements. Let $X:=(X_j\mid j\in J)$ be an $\mathcal{S}$-valued sequence of random variables where $\mathcal{S}$ is a Polish space. We call $X$ an $\mathcal{S}$-valued array of random variables, and a sequence when $k=1$. Let $G$ be a group of permutations of $\mathbb{N}$. We say that $X$ is $\bm{G}$\textbf{-invariant} if its distribution is invariant under the action of $G$ on $J$. That is, for each $g\in G$
\[(X_j\mid j\in J)\stackrel{d}{=}(X_{g j}\mid j\in J).\]
Hence, the classical problem in probability is, given a group $G$ of permutations of $\mathbb{N}$, to provide an informative description of $G$-invariant arrays. De Finetti's Theorem \cite{deFinetti1} and the Aldous-Hoover Theorem \cite{AldousT, HooverT}(cf. Fact~\ref{AldousHoover}) are clear examples of such informative descriptions when $G$ is all permutations of $\mathbb{N}$ (i.e., $S_\infty$).\\

If $\mathcal{S}=\{0,1\}$, and $G=\mathrm{Aut}(\MM)$ for a countable structure with domain $\mathbb{N}$, $G$-invariant arrays of the form $(X_i\mid i\in [\mathbb{N}]^k)$ correspond to invariant random expansions of $\MM$ by $k$-hypergraphs, and those of the form $(X_j\mid j\in \mathbb{N}^{(k)})$ correspond to invariant random expansions of $\MM$ by an injective $k$-ary relation. For example, for the first correspondence, $X$ induces an invariant random expansion of $\MM$ by a $k$-hypergraph as follows: let $\{a_1^i, \dots, a_k^i\}\in[\mathbb{N}]^k$ and $\xi_i\in\{0,1\}$ for $i< \ell$. Let $\mathcal{C}'$ be the class of $k$-hypergraphs with relation $R$, and write $R^0$ for $\neg R$ and $R^1$ for $R$. Then, define the Borel probability measure $\mu$ on $\mathrm{Struc}(M, \mathcal{C}')$ by specifying its values on the basic clopen sets as
\begin{equation}\label{eq:rvcorrespondence}
    \mu\left(\left\{\mathcal{N} \ \large\middle\vert \ \mathcal{N}\vDash\bigwedge_{i<\ell} R^{\xi_i}(a_1^i, \dots, a_k^i)\right\}\right)=\P\left(\bigwedge_{i<\ell} X_{\left\{a_1^i, \dots, a_k^i\right\}}=\xi_i\right).
\end{equation}
Clearly, this measure is $\mathrm{Aut}(\MM)$-invariant and so $\mu$ is an invariant random expansion. Conversely, any invariant random expansion of $\MM$ by a $k$-hypergraph induces an $\mathrm{Aut}(\MM)$-invariant $\{0,1\}$-valued array $(X_i\mid i\in [\mathbb{N}]^k)$ by the correspondence given in (\ref{eq:rvcorrespondence}).\\

When $\mathcal{S}$ is compact and Hausdorff, the question of understanding $G$-invariant $\mathcal{S}$-valued arrays for an arbitrary group of permutation of $\mathbb{N}$ reduces to the question of understanding $\mathrm{Aut}(\MM)$-invariant arrays where $\MM$ is a countable homogeneous relational structure. In particular, when $\mathcal{S}=\{0,1\}$, the question of understanding such arrays reduces to the question of understanding some particular invariant random expansions. To see this, note that the group $G$ acts continuously on $\mathcal{S}^I$ when endowed with the pointwise convergence topology, and the latter space is compact and Hausdorff. Hence, the same argument as in Remark~\ref{rem:finitaryperm} yields that $G$-invariance of $X$ implies $\overline{G}$-invariance of $X$, where $\overline{G}$ is the closure of $G$ in $S_\infty$ with respect to the topology of pointwise convergence. It is well-known that closed subgroups of $S_\infty$ correspond to the automorphism groups of countable homogeneous relational structures (possibly in an infinite language). This gives us a natural reason to focus only on $\mathrm{Aut}(\MM)$-invariant arrays for some countable homogeneous structure $\mathcal{M}$. Obviously, understanding $\mathrm{Aut}(\MM)$-invariant arrays for arbitrary countable (homogeneous) structures $\mathcal{M}$ is still an essentially hopeless task. However, given this framework, it is natural to focus on particular classes with large and well-behaved automorphism groups, such as homogeneous structures in a finite relational language or, more generally, $\omega$-categorical structures.\\

Going back to other connections, as pointed out in \cite[Chapter 8]{Cranebook} invariant random expansions are relevant to the study of large statistical networks, where one may want to reason probabilistically about a network given information about a base network. For example one might be given as a base network $\mathcal{M}$ a graph of friendships in an online social network, or a hypergraph joining people in a common chat by hyperedges and want to think of the actual friendship network $\mathcal{N}'$ in terms of $\mathcal{M}$ as a probability distribution on the space of expansions of $\mathcal{M}$ by a graph. It is natural to consider the given probability measure invariant under automorphisms of $\mathcal{M}$ and so to study invariant random expansions: since all knowledge we have of a set of individuals are their relationships in $\MM$, if there is an automorphism of $\MM$ sending $A$ to $A'$, the same friendship configuration must be assigned the same probability for the individuals in $A$ and for those in $A'$. It also makes sense to assume the stronger hypothesis that the measure induces the same distribution on isomorphic finite substructures as in Definition \ref{CRE} since it is natural  to work under the assumption that relations amongst non-sampled individuals do not interfere with those of the sampled individuals (cf \cite[Section 8.3.3]{Cranebook}). Indeed, some frequently studied models of large statistical networks, i.e. Erd\"{o}s-R\'{e}nyi-Gilbert models and stochastic block models \cite{SBM} can be thought of (in the limit) as invariant random expansions by graphs of homogeneous structures: respectively, $(\mathbb{N}, =)$ and an infinite set partitioned by finitely many unary predicates.\\

In light of this, Theorem~\ref{thm:IntroappliedforIKM} has some interesting implications since it yields that for various homogeneous hypergraphs, including many built by omitting non-trivial configurations, all invariant random expansions by graphs are exchangeable. This is too strong for what one would expect of real world networks: the homogeneous hypergraphs we work with are too dense and too symmetric, yielding, for example, that in all invariant random expansions of one of our $3$-hypergraphs, the probability of a triangle on top of a $3$-hyperedge is the same as that of it on top of a non-hyperedge: thinking of the $3$-hypergraph as a network of common chats, the probability that three people are friends will be independent of whether they are in a common chat or not in all distributions. Still, it is surprising that the only invariant random expansions by graphs of the hypergraphs we study are exchangeable. The issue of building good models which, unlike exchangeable graphs, adequately capture the sparsity of real-world networks, i.e. some form of "sparse exchangeable graphs", is a major challenge and topic of research in modern network analysis and in the theory of graphons \cite{BorgGraphons, Bollobassparse, BorgsparrseI, BorgAH}. Relatively exchangeable structures (and so invariant random expansions) were suggested by Crane \cite{Cranebook} as a potential way to simulate sparsity more accurately than exchangeable graph models. Given this, Theorem~\ref{thm:IntroappliedforIKM} tells us that a homogeneous structure $\mathcal{M}$ must fail the condition we call $2$-overlap closedness in order for its invariant graph expansions to yield statistical network models  which allow for non-exchangeable distributions.

\section{Invariance and exchangeability}\label{sec:invexch}
In this section, we provide some sufficient conditions on classes $\CC$ and $\CC'$ for all consistent random expansions of $\CC$ by $\CC'$ to be exchangeable, and show that these are satisfied in several contexts. Our approach is largely based on that of \cite{AKL}, which is concerned with the restricted setting where $\CC'$ is the class of finite linear orders. The full setting we work with brings some new complications. The approach has three main steps. First (Lemma \ref{lem:si}), we show that the exchangeability of consistent random expansions of $\CC$ by $\CC'$ is forced by the existence of suitable structures in $\CC$. Then (Theorem \ref{theorem:Chernoff}), we show that if $\CC'$ has a sufficiently slow growth rate, such structures can be found in $\CC$ using a ``random placement construction'' on suitably dense hypergraphs with controlled overlaps. Finally (Lemma \ref{lemma:Steiner}) we show that we can produce such hypergraphs, and check that they suffice for several classes, leading to Theorem \ref{thm:cre si}.\\

In this section, $\mathcal{C}$ and $\mathcal{C}'$ are hereditary classes of (relational) structures with (assumed to be disjoint) signature $\mathcal{L}$ and $\mathcal{L}'$ respectively. We will also assume that each relation in $\CC$ and $\CC'$ is injective, i.e. never holds with repeated entries. Since this can assumed up to quantifier-free interdefinability, it does not pose any real restriction. We denote by $\mathcal{L}^*$ the language $\mathcal{L}\cup \mathcal{L}'$ and by $\mathcal{C}^*$ the class of structures $\mathbf{A}$ such that $\mathbf{A}_{\upharpoonright \mathcal{L}} \in \mathcal{C}$ and  $\mathbf{A}_{\upharpoonright \mathcal{L}'} \in \mathcal{C}'$.

\begin{notation} For notational convenience, in the following sections, when talking about consistent random $\mathcal{C}'$-expansions of $\mathcal{C}$ we will mean $\mathrm{CRE}(\mathcal{C}, \mathcal{C}^*)$ for $\mathcal{C}^*=\mathcal{C}*\mathcal{C}'$.
\end{notation}


\begin{notation} Let $\HH, \G\in\mathcal{C}$. Let $\Theta$ be a family of embeddings of $\HH$ in $\G$. Let $\HH^*, \G^*\in\mathcal{C}^*$ be such that $\HH^*_{\upharpoonright\mathcal{L}}=\HH$ and $\G^*_{\upharpoonright\mathcal{L}}=\G$. Then, we write 
\[N_{\Theta}(\HH^*,\G^*)\]
for the number of embeddings in $\Theta$ that are also embeddings of $\HH^*$ in $\G^*$.
\end{notation}

\subsection{When invariance implies exchangeability}\label{sec:wheninvimpliesexch}

{Our first lemma in this section, that exchangeability of consistent random expansions of $\CC$ by $\CC'$ is forced by the existence of suitable structures in $\CC$, reduces verifying some universal statement about all possible expansions by $\CC'$ to an existential statement about $\CC$. 

While it may seem that a richer class should admit more consistent random expansions, the classes in Example \ref{ex:CREs} show this is not the case. When $\CC$ is the class of paths, we have a consistent random order-expansion that is not exchangeable, but when $\CC$ is the class of all graphs, \cite{AKL} shows that the only consistent random order-expansion is exchangeable. So the graphs we have added to $\CC$ have somehow forced exchangeability of consistent random expansions by linear orders. This is because for every target graph $\G$ into which a given graph $\HH$ can embed, the consistency condition yields that the distribution on expansions of $\G$ imposes additional restrictions on the distribution on expansions of $\HH$.

This in turn suggests the statement of Lemma \ref{lem:si}. We roughly want that for every pair of structures $\HH_1, \HH_2 \in \CC$ on $[n]$, there is some $\G \in \CC$ embedding both and such that for every $\CC'$-expansion of $\G$, the probability distribution induced on $\CC'$-expansions of some $n$ points of $\G$ does not depend on whether those $n$ points embed a copy of $\HH_1$ or a copy of $\HH_2$. Since the distributions of expansions of embeddings of $\HH_1$ and of $\HH_2$ are the same in $\G$, consistency will force $\P_{\HH_1} = \P_{\HH_2}$.}

The lemma is closely related to the ordering property from structural Ramsey theory (which is clearer in the setting of \cite{AKL}). A class $\CC$ satisfies the ordering property if for every $\HH \in \CC$, there is some $\G \in \CC$ such that for every linear ordering $\vec \HH$ of $\HH$ and $\vec \G$ of $\G$, there is some embedding of $\vec \HH$ into $\vec \G$. In the following lemma, rather than just requiring that a single embedding of $\HH$ into $
\G$ survives the expansion process, we make quantitative comparison of the fraction of embeddings that survive the expansion for two different base structures $\HH_1$ and $\HH_2$.

\begin{lemma}\label{lem:si} Let $\LL$ and $\LL'$ be disjoint relational languages. Let $\CC$ be a hereditary class of $\LL$-structures and $\CC'$ a hereditary class of $\LL'$-structures. Given $\mathbf{H}_1$, $\mathbf{H}_2\in\mathcal{C}[k]$, and $\epsilon>0$, suppose there is some $n$ and $\G \in \mathcal{C}[n]$ and non-empty families $\Theta_i$ of embeddings of $\mathbf{H}_i$ in $\G$ such that for all $\mathbf{H}'\in\mathcal{C}'[k]$ and $\G' \in \mathcal{C}'[n]$ we have
\[ \left|\frac{N_{\Theta_1}(\mathbf{H}_1^*,\mathbf{G}^*)}{|\Theta_1|}-\frac{N_{\Theta_2}(\mathbf{H}_2^*,\mathbf{G}^*)}{|\Theta_2|}\right| < \varepsilon,\]
where $\G^*:=\G*\G', \HH_i^*:=\HH_i*\HH'$ and $N_{\Theta_i}(\HH_i^*,\G^*)$ is the number of embeddings in $\Theta_i$ that are also embeddings of $\HH_i^*$ in $\G^*$.

Then for every consistent random expansion of $\CC$ by $\CC'$, $(\P_\A\mid \A\in\CC)$, we have that $\P_{\HH_1}(\mathbf{H}') = \P_{\HH_2}(\mathbf{H}') $ for every $\mathbf{H}'\in\mathcal{C}'[k]$. Thus, if the above is true for all $\mathbf{H}_1$, $\mathbf{H}_2\in\mathcal{C}[k]$, then every consistent random expansion of $\CC$ by $\CC'$ is exchangeable.
\end{lemma}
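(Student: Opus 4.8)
The plan is to fix a consistent random $\CC'$-expansion $\mu = (\mathbb{P}_A \mid A \in \CC)$ of $\CC$ and an $\LL'$-structure $\mathbf{H}' \in \CC'[k]$, and to reinterpret each of $\mathbb{P}_{\mathbf{H}_1}(\mathbf{H}_1 * \mathbf{H}')$ and $\mathbb{P}_{\mathbf{H}_2}(\mathbf{H}_2 * \mathbf{H}')$ as an average over a random $\CC'$-expansion of $\mathbf{G}$. Concretely, I would sample $\mathbf{G}'$ from $\mathbb{P}_{\mathbf{G}}$ and set $\mathbf{G}^* := \mathbf{G} * \mathbf{G}'$; since $\mu \in \mathrm{CRE}(\CC, \CC^*)$ with $\CC^* = \CC * \CC'$, we have $\mathbf{G}' \in \CC'[n]$ for $\mathbb{P}_{\mathbf{G}}$-almost every $\mathbf{G}'$. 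The key observation is that a fixed $\theta \in \Theta_i$, being already an embedding of $\mathbf{H}_i$ into $\mathbf{G}$, is also an embedding of $\mathbf{H}_i^* = \mathbf{H}_i * \mathbf{H}'$ into $\mathbf{G}^*$ exactly when $\mathbf{G}' \upharpoonright \theta([k]) = \mathbf{H}'_\theta$ (here one uses that relations are injective and that embeddings are induced). Hence, as a random variable in $\mathbf{G}'$,
\[
N_{\Theta_i}(\mathbf{H}_i^*, \mathbf{G}^*) \;=\; \sum_{\theta \in \Theta_i} \mathbf{1}\bigl[\,\mathbf{G}' \upharpoonright \theta([k]) = \mathbf{H}'_\theta\,\bigr].
\]

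Next I would apply the consistency condition of Definition \ref{CRE} to each $\theta \in \Theta_i$: it gives $\mathbb{P}_{\mathbf{H}_i}(\mathbf{H}_i * \mathbf{H}') = \mathbb{P}_{\mathbf{G}}(\theta(\mathbf{H}_i) * \mathbf{H}'_\theta)$, which by definition is exactly the probability, under $\mathbf{G}' \sim \mathbb{P}_{\mathbf{G}}$, that $\mathbf{G}' \upharpoonright \theta([k]) = \mathbf{H}'_\theta$. Taking expectations over $\mathbf{G}' \sim \mathbb{P}_{\mathbf{G}}$ in the display above and using linearity together with $\Theta_i \neq \emptyset$ gives
\[
\mathbb{P}_{\mathbf{H}_i}(\mathbf{H}_i * \mathbf{H}') \;=\; \mathbb{E}_{\mathbf{G}' \sim \mathbb{P}_{\mathbf{G}}}\!\left[\frac{N_{\Theta_i}(\mathbf{H}_i^*, \mathbf{G}^*)}{|\Theta_i|}\right].
\]
Subtracting the $i = 1$ and $i = 2$ instances, moving the difference inside the expectation and applying the triangle inequality, it remains to bound $\mathbb{E}_{\mathbf{G}' \sim \mathbb{P}_{\mathbf{G}}}\!\bigl[\,\bigl|\,N_{\Theta_1}(\mathbf{H}_1^*,\mathbf{G}^*)/|\Theta_1| - N_{\Theta_2}(\mathbf{H}_2^*,\mathbf{G}^*)/|\Theta_2|\,\bigr|\,\bigr]$; but $\mathbf{G}' \in \CC'[n]$ almost surely and $\mathbf{H}' \in \CC'[k]$, so by hypothesis the integrand is $< \varepsilon$ for every realization of $\mathbf{G}'$, whence $\bigl|\mathbb{P}_{\mathbf{H}_1}(\mathbf{H}_1 * \mathbf{H}') - \mathbb{P}_{\mathbf{H}_2}(\mathbf{H}_2 * \mathbf{H}')\bigr| \le \varepsilon$. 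Since the hypothesis may be invoked for every $\varepsilon > 0$, this forces $\mathbb{P}_{\mathbf{H}_1}(\mathbf{H}_1 * \mathbf{H}') = \mathbb{P}_{\mathbf{H}_2}(\mathbf{H}_2 * \mathbf{H}')$. If instead $\mathbf{H}' \notin \CC'[k]$, then $\mathbf{H}_i * \mathbf{H}' \notin \CC^* = \CC * \CC'$ and both probabilities vanish, since $\mathbb{P}_{\mathbf{H}_i}$ is supported on those $\LL'$-structures whose superposition with $\mathbf{H}_i$ lies in $\CC^*$. Thus $\mu(\mathbf{H}_1 * \mathbf{H}') = \mu(\mathbf{H}_2 * \mathbf{H}')$ for every $\mathbf{H}' \in \mathrm{Struc}_{\LL'}[k]$, and the final sentence is immediate: if the above applies to all pairs $\mathbf{H}_1, \mathbf{H}_2 \in \CC[k]$, this equality is precisely the definition of exchangeability of $\mu$.

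I do not expect a genuine obstacle. The entire content is the ``double-counting / averaging'' reinterpretation of $\mathbb{P}_{\mathbf{H}_i}(\mathbf{H}_i * \mathbf{H}')$ through consistency, after which the quantitative hypothesis is applied essentially verbatim. The steps requiring attention are bookkeeping ones: confirming that ``$\theta$ is an $\LL^*$-embedding'' unwinds to the induced-substructure identity $\mathbf{G}' \upharpoonright \theta([k]) = \mathbf{H}'_\theta$, checking that $\mathbb{P}_{\mathbf{G}}$-almost every $\mathbf{G}'$ lies in $\CC'[n]$ so that the hypothesis applies under the expectation, and using non-emptiness of $\Theta_1, \Theta_2$ to make the normalizations $N_{\Theta_i}/|\Theta_i|$ legitimate.
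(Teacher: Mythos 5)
Your proposal is correct and follows essentially the same double-counting argument as the paper: the paper phrases it by drawing a uniform random embedding $\phi_i \in \Theta_i$ and computing $\P(B_i)$ two ways (conditioning on $\phi_i$ versus conditioning on $\G^\mu = \G'$), whereas you sum indicators over $\Theta_i$ and use linearity of expectation, which is the same computation. Your explicit handling of $\mathbf{H}' \notin \CC'[k]$ (both probabilities vanish) is a small bookkeeping point the paper leaves implicit when passing to exchangeability, but there is no substantive difference in method.
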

\begin{proof} This proof follows the same structure as \cite[Lemma 2.1]{AKL}.\\

Let $m\in \mathbb{N}$ and let $\mathbf{A}\in \CC[m]$. We denote by $\mathbf{A}^\mu$ a random element of $\mathcal{C}'[m]$ such that the expansion of $\mathbf{A}$ by $\mathbf{A}^\mu$ has distribution $\P_\A$. That is, for any $\mathbf{A}'\in\mathcal{C}'[m]$,
\[\P(\mathbf{A}^\mu=\mathbf{A}')=\P_\A(\mathbf{A}').\]
Since $(\P_\A\mid \A\in\CC)$ is a consistent random expansion, for $\mathbf{A}_1\cong \mathbf{A}_2$, $\mathbf{A}_1^\mu$ and $\mathbf{A}_2^\mu$ have the same distribution. 
We need to show that for $\HH_1, \HH_2\in\CC[k]$ and $\HH'\in\mathcal{C}'[k]$, 
\[\P_{\HH_1}(\HH')=\P(\HH_1^{\mu}=\HH')=\P(\HH_2^\mu=\HH')=\P_{\HH_2}(\HH').\]

Let $\HH_1, \HH_2\in\CC[k]$ and let $\G\in\CC[n]$ satisfy the hypotheses of the Lemma with respect to the class of embeddings $\Theta_i$ for $i\in\{1,2\}$.

Take $\phi_i$ a uniform random element of $\Theta_i$. In particular, $\phi_i$ is always an embedding of $\HH_i$ in $\G$. Let $B_i$ be the event ``$\phi_i $ is an embedding of $\mathbf{H}'$ into $\G^\mu$'', i.e. the event that in $\G^\mu$
\[\phi_i(H_i)\upharpoonright_{\mathcal{L}'}=\HH'_{\phi_i}\; ,\]
where $\HH'_{\phi_i}$ is the relabelling of $\HH'$ by $\phi_i$. Conditionally on a given $\psi_i \in \Theta_i, B_i$ is the event ``$\psi_i(\HH_i)^\mu=\HH'_{\psi_i}$'', which has the same probability as the event ``$\HH_i^\mu=\HH'$'', by the definition of a consistent random expansion. Thus we have, 
    \[\mathbb{P}(B_i|\phi_i=\psi_i)=\P(\HH_i^\mu= \HH').\] 
    Since this is true for any $\psi_i$, by the law of total probability, we get
    \begin{equation}\label{L1eq1}
        \mathbb{P}(B_i)=\P(\HH_i^\mu= \HH')=\P_{H_i}(\HH').
    \end{equation}

    On the other hand, since $\phi_i$ is uniform in $\Theta_i$, for a given expansion $\G'$ we have 
    \[ \P(B_i |\G^\mu=\G') =\frac{|N_{\Theta_i}(\mathbf{H}_i^*,\mathbf{G}^*)|}{|\Theta_i|} .\]

    Our hypothesis can therefore be rewritten as 
    \begin{equation}\label{L1eq2}
          |\P(B_1 | \G^\mu=\G') - \P(B_2|\G^\mu=\G')|\leq \varepsilon.
    \end{equation}

    We can conclude that,
    \begin{align*}
    \left| \ \P_{\HH_1}(\HH') - \P_{\HH_2}(\HH') \right| &= \left| \mathbb{P}(B_1)-\mathbb{P}(B_2) \right| \\
    &= \left| \sum_{G'\in\mathcal{C}'[n]} (\P(B_1 | \G^\mu=\G') - \P(B_2|\G^\mu=\G'))\P(\G^\mu=\G') \right|
    \\
    &\leq \sum_{G'\in\mathcal{C}'[n]} |\P(B_1 | \G^\mu=\G') - \P(B_2|\G^\mu=\G')|\P(\G^\mu=\G') \\
    &\leq \varepsilon \sum_{G'\in\mathcal{C}'[n]}\P(\G^\mu=\G')\\
    &= \varepsilon.\end{align*}
Here, the first equality is just the identity \ref{L1eq1}, whilst the second equality is just the law of total probability. Hence, we get the desired statement that $\P_{\HH_1}(\mathbf{H}') = \P_{\HH_2}(\mathbf{H}')$.
\end{proof}

{In order for $\G$ to satisfy the hypotheses of Lemma \ref{lem:si}, it will need to embed many copies of $\HH_1$ and $\HH_2$ with several overlaps between them.} We are now ready to define $k$-overlap closedness of a hereditary class $\mathcal{C}$, {which will allow us to build such $\G$ by overlapping structures from $\CC$,} and which will yield exchangeability of invariant random expansions by suitably slow growing classes $\mathcal{C}'$ in Theorem \ref{theorem:Chernoff}. In Subsection \ref{sec:koverlapclosed}, we give several examples of $k$-transitive homogeneous structures with $k$-overlap closed age, with Theorem \ref{thm:cre si} summarising some of the most interesting examples:

\begin{definition} \label{def:closed}
	Let $\CC$ be a class of relational structures with minimum arity strictly greater than $k$. We say that $\CC$ is {\bf \closed} if for every $r > k$ and arbitrarily large $n$, there exists an $r$-uniform hypergraph $\K$ on $n$ vertices satisfying the following conditions.
	\begin{enumerate}
		\item $\K$ has at least $C(r)n^{k+\alpha(r)}$ many hyperedges for some $\alpha(r) > 0$.
		\item No two $\K$-hyperedges intersect in more than $k$ points.
		\item For every $\HH_1, \HH_2 \in \CC[r]$, if copies of $\HH_1$ and $\HH_2$ are pasted into the $\K$-hyperedges, the resulting structure is completable to a structure in $\CC$ by adding relations such that no added relation is contained within a single $\K$-hyperedge.
	\end{enumerate}
To express the third point formally: for every $\HH_1, \HH_2\in\mathcal{C}[r]$ and every $\mathcal{L}$-structure $\G_0$ on $n$ vertices such that the induced substructure on each $r$-hyperedge of $\K$ is isomorphic to either $\HH_1$ or $\HH_2$, there is an injective homomorphism $g$ of $\G_0$ into a structure $\G\in\mathcal{C}[n]$ such that the restriction of $g$ to each $r$-hyperedge of $\K$ is an embedding. Figure \ref{fig:overlapclosed} may also help the reader understanding what we mean by $k$-overlap closedness.
\end{definition}

\begin{figure}[t]
\includegraphics[width=\textwidth]{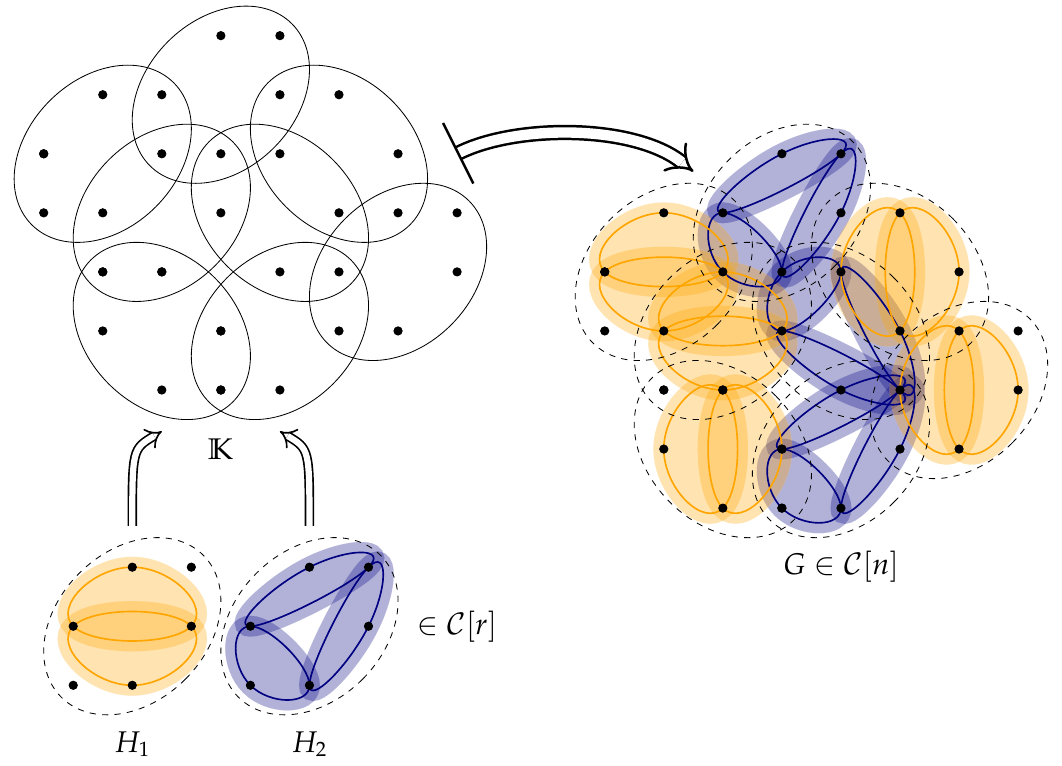}
    \caption{A pictorial representation of $k$-overlap closedness of $\mathcal{C}$: for some dense enough $r$-hypergraph $\mathbb{K}$ on $n$ vertices where no two hyperedges intersect in more than $k$ vertices, for any two $H_1, H_2\in\mathcal{C}[r]$, any way of pasting $H_1$ and $H_2$ into the $r$-hyperedges of $\mathbb{K}$ gives some $G\in\mathcal{C}[n]$ (after possibly adding some relations that do not change the induced structures on the $r$-hyperedges of $\mathbb{K}$).}
    \label{fig:overlapclosed}
\end{figure}

The last point in this definition is similar to the notion of multiamalgamation that occurs in structural Ramsey theory, as in \cite{hubicka2017ramsey}, which similarly splits into a free overlap stage and a completion stage. However, we will often consider classes that are closed under the removal of relations, in which case no completion is required in the last point.

\begin{fact}[Chernoff bound for a binomial random variable]
    Let $X$ be a binomial random variable with parameters $n, p$. Let $\mu = np$ and let $D \geq 0$. Then $\P\left(\abs{\frac{X}{\mu} -1} \geq D\right) \leq \exp{\left(-\frac1{3} D^2 \mu\right)}$.
\end{fact}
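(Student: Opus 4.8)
This is a standard large-deviation estimate, and the route I would take is the classical Chernoff (exponential-moment) method, treating the two tails separately. Write $X=\sum_{i=1}^{n}X_{i}$ as a sum of independent Bernoulli$(p)$ indicators, so that $\mathbb{E}[X]=\mu=np$, and record the elementary moment bound $\mathbb{E}[e^{tX}]=(1+p(e^{t}-1))^{n}\le \exp(\mu(e^{t}-1))$, which follows from independence of the $X_i$ together with $1+x\le e^{x}$ applied to each of the $n$ factors.

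For the upper tail, apply Markov's inequality to $e^{tX}$ with $t>0$, obtaining $\mathbb{P}(X\ge(1+D)\mu)\le \exp\!\left(\mu(e^{t}-1)-t(1+D)\mu\right)$; optimizing in $t$ (the minimizer is $t=\ln(1+D)$) yields $\mathbb{P}(X\ge(1+D)\mu)\le \exp\!\left(-\mu\big[(1+D)\ln(1+D)-D\big]\right)$. For the lower tail, the symmetric argument with $t<0$ and minimizer $t=\ln(1-D)$ gives $\mathbb{P}(X\le(1-D)\mu)\le \exp\!\left(-\mu\big[(1-D)\ln(1-D)+D\big]\right)$.

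It then remains to pass from these exact rate functions to the clean exponent $D^{2}/3$, which is purely analytic: one checks the elementary inequalities $(1+D)\ln(1+D)-D\ge D^{2}/3$ and $(1-D)\ln(1-D)+D\ge D^{2}/2\ge D^{2}/3$ on the range $0\le D\le 1$. Each follows from a short calculus argument, since in both cases the difference of the two sides and its first derivative vanish at $D=0$; the lower-tail inequality is immediate because that difference has a nondecreasing derivative, while the upper-tail one requires only verifying that its derivative $\ln(1+D)-\tfrac{2}{3}D$ stays nonnegative up to $D=1$. Adding the two tail bounds gives $\mathbb{P}\!\left(\abs{X/\mu-1}\ge D\right)\le \exp(-D^{2}\mu/3)+\exp(-D^{2}\mu/2)$.

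The one point genuinely requiring care is the normalization as stated. The two-tailed sum above is bounded by $2\exp(-D^{2}\mu/3)$, and both one-sided estimates do require $0\le D\le 1$: for $D$ large the upper-tail rate $(1+D)\ln(1+D)-D$ grows only like $D\ln D$, so the exponent $D^{2}/3$ cannot hold unrestrictedly. I would therefore either cite the precise normalization from a standard reference such as the large-deviation appendix of \cite{alon2015probabilistic}, or simply record the bound in exactly the form in which it is used downstream — for the small multiplicative deviations $D\to 0$ arising in the application in Theorem \ref{theorem:Chernoff}, the harmless constant factor and the restriction $D\le 1$ are immaterial, as the probabilities are driven to zero faster than any polynomial in $n$ regardless.
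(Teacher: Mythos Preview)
The paper does not prove this statement at all; it is stated as a standard fact and used as a black box in Theorem~\ref{theorem:Chernoff}. Your argument is the textbook Chernoff/exponential-moment derivation, and it is correct. You are also right to flag the two cosmetic discrepancies with the stated form: the two-sided union bound naturally produces a factor of $2$ in front of the exponential, and the quadratic exponent $D^{2}/3$ for the upper tail only holds for $D$ in a bounded range (for large $D$ the true rate is $\sim D\ln D$). Neither issue matters for the paper's purposes, since in the proof of Theorem~\ref{theorem:Chernoff} the bound is applied with $D\to 0$ and the resulting probabilities are absorbed into unspecified constants before being driven to zero; your closing remark about this is exactly the right attitude.
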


\begin{definition}\label{def:growthrate}
    Let $\CC$ be a hereditary class of finite structures. The {\bf labelled growth rate of $\CC$} is the function $f \colon \mathbb{N}^+ \to \mathbb{N} \cup \set{\infty}$ such that $f(n)$ is the number of structures in $\CC$ with domain $[n]$.
\end{definition}

The following theorem is an analogue of \cite[Theorem 5.2]{AKL}, and is similarly proved by an application of the ``random placement construction'' of \cite{nesetril1978probabilistic}, which uses the construction to prove the ordering property for hypergraphs with high girth.

\setcounter{equation}{0}
\begin{theorem} \label{theorem:Chernoff}
	Let $\CC$ be a {\closed} hereditary class and let $\CC'$ be a hereditary class of $\LL'$-structures with labelled growth rate $O(e^{n^{k+\delta}})$ for every $\delta > 0$. Then every consistent random expansion of $\CC$ by $\CC'$ is exchangeable.
\end{theorem}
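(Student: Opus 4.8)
The plan is to verify the hypotheses of Lemma~\ref{lem:si} for every pair $\mathbf{H}_1, \mathbf{H}_2 \in \CC[k']$ and every $\varepsilon > 0$, using the hypergraph $\K$ supplied by $k$-overlap closedness together with the ``random placement construction'' of \cite{nesetril1978probabilistic} and a Chernoff bound. Fix $r := k'$ (the common size of $\mathbf{H}_1, \mathbf{H}_2$) and let $\K$ be an $r$-uniform hypergraph on $n$ vertices as in Definition~\ref{def:closed}, with at least $C(r) n^{k + \alpha(r)}$ edges, pairwise intersections of size $\leq k$, and the completion property. Enumerate the hyperedges of $\K$ and attach a labelling to each: for each hyperedge $e$, fix an isomorphism to $[r]$, so ``placing $\mathbf{H}_i$ on $e$'' is well-defined. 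I would first set $G_0$ to be the $\LL$-structure on $[n]$ obtained by placing a copy of $\mathbf{H}_1$ on every hyperedge of $\K$ with some uniformly random orientation among the $r!$ possibilities, independently across hyperedges. By condition (3) of $k$-overlap closedness, $G_0$ admits an injective homomorphism $g$ into some $\G \in \CC[n]$ restricting to an embedding on each hyperedge; let $\Theta_1$ be the (random) family of these $r!\cdot(\#\text{edges})$ embeddings of $\mathbf{H}_1$ into $\G$ coming from the placements. (We must be slightly careful: the completion $\G$ depends on the random placements, but the families $\Theta_1, \Theta_2$ are what we control; alternatively one fixes a single $\G$ once the random placement is chosen, then argues.)

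The key computation is to estimate, for a fixed expansion $\G' \in \CC'[n]$ and $\mathbf{H}' \in \CC'[k']$, the fraction $N_{\Theta_i}(\mathbf{H}_i^*, \G^*)/|\Theta_i|$ of placed copies of $\mathbf{H}_i$ on which the induced $\LL'$-structure from $\G'$ is isomorphic (via the chosen labelling) to $\mathbf{H}'$. The crucial point is that this fraction is an average of $\{0,1\}$-indicators over the hyperedges of $\K$, and these indicators depend \emph{only} on the restriction of $\G'$ to the individual hyperedges (here condition (2), bounded overlaps, ensures the placed copies on distinct hyperedges share at most $k < r$ points, so the copies are genuinely distinct as unordered sets, and since relations in $\CC'$ have arity $> k$ no $\CC'$-relation is forced by the overlaps). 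Crucially, the indicator for hyperedge $e$ does not depend on our random choice of which $\mathbf{H}_i$ we placed there — it depends only on $\G'$. So the only randomness we exploit is in choosing $\G'$: we apply Lemma~\ref{lem:si} with $\mu$ the given consistent random $\CC'$-expansion, so $\G^\mu = \G * \G^{\mu}$ is random, and we want
\[
\left| \frac{N_{\Theta_1}(\mathbf{H}_1^*, \G^*)}{|\Theta_1|} - \frac{N_{\Theta_2}(\mathbf{H}_2^*, \G^*)}{|\Theta_2|} \right| < \varepsilon
\]
to hold for \emph{every} realization $\G'$ of the $\CC'$-part (this is the conclusion needed to feed into Lemma~\ref{lem:si}); we establish it by a first-moment/union-bound argument over all $\G' \in \CC'[n]$ and all placement patterns, using the Chernoff bound and the growth-rate hypothesis.

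Here is the heart of the argument. Think of the placement of $\mathbf{H}_1$ vs.\ $\mathbf{H}_2$ on each hyperedge as a uniformly random bit; for a fixed $\G' \in \CC'[n]$ and fixed $\mathbf{H}'$, let $p$ be the expected fraction of hyperedges whose $\G'$-induced $\LL'$-structure (with the fixed labelling) equals $\mathbf{H}'$ — note $p$ is the \emph{same} target count regardless of which $\mathbf{H}_i$ sits there, because the $\LL'$-structure comes from $\G'$, not from the placement. Thus $N_{\Theta_1}(\mathbf{H}_1^*,\G^*)$ and $N_{\Theta_2}(\mathbf{H}_2^*,\G^*)$ are \emph{both} concentrated around $p \cdot r! \cdot m$ where $m$ is the edge count: each is a sum over hyperedges (and orientations) of a Bernoulli-type term, and these are independent across hyperedges (the dependence within a hyperedge across its $r!$ orientations is irrelevant since we can bound each orientation class separately, or absorb it). Applying the Chernoff bound to the binomial-type variable counting matching hyperedges: with $\mu_{\text{exp}} = p \cdot m \geq$ (something) $n^{k+\alpha(r)}$, the probability that either count deviates from its mean by more than $\varepsilon/2$ relative fraction is at most $\exp(-c\, \varepsilon^2 n^{k+\alpha(r)})$ for a constant $c = c(r,\varepsilon) > 0$. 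The number of pairs $(\G', \mathbf{H}')$ to union-bound over is at most $|\CC'[n]| \cdot |\CC'[r]|$; choosing $\delta = \alpha(r)/2$ in the growth hypothesis, $|\CC'[n]| = O(e^{n^{k+\alpha(r)/2}})$, which is dominated by $\exp(c\,\varepsilon^2 n^{k+\alpha(r)})$ for $n$ large. Hence for all sufficiently large $n$ there exists a single choice of random placements (one $\Theta_1$ and one $\Theta_2$) such that simultaneously for \emph{every} $\G' \in \CC'[n]$ and every $\mathbf{H}' \in \CC'[r]$ the displayed $\varepsilon$-inequality holds. This $\G$ (together with $\Theta_1, \Theta_2$) witnesses the hypothesis of Lemma~\ref{lem:si} for the pair $(\mathbf{H}_1, \mathbf{H}_2)$ and the given $\varepsilon$; since $\varepsilon$ and the pair were arbitrary, Lemma~\ref{lem:si} yields that every consistent random $\CC'$-expansion of $\CC$ is exchangeable.

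\textbf{Main obstacle.} The subtle point — and the place to be careful — is the bookkeeping around the completion stage of $k$-overlap closedness: the completed structure $\G \in \CC[n]$ may carry extra relations, and one must check that these extra relations, being not contained in any single $\K$-hyperedge, do not disturb the induced $\LL$-structure on each hyperedge (so the $\Theta_i$ really are embeddings) and, more importantly, that the quantities $N_{\Theta_i}(\mathbf{H}_i^*, \G^*)$ depend on the $\CC'$-expansion $\G'$ only through its restrictions to hyperedges. The second mild obstacle is organizing the Chernoff/union-bound so that the random object being concentrated (the placement pattern) is decoupled from the random object whose realizations we must handle uniformly (the $\CC'$-expansion $\G'$ coming from $\mu$): the resolution is that $\Theta_1, \Theta_2$ are fixed \emph{before} invoking Lemma~\ref{lem:si}, by the probabilistic selection above applied to the placement randomness alone, and the uniformity over $\G'$ is exactly what the union bound over $\CC'[n]$ buys us. Once this is set up, the growth-rate hypothesis $O(e^{n^{k+\delta}})$ for all $\delta>0$ against the Chernoff exponent $n^{k+\alpha(r)}$ closes the argument with room to spare.
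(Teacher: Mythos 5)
Your overall strategy is the paper's: randomly paste $\mathbf{H}_1$ and $\mathbf{H}_2$ into the hyperedges of the $\K$ supplied by $k$-overlap closedness, show the two survival fractions concentrate, beat a union bound over $\CC'[n]$ using the growth hypothesis, and feed the result into Lemma~\ref{lem:si}. But there is a genuine gap in the concentration step. You apply the multiplicative Chernoff bound to $N_{\Theta_i}(\mathbf{H}_i^*,\G^*)$ around a mean proportional to $p\,m$ and claim a failure probability of $\exp(-c\,\varepsilon^2 n^{k+\alpha(r)})$, but the exponent in the Chernoff bound is proportional to the mean itself, i.e.\ to $p\,m$, not to $m$. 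The fraction $p$ of $\K$-hyperedges on which $\G'$ induces a copy of $\mathbf{H}'$ depends on $\G'$ and can be arbitrarily small or zero; for such $\G'$ the bound $\exp(-cD^2 p m)$ is nowhere near strong enough to survive the union bound over the $O(e^{n^{k+\delta}})$ choices of $\G'$. This is exactly why the paper splits into two cases: when $p<\varepsilon/4$ no concentration of the numerators is needed at all --- both ratios are automatically at most $\varepsilon$ once each $|\Theta_i|\geq m/4$, an event that does not depend on $\G'$ and fails with probability only $\exp(-cm)$ --- and only when $p\geq\varepsilon/4$ does one run the Chernoff-plus-union-bound argument, now with mean at least $\varepsilon\beta m/8$, so the exponent really is of order $n^{k+\alpha(r)}$. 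Without this case split your union bound does not close.

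Two smaller points also need repair. First, your construction is internally inconsistent: you begin by pasting $\mathbf{H}_1$ into \emph{every} hyperedge (so $\Theta_2$ is never defined and $|\Theta_1|$ is deterministic), but the ``heart of the argument'' treats the choice of $\mathbf{H}_1$ versus $\mathbf{H}_2$ on each hyperedge as an independent fair bit; the latter is what you need, and it makes $|\Theta_1|$ and $|\Theta_2|$ random ($\mathrm{Binom}(m,1/2)$), so the denominators in $N_{\Theta_i}/|\Theta_i|$ require their own Chernoff bound, which the paper supplies before combining numerator and denominator deviations. Second, $\Theta_i$ should contain one embedding per hyperedge assigned to $\mathbf{H}_i$, not $r!$ per hyperedge; the factor $|\mathrm{Aut}(\mathbf{H}')|/r!$ enters instead as the per-hyperedge success probability for the uniformly random bijection, which is the $\beta$ in the paper's computation.
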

\begin{proof}
	We show that $\CC$ satisfies the conditions of Lemma \ref{lem:si}. So fix $\HH_1, \HH_2 \in \CC[r]$ and $\epsilon > 0$. Then we wish to find $n$ and $\G \in \mathcal{C}[n]$ and non-empty families $\Theta_i$ of embeddings of $\mathbf{H}_i$ in $\G$ such that for all $\mathbf{H}'\in\mathcal{C}'[k]$ and $\G' \in \mathcal{C}'[n]$ we have
\[ \left|\frac{N_{\Theta_1}(\mathbf{H}_1^*,\mathbf{G}^*)}{|\Theta_1|}-\frac{N_{\Theta_2}(\mathbf{H}_2^*,\mathbf{G}^*)}{|\Theta_2|}\right| < \varepsilon.\] 
Let $\K$ be an $r$-uniform hypergraph satisfying the conditions of Definition \ref{def:closed} with $n$ vertices, for $n$ to be determined later, and $m \geq Cn^{k+\alpha(r)}$ edges. Let $\widetilde \G$ be the random $\CC$-structure obtained by pasting either $\HH_1$ and $\HH_2$ into every $\K$-hyperedge, each with probability 1/2 and via bijections chosen uniformly at random. Let $\Theta_i$ be the embeddings via which we have chosen to paste $\HH_i$ into a $\K$-hyperedge. Note that even if $\mathrm{Aut}(
\HH_i)$ is non-trivial, $\Theta_i$ still contains at most one embedding for each $\K$-hyperedge. 
	
	Now fix $\HH' \in \CC'[r]$ and $\G' \in \CC'[n]$ and let $\HH_i^* = \HH_i * \HH'$ and $\widetilde{\G}^* = \widetilde{\G} * \G'$. Let $\Theta'$ be the embeddings of $\HH'$ into $\G'$ whose image is a $\K$-hyperedge. We now consider two cases depending on the choice of $\G'$. In each case, we will show $\P$(There exists some choice of $\G'$ in that case so that our random $\widetilde \G$ fails) can be made arbitrarily small by taking $n$ sufficiently large. Thus by a union bound, $\P$(There exists some choice of $\G'$ so that our random $\widetilde \G$ fails) can also be made arbitrarily small by taking $n$ sufficiently large.

	Case 1: In $\G'$, the fraction of $\K$-hyperedges whose induced structure is isomorphic to $\HH'$ is less than $\epsilon/4$. 
	
	Note that if  $\HH_1$ and $\HH_2$ were each pasted into at least 1/4 of the $\K$-edges, then for every $\G'$ satisfying the case assumption, each of the terms in Lemma \ref{lem:si} is between 0 and $(mL_i^*\epsilon/4)/(mL_i/4) \leq \epsilon$, and so their difference is between $0$ and $\epsilon$ as well.
	
	By a Chernoff bound, the probability that the fraction of $\K$-edges replaced by copies of $\HH_1$ is less than 1/4 is at most $\exp{(-c_0 m)} \leq \exp(-c_1n^{k+\alpha(r)})$, for some constants $c_0$ and $c_1$, and similarly for $\HH_2$. So the probability that either event occurs goes to 0 as $n$ goes to infinity.

	Case 2: In $\G'$, the fraction of $\K$-hyperedges whose induced structure is isomorphic to $\HH'$ is at least $\epsilon/4$. 
	
Then $|\Theta_i|$ is binomial with parameters $m, 1/2$. Let $m' \geq m\epsilon/4$ be the number of $\K$-hyperedges in $\G'$ with induced structure isomorphic to $\HH'$. Let $\beta = |\mathrm{Aut}(\HH')|/r!$. Then $N_{\Theta_i}(\HH_i^*, \widetilde \G^*)$ is binomial with parameters $\mathrm{Binom}(m', 1/2), \beta$, which is equivalent to parameters $m', \beta/2$.
	
	
	 By Chernoff bounds
	    \begin{equation}\label{Chereq1}
     \P\left(\abs{\frac{|\Theta_i|}{m/2} -1} \geq D\right) \leq \exp{\left(-\frac1{12} D^2 m\right)}
     \end{equation}
	and
 \begin{equation}\label{Chereq2}
	\P\left(\abs{\frac{N_{\Theta_i}(\HH_i^*, \widetilde{\G}^*)}{m'\beta/2} -1} \geq D\right) \leq \exp{\left(-\frac{\beta}{12} D^2 m'\right)} \leq \exp{\left(-\frac\epsilon{48} D^2 m\right)}
 \end{equation}
	Thus
	\begin{align*}
			&\P\left(\abs{\frac{N_{\Theta_1}(\HH_1^*, \widetilde{\G}^*)}{|\Theta_1|} -\frac{N_{\Theta_2}(\HH_2^*, \widetilde{\G}^*)}{|\Theta_2|}} \geq  \frac{4D}{1-D^2} \right)\\
		\leq~ 	&\P\left(\abs{\frac{N_{\Theta_1}(\HH_1^*, \widetilde{\G}^*)}{|\Theta_1|} -\frac{N_{\Theta_2}(\HH_2^*, \widetilde{\G}^*)}{|\Theta_2|}} \geq \beta\frac{m'}{m}\frac{4D}{1-D^2} \right) \\
		=~& \P\left(\abs{\frac{N_{\Theta_1}(\HH_1^*, \widetilde{\G}^*)}{|\Theta_1|} -\frac{N_{\Theta_2}(\HH_2^*, \widetilde{\G}^*)}{|\Theta_2|}} \geq \frac{(1+D)m'\beta}{(1-D)m} - \frac{(1-D)m'\beta}{(1+D)m} \right) \\
		\leq ~ & c_0\exp{(-c_1D^2m)} \text{ for some constants $c_0, c_1$}
	\end{align*}
	where the first inequality uses that $\beta\frac{m'}{m} \leq 1$, the next equality uses that $4D/(1-D^2) = (1+D)/(1-D) - (1-D)/(1+D)$, and the last inequality uses a union bound over the failure of the events in (\ref{Chereq1}) and (\ref{Chereq2}).
	
	Let $D = \sqrt{n^{k + \delta}/m}$ for some $0< \delta < \alpha(r)$, recalling that $m \geq Cn^{k + \alpha(r)}$. Thus $\lim_{n \to \infty} D = 0$, and so $\lim_{n \to \infty} \frac{4D}{1-D^2} = 0$. 
	
	Taking another union bound (and using that there are only a constant number of options for $\HH'$)
	
	\begin{align*}
		& \P\left(\text{For some $\G'$ as in Case 2 and some $H'$ :} \abs{\frac{N_{\Theta_1}(\HH_i^*, \widetilde{\G}^*)}{|\Theta_1|} -\frac{N_{\Theta_2}(\HH_i^*, \widetilde{\G}^*)}{|\Theta_2|}} \geq  \frac{4D}{1-D^2} \right)\\
		\leq ~ & c_2 f(n) \exp{(-c_1D^2m)} = c_2f(n) \exp{(-c_1n^{k+\delta})},
	\end{align*}
 where $f(n)=|\mathcal{C}'[n]|$.
	By our assumption on $f(n)$, the last term goes to 0 as $n$ goes to $\infty$. Thus for sufficiently large $n$, we have both that$\frac{4D}{1-D^2} < \epsilon$ and the probability of failure for our random $\widetilde \G$ is arbitrarily small when considering $\G'$ in Case 2.
\end{proof}

\subsection{\texorpdfstring{{\closed}}{k-overlap closed} structures}\label{sec:koverlapclosed}

We now turn to the problem of identifying classes that are \closed. Lemma \ref{lemma:Steiner} will be used to construct the hypergraph $\mathbb{K}$ from Definition \ref{def:closed}, and afterwards we will show that this construction suffices for many classes (Corollary~\ref{cor:k+1 irred} and Lemma~\ref{lemma:k irred}).

\begin{remark}[Non-examples of $k$-overlap closedness] Let us begin with two examples of ages of $k$-transitive homogeneous $(k+1)$-ary structures which are not $k$-overlap closed. It is easy to see that linear orders are not $1$-overlap closed by looking at the case of $r=2$: any large enough graph $\mathbb{K}$ satisfying the density condition (1) in Definition \ref{def:closed} must contain a cycle, which makes it impossible to satisfy condition (3) of the definition (i.e. pasting copies of $``a<b''$ into the edges and then adding relations whilst staying in the class of linear orders). More generally, in Remark \ref{rem:macpherson}, we point out that any $\mathrm{NIP}$ $1$-overlap closed finitely homogeneous structure must have automorphism group $S_\infty$. In Subsection \ref{sec:kaygraphs}, we will look more carefully at the example of parity $(k+1)$-hypergraphs which are not $k$-overlap closed.  
\end{remark}

\begin{remark}\label{rem:partialsteiner}
The following lemma plays the role of \cite[Lemma 4.1]{AKL}. While our construction remains elementary, it seems possible that much higher-powered combinatorics may be useful in our higher-arity setting. A uniform hypergraph satisfying the second condition of Definition \ref{def:closed} is called a \emph{partial Steiner system}. The problem of constructing partial Steiner systems with many edges and possibly avoiding certain configurations is an intensively studied problem in extremal combinatorics; we refer to \cite{delcourt2022finding} or \cite{glock2024conflict} for recent results and a discussion of the long history. Most of the work in extremal combinatorics has been concerned with achieving nearly the maximum number of possible edges, which in the setting of Definition \ref{def:closed} would be $\binom{n}{k+1}/\binom{r}{k+1} = O(n^{k+1})$. However, the first condition of Definition \ref{def:closed} calls for substantially fewer edges, which allows us to forbid more configurations. 
    
\end{remark}

\begin{lemma} \label{lemma:Steiner}
	Let $r,k, J \in \mathbb{N}$ with $r > k \geq 2$. There are constants $C$ and $\epsilon > 0$ such that for all $n > r$, there is an $r$-uniform hypergraph on $n$ vertices satisfying the following conditions.
	\begin{enumerate}
		\item There are at least $Cn^{k+\epsilon}$ hyperedges.
		\item No two hyperedges intersect in more than $k$ points.
		\item For all $j \leq J$, there is no configuration of $j$ hyperedges on at most $jr -k(j-1)-1$ vertices.
	\end{enumerate}
\end{lemma}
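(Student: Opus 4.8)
The plan is to use the probabilistic deletion method: take a random $r$-uniform hypergraph with an appropriate edge probability, then delete edges to destroy all bad configurations (pairs of edges meeting in $>k$ points, and the small ``dense'' configurations in condition (3)), and check that enough edges survive in expectation. Concretely, pick $p = n^{-(r-k-1) - \gamma}$ for a small constant $\gamma > 0$ to be chosen, and let $\mathbb{K}_0$ be the binomial random $r$-uniform hypergraph $G^{(r)}(n,p)$, so $\mathbb{E}[e(\mathbb{K}_0)] = \binom{n}{r}p = \Theta(n^{r}p) = \Theta(n^{k+1-\gamma})$. I would then bound the expected number of ``obstructions'': (a) the number of pairs of $r$-edges sharing exactly $s$ vertices for each $k+1 \leq s \leq r-1$, and (b) for each $2 \leq j \leq J$, the number of configurations of $j$ edges spanning at most $jr - k(j-1) - 1$ vertices. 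Deleting one edge from each obstruction gives a hypergraph satisfying (2) and (3); the point is that the expected number of obstructions is $o(\mathbb{E}[e(\mathbb{K}_0)])$, so some outcome retains at least, say, $\tfrac12 \mathbb{E}[e(\mathbb{K}_0)] = \Omega(n^{k+\epsilon})$ edges with $\epsilon = 1-\gamma > 0$ (shrinking $\gamma$ to keep $\epsilon$ positive while the computations go through), and then passing to the induced sub-hypergraph on the surviving vertex set at worst relabels vertices.

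For the counting in (a): a pair of $r$-edges meeting in exactly $s$ vertices is determined by $2r - s$ vertices together with bounded combinatorial data, and involves $2$ edges, so its expected count is $O(n^{2r-s} p^2)$. With $p = \Theta(n^{-(r-k-1)-\gamma})$ this is $O(n^{2r - s - 2(r-k-1) - 2\gamma}) = O(n^{2k+2-s - 2\gamma})$, which for $s \geq k+2$ is $O(n^{k - 2\gamma}) = o(n^{k+1-\gamma})$ — so the $s > k$ obstructions are negligible. For (b): a configuration of $j$ edges on at most $v := jr - k(j-1) - 1$ vertices has expected count $O(n^{v} p^{j}) = O(n^{jr - k(j-1) - 1} \cdot n^{-j(r-k-1) - j\gamma}) = O(n^{jr - k(j-1) - 1 - j(r-k-1) - j\gamma}) = O(n^{(j - 1) - j\gamma - 1})= O(n^{j-2-j\gamma})$. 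Hmm — I need to recompute the exponent carefully: $jr - k(j-1) - 1 - j(r - k - 1) = jr - kj + k - 1 - jr + jk + j = k + j - 1$, so the expected count is $O(n^{k + j - 1 - j\gamma})$; this is genuinely problematic for large $j$, which shows a plain uniform-$p$ deletion argument is \emph{not} enough and is where the real difficulty lies.

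Thus the main obstacle is exactly controlling these moderately dense configurations of many edges, which a single global edge probability cannot kill — the expected number of such configurations grows with $j$. The fix I would pursue is an ``alteration with a sparser regime plus a local-sparsity pruning'': rather than one shot of deletion, iterate, or instead choose $p$ so small that $n^{r}p$ is only slightly superlinear in $n^{k}$ (this is precisely why Definition~\ref{def:closed} and the lemma ask for only $n^{k+\epsilon}$ edges rather than the maximum $\Theta(n^{k+1})$, as flagged in Remark~\ref{rem:partialsteiner}), and then observe that with $p = n^{-(r-k) + \epsilon'}$ the dense-configuration count becomes $o(n^{k+\epsilon})$ for \emph{all} $j \leq J$ simultaneously once $\epsilon'$ is small relative to $1/J$. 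The routine but necessary check is then that, with this choice, (i) the $s > k$ intersection obstructions are still $o$ of the edge count, and (ii) the $j$-edge dense configurations for every fixed $j \le J$ contribute $o(n^{k+\epsilon})$ in expectation; both reduce to comparing polynomial exponents in $n$, with $\epsilon$ and $\epsilon'$ chosen at the end as small positive constants depending only on $r, k, J$. I would present the clean deletion computation with the correctly tuned $p$ and relegate the exponent bookkeeping to a short lemma.
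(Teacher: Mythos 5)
Your proposal is correct and, after the false start with the denser probability $p=n^{-(r-k-1)-\gamma}$, lands on exactly the paper's argument: a deletion-method construction with edge probability $p=n^{-(r-(k+\epsilon))}$, where the pair-intersection obstructions contribute $O(n^{k-1+2\epsilon})$ and the $j$-edge dense configurations contribute $O(n^{k-1+j\epsilon})$, both $o(n^{k+\epsilon})$ once $\epsilon$ is small relative to $1/J$. The exponent bookkeeping you defer is precisely the computation the paper carries out, with the same constraint ($J\epsilon<1+\epsilon$) emerging at the end.
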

\begin{proof}
	Let $p = 1/n^{r-(k+\epsilon)}$, where $0 < \epsilon < 1$ is to be determined later. Consider the random $r$-uniform hypergraph on $n$ vertices obtained by adding each hyperedge independently with probability $p$, so the expected number of hyperedges is $c_0n^{k+\epsilon}$.
	
For a fixed set of $k+1$ vertices, the expected number of pairs of hyperedges whose intersection contains that set is at most $n^{2(r-(k+1))}p^2$. Thus expected number of intersecting pairs on all such sets is at most 
\[ \binom{n}{k+1} n^{2(r-(k+1))} n^{-2r+2k+2\epsilon} \leq \binom{n}{k+1}n^{-2+2\epsilon} \leq n^{k-1+2\epsilon}\]
	
	And for $\epsilon < 1$, we have $k-1 + 2\epsilon < k+\epsilon$, so we can remove one hyperedge from each pair.
	
	Now we need to remove hyperedges to satisfy (3). The expected total number of such configurations involving $j$ hyperedges is at most
	
	\[\sum_{i=1}^{jr-k(j-1)-1}\binom{n}{i}\binom{i^r}{j}p^j \leq c_1n^{jr -k(j-1)-1}n^{-jr+j(k+\epsilon)} = c_1n^{k-1+j\epsilon}\]
	
	As long as $J\epsilon < 1+\epsilon$, we may remove all such configurations for all $j \leq J$ while keeping enough total hyperedges.
\end{proof}
\begin{remark} \label{rem:noCon}
    If one does not need the third condition, then the proof shows we may choose any $\epsilon < 1$, and so we obtain at least $Cn^{k+1-\delta}$ hyperedges for every $\delta > 0$.
\end{remark}

We note that the bound on the number of vertices in third point is tight. For any fixed $r \geq 3$ and $j,k \geq 2$, by a result of Brown, Erd\"os, and S\'os \cite{brown1973some} (see e.g. the abstract of \cite{shangguan2020degenerate} for a more explicit statement), the maximum number of hyperedges in an $r$-uniform hypergraph in which there is no configuration of $j$ hyperedges on at most $jr-k(j-1)$ vertices is $O(n^k)$, which conflicts with the first point. However, it may be possible to avoid other sorts of configurations in the third point, which may in turn yield further classes that are \closed.


\begin{lemma}\label{lem:gettingclosed}
	Let $\LL$ be a relational language with minimum arity strictly greater than $k \geq 1$. Let $\FF$ be a set of finite $\LL$-structures, and suppose every $F \in \FF$ satisfies that there is a subset $A$ of size $\ell > k$ such that there is some $\ell$-ary relation induced on $A$, and such that for every $b \in F \backslash A$ there are distinct  $A_0, A_1 \subset A$ each of size at least $k$ and each in a relation with $b$ (and possibly additional points). Then $\CC = \mathrm{Forb}(\FF)$ is \closed. 
\end{lemma}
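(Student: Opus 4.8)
The goal is to verify the three conditions of Definition~\ref{def:closed} for $\CC = \mathrm{Forb}(\FF)$, for every $r > k$ and arbitrarily large $n$. The first two conditions are purely about the hypergraph $\K$, so the plan is to invoke Lemma~\ref{lemma:Steiner} to obtain $\K$, leaving a parameter $J$ to be chosen; conditions (1) and (2) of Definition~\ref{def:closed} are then immediate from conditions (1) and (2) of Lemma~\ref{lemma:Steiner} (with $\alpha(r) = \epsilon$). The whole content is condition (3): given $H_1, H_2 \in \CC[r]$ and a structure $G_0$ on $n$ vertices whose induced substructure on each $\K$-hyperedge is a copy of $H_1$ or $H_2$, we must find an injective homomorphism of $G_0$ into some $G \in \CC[n]$ that restricts to an embedding on each $\K$-hyperedge. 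Since $\CC = \mathrm{Forb}(\FF)$ is closed under removing relations, no completion step is needed: I would simply take $G = G_0$ itself (the identity map is an injective homomorphism and an embedding on each hyperedge, since the hyperedges carry exactly their induced structure), and the only thing to check is that $G_0 \in \CC$, i.e. that $G_0$ omits every $F \in \FF$.

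\textbf{The core argument.} Suppose toward a contradiction that some $F \in \FF$ embeds into $G_0$ via an embedding $e$. By hypothesis $F$ has a subset $A$ of size $\ell > k$ carrying an $\ell$-ary relation, and every $b \in F \setminus A$ lies in a relation with each of two distinct subsets $A_0^b, A_1^b \subseteq A$ of size $\geq k$. All relations of $G_0$ lie within single $\K$-hyperedges (since the induced structure on each hyperedge is all of $H_1$ or $H_2$, and there is no relation of $G_0$ straddling two hyperedges — this is exactly the feature of condition (3) of Definition~\ref{def:closed}, and it holds here because $G_0$ has only the relations coming from the pasted copies). Hence the $\ell$-ary relation witnessed on $e(A)$ forces $e(A)$ to lie inside a single $\K$-hyperedge $E$, since $\ell > k$ and no two $\K$-hyperedges meet in more than $k$ points. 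Now I claim $e(F) \subseteq E$ as well: for each $b \in F \setminus A$, $e(b)$ is in a relation with $e(A_0^b)$, which has $\geq k \geq 1$ elements inside $E$; a relation containing $e(b)$ must live in some $\K$-hyperedge $E'$, and $E' \cap E \supseteq e(A_0^b)$. If $|A_0^b| = |A_1^b| = k$ were somehow not enough I would instead use both $A_0^b$ and $A_1^b$: the two relations place $e(b)$ in hyperedges $E', E''$ with $E' \cap E \supseteq e(A_0^b)$ and $E'' \cap E \supseteq e(A_1^b)$; if either $E'$ or $E''$ equalled $E$ we are done, and otherwise $E', E'' \neq E$ each meet $E$ in exactly $k$ points, forcing $e(A_0^b) = e(A_1^b) = E' \cap E = E'' \cap E$, contradicting $A_0^b \neq A_1^b$. (The clean way to run this: choose $J$ in Lemma~\ref{lemma:Steiner} large enough — e.g. $J \geq |F|$ for all $F \in \FF$, using finiteness of $\FF$ — so that condition (3) of Lemma~\ref{lemma:Steiner} rules out the sparse configurations of hyperedges that $e(F)$ would induce if it were spread over more than one hyperedge.) Once $e(F) \subseteq E$, the copy of $F$ is induced on a subset of $E$, so it embeds into the induced structure on $E$, which is $H_1$ or $H_2$ — but $H_1, H_2 \in \CC = \mathrm{Forb}(\FF)$, so neither contains a copy of $F$, a contradiction. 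Therefore $G_0 \in \CC$.

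\textbf{Where the difficulty lies.} The delicate point is bounding how $e(F)$ can be distributed among the $\K$-hyperedges and concluding it must all sit in one. The subset $A$ pins down one hyperedge $E$ via the $\ell$-ary relation with $\ell > k$; the role of the two distinct sets $A_0^b, A_1^b$ is precisely to prevent each remaining vertex $e(b)$ from escaping $E$ into a neighbouring hyperedge that meets $E$ in exactly $k$ points. I expect the cleanest write-up routes this through the "no $j$ hyperedges on $\leq jr - k(j-1) - 1$ vertices" condition of Lemma~\ref{lemma:Steiner}: if $e(F)$ used $j \geq 2$ distinct hyperedges, then because consecutive hyperedges overlap in $\geq k$ points (forced by the relations linking $A$ to the $e(b)$'s) one gets a configuration of $j$ hyperedges spanning at most $jr - k(j-1) - 1$ vertices, which is forbidden once $J$ is taken $\geq \max_{F \in \FF}|F|$. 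The only genuinely fiddly bookkeeping is making sure that the sizes $|A_i^b| \geq k$, together with $|A| = \ell$ and the pairwise-intersection bound $k$, indeed force the overlaps to be large enough to trigger the forbidden-configuration count; this is elementary but needs care, and is the step I would write out most carefully.
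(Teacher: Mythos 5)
Your overall strategy is the same as the paper's: use Lemma \ref{lemma:Steiner} to build $\K$, take $G=G_0$ (no completion needed since $\mathrm{Forb}(\FF)$ is closed under removing relations), locate the unique $\K$-hyperedge $E$ containing $e(A)$ via the $\ell$-ary relation with $\ell>k$, and then use the two sets $A_0^b, A_1^b$ to rule out a vertex $e(b)$ escaping $E$. However, the decisive step in your ``core argument'' is wrong as written. When $e(b)\notin E$, the two relations give hyperedges $E'\supseteq e(A_0^b)\cup\{e(b)\}$ and $E''\supseteq e(A_1^b)\cup\{e(b)\}$ with $E',E''\neq E$, and you conclude $e(A_0^b)=E'\cap E=E''\cap E=e(A_1^b)$. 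But $E'$ and $E''$ are (a priori) different hyperedges, so there is no reason for $E'\cap E$ and $E''\cap E$ to coincide; no contradiction follows from the overlap bound alone. What does follow is that $E'\neq E''$ (otherwise $E'\cap E\supseteq e(A_0^b\cup A_1^b)$ would have more than $k$ points), and then one must count: $E, E', E''$ are three distinct hyperedges on at most $r+(r-k)+(r-(k+1))=3r-2k-1$ vertices, which is exactly the configuration excluded by Lemma \ref{lemma:Steiner}(3) with $j=3$. This counting step --- which is the actual content of the proof in the paper --- is the one you defer to a parenthetical and to the final paragraph without carrying it out.

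Your proposed repair, taking $J\geq\max_{F\in\FF}|F|$ ``using finiteness of $\FF$,'' has two problems. First, $\FF$ is not assumed finite in the statement (and Corollary \ref{cor:k+1 irred}(1) applies the lemma to possibly infinite families of $(k+1)$-irreducible structures, e.g.\ for classes like $\mathcal{C}_I$ in Example \ref{Kopconstr}), so $\max_{F\in\FF}|F|$ need not exist; and Lemma \ref{lemma:Steiner} requires a fixed finite $J$ since $\epsilon$ must satisfy $J\epsilon<1+\epsilon$. Second, it is unnecessary: the paper's proof takes $J=3$, fixes a single $b\in F\setminus E$ (which exists because $E$ cannot contain all of $F$, as its induced structure is a copy of $H_1$ or $H_2\in\CC$), and derives the three-hyperedge configuration above. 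No induction over all of $F\setminus A$ and no bound in terms of $|F|$ is needed. Your sketch of ``$j\geq 2$ hyperedges spanning at most $jr-k(j-1)-1$ vertices'' for general $j$ is likewise unjustified as stated --- that kind of bookkeeping is genuinely delicate (compare the inductive proof of Lemma \ref{lemma:k irred}) --- but fortunately the lemma does not require it.
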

\begin{proof}
	Fix $r > 0$, and let $\K$ be a hypergraph as in Lemma \ref{lemma:Steiner} with $J = 3$. Let $F$ be a forbidden structure, and suppose some copy (which we henceforth identify with $F$) can be obtained by pasting structures from $\CC[r]$ into $\K$. Let $A \subset F$ be as described, so $A$ lies in a single (unique) $\K$-hyperedge $E_1$. Since $E_1$ cannot contain the whole of $F$, there is some $b \in F$ lying outside $E_1$. Let $A_0, A_1 \subset A$ be as described, so there is a unique $\K$-hyperedge $E_2$ containing $A_0b$ and another $E_3$ containing $A_1b$. Since no two hyperedges can intersect in $k+1$ points, we have that $A_0 \not\subset E_3$ and $A_1 \not\subset E_2$. But then $E_1, E_2, E_3$ are three distinct $\K$-hyperedges on at most $3r - k - (k+1)$ points, contradicting the properties of $\K$.
\end{proof}

\begin{corollary} \label{cor:k+1 irred} \leavevmode
	\begin{enumerate}
		\item \label{it1} In a language with minimum arity strictly greater than $k$, any class defined by forbidding a set of $(k+1)$-irreducible structures is \closed.
		\item \label{it2} For $1\leq k\leq n$, $k+1$-hypergraphs forbidding $P_n^{k+1}$ are \closed.
		\item \label{it3} In a language with minimum arity strictly greater than $k$, any class with disjoint $n$\-/amalgamation for all $n$ is \closed.
	\end{enumerate}
\end{corollary}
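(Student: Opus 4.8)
The plan is to derive all three items of Corollary \ref{cor:k+1 irred} from Lemma \ref{lem:gettingclosed}, by verifying in each case that the forbidden structures satisfy the hypothesis of that lemma (namely: each forbidden $F$ has a subset $A$ of size $\ell > k$ carrying an $\ell$-ary relation, and every $b \in F \setminus A$ is in a relation with two distinct subsets of $A$ of size at least $k$).

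For item \ref{it1}, suppose $\CC = \mathrm{Forb}(\FF)$ where every $F \in \FF$ is $(k+1)$-irreducible. I would first reduce to the case where $\FF$ is minimal (as in the Remark after Definition on $\mathrm{Forb}$), so no proper substructure of an $F \in \FF$ lies in $\FF$; equivalently each $F$ is itself not in $\CC$ but all proper substructures are. By $(k+1)$-irreducibility, any $k+1$ points of $F$ lie in a common relation (of arity $\geq k+1$); pick any such tuple and let $A$ be its underlying set, so $|A| = \ell \geq k+1 > k$ and $A$ carries an $\ell$-ary relation. Now for any $b \in F \setminus A$: by $(k+1)$-irreducibility, for each $k$-subset $A' \subseteq A$, the set $A' \cup \{b\}$ (of size $k+1$) lies in some relation, hence $A'$ itself is in a relation with $b$. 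Since $|A| \geq k+1$, there are at least two distinct $k$-subsets $A_0, A_1 \subset A$, each in a relation with $b$. This is exactly the hypothesis of Lemma \ref{lem:gettingclosed}, so $\CC$ is \closed. (If $A$ has size exactly $k$, that would be a problem, but $(k+1)$-irreducibility forces $\ell \geq k+1$.) One small caveat I would address: the lemma wants a \emph{finite} forbidden set, or at least the argument localizes — but Lemma \ref{lem:gettingclosed} is stated for a set $\FF$ and the proof only ever inspects one $F$ at a time, so no finiteness is needed.

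For item \ref{it2}: the class of $(k+1)$-hypergraphs forbidding $P_n^{k+1}$ is $\mathrm{Forb}(\{P_n^{k+1}\})$, so I just need to check $P_n^{k+1}$ satisfies the hypothesis of Lemma \ref{lem:gettingclosed}. Recall $P_n^{k+1}$ has $n+1$ vertices: an independent set $I$ of size $n \geq k+1$ (the relevant case, since for $n = k$ we noted $P_k^{k+1} = K_{k+1}^{-}$ which is $(k+1)$-irreducible, covered by item \ref{it1}) plus a distinguished vertex $e$, with every $k$-subset of $I$ forming a $(k+1)$-hyperedge with $e$, and no hyperedges inside $I$. Here there is no $\ell$-ary relation with $\ell > k$ on a subset of $I$ alone, so I should take $A$ to be a hyperedge: pick any $k$ vertices of $I$ together with $e$, giving $A$ of size $k+1$ carrying the $(k+1)$-ary hyperedge relation. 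Then $F \setminus A$ consists of the remaining $n - k \geq 1$ vertices of $I$; for such a $b \in I \setminus A$, I need two distinct subsets of $A$ of size $\geq k$ each in a relation with $b$. Any $k$-subset of $I \cap A$ (there are $k$ such vertices, giving one $k$-subset if we drop... wait, $|I \cap A| = k$, so the only $k$-subset is $I \cap A$ itself) together with $b$... this gives only one subset. So I need $|I \cap A| \geq k+1$, i.e. I should instead try to take $A$ entirely inside $I$ — but then there's no relation on $A$. This is the delicate point, so let me reconsider: if $n \geq k+2$, I can take $A = \{v_1, \dots, v_{k+1}\} \subseteq I$ augmented... no. The honest fix: take $A = \{v_1,\dots,v_k, e\}$ a hyperedge; for $b = v_j \in I \setminus A$, the $k$-subsets of $A$ in a relation with $b$ are exactly those $S \subset A$ with $|S| = k$ such that $S \cup \{b\}$ is a hyperedge, i.e. $S \cup \{v_j\}$ of the form ($k$ vertices of $I$) $\cup \{e\}$; so $S$ must contain $e$ and $k-1$ vertices of $I \cap A = \{v_1,\dots,v_k\}$, giving $k$ choices of $S$ — at least two distinct ones as soon as $k \geq 2$. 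Good, so the hypothesis holds with these $A_0, A_1 \ni e$, and Lemma \ref{lem:gettingclosed} applies.

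For item \ref{it3}: if $\CC$ has disjoint $n$-amalgamation for all $n$, then by Fact \ref{basicndap} (equivalently the Remark after it) $\CC$ is a random structure's age, and in a language of minimum arity $> k$ its constraints have size $\leq k$... actually more directly: I would argue that such a $\CC$ equals $\mathrm{Forb}(\FF)$ where $\FF$ consists of the constraints of $\CC$ (Definition \ref{def:random} / the Remark on $\mathrm{Forb}$), and that each constraint is $(k+1)$-irreducible, so item \ref{it1} applies. To see constraints are $(k+1)$-irreducible: let $F$ be a constraint, so $F \notin \CC$ but every proper substructure is in $\CC$; suppose for contradiction $F$ has $k+1$ points $a_1, \dots, a_{k+1}$ lying in no common relation. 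Since the language has minimum arity $> k$, no relation of $F$ has its support inside $\{a_1,\dots,a_{k+1}\}$ in a way... I would set this up as a disjoint amalgamation: present $F$ restricted to $F \setminus \{a_i\}$ for the relevant indices and amalgamate them over $F \setminus \{a_1,\dots,a_{k+1}\}$ (an $(k+1)$-fold disjoint amalgamation, or iterated disjoint $2$-amalgamations since all proper substructures are in $\CC$), obtaining a structure in $\CC$ whose only relations among the $a_i$'s would have to be the ones present in $F$ — and since no relation lives entirely on $\leq k+1$ of them that aren't already inside a proper subset... The cleanest route: because the minimum arity is $> k$, any relation instance of $F$ either avoids some $a_i$ (hence lies in a proper substructure $F \setminus \{a_i\}$ of $F$, which is in $\CC$) or involves all of $a_1, \dots, a_{k+1}$ and possibly more — but if $a_1,\dots,a_{k+1}$ lie in no common relation, the latter never happens, so all of $F$'s relations live in proper substructures, and disjoint amalgamation of those proper substructures over their common parts reconstructs $F$ inside $\CC$, contradicting $F \notin \CC$. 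The main obstacle across all three parts is exactly this last bookkeeping in item \ref{it3} — correctly organizing the (iterated) disjoint amalgamation and checking that no relation on the deleted points survives — together with the care needed in item \ref{it2} to choose $A$ to be a hyperedge rather than a subset of the independent set (so that the required $\ell$-ary relation on $A$ exists) while still exhibiting two witnessing subsets $A_0, A_1$, which works precisely because $k \geq 2$.
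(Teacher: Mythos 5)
Your arguments for items \ref{it1} and \ref{it2} are correct and essentially identical to the paper's: both verify the hypothesis of Lemma \ref{lem:gettingclosed}, taking $A$ to be ($k+1$ points in) a relation in item \ref{it1} and a hyperedge of $P_n^{k+1}$ containing the centre in item \ref{it2}, with the two witnessing sets both containing the centre. Your observation that one needs $k\geq 2$ to get two distinct witnessing subsets in item \ref{it2} is a fair point about the stated range, but it is shared with the paper's own proof and with the definition of $P_n^r$ (which requires $r>2$), so it is not a defect of your argument.

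Item \ref{it3} is where you diverge from the paper, and your route has a genuine gap. You reduce to item \ref{it1} via the claim that every constraint of a class with disjoint $n$-amalgamation for all $n$ (in a language of minimum arity $>k$) is $(k+1)$-irreducible. The amalgamation step does not deliver this: solving the $1$-point disjoint $(k+1)$-amalgamation problem whose faces are the proper substructures $F\setminus\{a_i\}$ produces \emph{some} $B\in\CC$ extending those faces, but $B$ may carry additional relations on tuples containing all of $a_1,\dots,a_{k+1}$, so $B$ need not equal $F$, and since $\CC$ is only closed under induced substructures you cannot conclude $F\in\CC$. In fact the intermediate claim is false: the class of finite tournaments (one binary relation, minimum arity $2>k=1$) has disjoint $n$-amalgamation for all $n$, yet its constraint consisting of two points with no arrow is not $2$-irreducible, and tournaments cannot be written as $\mathrm{Forb}(\FF)$ for any set $\FF$ of $2$-irreducible structures. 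So item \ref{it3} is not a special case of item \ref{it1}. The paper instead proves item \ref{it3} directly: build $\K$ as in Lemma \ref{lemma:Steiner} \emph{without} the third condition (Remark \ref{rem:noCon}), paste structures from $\CC[r]$ into the $\K$-hyperedges, and then use disjoint $n$-amalgamation to complete the result to a member of $\CC$ — a completion that is explicitly allowed to \emph{add} relations (the tournament being the paper's own illustrative case), which is exactly the move your reduction cannot make.
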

\begin{proof} The structures in \ref{it1} and \ref{it2} satisfy the condition of Lemma \ref{lem:gettingclosed}. For the first case, consider any $(k+1)$-irreducible structure $F\in\mathrm{Forb}(\mathcal{F})$. Take $k+1$ vertices in a relation $A$ and let $b\in F\setminus A$. Take two distinct subsets of $A$ of size $k$, $A_0, A_1$. By $(k+1)$-irreducibility $bA_0$ and $bA_1$ are both in relations of $F$, and so $F$ satisfies the hypothesis of Lemma \ref{lem:gettingclosed}. Meanwhile, for $\mathcal{P}_n^k+1$, consider as $A\subset P_n^{k+1}$ any $k+1$-hyperedge. The set $A$ will contain the "centre" of $P_n^{k+1}$, i.e. the vertex $a$ which forms a hyperedge with each other set of $k$-many vertices in $A\setminus \{a\}$. Now, for every $b\in P_n^{k+1}\setminus A$, take $A_0', A_1'\subset A\setminus\{a\}$ of size $k-2$. By construction of $P_{n}^{k+1}, bA_0 a$ and $bA_1a$ form hyperedges implying that $\mathcal{P}_n^k+1$ satisfies the hypothesis of Lemma \ref{lem:gettingclosed}. Finally, for \ref{it3} we may construct a $r$-uniform hypergraph $\mathbb{K}$ as in Lemma \ref{lemma:Steiner} without the need for the third condition, as discussed in Remark \ref{rem:noCon}. After pasting copies of structures in $\mathrm{Age}(\MM)[r]$ into the the $\mathbb{K}$-hyperedges, there is no obstruction to extending the resulting structure to one in $\mathrm{Age}(\MM)$. This may require adding relations (say when working with the random tournament), but can always be done since we are working with structures satisfying disjoint $n$-amalgamation for all $n$.
\end{proof}

\begin{lemma} \label{lemma:k irred}
	Let $k \geq 2$, $\LL$ be a relational language with minimum arity strictly greater than $k$, and let $\CC$ be a hereditary class of $\LL$-structures whose minimal forbidden structures are $k$-irreducible and have size at most $N$. Then $\CC$ is $k$-overlap closed.
\end{lemma}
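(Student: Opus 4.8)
The plan is to deduce Lemma \ref{lemma:k irred} from Lemma \ref{lem:gettingclosed}, since the latter already does the hard combinatorial work (building the partial Steiner system via Lemma \ref{lemma:Steiner} with $J=3$ and checking the overlap argument). So the task reduces to verifying that the hypothesis of Lemma \ref{lem:gettingclosed} is met: every minimal forbidden structure $F \in \FF$ has a subset $A$ of size $\ell > k$ carrying an induced $\ell$-ary relation, such that every $b \in F \setminus A$ sits in two distinct $k$-subsets $A_0, A_1 \subseteq A$, each in a relation with $b$.

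First I would pick, for a given minimal forbidden $k$-irreducible $F$, any relation $R(\bar c)$ holding in $F$ (there is at least one, else $F$ embeds into the empty structure, which is in $\CC$ by heredity unless $F$ itself is a point, but the minimum arity is $> k \geq 2$ so singletons and pairs can't be forbidden). Set $A = \bar c$ as a set; then $\ell = |A| \geq k+1 > k$ since all relations have arity strictly greater than $k$, and $A$ carries the induced $\ell$-ary relation (or, if we want exactly $\ell$-ary, restrict attention to the arity of $R$). Now for any $b \in F \setminus A$: by $k$-irreducibility of $F$, \emph{every} $k$-subset of $F$ — in particular every $k$-subset of the form $\{b\} \cup (k-1$ elements of $A)$ — lies in some relation of $F$. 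Hence for any two distinct $(k-1)$-subsets $A_0', A_1' \subset A$ (which exist since $|A| \geq k+1 \geq k \geq 2$, so $|A| - 1 \geq k-1$ and there are at least two $(k-1)$-subsets when $|A| \geq k$... one should check $|A| \geq k+1$ gives at least $\binom{k+1}{k-1} = \binom{k+1}{2} \geq 3 > 1$ such subsets), the sets $A_i = \{b\} \cup A_i'$... wait, this gives $A_i$ containing $b$, not $A_i \subseteq A$. The right move: take $A_0, A_1$ to be two distinct $k$-subsets of $A$; then $\{b\} \cup (k-1$-subset of $A_i)$ lies in a relation, so $b$ is in a relation "with $A_0$" in the weak sense required ($A_0$ and $b$ both contained in some tuple, plus possibly other points). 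Since $|A| \geq k+1$, there are at least two distinct $k$-subsets $A_0, A_1$ of $A$, and for each, picking any $(k-1)$-subset together with $b$ gives a relation by $k$-irreducibility, witnessing that $b$ is in a relation with $A_i$ (and possibly extra points). So the hypothesis of Lemma \ref{lem:gettingclosed} holds with this $A$, $A_0$, $A_1$.

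Then I would invoke Lemma \ref{lem:gettingclosed} directly: since $\CC = \mathrm{Forb}(\FF)$ where $\FF$ is (without loss of generality) the minimal forbidden structures, and every member satisfies the stated condition, $\CC$ is $k$-overlap closed. The size bound $N$ plays no role in the logical argument — it is presumably included because in the surrounding development one wants the constants in the growth rate to be uniform, or because $\FF$ itself may be infinite but one only needs finitely many "shapes" up to the relevant size; but for $k$-overlap closedness as defined, $\mathrm{Forb}(\FF)$ only depends on $\FF$ as a set and Lemma \ref{lem:gettingclosed} has no finiteness hypothesis on $\FF$, so the bound is harmless.

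The main obstacle, such as it is, is purely bookkeeping: making sure that "two distinct $k$-subsets of $A$" genuinely exist, i.e. that $|A| \geq k+1$ (forced by the arity assumption, good) and that the notion "$A_i$ in a relation with $b$, possibly with additional points" in Lemma \ref{lem:gettingclosed} is correctly matched to what $k$-irreducibility provides — namely that $\{b\}$ together with any $k-1$ elements of $A_i$ lies in a common tuple of some relation. One should double-check the edge case $k = 2$: then $A_i$ is a pair, a $(k-1)$-subset is a singleton $\{a\}$, and $2$-irreducibility says $\{b,a\}$ lies in a relation — exactly the statement that appears (under the name "$2$-irreducible") throughout the paper, so everything is consistent. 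No nontrivial estimate or construction beyond what Lemmas \ref{lemma:Steiner} and \ref{lem:gettingclosed} already supply is needed.
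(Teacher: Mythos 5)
There is a genuine gap, and it sits exactly at the point where you hesitate mid-argument. Lemma \ref{lem:gettingclosed} requires, for each $b \in F \setminus A$, two distinct subsets $A_0, A_1 \subseteq A$ \emph{each of size at least $k$} such that $A_0 \cup \{b\}$ (resp.\ $A_1 \cup \{b\}$) is contained in a single relation tuple of $F$. What $k$-irreducibility provides is strictly weaker: any $k$-element subset of $F$, in particular $\{b\}$ together with $k-1$ points of $A$, lies in a common relation. The witnessing tuple contains $b$ and only $k-1$ points of $A_0$; nothing forces it to contain the remaining point(s) of $A_0$, so $A_0$ need not be ``in a relation with $b$'' in the sense of Lemma \ref{lem:gettingclosed}. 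The parenthetical ``(and possibly additional points)'' in that lemma allows extra points \emph{outside} $A_0 \cup \{b\}$ in the tuple; it does not allow points of $A_0$ to be missing. A concrete counterexample to your reduction: for $3$-uniform hypergraphs with $k=2$, let $F$ have vertices $a_1,a_2,a_3,b,c$ and hyperedges $\{a_1,a_2,a_3\}$ and $\{b,c,a_i\}$ for $i=1,2,3$. This $F$ is $2$-irreducible, yet no admissible $A$ (necessarily a hyperedge) satisfies the hypothesis of Lemma \ref{lem:gettingclosed}: for $A=\{a_1,a_2,a_3\}$ no $2$-subset of $A$ lies in a hyperedge together with $b$, and for $A=\{b,c,a_1\}$ only the single $2$-subset $\{b,c\}$ lies in a hyperedge together with $a_2$. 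This is precisely why Corollary \ref{cor:k+1 irred}(\ref{it1}) handles $(k+1)$-irreducible constraints via Lemma \ref{lem:gettingclosed}, while $k$-irreducible constraints require a separate argument.

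The paper's actual proof is a different and substantially harder argument in which the size bound $N$ -- which you dismiss as harmless bookkeeping -- is essential. It takes $\K$ from Lemma \ref{lemma:Steiner} with $J = N+3$ (not $J=3$) and runs an induction for $3 \leq j \leq N+1$, building an increasing chain $K_j$ of $\K$-hyperedges meeting a putative pasted copy $F^*$ of a forbidden structure, with tight control on $\left|\bigcup K_j\right|$: at each step either $j$ hyperedges land on too few vertices (contradicting property (3) of $\K$) or a new vertex of $F^*$ is forced into $\bigcup K_j$. After $N+1$ steps this yields $|F^*| \geq N+1 > N$, a contradiction. Without the bound on the size of the forbidden structures the induction never terminates, and Remark \ref{rem:4aryExample} shows the statement is already delicate in arity $4$. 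If you want a short proof by reduction to Lemma \ref{lem:gettingclosed}, you must strengthen the hypothesis from $k$-irreducible to $(k+1)$-irreducible, which is a different (and already recorded) statement.
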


\begin{figure}[ht]

\begin{center}
\includegraphics[width=\textwidth]{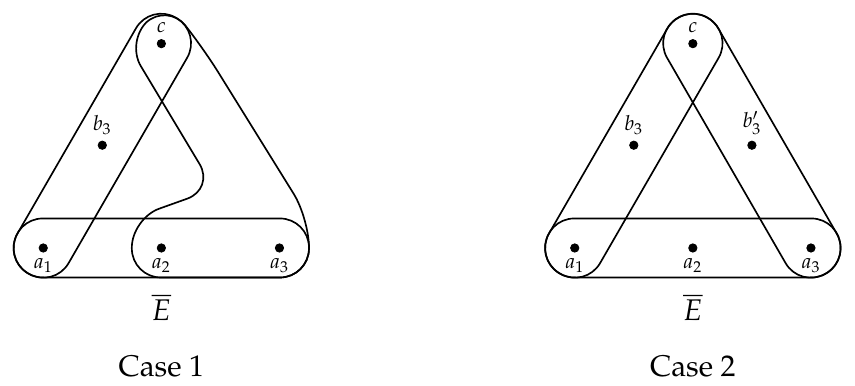}
    \caption{The two starting configurations for the case of a $3$-uniform hypergraph with $r=3$. In Case 1, $b_3$ needs to be connected to each of $\{a_2, a_3\}$, and by Lemma \ref{lemma:Steiner} (3)  each such hyperedge must contain a new point. In Case 2, $b_3$  needs to be connected to each of $\{a_2, a_3\}$, while $b'_3$ needs to be connected to each of $\{a_1, a_2\}$, and at least three such hyperedges must each contain a new point. Either case leads to indefinitely creating new hyperedges with new points until the size of the forbidden structures is exceeded.}
        \label{fig:twocases}
\end{center}
\end{figure}

\begin{proof}
	Fix $r > 0$ and let $\K$ be a hypergraph as in Lemma \ref{lemma:Steiner} for $J = N+3$. Let $F$ be a minimal forbidden structure, and suppose some copy $F^*$ can be obtained by pasting structures from $\CC[r]$ into $\K$. We will abuse terminology, and use ``$F^*$-hyperedge'' to mean some tuple of $F^*$-elements in an $\LL$-relation. Fix an $F^*$-hyperedge $E = \set{a_i}_{i \in [n]}$, and note $n \geq k+1$. For $3 \leq j \leq N+1$, we will inductively build increasing sets $K_j$ of $\K$-hyperedges. When $K_j$ is the set most recently built, we will say $v \in F^*$ satisfies $(*)$ if $v \in \bigcup K_j$, but for some distinct $\ell, m \in [n]$, no $F^*$-hyperedge containing $v$ and $a_\ell$ or $v$ and $a_m$ is a subset of an element of $K_j$. We will inductively show, for $3 \leq j \leq N+1$, that $|K_j| =j$ and one of the following two situations holds.
	\begin{enumerate}
		\item[(1)]  $|\bigcup K_j| \leq jr-k(j-1)$, $|F^* \cap \bigcup K_j| \geq j$, and there is some $b_j \in \bigcup K_j$ satisfying $(*)$
		\item[(2)] $|\bigcup K_j| \leq jr-k(j-1)+1$, $|F^* \cap \bigcup K_j| \geq j+1$, and there are distinct $b_j, b'_j \in \bigcup K_j$ satisfying $(*)$
	\end{enumerate}
	
	Note that this will imply that $|F^*| \geq N+1$, which is a contradiction.

	For the base case, let $j=3$. Let $\overline E$ be the unique $\K$-hyperedge containing $E$; we will additionally ensure that $\overline E \in K_3$. Since $\overline E$ cannot contain all of $F^*$, let $c \in F^*$ lie outside $\overline E$. We will let $\binom{[n]}{k-1}$  denote the $(k-1)$-element subsets of $[n]$, and given $I \subset [n]$ will let $A_I = \set{a_i | i \in I} \subset E$. By $k$-irreducibility, there are (not necessarily distinct) $F^*$-hyperedges $E_I \supset cA_I$ for each $I \in \binom{[n]}{k-1}$, each contained in a unique $\K$-hyperedge $\overline E_I$. Note that no $\overline E_I$ can intersect $\overline E$ in $k+1$ vertices. There are now two subcases, but first we mention an observation we will repeatedly use: given $K_j$, if a vertex $v$ is contained in a unique element $\overline {E'}$ of $K_j$ and $\overline {E'}$ intersects $E$ in at most $k-1$ vertices, then $v$ satisfies $(*)$.\\

 Now, for the subcases (illustrated in Figure \ref{fig:twocases} in a simplified context):
	
	Case 1: Suppose there is $M \in \binom{[n]}{k-1}$ such that $\overline E_M$ intersects $\overline E$ in $k$ vertices. Let $\ell \in [n]$ be such that $a_\ell \not\in \overline E_M$, and let $L \in \binom{[n]}{k-1}$ contain $\ell$. If $\overline E_L \cap (\overline E \cup \overline E_M)$ contains more than just $cA_L$, then $\set{\overline E, \overline E_M, \overline E_L}$ form three $\K$-hyperedges on at most $3r - 2k-1$ vertices, contradicting the construction of $\K$. So let $d \in E_L \bs cA_L$. Let $K_3 = \set{\overline E, \overline E_M, \overline E_L}$ and let $b_3 = d$. 
	These satisfy the conditions in (1).
	
	Case 2: Suppose there is no $M \in \binom{[n]}{k-1}$ such that $\overline E_M$ intersects $\overline E$ in $k$ vertices. For each $I \in \binom{[n]}{k-1}$, let $d_I \in E_I \bs cA_I$. If there are distinct $K, L, M \in \binom{[n]}{k-1}$ such that the intersection of each pair from $E_K, E_L, E_M$ contains some element outside $cE$, then $\set{\overline E, \overline E_K, \overline E_L, \overline E_M}$ form four $\K$-hyperedges on at most $4r-3k-1$ points, contradicting the construction of $\K$. So without loss of generality, suppose $\overline E_K \cap \overline E_L \subset cE$. Let $d_1 \in \overline E_K \bs cE$ and let $d_2 \in \overline E_L \bs cE$. Then let $K_3 = \set{\overline E, \overline E_K, \overline E_L}$, $b_3 = d_1$, and $b'_3 = d_2$. These satisfy the conditions in (2).\\

	For the inductive step, suppose we have built $K_m$ (containing $\overline E$) and wish to build $K_{m+1}$.
	
	Case 1: $K_m$ satisfies the conditions in (1) and contains $\overline E$.
	
	For $I \in \binom{[n]}{k-1}$, let $E_I$ be an $F^*$-hyperedge containing $b_mA_I$. Since $b_m$ satisfies $(*)$, there is some $J \in \binom{[n]}{k-1}$ such that $E_J$ is not a subset of any element of $K_m$. Let $d \in E_J \bs b_mA_J$ and let $\overline E_J$ be the $\K$-hyperedge containing $E_J$. If $\overline E_J$ intersects $\bigcup K_m$ in more than just $b_mA_J$,  then $K_m \cup \set{\overline E_J}$ form $m+1$ $\K$-hyperedges on at most $mr - k(m-1)+r-(k+1) = (m+1)r -km-1$ vertices, contradicting the construction of $\K$. So let $K_{m+1} = K_m \cup \set{\overline E_J}$ and let $b_{m+1} = d$. These satisfy the conditions in (1).

	Case 2: $K_m$ satisfies the conditions in (2) and contains $\overline E$.
	
	For each $I \in \binom{[n]}{k-1}$, there are (not necessarily distinct) $F^*$-hyperedges $E_I \supset b_mA_I$, each contained in a unique $\K$-hyperedge $\overline E_I$. By $(*)$ least two are not contained in $K_m$, which we call $\overline E_J$ and $\overline E_L$. 
	
	Case 2a: Suppose $\overline E_J = \overline E_L$. If $\overline E_J$ intersects $\bigcup K_m$ in more than just $b_mA_{J \cup L}$, then $K_m \cup \set{\overline E_J}$ form $m+1$ $\K$-hyperedges on at most $mr - k(m-1)+1+r-(k+2) = (m+1)r -km-1$ vertices, contradicting the construction of $\K$. In particular, $b'_m \not\in \overline E_J$. So let $K_{m+1} = K_m \cup \set{\overline E_J}$ and let $b_{m+1} = b'_m$. Then these satisfy the conditions in (1).

	Case 2b: Suppose $\overline E_J \neq \overline E_L$. If $\overline E_J$ intersects $\bigcup K_m$ in more than just $b_mA_J$ and $\overline E_L$  intersects $\bigcup K_m$ in more than just $b_mA_L$, then $K_m \cup \set{\overline E_J, \overline E_L}$ form $m+2$ $\K$-hyperedges on at most $mr - k(m-1)+1 + 2r -2(k+1) = (m+2)r -k(m+1)-1$ vertices, contradicting the construction of $\K$. So without loss of generality, suppose $\overline E_J \cap \bigcup K_m = b_mA_J$. Let $d \in E_J \bs b_mA_J$. Then let $K_{m+1} = K_m \cup \set{\overline E_J}$, $b_{m+1} = d$, and $b'_{m+1} = b'_m$. These satisfy the conditions in (2).
\end{proof}

\begin{remark} \label{rem:4aryExample}
   Already for finitely bounded free amalgamation classes of 4-uniform hypergraphs, proving 3-overlap closedness runs into an issue. We obtain a contradiction to the construction of $\K$ as in Lemma \ref{lemma:Steiner} if there are three $\K$-hyperedges on 5 vertices. Consider the 4-uniform hypergraph $F$ on vertices $\set{a_1, a_2, a_3, a_4, b, c}$ with hyperedges $E_0 = \set{a_1, a_2, a_3, a_4}$, $E_1 = \set{b, c, a_1, a_2}$, and $E_2 = \set{b, c, a_3, a_4}$. This is 2-irreducible but has 6 vertices. So the conditions in Lemma \ref{lemma:Steiner}, which we recall are in a sense tight, cannot handle the class $\mathrm{Forb}(F)$.

   More explicitly, if $\K$ is an $r$-uniform hypergraph, it may be that $r = 4$. Then $\K$ as constructed in Lemma \ref{lemma:Steiner} may contain copies of $F$, and pasting a hyperedge into every $\K$-hyperedge will yield a copy of $F$ in the resulting hypergraph.
\end{remark}

\begin{theorem} \label{thm:cre si}
Let $k \geq 1$, $\LL$ be a relational language with minimum arity strictly greater than $k$, and let $\CC$ be a hereditary class of $\LL$-structures. 
Suppose that the minimal forbidden structures of $\CC$ consist of a set of $(k+1)$-irreducible structures, and (when $k \geq 2$) $k$-irreducible structures of bounded size.
Let $\CC'$ be a hereditary class of $\LL'$-structures with labelled growth rate $O(e^{n^{k+\delta}})$ for every $\delta > 0$. Then every consistent random expansion of $\CC$ by $\CC'$ is exchangeable.
\end{theorem}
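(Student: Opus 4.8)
The plan is to show that the hypotheses force $\CC$ to be $k$-overlap closed, and then to invoke Theorem~\ref{theorem:Chernoff}, which applies because $\CC'$ has labelled growth rate $O(e^{n^{k+\delta}})$ for every $\delta>0$. When $k=1$ the minimal forbidden structures of $\CC$ are all $(k+1)$-irreducible, so $k$-overlap closedness is exactly Corollary~\ref{cor:k+1 irred}(\ref{it1}); assume henceforth $k\geq 2$. Write $\CC=\mathrm{Forb}(\mathcal{F})$ with $\mathcal{F}$ its set of minimal forbidden structures, and split $\mathcal{F}=\mathcal{F}_1\cup\mathcal{F}_2$, where every member of $\mathcal{F}_1$ is $(k+1)$-irreducible and every member of $\mathcal{F}_2$ is $k$-irreducible of size at most some fixed $N$.

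To check $k$-overlap closedness, fix $r>k$ and, for arbitrarily large $n$, let $\K$ be the $r$-uniform hypergraph on $n$ vertices supplied by Lemma~\ref{lemma:Steiner} applied with $J=N+3$: it has at least $Cn^{k+\epsilon}$ hyperedges, no two of which meet in more than $k$ vertices, and it contains no configuration of $j\leq N+3$ hyperedges spanning at most $jr-k(j-1)-1$ vertices. The first two properties are conditions (1) and (2) of Definition~\ref{def:closed}. For condition (3), fix $H_1,H_2\in\CC[r]$ and let $G_0$ be any $\LL$-structure on these $n$ vertices whose induced substructure on each $\K$-hyperedge is isomorphic to $H_1$ or $H_2$. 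It then suffices to prove that $G_0$ itself avoids $\mathcal{F}$, for then $G_0\in\CC$ and we may take $G=G_0$ with $g$ the identity, so that no completion is needed.

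So suppose, for contradiction, that some $F\in\mathcal{F}$ embeds into $G_0$, and identify $F$ with its image. If $F\in\mathcal{F}_1$, then, exactly as in the proof of Corollary~\ref{cor:k+1 irred}(\ref{it1}), its $(k+1)$-irreducibility places $F$ in the scope of Lemma~\ref{lem:gettingclosed}: some $(k+1)$-subset $A$ of $F$ lies in a unique $\K$-hyperedge, and a point $b\in F$ outside that hyperedge yields, through two distinct $k$-subsets of $A$, three distinct $\K$-hyperedges on at most $3r-2k-1$ vertices, contradicting property (3) of $\K$ (valid since $J\geq 3$). If instead $F\in\mathcal{F}_2$, then $F$ is $k$-irreducible of size at most $N$, and the multi-step pasting induction of Lemma~\ref{lemma:k irred} --- which uses only the conclusion of Lemma~\ref{lemma:Steiner} with this same value $J=N+3$ --- forces $|F|\geq N+1$, a contradiction. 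Hence $G_0\in\mathrm{Forb}(\mathcal{F})=\CC$, so $\CC$ is $k$-overlap closed, and Theorem~\ref{theorem:Chernoff} yields that every consistent random $\CC'$-expansion of $\CC$ is exchangeable.

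The substantive ingredients --- the probabilistic deletion argument behind Lemma~\ref{lemma:Steiner}, the delicate inductive pasting of Lemma~\ref{lemma:k irred}, and the random-placement/Chernoff argument of Theorem~\ref{theorem:Chernoff} --- are already in place, so the real content here is organizational. One cannot simply quote Corollary~\ref{cor:k+1 irred}(\ref{it1}) and Lemma~\ref{lemma:k irred} separately, since $k$-overlap closedness need not be preserved under intersecting two $k$-overlap closed classes; the point to check, and the only slightly delicate step, is that a single auxiliary hypergraph from Lemma~\ref{lemma:Steiner}, with the parameter $J$ enlarged to $N+3$, simultaneously rules out both the $(k+1)$-irreducible and the $k$-irreducible forbidden structures.
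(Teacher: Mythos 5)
Your proof is correct and takes essentially the same approach as the paper, whose entire argument is the one-line remark that the proofs of Corollary~\ref{cor:k+1 irred} and Lemma~\ref{lemma:k irred} ``straightforwardly combine'' to give $k$-overlap closedness, followed by an appeal to Theorem~\ref{theorem:Chernoff}. The single point you flag as delicate --- that one must run both exclusion arguments against a \emph{single} hypergraph from Lemma~\ref{lemma:Steiner} with $J=N+3$, rather than citing the two results as black boxes --- is exactly the content the paper's ``straightforwardly combine'' elides, and you have filled it in correctly.
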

\begin{proof}
    The proofs of Corollary \ref{cor:k+1 irred} and Lemma \ref{lemma:k irred} straightforwardly combine to show $\CC$ is \closed. The result then follows by Theorem \ref{theorem:Chernoff}.
\end{proof}

\begin{remark} \label{rem:VCn}
In some cases, we can weaken the hypotheses of Theorem \ref{thm:cre si} by allowing $\CC'$ to have faster growth rate. For example, if $\CC$ is the class of all $(k+1)$-uniform hypergraphs, then by Remark \ref{rem:noCon} we may expand by any $\CC'$ whose labeled growth rate is $O(e^{n^{k+1-\delta}})$ for some $\delta > 0$.
\end{remark}

We finish this section by giving classes of expansions where our Theorems can be applied.

\begin{fact}\label{factgrowth}
    Let $\mathcal{L}'$ be a finite relational language whose relations have arity $\leq k$. Then the labelled growth of all $\mathcal{L}'$-structures is $o(e^{n^{k+\delta}})$ for any $\delta>0$
\end{fact}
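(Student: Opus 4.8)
The plan is to count labelled $\mathcal{L}'$-structures on $n$ vertices directly and compare the total with $e^{n^{k+\delta}}$. Since $\mathcal{L}'$ is finite, say $\mathcal{L}' = \{R_1, \dots, R_s\}$ with $R_i$ of arity $a_i \leq k$, a labelled $\mathcal{L}'$-structure on $[n]$ is determined by choosing, for each $i$, a subset of the $a_i$-tuples from $[n]$ on which $R_i$ holds. The number of $a_i$-tuples is $n^{a_i} \leq n^k$, so the number of choices for the interpretation of $R_i$ is at most $2^{n^{a_i}} \leq 2^{n^k}$. Multiplying over the $s$ relations gives at most $2^{sn^k} = (e^{\log 2})^{sn^k}$ labelled $\mathcal{L}'$-structures on $[n]$.

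The key step is then the elementary asymptotic comparison: for any fixed $\delta > 0$ and any constant $c = s\log 2$ we have $c\, n^k = o(n^{k+\delta})$, hence $e^{c n^k} = o(e^{n^{k+\delta}})$. First I would fix an enumeration of $\mathcal{L}'$ and record the bound $|\mathrm{Struc}_{\mathcal{L}'}[n]| \leq 2^{sn^k}$; then I would observe that $\log\bigl(2^{sn^k}\bigr) = (s\log 2) n^k$ and that this is $o(n^{k+\delta})$ since $n^k / n^{k+\delta} = n^{-\delta} \to 0$; finally exponentiating gives $|\mathrm{Struc}_{\mathcal{L}'}[n]| = o(e^{n^{k+\delta}})$, which is exactly the claimed bound on the labelled growth.

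There is essentially no obstacle here; the only point requiring any care is bookkeeping about what ``labelled growth rate'' counts (structures on $[n]$ up to nothing, i.e. genuinely labelled), and making sure the crude bound $n^{a_i} \leq n^k$ (valid for $n \geq 1$) is applied uniformly. One could be slightly more precise and note that if the maximum arity is exactly $k$ then the growth is $2^{\Theta(n^k)}$, so the exponent $k$ in the statement is optimal among bounds of the form $e^{n^{\gamma}}$; but for the stated conclusion the crude upper bound suffices. This fact is then used to invoke Theorem \ref{theorem:Chernoff} and Theorem \ref{thm:cre si} with $\mathcal{C}'$ the class of all $\mathcal{L}'$-structures.
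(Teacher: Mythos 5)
Your proposal is correct and follows essentially the same route as the paper: the paper also bounds the count by $2^{mn^k}$ (treating all relations as $k$-ary) and leaves the comparison with $e^{n^{k+\delta}}$ implicit, whereas you spell out that $(s\log 2)n^k = o(n^{k+\delta})$. The extra explicitness is harmless and the argument matches.
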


\begin{proof}
    Take $m$ the size of $\mathcal{L}'$. Without loss of generality, we assume that all relations in $\mathcal{L}$ are of arity $k$. There are $n^k$ tuples of size $k$, each having $2$ possibilities for each element of $\mathcal{L}'$. Therefore, there are at most $2^{mn^k}$ $\mathcal{L}'$ structures of size $n$.
\end{proof}

Therefore, from Theorem \ref{theorem:Chernoff}  and Fact \ref{factgrowth}, we get

\begin{theorem}\label{thm:appliedforIKM}
    Let $k \geq 2$, and $\CC$ be a {\closed} hereditary class. Let $\CC'$ be a hereditary class of $\LL'$-structures where $\LL'$ consists of relations of arity $\leq k$. Then every consistent random expansion of $\CC$ by $\CC'$ is exchangeable.
\end{theorem}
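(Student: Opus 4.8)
The plan is to reduce directly to Theorem \ref{theorem:Chernoff}, whose two hypotheses are that $\CC$ be {\closed} and that $\CC'$ have labelled growth rate $O(e^{n^{k+\delta}})$ for every $\delta > 0$. The first hypothesis is part of the assumptions, so the only thing left to check is the growth-rate bound on $\CC'$.

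For this I would invoke Fact \ref{factgrowth}: since $\LL'$ is a finite relational language all of whose relations have arity at most $k$, writing $m = |\LL'|$ there are at most $2^{m n^k}$ many $\LL'$-structures on $[n]$, which is $o(e^{n^{k+\delta}})$ for every $\delta > 0$. As $\CC'$ is a subclass of all $\LL'$-structures, its labelled growth is bounded above by the same quantity, and in particular is $O(e^{n^{k+\delta}})$ for every $\delta > 0$. Theorem \ref{theorem:Chernoff} then applies verbatim and yields that every consistent random $\CC'$-expansion $\mu$ of $\CC$ is exchangeable.

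There is no genuine obstacle here: all of the combinatorial content sits upstream, in Theorem \ref{theorem:Chernoff} (and, before it, in Lemma \ref{lem:si} and the partial Steiner system construction of Lemma \ref{lemma:Steiner}). The only point worth flagging is that finiteness of $\LL'$ is essential for Fact \ref{factgrowth}: with infinitely many relations, or relations of unbounded arity, the counting bound $2^{m n^k}$ is lost and the reduction fails, so the statement is to be read with $\LL'$ finite.
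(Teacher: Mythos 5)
Your proposal is correct and is essentially identical to the paper's own proof, which likewise derives the theorem by combining Fact \ref{factgrowth} (the $2^{mn^k}$ counting bound for finite $\LL'$ of arity $\leq k$) with Theorem \ref{theorem:Chernoff}. Your remark that finiteness of $\LL'$ is needed for the counting bound is a fair reading of the paper's standing conventions.
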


Finally, we note that the results of this section can be applied locally to a given arity, or even to a pair of structures of a given arity. We illustrate with the following proposition, which gives some evidence that the conclusion of Theorem \ref{thm:cre si} might hold assuming only that $\CC$ has free amalgamation.

\begin{prop}\label{prop:edgecase}
    Let $\CC = \mathrm{Forb}(\FF)$ be a free amalgamation class whose relations have arity strictly greater than $k$. Let $\HH_1, \HH_2 \in \CC[k+1]$. Let $\CC'$ be a hereditary class of $\LL'$-structures with labelled growth rate $O(e^{n^{k+\delta}})$ for every $\delta > 0$. Then in every consistent random expansion $\mu$ of $\CC$ by $\CC'$, we have (in the notation of Definition \ref{CRE}) $\mathbb{P}_{H_1} = \mathbb{P}_{H_2}$.
\end{prop}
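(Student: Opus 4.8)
The plan is to apply Lemma \ref{lem:si} directly to the pair $H_1, H_2$ at arity $r = k+1$, which requires constructing, for every $\epsilon > 0$, a structure $\G \in \CC[n]$ together with families $\Theta_1, \Theta_2$ of embeddings of $H_1, H_2$ respectively into $\G$ satisfying the quantitative comparison inequality. Since $|H_1| = |H_2| = k+1$, a copy of $H_i$ in $\G$ is just a $(k+1)$-subset carrying the isomorphism type of $H_i$; the key simplification is that the ``no added relation contained in a single $\K$-hyperedge'' requirement of Definition \ref{def:closed}(3) becomes vacuous when $r = k+1$, because relations of $\CC$ have arity strictly greater than $k$, hence arity at least $k+1$, so the only relation that could sit inside a $(k+1)$-hyperedge is a $(k+1)$-ary one on exactly that hyperedge — and that is precisely the relational data we are pasting in, never something we add during completion. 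Thus for the arity $r = k+1$ we do not need $\CC$ to be $k$-overlap closed in full; we only need the hypergraph $\K$ of Lemma \ref{lemma:Steiner}, and the crucial point is that the completion step is automatic using free amalgamation of $\CC$.

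Concretely: first I would invoke Lemma \ref{lemma:Steiner} with this $r = k+1$ and with $J$ a bound large enough to exceed the sizes of the (finitely many relevant, or bounded-size) minimal forbidden structures one actually needs to avoid — though in fact, since we only care about the two specific structures $H_1, H_2$ of size $k+1$, and free amalgamation means all minimal forbidden structures are $2$-irreducible (Fact \ref{fact:2irrfree}), a cleaner route is: take $\K$ a partial Steiner $(k+1)$-system on $n$ vertices with $\geq C n^{k+\epsilon_0}$ hyperedges, no two meeting in more than $k$ points (condition (2) of Lemma \ref{lemma:Steiner}, which needs no condition (3)). Paste a copy of $H_1$ or $H_2$ (chosen however) into each $\K$-hyperedge. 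I then claim the resulting $\LL$-structure $G_0$ already lies in $\CC$: any putative forbidden substructure $F \in \FF$ is $2$-irreducible, so every pair of its vertices lies in a common relation; since all relations of $G_0$ are $(k+1)$-ary and supported on single $\K$-hyperedges, and two $\K$-hyperedges share at most $k < k+1$ vertices, the $2$-irreducibility forces all of $F$ into one $\K$-hyperedge, where it embeds into $H_1$ or $H_2 \in \CC$ — contradiction. So $G_0 = \G \in \CC[n]$ and no completion is needed at all; this is the point where free amalgamation does the work, replacing the more delicate Lemma \ref{lemma:k irred} argument.

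Then I would set $\Theta_i$ to be the set of pasting-embeddings used for $H_i$, arrange the coin flips so that each of $H_1, H_2$ is pasted into a constant fraction (say half) of the $\K$-hyperedges — this can be done deterministically, no probabilistic argument is needed since we just need existence of one good $\G$ — and run the counting estimate of Theorem \ref{theorem:Chernoff}. Actually the simplest packaging is to mirror the proof of Theorem \ref{theorem:Chernoff} verbatim but only at arity $r = k+1$: given $H' \in \CC'[k]$ and $G' \in \CC'[n]$, in Case 1 (few $\K$-hyperedges of $G'$ induce $H'$) both normalized counts are $\leq \epsilon$; in Case 2 the Chernoff bounds on $|\Theta_i|$ and on $N_{\Theta_i}(H_i^*, \G^*)$ — binomial with the same parameters up to the $|\mathrm{Aut}(H')|/(k+1)!$ factor common to both $i$ — force the difference below $\epsilon$ once $n$ is large, using the growth bound $O(e^{n^{k+\delta}})$ on $|\CC'[n]|$ in the final union bound over $H'$ and $G'$. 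Conclude $\mathbb{P}_{H_1}(H_1 * H') = \mathbb{P}_{H_2}(H_2 * H')$ for all $H' \in \CC'[k+1]$... and more precisely for all $H'$, giving $\mathbb{P}_{H_1} = \mathbb{P}_{H_2}$. The main obstacle is purely bookkeeping: checking that Definition \ref{def:closed}(3)'s completion clause genuinely trivializes at $r = k+1$ under free amalgamation, i.e. that pasting into a partial Steiner system cannot create a forbidden $2$-irreducible configuration — everything else is a specialization of arguments already in the excerpt.
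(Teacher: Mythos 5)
There is a genuine gap at the heart of your ``cleaner route'': the claim that $2$-irreducibility of a forbidden $F$ forces all of $F$ into a single $\K$-hyperedge does not follow from the fact that two $\K$-hyperedges meet in at most $k$ points. Two-irreducibility only gives you, for each \emph{pair} of vertices of $F$, \emph{some} hyperedge containing that pair; different pairs may be covered by different hyperedges. Concretely, take $k=2$ and $\CC$ the class of tetrahedron-free $3$-hypergraphs, with $H_1$ the single $3$-hyperedge on $3$ vertices. A partial Steiner system built as in Lemma \ref{lemma:Steiner} \emph{without} condition (3) will, with high probability, contain four hyperedges covering all $\binom{4}{3}$ triples of some $4$-set (each pair of these hyperedges meets in exactly $2=k$ points, so condition (2) does not exclude them, and the expected count $\approx n^{4\epsilon}$ diverges). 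Pasting $H_1$ into all four produces a tetrahedron, which cannot be repaired by \emph{adding} relations, so $G_0$ is not completable into $\CC$. Your fallback of invoking condition (3) with large $J$ also does not suffice in general: $\FF$ may contain $2$-irreducible structures of unbounded size, and even for bounded ones the vertex bound $jr-k(j-1)-1$ in Lemma \ref{lemma:Steiner}(3) is too tight to exclude all relevant coverings (Remark \ref{rem:4aryExample} gives exactly such an example, with three hyperedges on $6 > 5$ vertices).

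The paper's proof avoids this by abandoning the sparse random hypergraph entirely at $r=k+1$ and taking $\K$ to be the \emph{complete $(k+1)$-partite} $(k+1)$-uniform hypergraph with $n$ vertices per part. This has $n^{k+1}$ hyperedges, any two meeting in at most $k$ points, and — this is the step your argument is missing — the pasted structure is $(k+1)$-partite (every relation lies on a transversal of the parts), a property inherited by all substructures; a $2$-irreducible $F$ with $|F|>k+1$ cannot be properly $(k+1)$-colored, so no forbidden configuration can appear. Your reduction to verifying $k$-overlap closedness only at $r=k+1$, and your observation that the Chernoff/union-bound machinery of Theorem \ref{theorem:Chernoff} then applies verbatim, are both correct and match the paper; but the construction of $\K$ itself needs the multipartite structure, not a partial Steiner system. (Also note that your intermediate remark that the pasting can be done deterministically is misleading — the random pasting is what lets the union bound over all $G'\in\CC'[n]$ go through — though you effectively retract this when you say to mirror the proof of Theorem \ref{theorem:Chernoff}.)
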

\begin{proof}
    It suffices to show that $\CC$ satisfies the conditions of being $k$-overlap closed only for $r = k+1$, rather than for every $r > k$. Given $F \in \FF$ with relations only of arity $k+1$, if $|F| \neq k+1$ then by 2-irreducibility $F$ is not $(k+1)$-partite, i.e. the points of $F$ cannot be colored with $k+1$ colors so that no two points in a tuple belonging to some relation receive the same color. Thus we may take $\mathbb K$ to be the complete $(k+1)$-uniform $(k+1)$-partite hypergraph with $n$ vertices in each part.
\end{proof}

\begin{remark}\label{rem:ipk} For those interested in higher-arity model theoretic properties: one can prove the conclusion of Proposition \ref{prop:edgecase} also for a homogeneous $(k+1)$-hypergraph or $(k+1)$-hypertournament (in the sense of \cite{cherlin2021ramsey}) whose relation has $\mathrm{IP}_k$ by \cite[Proposition 5.2]{ndependence}. For $k>1$, we only mention $\mathrm{IP}_k$ in passing in Question \ref{q:ipk}, and refer the reader to \cite{ndependence} for more on this property. The independence property $\mathrm{IP}$ (i.e. $\mathrm{IP}_1)$ is given in Definition \ref{def:ip}.
\end{remark}


\subsection{Some exchangeable invariant random expansions}\label{sec:applications}

In this section, we give some explicit applications of Theorem \ref{thm:cre si} to certain random expansions of the structures from Example \ref{ex:hypergraphs}. We also  discuss how results from exchangeability theory, and in particular the Aldous-Hoover Theorem and the work of Ackerman, Freer, and Patel \cite{AFP}, can be used to characterise these expansions and provide interesting examples. These results will have immediate applications to the study of invariant Keisler measures in Subsection \ref{classtime}, where invariant random expansions by lower arity relations, often with certain forbidden configurations, naturally arise.\\


\begin{notation} Writing $\mathcal{P}([k])$ for the set of subsets of $[k]$, let $f:[0,1]^{\mathcal{P}([k])}\to\{0,1\}$ be a function. We say that $f$ is \textbf{symmetric} if for any permutation of $[k]$, $\tau\in S_k$, and for all $(a_I)_{I\in\mathcal{P}([k])}$, 
\[f((a_I)_{I\in\mathcal{P}([k])})=f((a_{\tau I})_{I\in\mathcal{P}([k])}),\]
where for $I\subseteq [k]$, $\tau I$ is the subset of $[k]$ obtained by permuting the elements of $I$ by $\tau$.
\end{notation}

There is an easy way to build exchangeable $k$-hypergraphs from a symmetric Borel function $f:[0,1]^{\mathcal{P}([k])}\to\{0,1\}$. Let $(\xi_I\vert I\subseteq \mathbb{N}, |I|\leq k)$ be i.i.d. $\mathrm{Uniform}[0,1]$ random variables. Consider the Borel probability measure $\mu$ on the space of $k$-ary hypergraphs with relation $R$, obtained by setting, for $A\in[\mathbb{N}]^k$,
\begin{equation}\label{AH}
    A\in R \text{ if and only if } f((\xi_I)_{I\in\mathcal{P}(A)})=1.
\end{equation}
It is easy to see that $\mu$ is indeed exchangeable. The Aldous-Hoover theorem tells us that any exchangeable hypergraph is obtained in this way.

\begin{fact}[Aldous-Hoover Theorem {\cite{AldousT, HooverT}}]\label{AldousHoover}
Let $\mu'$ be an exchangeable $k$-hypergraph. Then, there is a symmetric Borel function $f:[0,1]^{\mathcal{P}([k])}\to\{0,1\}$ such that $\mu'$ has the same distribution as the exchangeable $k$-hypergraph $\mu$ built according to (\ref{AH}).
\end{fact}

The standard construction of the random graph by tossing coins for each pair of vertices in $\mathbb{N}$ given in Example \ref{ex:IREs} (a) can be seen as being exactly of this form. Some technical twists are needed to represent arbitrary exchangeable structures (where the relations may not be symmetric or injective). We refer the reader to \cite[S.2.1]{CraneTown} for a discussion of these details (see also \cite[Section 2.4]{AFKrP}). However, all exchangeable structures have a representation essentially of this form, which we call an Aldous-Hoover-Kallenberg representation.\\

As we just mentioned, there is an exchangeable random graph concentrating on the isomorphism type of $\mathcal{R}_2$. It was much harder to prove that there are exchangeable random graphs concentrating on the isomorphism type of the generic $K_n$-free graphs \cite{PetrovVershik}. The isomorphism type of a given countable $\mathcal{L}$-structure is a Borel set invariant under the action of $S_\infty$ on $\mathrm{Struc}_{\mathcal{L}'}(\mathbb{N})$. So, it makes sense to ask for which countable structures we can actually find an exchangeable structure concentrating on their isomorphism type. This was explored in the work of Ackerman, Freer, and Patel \cite{AFP} who characterise when an exchangeable structure concentrates on a given isomorphism type:

\begin{fact}[{\cite{AFP}}]\label{AFPfact} Let $\mathcal{N}$ be a countable $\mathcal{L}'$-structure. Then, there is an exchangeable $\mathcal{L}'$-structure concentrating on the isomorphism type of $\mathcal{N}$ if and only if $\mathcal{N}$ has trivial group-theoretic definable closure, i.e. for every finite $A\subseteq N$, the points of $A$ are the only points of $N$ fixed by the pointwise stabilizer of $A$, $\mathrm{Aut}(\mathcal{N}/A)$.
\end{fact}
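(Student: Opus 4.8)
The plan is to handle the two implications separately, proving necessity by a short measure\-/theoretic argument and sufficiency by a sampling construction in the spirit of Petrov--Vershik; the latter is the substantial part and is where triviality of definable closure is genuinely used.

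\emph{Necessity.} Suppose $\mu$ is an $S_\infty$-invariant Borel probability measure on $\mathrm{Struc}_{\mathcal{L}'}(\mathbb{N})$ concentrating on the isomorphism type of $\mathcal{N}$, and, towards a contradiction, that $\mathcal{N}$ has nontrivial definable closure; fix the least $n$ for which some $n$-element subset of $N$ has nontrivial $\mathrm{dcl}$. For $M$ on $\mathbb{N}$ with $M \cong \mathcal{N}$ and $B \subseteq \mathbb{N}$ finite, membership in $\mathrm{dcl}^M(B)$ is a Borel condition on $M$, being determined by the $L_{\omega_1\omega}$-types (equivalently the $\mathrm{Aut}(M)$-orbits) of tuples of $M$. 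Since $\mu$-almost every $M$ is isomorphic to $\mathcal{N}$ and hence has some $n$-set on which $\mathrm{dcl}^M$ is nontrivial, countable additivity over all $n$-subsets of $\mathbb{N}$ yields one such set $B$ with $\mu(\mathrm{dcl}^M(B) \supsetneq B) > 0$, and by $S_\infty$-invariance we may take $B = \{1,\dots,n\}$; restricting further to the still-positive-measure event $E$ on which moreover $|\mathrm{dcl}^M(\{1,\dots,n\}) \setminus \{1,\dots,n\}| = m$ for a fixed finite $m \geq 1$, the set $D(M) := \mathrm{dcl}^M(\{1,\dots,n\}) \setminus \{1,\dots,n\}$ is a nonempty size-$m$ subset of $\{n+1,n+2,\dots\}$. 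For every $j > n$ the transposition $(j \ j{+}1)$ fixes $\{1,\dots,n\}$ pointwise and preserves $E$, so $\mathbb{P}_\mu(j \in D(M) \mid E)$ does not depend on $j > n$; but these probabilities sum over $j > n$ to $\mathbb{E}_\mu(|D(M)| \mid E) = m$, a finite positive number, which no constant sequence attains. Contradiction.

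\emph{Sufficiency.} Assume $\mathcal{N}$ has trivial definable closure. The goal is to realize the desired exchangeable structure as the law of a random countable structure obtained by i.i.d.\ sampling from a single ``continuum-sized'' Borel $\mathcal{L}'$-structure. Concretely, one constructs a standard Borel probability space $(\Omega,\nu)$ and a Borel $\mathcal{L}'$-structure $\mathcal{B}$ on $\Omega$ such that (i) every finite induced substructure of $\mathcal{B}$ embeds into $\mathcal{N}$, and (ii) $\mathcal{B}$ is generic relative to $\mathcal{N}$: for $\nu^{\mathbb{N}}$-almost every $(\omega_i)_{i<\omega}$, the structure induced on $\{\omega_i : i<\omega\}$ is isomorphic to $\mathcal{N}$. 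Given such a $\mathcal{B}$, let $\mu$ be the distribution of the random structure on $\mathbb{N}$ whose quantifier-free type on $(i_1,\dots,i_r)$ is that of $(\omega_{i_1},\dots,\omega_{i_r})$ in $\mathcal{B}$; then $\mu$ is $S_\infty$-invariant because the $\omega_i$ are i.i.d., and it concentrates on the isomorphism type of $\mathcal{N}$ by (ii). To build $\mathcal{B}$ one first replaces $\mathcal{L}'$ by the canonical relational language adding a predicate for each $\mathrm{Aut}(\mathcal{N})$-orbit of tuples, in which $\mathcal{N}$ is homogeneous and $\mathrm{Age}(\mathcal{N})$ is a Fra\"iss\'e class; this changes neither the isomorphism type nor the space of exchangeable structures. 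Triviality of $\mathrm{dcl}$ enters precisely here: it is equivalent to saying that every one-point extension $\mathbf{A}b$ of a finite substructure $\mathbf{A}$ of $\mathcal{N}$ with $b\notin A$ can be amalgamated over $\mathbf{A}$ with a disjoint copy of itself keeping the two copies of $b$ distinct. One then builds $(\Omega,\nu)$ and $\mathcal{B}$ by an iterated ``blow up and randomise'' procedure along the finite substructures of $\mathcal{N}$, resolving each point of $\mathcal{N}$ into a positive-measure set of copies and using independent auxiliary $\mathrm{Uniform}[0,1]$ labels on vertices to choose, consistently, among the several one-point extensions a given finite configuration admits in $\mathcal{N}$. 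Trivial $\mathrm{dcl}$ guarantees this choice is never forced (no fresh point is ever compelled to coincide with one already present), and homogeneity lets a back-and-forth argument show that an i.i.d.\ sample realises the same orbit structure as $\mathcal{N}$, hence is isomorphic to it by Scott analysis.

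\emph{Main obstacle.} The delicate step is the construction of $\mathcal{B}$: producing a single Borel structure that simultaneously avoids all forbidden configurations of $\mathcal{N}$ and is rich enough that an i.i.d.\ sample is almost surely isomorphic to $\mathcal{N}$ (not merely embeddable in, or elementarily equivalent to, it). Everything hinges on the trivial-$\mathrm{dcl}$ hypothesis being exactly what removes the sole obstruction to this ``disjoint'' one-point amalgamation; the remaining verifications are bookkeeping.
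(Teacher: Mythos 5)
The paper does not actually prove this statement: it is imported as a cited fact from \cite{AFP}, so what you have written is a reconstruction of the Ackerman--Freer--Patel theorem itself, and your two-part strategy (an invariance/counting argument for necessity, a Petrov--Vershik-style Borel sampling construction for sufficiency) is indeed the strategy of that paper. However, your necessity argument has a concrete gap. You condition on the event that $\lvert \mathrm{dcl}^M(\{1,\dots,n\})\setminus\{1,\dots,n\}\rvert = m$ for some fixed finite $m\geq 1$ and assert this still has positive measure, but nothing guarantees that the group-theoretic definable closure of a finite set is finite: for $\mathcal{N}=(\mathbb{Z},\mathrm{succ})$ the pointwise stabilizer of a single point is trivial, so $D(M)$ is the entire infinite complement, no finite $m$ occurs with positive probability, and your contradiction never gets off the ground --- yet this is exactly a structure the theorem must exclude. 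The standard repair is to fix a single witness $b\in\mathrm{dcl}(A)\setminus A$ and, for each $j>n$, consider the event that $(1,\dots,n,j)$ realizes the $\mathrm{Aut}(N)$-orbit of $(\overline{a},b)$: conditionally on $(1,\dots,n)$ realizing the orbit of $\overline{a}$ (positive measure by your countable-additivity step), exactly one such $j$ exists because $b$ is fixed by $\mathrm{Aut}(N/\overline{a})$, so the constant conditional probabilities sum to $1$ over infinitely many $j$, which is the contradiction.

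The sufficiency direction is a faithful outline of the AFP approach but only an outline. The entire mathematical content of that direction is the construction of the Borel structure $\mathcal{B}$ with the property that an i.i.d.\ sample is almost surely \emph{isomorphic} to $\mathcal{N}$ --- not merely embeddable in it or $L_{\omega_1\omega}$-equivalent to it --- and this is where \cite{AFP} spend most of their effort, carrying out an inductive refinement along the Scott analysis and verifying almost-sure isomorphism by a measure-theoretic back-and-forth. Your identification of trivial definable closure with the ``duplicable one-point extension'' property in the canonical orbit language is the right pivot, but declaring the remaining verifications ``bookkeeping'' does not discharge them; as written this half is a plan rather than a proof.
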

Note that if $\mathcal{N}$ is homogeneous, trivial group-theoretic definable closure corresponds to trivial algebraic closure. So, there will be an exchangeable structure concentrating on the isomorphism type of a homogeneous structure $\mathcal{N}$ if and only if $\mathrm{Age}(\mathcal{N})$ has the disjoint amalgamation property.\\

Hence, under the hypotheses of Theorem \ref{theorem:Chernoff}, Fact \ref{AFPfact} yields a description of when an invariant random expansion of $\mathcal{M}$ concentrates on the isomorphism type of a given expansion $\mathcal{M}^*$:

\begin{theorem} Let $\mathcal{M}$ be a {\closed} $\mathcal{L}$-structure and $\mathcal{M}^*$ be an expansion of $\mathcal{M}$ by $\mathcal{L}'$-relations so that the growth-rate of structures in $\mathrm{Age}(\MM^*)\upharpoonright_{\mathcal{L}'}$ is $O(e^{n^{k+\delta}})$ for all $\delta>0$. Then, there is an invariant random expansion of $\mathcal{M}$ concentrating on the isomorphism type of $M^*$ if and only if $\mathcal{M}^*=\mathcal{M}*\mathcal{N}$ and $\mathrm{Age}(\MM^*)=\mathrm{Age}(\MM)*\mathrm{Age}(\mathcal{N})$, where $\mathcal{N}$ is an $\mathcal{L}'$-structure with trivial group-theoretic definable closure. 
\end{theorem}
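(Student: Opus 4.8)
The plan is to combine Theorem~\ref{theorem:Chernoff} with Fact~\ref{AFPfact}, noting that an invariant random expansion concentrating on (the isomorphism type of) a single structure $M^*$ must \emph{a fortiori} be exchangeable under our hypotheses, and then transport the $S_\infty$-invariant analysis through the reduct map. First I would observe that if $\mu\in\mathrm{IRE}_{\LL'}(M)$ concentrates on the isomorphism type of $M^*$, then since $M$ is $k$-overlap closed and $\mathrm{Age}(M^*)\upharpoonright_{\LL'}$ has labelled growth $O(e^{n^{k+\delta}})$ for every $\delta>0$, Theorem~\ref{theorem:Chernoff} (via the $\mathrm{CRE}$/$\mathrm{REX}$/$\mathrm{IRE}$ identifications for homogeneous $M$ recorded in Subsection~\ref{sec:randomexp}) gives that $\mu$ is exchangeable, i.e. $S_\infty$-invariant on $\mathrm{Struc}_{\LL'}(\mathbb{N})$. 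So the existence question reduces to: for which $M^*$ is there an $S_\infty$-invariant measure concentrating on the isomorphism type of $M^*$, subject to the constraint that its $\LL$-reduct is (a.s.) isomorphic to $M$?

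The next step is to see that such a measure, if it exists, forces $M^*$ to split as a superposition $M*\mathcal{N}$ with $\mathrm{Age}(M^*)=\mathrm{Age}(M)*\mathrm{Age}(\mathcal{N})$. Indeed, an exchangeable structure on the isomorphism type of $M^*$ is in particular an exchangeable structure on the isomorphism type of its $\LL$-reduct; but the $\LL$-reduct of $M^*$ is required to be isomorphic to the homogeneous structure $M$, and $M$ being homogeneous, its age is a Fra\"{i}ss\'{e} class with (by hypothesis) the disjoint amalgamation property is \emph{not} automatic --- however we do not need that: what we need is that the $\LL'$-part $\mathcal{N}:=M^*\upharpoonright_{\LL'}$, together with $M$, recovers $M^*$ by superposition, which is immediate since $\LL^*=\LL\sqcup\LL'$ is a disjoint union of relational signatures, so every $\LL^*$-structure is the superposition of its two reducts. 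The content is in the age equality: $\mathrm{Age}(M^*)=\mathrm{Age}(M)*\mathrm{Age}(\mathcal{N})$ says that any finite $\LL$-substructure of $M$ and any finite $\LL'$-substructure of $\mathcal{N}$ on the same vertex set can be superposed \emph{inside} $M^*$ --- and this is exactly what exchangeability (equivalently, the consistent random expansion being a \emph{product}-type placement) will buy us, since an exchangeable measure concentrating on a single isomorphism type must place, for each finite $n$, positive mass on every pair $(A,A')$ with $A\in\mathrm{Age}(M)[n]$, $A'\in\mathrm{Age}(\mathcal{N})[n]$ that is \emph{realized} in $M^*$, and exchangeability forces all of $\mathrm{Age}(M)[n]\times\mathrm{Age}(\mathcal{N})[n]$ (restricted to realized types) to collapse appropriately. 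I would spell this out via the correspondence in Remark~\ref{Kolmocorr}: $\mu$ induces $\P_A\in\DD_A^*$ for each finite $A$, and $S_\infty$-invariance plus concentration on one iso-type pins down these distributions so that the $\LL'$-marginal is itself exchangeable and the two coordinates decouple at the level of ages.

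For the converse direction, suppose $M^*=M*\mathcal{N}$ with $\mathrm{Age}(M^*)=\mathrm{Age}(M)*\mathrm{Age}(\mathcal{N})$ and $\mathcal{N}$ has trivial group-theoretic definable closure. By Fact~\ref{AFPfact} there is an exchangeable $\LL'$-structure $\nu$ concentrating on the isomorphism type of $\mathcal{N}$. Now since $M$ is homogeneous and $\omega$-categorical (being the Fra\"{i}ss\'{e} limit of a class in a finite, or at least reasonable, relational language --- here one uses that $k$-overlap closedness is stated for hereditary classes and we work with $M=\mathrm{Flim}(\mathrm{Age}(M))$), fix any enumeration of $M$ on $\mathbb{N}$ and push $\nu$ forward along it to get a measure on $\mathrm{Struc}_{\LL'}(M)$; the superposition $M*(-)$ then sends this to a measure on $\mathrm{Struc}_{\LL^*}(\mathbb{N})$ whose $\LL$-reduct is constantly $M$ and whose $\LL'$-reduct is $\nu$-distributed. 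One checks this measure is $\mathrm{Aut}(M)$-invariant (since $\nu$ is $S_\infty$-invariant, hence in particular $\mathrm{Aut}(M)$-invariant after transport), lives in $\mathrm{IRE}(M,\mathrm{Age}(M^*))$ because $\mathrm{Age}(M^*)=\mathrm{Age}(M)*\mathrm{Age}(\mathcal{N})$ guarantees every finite superposition that arises is genuinely in $\mathrm{Age}(M^*)$, and concentrates on the isomorphism type of $M^*=M*\mathcal{N}$ because $\nu$ concentrates on that of $\mathcal{N}$ and the $\LL$-part is literally $M$.

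The main obstacle I expect is the forward direction's age-decomposition claim: extracting $\mathrm{Age}(M^*)=\mathrm{Age}(M)*\mathrm{Age}(\mathcal{N})$ from mere exchangeability requires arguing that an exchangeable measure concentrating on a single iso-type of $M^*$ cannot ``correlate'' the $\LL$- and $\LL'$-parts in a way that omits some superposition $A*A'$ that one would expect to see --- one must rule out that, say, a particular $\LL'$-pattern only ever occurs over a particular $\LL$-pattern. Here the cleanest route is to invoke the Aldous--Hoover--Kallenberg representation (Fact~\ref{AldousHoover} and the AHK extension discussed after it) applied to the full exchangeable $\LL^*$-structure: the reducts to $\LL$ and $\LL'$ are then governed by the same i.i.d.\ vertex (and lower-order) labels, and concentration on a single iso-type in each coordinate forces the representing functions to be essentially ``rigid'', from which the product structure on ages follows. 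I would present this via the $\P_A$-distributions rather than the full representation if a direct combinatorial argument is short enough; otherwise the AHK route is the safe one.
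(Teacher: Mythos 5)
Your overall strategy --- derive exchangeability from Theorem~\ref{theorem:Chernoff} and then invoke Fact~\ref{AFPfact} --- is exactly the derivation the paper intends: the statement appears there with no further proof, introduced by the sentence that ``under the hypotheses of Theorem~\ref{theorem:Chernoff}, Fact~\ref{AFPfact} yields'' the description. Your forward direction is essentially sound: the $\LL'$-marginal of an invariant random expansion concentrating on the isomorphism type of $M^*$ is $S_\infty$-invariant by Theorem~\ref{theorem:Chernoff} (via the $\mathrm{CRE}$/$\mathrm{IRE}$ identification), it concentrates on the isomorphism type of $\mathcal{N}:=M^*\upharpoonright_{\LL'}$ because any $\LL^*$-isomorphism restricts to an $\LL'$-isomorphism of reducts, so Fact~\ref{AFPfact} gives trivial group-theoretic definable closure; and the age equality follows from the short positivity argument with the $\P_A$'s that you gesture at (exchangeability makes $\P_{\mathbf{A}}(\mathbf{A}*A')$ independent of $\mathbf{A}$, and this common value is positive for every $A'\in\mathrm{Age}(\mathcal{N})$ since almost every sample contains a copy of $A'$ somewhere). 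The detour through the Aldous--Hoover--Kallenberg representation is unnecessary.

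The converse direction, however, has a genuine gap at its final step. You transport the exchangeable measure $\nu$ concentrating on the isomorphism type of $\mathcal{N}$ to $\mathrm{Struc}_{\LL'}(M)$ and conclude that the result ``concentrates on the isomorphism type of $M^*=M*\mathcal{N}$ because $\nu$ concentrates on that of $\mathcal{N}$ and the $\LL$-part is literally $M$.'' This is a non-sequitur: an $\LL'$-isomorphism from a sample $N'$ to $\mathcal{N}$ need not carry $M$ to $M$, so $N'\cong\mathcal{N}$ does not yield $M*N'\cong M*\mathcal{N}$. Concretely, take $M=\mathcal{R}_2$ (which is $1$-overlap closed), $\LL'=\{P\}$ unary, and let $M^*$ be an expansion in which $P$ holds on two vertices $a,b$ and on all of their common neighbours, $P$ is generic on the set of vertices adjacent to neither $a$ nor $b$, and $\neg P$ holds elsewhere. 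Then $\mathcal{N}$ has trivial definable closure, $\mathrm{Age}(M^*)=\mathrm{Age}(M)*\mathrm{Age}(\mathcal{N})$, yet $M^*$ is not the generic superposition (no common neighbour of $a,b$ satisfies $\neg P$), while the transported de Finetti measure produces the generic superposition almost surely; indeed no ergodic exchangeable unary expansion can concentrate on the orbit of this $M^*$. So the right-hand conditions do not by themselves determine the isomorphism type of the superposition, and an additional argument is needed identifying exactly which $M^*$ the AFP measure charges (or the statement must be read as selecting that distinguished superposition). Be aware that the paper does not supply this missing step either, so you cannot simply defer to it; as written, your converse does not close.
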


 This description is especially important as \cite[Theorem 1.5]{IREs} proves that for an $\omega$-categorical structure $M$ with no algebraicity, weak eliminations of imaginaries, and $\mathrm{Aut}(\MM)\neq S_\infty$, every ergodic invariant random expansion of $M$ either concentrates on an orbit (i.e. the isomorphism type of some expansion of $M$) or is essentially free in the sense that it assigns measure $0$ to every orbit of $\mathrm{Aut}(\MM)$ on $\mathrm{Struc}_{\mathcal{L}'}(M)$.\footnote{Interestingly, this fails for $\mathrm{Aut}(\MM)=S_\infty$ \cite[Example 3.5]{AFKwP}.}\\


Let us now discuss some applications of Theorem \ref{theorem:Chernoff}. Firstly, from Theorem \ref{thm:appliedforIKM}, we can see that the invariant random expansions of many homogeneous $r$-hypergraphs by $(r-1)$-hypergraphs are exchangeable. 

\begin{corollary}\label{cor:irehyper}
Let $\mathcal{H}$ be an $r$-uniform hypergraph from Example \ref{ex:hypergraphs}, other than a universal homogeneous parity $k$-hypergraph. Then $\mathcal{H}$ is $(r-1)$-overlap closed, and so invariant random expansions by the class of all $(r-1)$-uniform hypergraphs are exchangeable.
\end{corollary}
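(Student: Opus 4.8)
The plan is to verify the hypotheses of the results already proved for each hypergraph $\mathcal{H}$ from Example \ref{ex:hypergraphs} that is not a kay-graph, and then invoke Theorem \ref{thm:cre si} (or its corollary Theorem \ref{thm:appliedforIKM}). Concretely, I would set $k = r-1$, so that the language of $\mathcal{H}$ is $(k+1)$-ary (a single $r$-ary hyperedge relation), and I want to show $\mathrm{Age}(\mathcal{H})$ is $(r-1)$-overlap closed; then since the class of all $(r-1)$-uniform hypergraphs is finite relational of arity $\leq r-1 = k$, Fact \ref{factgrowth} gives it labelled growth $o(e^{n^{k+\delta}})$ for all $\delta > 0$, and Theorem \ref{theorem:Chernoff} (or Theorem \ref{thm:appliedforIKM}) yields exchangeability.

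So the core task is: each such $\mathcal{H}$ is $(r-1)$-overlap closed. I would go through the list. For $\mathcal{R}_r$, the age is all finite $r$-uniform hypergraphs, which has disjoint $n$-amalgamation for all $n$ (Remark \ref{rem:ndaphyper}), so $(r-1)$-overlap closedness follows from Corollary \ref{cor:k+1 irred}\ref{it3}. For $\mathcal{H}_n^r = \mathrm{Forb}(\mathcal{K}_n^r)$ and $\mathcal{H}_{r+1}^- = \mathrm{Forb}(\mathcal{K}_{r+1}^-)$, the minimal forbidden structure is a single complete-or-almost-complete $r$-uniform hypergraph on $n$ (resp. $r+1$) vertices, which is plainly $(r-1+1) = r$-irreducible: any $r$ of its vertices lie in a hyperedge; hence Corollary \ref{cor:k+1 irred}\ref{it1} applies with $k = r-1$. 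Wait—I should double-check: $(k+1)$-irreducible with $k = r-1$ means $r$-irreducible, i.e. every $r$-subset is contained in some relation; for $\mathcal{K}_n^r$ every $r$-subset \emph{is} a hyperedge, and for $\mathcal{K}_{r+1}^-$ every $r$-subset of the $r+1$ vertices is one of its $r$ hyperedges, so both are $r$-irreducible. For $\mathcal{P}_n^r$, the forbidden structure $P_n^r$ is exactly the case treated in Corollary \ref{cor:k+1 irred}\ref{it2} (with $k+1 = r$), so that class is $(r-1)$-overlap closed directly. This exhausts all structures in Example \ref{ex:hypergraphs} other than $\mathcal{R}_2$-type graphs (where $r=2$, $k=1$, handled the same way by \ref{it1} since $K_n$ and forbidden graphs there are $2$-irreducible) and the kay-graphs $\mathcal{G}_k$, which are explicitly excluded (and indeed are \emph{not} $k$-overlap closed, as discussed around Subsection \ref{sec:kaygraphs}).

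Having established $(r-1)$-overlap closedness in each case, the conclusion is immediate: by Theorem \ref{theorem:Chernoff} with $\CC = \mathrm{Age}(\mathcal{H})$ and $\CC'$ the class of all finite $(r-1)$-uniform hypergraphs (whose labelled growth is $O(e^{n^{(r-1)+\delta}})$ for every $\delta > 0$ by Fact \ref{factgrowth}), every consistent random $\CC'$-expansion of $\CC$ is exchangeable; and since $\mathcal{H}$ is homogeneous, consistent random expansions coincide with invariant random expansions, so every invariant random expansion of $\mathcal{H}$ by an $(r-1)$-uniform hypergraph is exchangeable.

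The main obstacle is not any single deep step but rather the bookkeeping of matching each structure in the list to the right clause of Corollary \ref{cor:k+1 irred} and confirming the irreducibility degree lines up with $k = r-1$; there is also a small subtlety worth a sentence, namely that $\mathcal{H}_n^r$'s minimal forbidden family could in principle contain more than $\mathcal{K}_n^r$ after taking the minimal representation, but since $\mathcal{K}_n^r$ is itself the unique minimal forbidden structure (every proper substructure embeds, being a smaller complete hypergraph or having a missing edge), this is fine. No genuinely new combinatorics is needed here—the work was all in Lemma \ref{lemma:Steiner}, Lemma \ref{lemma:k irred}, and Corollary \ref{cor:k+1 irred}.
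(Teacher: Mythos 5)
Your overall strategy coincides with the paper's (implicit) proof: set $k=r-1$, verify that $\mathrm{Age}(\mathcal{H})$ is $(r-1)$-overlap closed case by case using Corollary \ref{cor:k+1 irred} and Lemma \ref{lemma:k irred}, then conclude via Theorem \ref{theorem:Chernoff} together with Fact \ref{factgrowth} and the identification of invariant random expansions with consistent random expansions for homogeneous structures. The cases $\mathcal{R}_r$, $\mathcal{H}_n^r$, $\mathcal{P}_n^r$ and the $r=2$ graphs are handled correctly.

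There is, however, a genuine error in your treatment of $\mathcal{H}_{r+1}^-$. You claim that ``every $r$-subset of the $r+1$ vertices is one of its $r$ hyperedges,'' but an $(r+1)$-element vertex set has $\binom{r+1}{r}=r+1$ many $r$-subsets while $\mathcal{K}_{r+1}^-$ has only $r$ hyperedges, so exactly one $r$-subset carries no relation (for $r=3$ this is the tetrahedron minus one face). Hence $\mathcal{K}_{r+1}^-$ is \emph{not} $r$-irreducible, and clause \ref{it1} of Corollary \ref{cor:k+1 irred} does not apply to this class. The conclusion still holds, but you must route this case through the other clause of Theorem \ref{thm:cre si}: $\mathcal{K}_{r+1}^-$ \emph{is} $(r-1)$-irreducible (every $(r-1)$-subset extends to two $r$-subsets of the vertex set, at most one of which is the missing hyperedge) and has bounded size, so Lemma \ref{lemma:k irred} applies with $k=r-1\geq 2$, which is legitimate since $\mathcal{H}_{r+1}^-$ is only defined for $r\geq 3$. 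Alternatively, one can verify the hypothesis of Lemma \ref{lem:gettingclosed} directly: take $A$ to be any hyperedge of $\mathcal{K}_{r+1}^-$; the single vertex $b$ outside $A$ lies in $r-1\geq 2$ further hyperedges, each meeting $A$ in exactly $r-1=k$ points, and these intersections are pairwise distinct. This is precisely the distinction the paper records in Remark \ref{moreoncorirehyper}, where the forbidden structures are allowed to be either $r$-irreducible or a finite set of $(r-1)$-irreducible ones.
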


\begin{remark}\label{moreoncorirehyper} By Theorem \ref{thm:cre si}, the above corollary really holds for any $r$-uniform homogeneous $r$-hypergraph whose age is of the form $\mathrm{Forb}(\mathcal{F})$, where $\mathcal{F}$ is either a set of $r$-irreducible structures, or a finite set of $r-1$-irreducible structures. This covers all finitely bounded homogeneous $3$-hypergraphs with free amalgamation.
\end{remark}

\begin{remark} The work of Crane and Towsner \cite{CraneTown} and Ackerman \cite{AckerAut} already yields exchangeability for invariant random  expansions by $(r-1)$-uniform hypergraphs of the random $r$-hypergraph $\mathcal{R}_r$. Our results are novel for all of the other hypergraphs in Corollary \ref{cor:irehyper} and all of the non-random hypergraphs mentioned in Remark \ref{moreoncorirehyper}. In particular, they cannot be recovered from the weak representation theorem in \cite[Theorem 3.15]{CraneTown}. This can be seen by considering the universal homogeneous parity $k$-hypergraph, which has non-exchangeable invariant random graph expansions (see \cite[Example 3.9]{CraneTown} and Subsection \ref{sec:kaygraphs}). 
\end{remark}

\begin{remark} Even in the case of $\mathcal{R}_r$, we obtain some results not clear from the work of Crane and Towsner and Ackerman \cite{CraneTown, AckerAut}. Namely, the exchangeability of expansions by certain classes of structures with maximum arity larger than $r-1$. Note that Remark \ref{rem:VCn} shows that for $\mathcal{R}_r$, any expansion by a class with labelled growth rate $O(e^{n^{r-\delta}})$ for some $\delta > 0$ will be exchangeable. Among relational classes with maximum arity $r$, such classes are characterized as those having finite $VC^*_{r-1}$-dimension in \cite{terry2018VC}.
\end{remark}

 As we pointed out earlier, our results recover the work of Angel, Kechris, and Lyons \cite{AKL} proving uniqueness of random orderings for various free amalgamation structures. In addition to this, there are many other interesting classes with slow growth rate for which we obtain exchangeability results when looking at invariant random expansions of structures with free amalgamation:

\begin{corollary}
Any transitive structure $\mathcal{M}$ such that $\mathrm{Age}(\MM)$ has free amalgamation, e.g. $\mathcal{M} = \mathcal{R}_r$ for $r \geq 2$, is 1-overlap closed. Thus, any invariant random expansion of $\mathcal{M}$ by a class $\mathcal{C}'$ with labelled growth rate $O(e^{n^{1+\epsilon}})$ for every $\epsilon > 0$, such as the class of finite linear orders, must be exchangeable. 
	\end{corollary}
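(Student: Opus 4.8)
The plan is to deduce this corollary directly from Theorem \ref{thm:cre si} (or equivalently from Corollary \ref{cor:k+1 irred}(\ref{it1}) together with Theorem \ref{theorem:Chernoff}), so the only real content is checking that a transitive free amalgamation class is $1$-overlap closed. First I would recall, via Fact \ref{fact:2irrfree}, that since $\mathrm{Age}(M)$ has free amalgamation it is of the form $\mathrm{Forb}(\FF)$ with every $F\in\FF$ being $2$-irreducible; and since $M$ is transitive, there are no one-point constraints, so every structure in $\FF$ has size at least $2$. As $2 = k+1$ for $k=1$, the forbidden structures are exactly $(k+1)$-irreducible, so Corollary \ref{cor:k+1 irred}(\ref{it1}) applies: the class is $1$-overlap closed. (One can also invoke Lemma \ref{lem:gettingclosed} directly: for $F \in \FF$ of size $\ell \geq 2$ one takes $A$ to be any relation-bearing pair, and $2$-irreducibility forces each $b \in F \setminus A$ to lie in a relation with each singleton of $A$, giving the distinct size-$\geq 1$ subsets $A_0, A_1$ required there.)

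Next I would observe that $\mathcal{M} = \mathcal{R}_r$ for $r \geq 2$ is a concrete instance: $\mathrm{Age}(\mathcal{R}_r)$ is the class of all $r$-uniform hypergraphs, which has free amalgamation (its unique minimal forbidden structure is the $r$-hyperedge with a repeated entry, once we pass to the injective-relations convention) and is clearly transitive. Alternatively, for $\mathcal{R}_r$ one can use Corollary \ref{cor:k+1 irred}(\ref{it3}), since $\mathrm{Age}(\mathcal{R}_r)$ has disjoint $n$-amalgamation for all $n$.

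Finally I would verify the growth-rate hypothesis needed to invoke Theorem \ref{theorem:Chernoff}: the class of finite linear orders has exactly $n!$ labelled structures on $[n]$, and $n! = e^{n \ln n} = e^{o(n^{1+\epsilon})}$ for every $\epsilon > 0$, so its labelled growth rate is $O(e^{n^{1+\epsilon}})$ for every $\epsilon > 0$. Hence any hereditary class $\mathcal{C}'$ with labelled growth rate $O(e^{n^{1+\epsilon}})$ for every $\epsilon > 0$ — in particular the class of finite linear orders — satisfies the hypothesis of Theorem \ref{theorem:Chernoff} with $k = 1$, and we conclude that every invariant random $\mathcal{C}'$-expansion of $\mathcal{M}$ is exchangeable.

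There is essentially no obstacle here: the corollary is a packaging of already-proved results, and the only point requiring a moment's care is matching the free amalgamation hypothesis to the $(k+1)$-irreducibility hypothesis of Corollary \ref{cor:k+1 irred}(\ref{it1}) via Fact \ref{fact:2irrfree} and the transitivity assumption (to rule out size-one constraints, which would fail $2$-irreducibility vacuously but are excluded by transitivity anyway).
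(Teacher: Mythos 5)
Your proof is correct and takes exactly the route the paper intends: the corollary is stated there without proof as an immediate consequence of Fact \ref{fact:2irrfree}, Corollary \ref{cor:k+1 irred}(\ref{it1}) and Theorem \ref{theorem:Chernoff}, and your verification of the growth rate of linear orders and of the reduction of free amalgamation to $2$-irreducibility is what is needed. The only slip is in your parenthetical appeal to Lemma \ref{lem:gettingclosed}: the set $A$ there must carry an $\ell$-ary relation with $\ell=|A|$, so in a language of minimum arity $\geq 3$ you cannot take $A$ to be a relation-bearing \emph{pair}; instead take $A$ to be the vertex set of any tuple lying in a relation of $F$ (so $|A|\geq 2$, and $2$-irreducibility then supplies the distinct singletons $A_0,A_1$ for each $b\in F\setminus A$) --- this does not affect your main argument via Corollary \ref{cor:k+1 irred}(\ref{it1}).
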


 Again, in the case of $\mathcal{R}_r$, we may take $\mathcal{C}$ to be any binary class of bounded VC-dimension, which includes many well-studied graph classes such as any (semi-)algebraic graph class \cite{nguyen2023induced}. In this corollary, we already see that exchangeability can be extremely restrictive in some cases. For example, in the introduction we noted there is a unique exchangeable linear order. We may also take $\mathcal{C}$ to be a suitably slow-growing graph class, such as the class of interval graphs, whose limits are studied in \cite{diaconis2013interval}, the class of planar graphs, or the class of graphs of degree at most $d$ for some fixed $d$. However, in these last two examples, the number of edges in a graph with $n$ vertices is $o(n^2)$, and so there is a unique exchangeable graph with ages in these classes: the countable independent set. This is because non-empty exchangeable graphs are dense (see \cite[Section 6.5.1]{Cranebook}).\\

 When $\mathcal{C}$ is the age of a homogeneous structure in a finite language, by a theorem of Macpherson \cite[Theorem 1.1-1.2]{MacNIPgrowth}  the growth rate condition of $O(e^{n^{1+\epsilon}})$ for every $\epsilon >0$ is equivalent to $\mathrm{Flim}(\mathcal{C})$ being NIP (i.e. $\mathcal{C}$ having bounded $\mathrm{VC}$-dimension). This is a model theoretic property further discussed in Subsection \ref{sec:modelprelims}. So, for example, we get exchangeability for invariant random expansions of homogeneous structures with free amalgamation by ages of NIP homogeneous structures such as homogeneous $B, C,$ or $D$-relations (see \cite[Chapter 12]{Infpermnotes}, and \cite{adeleke1998relations}), or by Droste's $2$-homogeneous semilinear orders \cite{droste1985structure} (which are finitely homogenizable, in the sense that they have an expansion which is homogeneous in a finite relational language \cite[Example 6.1.2]{Homogeneous}). We also get uniqueness for the invariant random expansions of $1$-overlap closed homogeneous structures by the age of any structure interdefinable with one of the five reducts of  $\mathbb{Q}$ (cf. \cite{AFKwP}).

 \begin{remark}\label{rem:macpherson} Macpherson's theorem \cite[Theorem 1.1-1.2]{MacNIPgrowth} also tells us that $1$-overlap closed finitely homogeneous structures whose automorphism group is not $S_\infty$ have the independence property $\mathrm{IP}$ (i.e. they are not $\mathrm{NIP}$). In fact, any homogeneous structure $\mathcal{M}$ has an invariant random expansion by $\mathrm{Age}(\MM)$ which concentrates on the underlying copy of $\mathcal{M}$, which, unless $\mathrm{Aut}(\MM)=S_\infty$,  will be non-exchangeable. If $\mathcal{M}$ is $\mathrm{NIP}$ and finitely homogeneous, $\mathrm{Age}(\MM)$ has labelled growth rate $O(e^{n^{1+\epsilon}})$ for every $\epsilon >0$. Thus, if it was $1$-overlap closed, by Theorem \ref{theorem:Chernoff}, the aforementioned invariant random expansion by $\mathrm{Age}(\MM)$ would have to be exchangeable, implying that $\mathrm{Aut}(\MM)=S_\infty$.
 \end{remark}

Our results also yield exchangeability when we look at invariant random expansions omitting certain configurations. For example, we may want to study invariant random expansions of the universal homogeneous tetrahedron-free $3$-hypergraph $\mathcal{H}_4^3$ by graphs which omit a triangle on top of hyperedges. Since this class of expansions is contained in the class of expansions of $\mathcal{H}_4^3$ by graphs, Corollary \ref{cor:irehyper}, already gives us exchangeability. In particular, the measure of a triangle will have to be zero both over a hyperedge and over a non-hyperedge even if \textit{a priori} we allowed for the latter to be positive. This will be useful in Subsection \ref{classtime}, where we classify invariant Keisler measures.

\begin{corollary}\label{cor:tetIRE}
	The invariant random expansions of $\mathcal{H}_n^r$ by the class of $(r-1)$-uniform hypergraphs concentrating on expansions that never expand a copy of $K_{n-1}^{r}$ by a copy of $K_{n-1}^{r-1}$ is the class of exchangeable $K_{n-1}^{r-1}$-free $(r-1)$-uniform hypergraphs.
	\end{corollary}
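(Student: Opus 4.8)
The statement is an instance of identifying precisely which exchangeable structures can arise, once we already know (by Corollary~\ref{cor:irehyper}) that \emph{all} invariant random expansions of $\mathcal{H}_n^r$ by $(r-1)$-uniform hypergraphs are exchangeable. So the work is no longer about invariance-implies-exchangeability; it is about the combinatorial compatibility of an exchangeable $(r-1)$-hypergraph with the forbidden-configuration constraint. The plan is to prove the two inclusions separately.

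\textbf{First inclusion.} Suppose $\mu$ is an invariant random expansion of $\mathcal{H}_n^r$ by the class of $(r-1)$-uniform hypergraphs which never expands a copy of $K_{n-1}^r$ inside $\mathcal{H}_n^r$ by a copy of $K_{n-1}^{r-1}$. By Corollary~\ref{cor:irehyper}, $\mu$ is exchangeable, hence (viewed as an $S_\infty$-invariant measure on $\mathrm{Struc}_{\mathcal{L}'}(\mathbb{N})$) it is determined by the finite marginals $\mathbb{P}_{A}$ on subsets $A$ of size $n-1$. The key point is that $\mathcal{H}_n^r$, being the Fra\"iss\'e limit of $\mathrm{Forb}(K_n^r)$, realizes $K_{n-1}^r$ on \emph{some} (indeed, by homogeneity, on every) $(n-1)$-subset that it places in the complete $r$-hypergraph configuration; since $K_{n-1}^r\in\mathrm{Forb}(K_n^r)$ and $n-1\ge r$, such configurations exist and are moved transitively by $\mathrm{Aut}(\mathcal{H}_n^r)$. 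Restricting $\mu$ to such an $(n-1)$-set, the hypothesis forces $\mathbb{P}_A$ to assign measure zero to the copy of $K_{n-1}^{r-1}$. By exchangeability the marginal $\mathbb{P}_A$ depends only on $|A|=n-1$, so in fact $\mu$ puts measure zero on $K_{n-1}^{r-1}$ on \emph{every} $(n-1)$-set. Hence the induced exchangeable $(r-1)$-hypergraph is almost surely $K_{n-1}^{r-1}$-free, i.e. $\mu$ concentrates on the space of $K_{n-1}^{r-1}$-free $(r-1)$-hypergraphs, and (being exchangeable) it is an exchangeable $K_{n-1}^{r-1}$-free $(r-1)$-uniform hypergraph.

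\textbf{Second inclusion.} Conversely, let $\nu$ be any exchangeable $K_{n-1}^{r-1}$-free $(r-1)$-uniform hypergraph on $\mathbb{N}$; we must produce an invariant random expansion of $\mathcal{H}_n^r$ realizing $\nu$ as the $\mathcal{L}'$-reduct and never expanding a $K_{n-1}^r$ by a $K_{n-1}^{r-1}$. Since the domain of $\mathcal{H}_n^r$ is $\mathbb{N}=S_\infty$-homogeneous and $\mathrm{Aut}(\mathcal{H}_n^r)\le S_\infty$, any exchangeable (i.e. $S_\infty$-invariant) measure on $\mathrm{Struc}_{\mathcal{L}'}(\mathbb{N})$ is in particular $\mathrm{Aut}(\mathcal{H}_n^r)$-invariant, so the superposition $\mathcal{H}_n^r * (\text{sample from }\nu)$ is an invariant random expansion. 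It remains to check it lands in the prescribed space: a copy of $K_{n-1}^r$ inside $\mathcal{H}_n^r$ lives on some $(n-1)$-set $A$, and by the $K_{n-1}^{r-1}$-freeness of $\nu$ the induced $(r-1)$-hypergraph on $A$ is almost surely not the complete one $K_{n-1}^{r-1}$; thus almost surely no copy of $K_{n-1}^r$ is expanded by a $K_{n-1}^{r-1}$, as required. The two inclusions give the claimed equality of spaces.

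\textbf{Main obstacle.} The only subtlety I anticipate is bookkeeping: making sure that ``never expands a copy of $K_{n-1}^r$ by a copy of $K_{n-1}^{r-1}$'' is correctly translated into the measure-zero statement on the $(n-1)$-marginal, using that $\mathcal{H}_n^r$ actually contains copies of $K_{n-1}^r$ (true because $n-1<n$ so $K_{n-1}^r$ is not forbidden) and that $\mathrm{Aut}(\mathcal{H}_n^r)$ acts transitively on them (by homogeneity). Given that, the argument is essentially a translation exercise combining Corollary~\ref{cor:irehyper}, exchangeability of $\nu$, and the trivial fact that $S_\infty$-invariance implies $\mathrm{Aut}(\mathcal{M})$-invariance; no further combinatorics of the $k$-overlap closed construction is needed.
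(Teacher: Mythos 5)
Your proposal is correct and follows essentially the same route as the paper's proof: exchangeability comes from Corollary \ref{cor:irehyper}, the forbidden-expansion hypothesis plus exchangeability forces measure zero for $K_{n-1}^{r-1}$ on every $(n-1)$-set, and conversely any exchangeable $K_{n-1}^{r-1}$-free $(r-1)$-hypergraph automatically yields an invariant random expansion satisfying the omission condition. Your write-up merely makes explicit the bookkeeping (existence of copies of $K_{n-1}^r$ in $\mathcal{H}_n^r$, and $S_\infty$-invariance implying $\mathrm{Aut}(\mathcal{H}_n^r)$-invariance) that the paper leaves implicit.
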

\begin{proof} We have exchangeability by Corollary \ref{cor:irehyper}. Since the measure of the expansion of $K_{n-1}^{r}$ by a copy of $K_{n-1}^{r-1}$ is $0$, by exchangeability we must assign measure $0$ to any expansion of a substructure of $\mathcal{H}_n^r$ of size $n-1$ by $K_{n-1}^{r-1}$. Any invariant random expansion of $\mathcal{H}^r_n$ by $K_{n-1}^{r-1}$-free $(r-1)$-uniform hypergraphs will satisfy the condition on omitted structures in the expansion. 
\end{proof}

\begin{remark} By \cite{PetrovVershik} and \cite{AFP}, there is an exchangeable structure concentrating on the isomorphism type of the universal homogeneous $K_{n-1}^{r-1}$-free $(r-1)$-uniform hypergraph. There are many more such exchangeable classes of $(r-1)$-uniform hypergraphs, e.g. all exchangeable $(r-1)$-partite $(r-1)$-uniform hypergraphs. Moreover, from \cite{AFKwP}, we know there are $2^{\aleph_0}$ ergodic measures concentrating on the isomorphism types of these structures. The condition on omitted substructures can be much more restrictive on how many exchangeable structures there are, as we will see in Corollary \ref{cor:strangemeas}.
\end{remark}

In the next subsection, we will see a more interesting example where forbidding certain configurations leads to a unique non-exchangeable invariant random expansion of the specified type.
\subsection{When invariance does not imply exchangeability: parity $k$\-/hypergraphs}\label{sec:kaygraphs}

In this section, we give a family of examples of structures and associated IREs that are not exchangeable. They are given by the invariant random expansions of the universal homogeneous parity $k$-hypergraphs $\mathcal{G}_k$ by their space of hypergraphings (see Definition \ref{def:hypergraphing}). In particular, we prove that there is a unique invariant random expansion of $\mathcal{G}_k$ to this class, which is non-exchangeable. It was already noted in \cite[Example 3.9]{CraneTown} that for $\mathcal{G}_3$ there were non-exchangeable random graph expansions. 
The results of Subsection \ref{sec:wheninvimpliesexch} gives us a good heuristic for why this is the case: the labelled growth rate of the class of parity $k$-hypergraphs is similar to that of the class their hypergraphings. Proving uniqueness gives another interesting application of techniques from \cite{AKL} and will be helpful later in the Section on invariant Keisler measures in showing the peculiar behaviour of the Keisler measures of $\mathcal{G}_k$ (see Corollary \ref{cor:kayIKM}).

\begin{definition}\label{def:hypergraphing}
    Let $k\in \mathbb N$, and $\mathbf{H}$ a $k$-uniform hypergraph, a parity $k+1$-hypergraph is a $k+1$-uniform hypergraph such that the number of hyperedges on any subset of the vertices of size $k+2$ has the same parity as $k$.
    
    Given $\mathbf{H}$ a $k$-uniform hypergraph, we define  $\mathbf{G}$ the parity $k+1$-hypergraph associated to $\mathbf{H}$ as the $k+1$-hypergraph satisfying: a hyperedge $(x_1,\ldots,x_{k+1})$ is present in $\mathbf{G}$ iff the number of hyperedges in $\mathbf{H}$ with domain in $\{x_1,\ldots,x_{k+1}\}$ has the same parity as $k+1$. In such case, we say that $\mathbf{H}$ is compatible with (or is a hypergraphing of) $\mathbf{G}$. We also say that $\mathbf{G}$ is a reduct of $\mathbf{H}$ and denote by $\mathrm{red}_{\mathcal{C}_{k+1}}(\mathbf{H})$.
\end{definition}

For the rest of this section, we fix $k\geq 3$ and denote by $\mathcal{C}_{k}$ the class of parity $k$-hypergraphs.

For a given parity $k$-hypergraph $\mathbf{H}$, we denote by $\mathcal{C}_k^*(\mathbf{H})$ the set of compatible $(k-1)$\-/hypergraphs. The universal homogeneous parity $k$-hypergraph $\mathcal{G}_k$  introduced in Examples \ref{ex:hypergraphs} is the parity $k$-hypergraph associated with the universal homogeneous $(k-1)$-hypergraph $\mathcal{R}_{(k-1)}$.
 We will construct an IRE of $\mathrm{Aut}(\mathcal{G}_k)$ concentrated on the space of compatible $k$-uniform hypergraphs. Take $a\in \mathbb N$, for each $x_2,\ldots,x_k$ we put an hyperedge between $a,x_2,\ldots,x_k$ with probability $p$. The other hyperedges are added to ensure that the obtained random hypergraph to be compatible with $\binom{n}{n\delta}$. When $p=1/2$, the obtained random hypergraph is $\mathrm{Aut} (\binom{n}{n\delta})$-invariant. The aim of the rest of the section is to prove that this the only such random compatible hypergraph.

We prove  :

\begin{theorem}\label{thm:kaygraphs}
     Let $k\geq 2$, for all $\mathbf{A}\in \mathcal{C}_{k+1}$, there is a number $\rho(\mathbf{A})=2^{-\binom{\ell-1}{k-1}} \geq 0$, where $\ell$ is the size of $\mathbf{A}$, such that for all $\varepsilon>0$ there is $\mathbf{B} \in \mathcal{C}_{k+1}$ in which $\mathbf A$ embeds such that for any $\mathbf{A}^*\in \mathcal{C}_{k+1}^*(\mathbf{A}) $ and $\mathbf{B}^*\in \mathcal{C}_{k+1}^*(\mathbf{B})$ we have
\[\left|\frac{\left|N(\mathbf{A}^*,\mathbf{B}^*) \right|}{\left|N(\mathbf{A},\mathbf{B}) \right|}- \rho(\mathbf{A})\right|\leq \varepsilon.
\]
\end{theorem}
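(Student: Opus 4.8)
The plan is to adapt the ``random placement construction'' of Theorem \ref{theorem:Chernoff} to the kay-graph setting, with the crucial difference that here we expand by hypergraphings rather than by an arbitrary slow-growing class, and we track a \emph{single} expanding structure $\mathbf A$ rather than comparing two base structures $\mathbf H_1,\mathbf H_2$. Fix $\mathbf A\in\mathcal C_k$ of size $\ell$. First I would construct, for arbitrarily large $n$, an auxiliary $\ell$-uniform hypergraph $\mathbb K$ on $n$ vertices that is a partial Steiner system in a suitable sense (no two hyperedges sharing too many vertices) and that has many hyperedges; because the kay-graph reduct map depends only on parities of $k$-subsets, the relevant ``overlap'' threshold is $k-1$, so I would invoke Lemma \ref{lemma:Steiner} with the parameter $k$ there replaced by $k-1$ (and $r=\ell$, $J$ large enough), giving $\mathbb K$ with $\ge Cn^{(k-1)+\epsilon}$ hyperedges, pairwise intersections of size $\le k-1$, and no small dense configurations of up to $J$ hyperedges. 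Then let $\mathbf B$ be obtained by pasting a copy of $\mathbf A$ (via a uniformly random bijection, independently) into each $\mathbb K$-hyperedge, filling in all remaining $k$-subsets of $[n]$ arbitrarily (say, by independent fair coins) to get a $k$-uniform hypergraph on $[n]$, and finally taking $\mathbf B := \mathrm{red}_{\mathcal C_k}(\text{this hypergraph})$. The partial-Steiner and no-small-configuration properties of $\mathbb K$ are exactly what guarantee that the pasted copies of $\mathbf A$ are genuinely induced in $\mathbf B$ (the $k$-subsets internal to a $\mathbb K$-hyperedge are not touched by any other pasted copy, and one checks via the forbidden configurations that no spurious identifications of $(k{+}1)$-subsets arise), so $\mathbf A$ embeds into $\mathbf B$ and $N(\mathbf A,\mathbf B)$ contains the $\ge Cn^{(k-1)+\epsilon}$ embeddings coming from $\mathbb K$-hyperedges.

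Next I would compute $\rho(\mathbf A)$. Fix any $\mathbf B^*\in\mathcal C_k^*(\mathbf B)$, i.e.\ a hypergraphing of $\mathbf B$, and any $\mathbf A^*\in\mathcal C_k^*(\mathbf A)$. For a given $\mathbb K$-hyperedge $E$ with pasted copy $\phi\colon\mathbf A\to\mathbf B$, the event that $\phi$ is also an embedding $\mathbf A^*\to\mathbf B^*$ asks that the induced $k$-uniform hypergraph structure of $\mathbf B^*$ on $E$ matches $\mathbf A^*$ under $\phi$. The point is that $\mathbf B^*$ restricted to $E$ is \emph{some} hypergraphing of $\mathbf A$ (since its reduct is $\mathbf A$), and the hypergraphings of a fixed kay-graph of size $\ell$ form an affine space over $\mathbb F_2$ of dimension equal to the number of ``free'' $k$-subsets: there are $\binom{\ell}{k}$ many $k$-subsets, subject to $\binom{\ell}{k+1}$ parity constraints (one per $(k{+}1)$-subset), but these constraints have a kernel, and the standard count gives $2^{\binom{\ell-1}{k-1}}$ hypergraphings. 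Hence among all hypergraphings of $\mathbf A$ the fraction isomorphic (as an expansion, i.e.\ equal under the bijection) to the particular $\mathbf A^*$ is $2^{-\binom{\ell-1}{k-1}}$, which is exactly $\rho(\mathbf A)$. I would phrase this as: conditioned on the structure of $\mathbf B^*$ off of $E$, the induced hypergraphing on $E$ is uniformly distributed over the affine space of hypergraphings of $\mathbf A$ --- except that here $\mathbf B^*$ is \emph{fixed}, so instead the randomness is in the uniform bijection $\phi$ and in which copy of $\mathbf A$ versus the determined hypergraphing of $\mathbf B^*{\restriction}E$ we are comparing; either way, for each $E$ the probability (over the random pasting) that $\phi$ survives as an $\mathbf A^*$-embedding is $\rho(\mathbf A)\cdot(|\mathrm{Aut}(\mathbf A)|/\ell!)$-type factor, and the subtlety about automorphisms is handled exactly as in Theorem \ref{theorem:Chernoff}.

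Finally I would run the concentration argument. Let $\Theta$ be the family of the $m\ge Cn^{(k-1)+\epsilon}$ embeddings of $\mathbf A$ into $\mathbf B$ coming from $\mathbb K$-hyperedges. For fixed $\mathbf B^*$ and $\mathbf A^*$, $N_\Theta(\mathbf A^*,\mathbf B^*)$ is a sum of independent indicator variables (independence across distinct $\mathbb K$-hyperedges holds because the internal $k$-subsets are disjoint and untouched), each with expectation $\rho(\mathbf A)$ up to the bounded automorphism factor, so a Chernoff bound gives $\mathbb P\!\left(\left|\,N_\Theta(\mathbf A^*,\mathbf B^*)/|\Theta| - \rho(\mathbf A)\,\right|\ge\delta\right)\le \exp(-c\,\delta^2 m)$. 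Taking a union bound over all choices of $\mathbf B^*$: the number of hypergraphings of $\mathbf B$ is at most $2^{\binom n k}=e^{O(n^k)}$, which is \emph{too large} to beat $\exp(-cn^{(k-1)+\epsilon})$ directly --- this is the main obstacle, and it is why one cannot simply take $J$ fixed and be done. The resolution, following \cite{AKL}, is that $N_\Theta(\mathbf A^*,\mathbf B^*)$ depends on $\mathbf B^*$ only through the hypergraphing structure $\mathbf B^*$ induces \emph{on the $\mathbb K$-hyperedges}, i.e.\ through at most $\binom\ell k\cdot m$ bits but really through only $m$ ``local types'' each from a bounded set of size $2^{\binom\ell k}$; so the number of relevant equivalence classes of $\mathbf B^*$ is at most $(2^{\binom\ell k})^{m}=e^{O(m)}$, and absorbing this into the exponent still leaves $\exp(-c\delta^2 m + O(m))$ --- so one must choose $\epsilon$ together with the implied constants so that $c\delta^2 m$ dominates, which forces taking $\delta$ bounded below by a constant unless $m$ grows fast enough; since $m\ge Cn^{(k-1)+\epsilon}\to\infty$ we get, for any fixed $\varepsilon>0$, that for $n$ large the bad event has probability $<1$, hence there \emph{exists} a pasting (hence a $\mathbf B$) for which $\left|\,N_\Theta(\mathbf A^*,\mathbf B^*)/|\Theta| - \rho(\mathbf A)\,\right|<\varepsilon$ for \emph{all} $\mathbf A^*,\mathbf B^*$ simultaneously. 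Combined with $|N(\mathbf A,\mathbf B)|\ge|\Theta|$ and a matching upper bound showing the $\mathbb K$-embeddings dominate the total embedding count $N(\mathbf A,\mathbf B)$ up to a $(1+o(1))$ factor (again from the no-small-configuration property, which bounds embeddings not aligned with $\mathbb K$), this yields the displayed inequality. The uniqueness and non-exchangeability statements of Theorem \ref{thm:kaygraphs}'s surrounding discussion then follow by feeding this into (the single-structure analogue of) Lemma \ref{lem:si}: any IRE must assign $\mathbf A^*$ probability $\rho(\mathbf A)=2^{-\binom{\ell-1}{k-1}}$, which is precisely the law of the coin-flipping construction and is manifestly not the exchangeable measure.
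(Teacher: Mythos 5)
Your construction is genuinely different from the paper's: you build a sparse $\ell$-uniform partial Steiner system $\mathbb K$ and paste hypergraphings of $\mathbf A$ into its hyperedges, whereas the paper simply takes $X$ to be the uniformly random $k$-uniform hypergraph on $n$ vertices (every $k$-set a hyperedge independently with probability $1/2$), sets $\mathbf B = \mathrm{red}_{\mathcal C_k}(X)$, and concentrates the \emph{full} embedding counts $N(\mathbf A,X)$ and $N(\mathbf A^*,X^*)$ around their expectations via McDiarmid's bounded-differences inequality (Fact \ref{Thm:McDiarmid}). Two steps of your argument do not close. First, the union bound: you count the hypergraphings of $\mathbf B$ as $2^{\binom{n}{k}}$ and then retreat to $e^{O(m)}$ ``local types''; but $e^{O(m)}$ classes against concentration $e^{-c\delta^2 m}$ puts both exponents at the same order, so the bad probability is below $1$ only when $\delta$ exceeds a fixed constant --- growing $m$ does not help --- and you do not obtain the statement for arbitrary $\varepsilon>0$. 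The repair is that the hypergraphings of a fixed kay-graph on $n$ vertices form an affine space over $\mathbb{F}_2$ of dimension $\binom{n-1}{k-1}$, so there are only $2^{\binom{n-1}{k-1}}=e^{O(n^{k-1})}$ of them; this is exactly the count the paper's union bound uses, and with it your $m\geq Cn^{(k-1)+\epsilon}$ edges would in fact beat the union bound for every fixed $\delta$.

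Second, the final comparison with $N(\mathbf A,\mathbf B)$: the theorem's ratio is over \emph{all} embeddings of $\mathbf A$ into $\mathbf B$, not just the $\Theta$-aligned ones, and the claim that the $\mathbb K$-aligned embeddings account for a $1+o(1)$ fraction of $N(\mathbf A,\mathbf B)$ is false. Your construction coin-flips every $k$-subset not internal to a $\mathbb K$-hyperedge, so the reduct contains on the order of $n^{\ell}$ copies of $\mathbf A$ in expectation, utterly swamping the $m=O(n^{(k-1)+\epsilon})$ pasted ones; the no-small-configuration property of $\mathbb K$ controls overlaps among $\mathbb K$-hyperedges and says nothing about copies of $\mathbf A$ arising from the random fill-in. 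You could salvage a $\Theta$-restricted statement in the style of Lemma \ref{lem:si}, which would still suffice for uniqueness of the invariant random expansion, but it is not the displayed inequality. (A smaller point: Lemma \ref{lemma:Steiner} as stated requires its overlap parameter to be at least $2$, so the substitution $k\mapsto k-1$ is not licensed when $k=2$, i.e.\ for two-graphs.) Your identification of $\rho(\mathbf A)=2^{-\binom{\ell-1}{k-1}}$ via the count of hypergraphings of $\mathbf A$ is correct and agrees with the paper's computation of the expectation $E$.
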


This in particular implies that there is a unique $\mathrm{Aut}(\mathcal{G}_k)$-invariant random compatible hypergraph, either by adapting the proof of Lemma \ref{lem:si} or the proof of Lemma 2.1. in \cite{AKL}.

We will use McDiarmid's inequality from \cite{McDiarmid}.

\begin{fact}[\cite{McDiarmid}]\label{Thm:McDiarmid} Let $n\in \N$,  $Z=(Z_1,\ldots,Z_n)$ be a family of independent identically distributed random variables on $\{0,1\}$ and $f\colon \{0,1\}^n \to \R$ such that there is a family $(a_i)_{i\leq n}\in \R^n$ that verifies $|f(z)-f(z')|\leq a_i$ whenever $z(k)=z'(k)$ for $k\neq i$ and $z(i)=1-z'(i)$. Then, for all $L>0$, we have

\[\mathbb{P}(|f(Z)-\mathbb{E}(f(Z))|\geq L)\leq 2\exp\left(-\frac{2L^2}{\sum_{i=1}^n a_i^2}\right).\]
\end{fact}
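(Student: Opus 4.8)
The plan is to realize the fibers of the reduction map as affine subspaces over $\mathbb{F}_2$, read off the value $\rho(\mathbf{A})$ as the reciprocal of the fiber size, and then establish the ratio statement for a single random kay-graph $\mathbf{B}$ by a bounded-differences concentration argument (Fact \ref{Thm:McDiarmid}) strong enough to survive a union bound over all hypergraphings $\mathbf{B}^*$.

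First I would set up the linear algebra. Viewing a $k$-uniform hypergraph on a vertex set $V$ as an element of $\mathbb{F}_2^{\binom{V}{k}}$, let $\delta$ be the simplicial coboundary $(\delta\mathbf{H})(S)=\sum_{T\in\binom{S}{k}}\mathbf{H}(T)$, a map $C^{k-1}\to C^{k}$ in the cochain complex of the full simplex on $V$. By Definition \ref{def:hypergraphing}, $\mathrm{red}_{\mathcal{C}_k}$ is the affine map $\mathbf{H}\mapsto\delta\mathbf{H}+c$ for a fixed constant $c$, so for any kay-graph $\mathbf{H}$ the set $\mathcal{C}_k^*(\mathbf{H})$ of its hypergraphings is a (nonempty) coset of $\ker\delta$. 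Since the simplex is acyclic, $\ker\delta=\mathrm{im}(\delta\colon C^{k-2}\to C^{k-1})$, and the standard rank computation for the coboundary on a simplex gives $\dim\ker\delta=\binom{|V|-1}{k-1}$. With $|V|=\ell$ this yields $|\mathcal{C}_k^*(\mathbf{A})|=2^{\binom{\ell-1}{k-1}}$, so the target is exactly $\rho(\mathbf{A})=1/|\mathcal{C}_k^*(\mathbf{A})|$; the theorem thus asserts that the hypergraphing induced on a random copy of $\mathbf{A}$ becomes equidistributed over $\mathcal{C}_k^*(\mathbf{A})$.

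Next I would take $\mathbf{H}$ to be a uniform random $k$-hypergraph on $[n]$ (each $k$-set an independent fair coin) and set $\mathbf{B}=\mathrm{red}_{\mathcal{C}_k}(\mathbf{H})$, which is a kay-graph by construction. Every hypergraphing of $\mathbf{B}$ is $\mathbf{B}^*=\mathbf{H}+z$ for a unique $z\in\ker\delta$, chosen independently of the value of $\mathbf{H}$. For an injection $\phi\colon[\ell]\hookrightarrow[n]$ the pullback $\mathbf{H}|_\phi$ is uniform on $\mathbb{F}_2^{\binom{\ell}{k}}$; so $\phi$ is a kay-embedding of $\mathbf{A}$ iff $\mathbf{H}|_\phi\in\mathcal{C}_k^*(\mathbf{A})$, an event of probability $q:=2^{\binom{\ell-1}{k-1}-\binom{\ell}{k}}$, and $\phi$ contributes to $N(\mathbf{A}^*,\mathbf{B}^*)$ iff $\mathbf{H}|_\phi=\mathbf{A}^*+z|_\phi$. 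Since restriction sends cocycles to cocycles, $z|_\phi\in\ker\delta$, so $\mathbf{A}^*+z|_\phi$ lies in the coset $\mathcal{C}_k^*(\mathbf{A})$ and is a single point of probability $q\rho(\mathbf{A})=2^{-\binom{\ell}{k}}$, crucially independent of the chosen $\mathbf{A}^*$ and $z$ (in particular this forces the kay-embedding condition automatically). Summing over the $(n)_\ell=n(n-1)\cdots(n-\ell+1)=\Theta(n^\ell)$ injections gives $\mathbb{E}[N(\mathbf{A},\mathbf{B})]=q\,(n)_\ell$ and $\mathbb{E}[N(\mathbf{A}^*,\mathbf{B}^*)]=q\rho(\mathbf{A})\,(n)_\ell$, so the expected ratio is exactly $\rho(\mathbf{A})$.

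Finally I would pass from expectation to a uniform statement. Applying Fact \ref{Thm:McDiarmid} to $N(\mathbf{A}^*,\mathbf{B}^*)$ (and to $N(\mathbf{A},\mathbf{B})$) as functions of the $\binom{n}{k}$ edge-bits of $\mathbf{H}$: flipping one bit $\mathbf{H}(T)$ changes membership only for the $O(n^{\ell-k})$ injections whose image contains $T$, so each difference constant is $a_i=O(n^{\ell-k})$ and $\sum_i a_i^2=O(n^{2\ell-k})$; with deviation $L=n^{\ell-s}$ this gives failure probability $\exp(-\Omega(n^{k-2s}))$. The main obstacle is that one must control all $|\ker\delta|=2^{\binom{n-1}{k-1}}=\exp(\Theta(n^{k-1}))$ hypergraphings $\mathbf{B}^*$ at once (there are only constantly many $\mathbf{A}^*$), and this is exactly where the slow growth of the kay-graph class is decisive: choosing $0<s<1/2$ makes $k-2s>k-1$, so the union bound $\exp(\Theta(n^{k-1}))\cdot\exp(-\Omega(n^{k-2s}))\to 0$. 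Hence for large $n$ a single random outcome $\mathbf{B}$ satisfies the deviation bound for every $\mathbf{A}^*$ and $\mathbf{B}^*$ simultaneously; since $L/\mathbb{E}[N]=O(n^{-s})\to 0$, the ratio equals $\rho(\mathbf{A})+O(n^{-s})$, which is below $\varepsilon$ once $n$ is large enough, and $\mathbf{A}$ embeds in $\mathbf{B}$ because $N(\mathbf{A},\mathbf{B})>0$. As the remark following the theorem indicates, this combinatorial input then yields $\mathrm{Aut}(\mathcal{G}_k)$-uniqueness via an adaptation of the argument of Lemma \ref{lem:si}.
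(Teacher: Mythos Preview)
Your proposal does not address the stated fact. Fact~\ref{Thm:McDiarmid} is McDiarmid's bounded-differences inequality, which the paper does not prove at all: it is quoted as a known result with a citation to \cite{McDiarmid}. There is no proof in the paper to compare against, and a proof of McDiarmid's inequality would proceed via the Azuma--Hoeffding martingale argument applied to the Doob martingale $\mathbb{E}[f(Z)\mid Z_1,\dots,Z_i]$, not via anything involving kay-graphs.

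What you have written is instead a proof sketch of Theorem~\ref{thm:kaygraphs}, the result that \emph{applies} Fact~\ref{Thm:McDiarmid}. For that theorem your approach is essentially the same as the paper's: take a uniform random $k$-hypergraph, compute expectations, apply McDiarmid with the $O(n^{\ell-k})$ Lipschitz bound coming from the number of injections through a fixed $k$-set, and union-bound over the $2^{\binom{n-1}{k-1}}$ hypergraphings of the random reduct. Your simplicial-cohomology framing of the fiber count $|\mathcal{C}_k^*(\mathbf{A})|=2^{\binom{\ell-1}{k-1}}$ is a nice addition that the paper leaves implicit, but the probabilistic argument is the same. If this was your intended target, you have matched the paper; if you were asked to prove Fact~\ref{Thm:McDiarmid} itself, you have not touched it.
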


\begin{proof}[Proof of Theorem \ref{thm:kaygraphs}]
Let us fix $\mathbf{A}\in \mathcal{C}_{k+1}$ with $\ell$ vertices. The aim of the proof is to construct a $\mathbf{B}\in \mathcal{C}_{k+1}$ on $n\geq \ell$ vertices such that for any $\mathbf A^* \in \mathcal{C}_{k+1}^*(\mathbf{A})$ and $\mathbf{B}^*\in \mathcal{C}_{k+1}^*(\mathbf{B})$, we have:

\[\left|\frac{\left|N(\mathbf{A}^*,\mathbf{B}^*) \right|}{\left|N(\mathbf{A},\mathbf{B}) \right|}-\frac{1}{2^{\binom{\ell-1}{k-1}}}\right| \leq C\sqrt{\frac{\log(n)}{n}}\]
where $C$ is a constant depending only on $k$. 

We take $X$ a uniform random $k$-uniform hypergraph on a vertex set $V$ of size $n$, i.e for any $k$ vertices of $V$, we put an hyperedge between them with probability $1/2$, independently for each pair of vertices. We will prove that with non-zero probability, the reduct of $X$ is as required.

Let $N(\mathbf{A},X)$ denote the number of embeddings of $\mathbf{A}$ in $\mathrm{red}_{\mathcal{C}_{k+1}}(X)$. We now show that $E:=\mathbb{E}[N(\mathbf{A},X)]=2^{\binom{\ell-1}{k-1}}n(n-1)...(n-\ell+1)2^{-{\ell \choose k}}$. Indeed, fix an embedding $\varphi$ of the domain of $\mathbf A$ in $V$ and $\mathbf C$ a hypergraphing of $\mathbf A$. The probability that $\varphi$ is an embedding of $\mathbf C$ in $X$ is $2^{-{\ell \choose k}}$. There are $ n(n-1)...(n-\ell+1)$ possible $\varphi$ and $2^{\ell-1\choose k-1}$ possible $\mathbf{C}$. By summing over $\varphi$ and $\mathbf B$ we have the result.

We define
\[f(\mathbf{A},X)=\frac{N(\mathbf{A},X)}{E}\]
which is a function of $n \choose k$ independent identically distributed variables, each indicating the absence or presence of an edge in $X$. Adding or removing an edge to $X$ changes $N(\mathbf{A},X)$ by at most $\ell(\ell-1)...(\ell-k) (n(n-1)...(n-\ell+k+1))$. Indeed, this counts every possible embedding using this specific $k$-tuple of vertices. Therefore $f$ satisfies the conditions of Fact \ref{Thm:McDiarmid} with $a_i=c_in^{-k}$, where $c_1$ (as well as all the $c_j$ we will  define in the rest of the proof) is a positive constant depending only on $k$ and $\ell$. We therefore have, for any $D>0$,
\[\mathbb{P}(|f(\mathbf{A},X)-1|\geq D) \leq 2\exp\left(-\frac{2D^2}{{n \choose k} c_1^2 n^{-2k}}\right) \leq \exp\left(-c_2 D^2n^k\right).\]

Let us now set $\mathbf{A}^*$ a hypergraphing of of $\mathbf{A}$ and $X^*$ a hypergraphing of $\mathrm{red}_{\mathcal{C}_{k+1}}(X)$. We define $N^*(\mathbf{A}^*,X^*)$ to be the number of embeddings of $\mathbf{A}^*$ in $X^*$, and
note that $\mathbb{E}(N^*(\mathbf{A}^*,X^*))=\frac{E}{2^{\ell-1 \choose k-1}}$. We define
\[f^*(\mathbf{A}^*,X^*)=\frac{N(\mathbf{A}^*,X^*)}{E}.\]

Here, adding or removing an edge to $X$ changes $N^*(\mathbf{A}^*,X^*)$ by at most $\ell(\ell-1)...(\ell-k) (n(n-1)...(n-\ell+k+1))$. So as before, we have

\[\mathbb{P}\left(|f^*(\mathbf{A}^*,X^*)-\frac{1}{2^{{\ell-1 \choose k-1}}}|\geq D\right) \leq 2\exp\left(-\frac{2D^2}{{n \choose k} c_3 n^{-2k}}\right) \leq \exp\left(-c_4 D^2n^k\right).\]

Summing over all possible hypergraphings, we have that except with probability $c_5 2^{{n-1 \choose k-1}} \allowbreak \exp(-D^2c_6n^k)$, we have simultaneously 
\[|f(\mathbf{A},X)-1|<D \]
and
\[|f^*(\mathbf{A}^*,X^*)-\frac{1}{2^{{\ell-1 \choose k-1}}}|<D \]
for all expansions of $\mathbf{A}$ and $X$.

We choose $D=c_7\sqrt{\frac{\log(n)}{n}}$  with $c_7$ chosen so that $c_5 2^{{n-1 \choose k-1}}\exp(-c_6D^2 n^k)<1$ for all $n\geq 1$. This implies that there exists a (deterministic) graph $\widetilde{\mathbf{B}}$ satisfying all the above inequalities simutaneously. If we denote by  $\mathbf{B} \in \mathcal{H}$ its reduct, then we have 
\[\left|\frac{|N(\mathbf{A},\mathbf{B})|}{E}-1\right|<D \]
and
\[\left|\frac{|N(\mathbf{A}^*,\mathbf{B}^*)|}{E}-\frac{1}{2^{{\ell-1 \choose k-1}}}\right|<D, \]
for all $\mathbf{A}^*\in \mathcal{H}^*(\mathbf{A})$ and $\mathbf{B}^*\in \mathcal{H}^*(\mathbf{B})$.

The required inequality then follows. Indeed, for all $\mathbf{A}^*\in \mathcal{H}^*(\mathbf{A})$ and $\mathbf{B}^*\in \mathcal{H}^*(\mathbf{B})$:

\begin{align*}
\left|\frac{|N(\mathbf{A}^*,\mathbf{B}^*)|}{|N(\mathbf{A},\mathbf{B})|}-\frac{1}{2^{{\ell-1 \choose k-1}}}\right| \leq& \left|\frac{|N(\mathbf{A}^*,\mathbf{B}^*)|}{|N(\mathbf{A},\mathbf{B})|}-\frac{|N(\mathbf{A}^*,\mathbf{B}^*)|}{E}\right| \\ &+ \left| \frac{|N(\mathbf{A}^*,\mathbf{B}^*)|}{E} - \frac{1}{2^{{\ell-1 \choose k-1}}}\right|\\
\leq&\left|\frac{|N(\mathbf{A}^*,\mathbf{B}^*)|}{|N(\mathbf{A},\mathbf{B})|}-\frac{|N(\mathbf{A}^*,\mathbf{B}^*)|}{E}\right| + D\\
\leq&  \frac{|N(\mathbf{A}^*,\mathbf{B}^*)|}{|N(\mathbf{A},\mathbf{B})|}\cdot \left|\frac{|N(\mathbf{A},\mathbf{B})|}{E}-1\right| + D \\
\leq& 2D \\
=& 2c_7\sqrt{\frac{\log(n)}{n}}.
\end{align*}
\end{proof}

\begin{remark} For the universal homogeneous parity $3$-hypergraph, a more model-theoretic version of this proof is given in \cite[Section 4.5.3]{ColinPhD} (where it is refered to as the $2$-graph) and independently in \cite[Theorem 7.4.11]{myPhD} (where it is refered to as the two-graph). The proof relies on the fact that, fixing a vertex, $\mathcal{G}_3$ canonically embeds a copy of the random graph, so any invariant random expansion $\mu$ of $\mathcal{G}_3$ by graphings induces an invariant unary random expansion $\nu$ of $\mathcal{R}_2$, and these are well-understood. Uniqueness then follows given some additional equations that $\mu$ must satisfy which force a unique choice of $\nu$. This technique should also generalise to the universal homogeneous parity $k$-hypergraphs, though some of our original motivation for looking at parity $k$-hypergraphs for $k\geq 3$ comes from the fact that \textit{a priori} more measures might have arisen in this context: for example, for $\mathcal{G}_4$, the step of looking at the measure induced by fixing a vertex would yield a binary random invariant expansions of $\mathcal{R}_3$, and this space is much wilder than unary invariant random expansions of $\mathcal{R}_2$.
\end{remark}

\begin{remark}
    It is easy to see that the universal homogeneous parity $k$-hypergraph is not $k$-overlap closed. However it is $(k-1)$-overlap closed, being a reduct of the universal homogeneous $k$-hypergraph, which is $(k-1)$-overlap closed.
\end{remark}
\section{The connection with invariant Keisler measures}\label{sec:IKMconnection}

In this section we show how invariant Keisler measures of homogeneous structures can be viewed as a special case of invariant random expansions. This allows us to describe the spaces of invariant Keisler measures of various homogeneous structures which were not previously understood. We view the space of types of a $\mathcal{L}$-homogeneous structure as an expansion by a language $\mathcal{L}'$ which associates to each $\mathcal{L}$-relation finitely many $\mathcal{L}'$-relations of strictly lower arity (Lemma \ref{Mpstar}). This generalises some observations of Ensley \cite{EnsleyPhD} and yields a correspondence between invariant Keisler measures of $\mathcal{M}$ and a particular space of invariant random expansions of $\mathcal{M}$ (Corollary \ref{meascorr}), which is discussed in Subsection \ref{howto}. The fact that the expansion is by relations of lower arity allows us to use Theorem \ref{thm:appliedforIKM} when $\mathrm{Age}(\MM)$ is $k$-overlap closed, concluding that all invariant Keisler measures for $M$ in the variable $x$ are exchangeable (Theorem \ref{thm:IKMareSI}). In particular, the space of invariant Keisler measures for $\mathcal{M}$ in the variable $x$ corresponds to a space of $\mathrm{S}_\infty$-invariant measures concentrating on a particular age (Corollary \ref{invcorr}). This correspondence is described in detail in Subsection \ref{homcont}. This allows us to describe the spaces of invariant Keisler measures for many homogeneous structures in Subsection \ref{classtime} and obtain some further model theoretic applications in Subsection \ref{FOtime}.\\

Below we discuss previous research on Keisler measures under different model theoretic assumptions in order to explain the relevance of Keisler measures to modern model theory and put the progress that we make in context. An understanding of model theoretic properties is not required to follow the correspondence between invariant Keisler measures and invariant random expansions discussed in Subsections \ref{howto} and \ref{homcont}. A more formal introduction to model theoretic properties is given in Subsection \ref{sec:modelprelims}, but it is important to keep in mind that relatively technical and sophisticated concepts trivialise in the examples and classes of structures that we study. \\

\begin{figure}[t]
    \centering
    \includegraphics[width=0.7\textwidth]{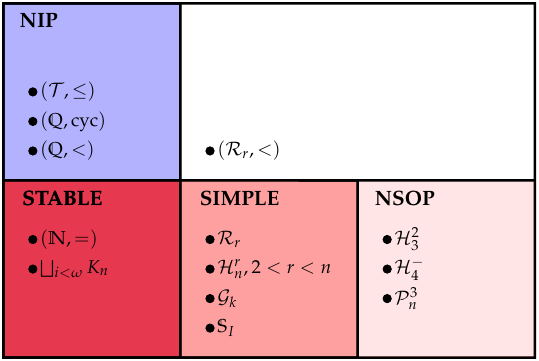}
    \caption{Simplified map of the model-theoretic universe. Note that $\mathrm{stable}\subsetneq\mathrm{simple}\subsetneq\mathrm{NSOP}$ and $\mathrm{NIP}\cap\mathrm{NSOP}=\mathrm{stable}$. The map includes examples of homogeneous (or finitely homogenisable) structures in the various classes. We already introduced the strictly simple and strictly $\mathrm{NSOP}$ ones in Example \ref{ex:hypergraphs}.  In the strictly $\mathrm{NIP}$ part of the universe we can see the  dense linear order $(\mathbb{Q},<)$, the cyclical order $(\mathbb{Q}, \mathrm{cyc})$, and Droste's $2$-homogeneous semilinear orders \cite{droste1985structure}, denoted by $(\mathcal{T}, \leq)$. In the stable part of the universe, we have an infinite set with equality and the countable disjoint union of copies of the complete graph $K_n$. An example of a homogeneous structure outside all of these classes is universal homogeneous ordered $r$-hypergraph $(\mathcal{R}_r, <)$. }
    \label{fig:universe}
\end{figure}

Model theorists study the behaviour of Keisler measures in  relation to model theoretic properties. These are properties of theories which characterise some combinatorial features of definable sets in their models. They usually yield various tools for the study of a theory such as tameness conditions on definable groups, or the existence of good notions of independence or dimensions. Our main focus in this paper lies in the following model theoretic properties: stability, NIP (not independence property), simplicity, and NSOP (not strict order property). In Figure \ref{fig:universe}, we illustrate the intersections and containments between these properties and list some $\omega$-categorical examples. These properties are also meaningful for many natural structures from algebra, number theory, and topology, some of which we mention below. We refer the curious reader to \url{https://www.forkinganddividing.com/} for a more extensive list of examples. Moreover, there are several resources on stable, NIP and simple theories that the reader may consult \cite{TZ, NIP, Kimsimp, Wagner:ST}. Strictly NSOP theories (i.e. theories which are NSOP and not simple) are in general not well-understood model theoretically though there is a hierarchy within them of $\mathrm{NSOP}_n$ theories for $n<\omega$, and much current research in the field has been dedicated to them for $n\leq 4$ \cite{chernikov2016model, mutchnik2024conantindependencegeneralizedfreeamalgamation}.\\

Keisler measures are best understood in NIP theories \cite{KeislerNIP, NIPinv, NIP, StarchNIP, GroupmeasuresNIP, Genericallystable}. For the purposes of our exposition, NIP theories may be regarded as the "non-random" part of the model theoretic universe since 
Keisler measures can be locally approximated by a weighted sum of types, which, under tame assumptions,  can be chosen to be invariant when the measure is invariant \cite[Lemma 4.8]{NIPinv}. Even stronger results hold for NIP $\omega$-categorical structures (see Section \ref{sec:nip}, and \cite{Ensley}).\\ 

Outside of an NIP context, Keisler measures are relatively poorly understood \cite{lots_of_authors, Keislerwild}, though, positive results have been obtained for arbitrary theories under the assumption of amenability (i.e. types over $\emptyset$ can be extended to global invariant Keisler measures) \cite{FOAmen, krupinski2019amenability}. Nevertheless, well-behaved invariant Keisler measures naturally appear in the context of many simple theories \cite{MS, WolfMEC}, such as pseudofinite fields \cite{HrushPseud, CVDMI} and smoothly approximable structures~\cite{FSFT}. Strictly simple theories are often considered as having some additional amount of "randomness" to stable theories (which are the intersection of simple and NIP theories).\\ 


Finally, Keisler measures have several applications in obtaining strong regularity lemmas under model theoretic tameness conditions such as stability, NIP and their higher arity generalisations \cite{MalliarisReg, DominationRegularity, CherStarchNIP, RegularityNIP, StarchNIP, GroupStableReg, chernikov2020hypergraph}. Moreover, the good behaviour of Keisler measures in pseudofinite fields was used in \cite{PillayStar} for a model theoretic proof of the algebraic regularity lemma for graphs definable in finite fields \cite{TaoReg} (see also \cite{dzamonja2022graphonsarisinggraphsdefinable}). This was further generalised in \cite{AlexisARL} to a hypergraph algebraic regularity lemma using model-theoretic techniques (cf. Appendix B in \cite{AER}).\\


In spite of the relevance of Keisler measures to modern model theory, the project of describing the space of invariant Keisler measures of a given class of theories outside of an NIP context was essentially abandoned after the work of Albert and Ensley \cite{Albert, Ensley, EnsleyPhD}. This is particularly surprising considering that the space of invariant Keisler measures for a structure is understood as a natural generalisations of its space of invariant types, which is usually easy to understand and heavily studied. However, as we explain more in Section \ref{sec:consequences}, their techniques cannot work for the structures we study. On one hand, it is well-known that Keisler measures will in general behave differently from how they do in NIP theories~\cite{Keislerwild}. On the other, Albert's techniques do not properly extend outside of a binary context. Even the most recent techniques (e.g. Theorem B.8 in \cite{AER}) require a structure to satisfy some higher independent amalgamation property, and so only help with structures with disjoint $n$-amalgamation for all $n$ in our context (due to \cite{Palacinrandom}). Indeed, even after noticing the correspondence we build in Subsections \ref{howto} and \ref{homcont} between invariant Keisler measures and invariant random expansions, characterising a given space of invariant random expansions requires results such as our exchangeability result in Theorem \ref{thm:appliedforIKM}, or, even for the random hypergraphs, the recent results of Crane and Towsner, and Ackerman \cite{CraneTown, AckerAut}.\\

For this reason, the correspondence we build, together with Theorem \ref{thm:appliedforIKM} opens up a large class of theories whose space of invariant Keisler measures can now be understood. This is explored in Section \ref{sec:consequences} and Subsection \ref{classtime}. Then in Subsection \ref{FOtime}, we will see how our results have strong implications for our understanding of invariant Keisler measures in simple theories.


\subsection{Preliminaries in model theory}\label{sec:modelprelims}

In this section, we give some preliminaries in model theory. The reader should keep in mind that (i) the material of Subsections \ref{howto} and \ref{homcont} requires only basic knowledge of model theory, such as Chapters 1-4 of \cite{TZ}; (ii) in all examples we consider, sophisticated model theoretic concepts trivialise; hence, (iii) until Section \ref{sec:nip} at the end of the paper, almost no background in model theory is needed except for the arguments explaining why certain model theoretic concepts trivialise in our context and examples. We keep the technical arguments for model theorists, but a reader unfamiliar with the discipline may skip these and lose little understanding of our ideas. Nevertheless, model theoretic concepts are central to the motivation of the following sections because the new understanding of invariant Keisler measures that we achieve offers many ideas and challenges for future research in the field: on one hand we get many positive results which manage to classify and show exchangeability for the invariant Keisler measures of many structures. On the other, we offer counterexamples to many reasonable things one could have expected of them. We use the introductions to the various subsections to overview how our work interacts with the rest of the literature in this respect.\\

Given a structure $\mathcal{M}$ and $A \subseteq M$, a \textbf{type over A in the variable $x$} is a maximal set of formulas with free variable $x$ and parameters from $A$ such that every finite subset is satisfied in $M$. In our usual setting where $M$ is a relational Fra\"{i}ss\'{e} limit, such a type simply describes all the relations holding and not holding between $x$ and $A$ while avoiding the forbidden substructures determining $\mathrm{Age}(\MM)$.\\

For a cardinal $\kappa$, we say that a structure $\mathcal{M}$ is $\bm{\kappa}$\textbf{-saturated} if all types over sets of cardinality $<\kappa$ are realised. We say that it is \textbf{strongly} $\bm{\kappa}$\textbf{-homogeneous} if partial elementary maps between subsets of $\mathcal{M}$ of cardinality $<\kappa$ extend to automorphisms of $\mathcal{M}$. The countable model of an $\omega$-categorical structure is always $\omega$-saturated and strongly $\omega$-homogeneous, and so is any homogeneous structure. In general, to introduce model theoretic concepts, we work in a sufficiently $\kappa$-saturated and strongly $\kappa$-homogeneous model $\mathbb{M}$ (see \cite[Section 6.1]{TZ}). We say that a subset $A\subseteq \mathbb{M}$ is \textbf{small} when it is of cardinality $<\kappa$ and in general, we will be working with small subsets of $\mathbb{M}$.\\

Imaginaries will be relevant in Section \ref{sec:nip} and in some of the discussion of Section \ref{sec:consequences}. The idea is to expand $\mathcal{M}$ by sorts representing the equivalence classes of $\mathcal{M}$ with respect to definable equivalence relations. The resulting structure, $\mathcal{M}^{eq}$ is usually a natural object for expressing model theoretic results at the correct level of generality. Often, one also needs to consider hyperimaginaries, i.e., objects standing for the quotients of $\mathcal{M}$ with respect to type-definable equivalence relations. The structures we study eliminate hyperimaginaries and weakly eliminate imaginaries, meaning that these objects will not be relevant to most of our proofs \cite[Theorem 1.1]{Freeam}. Still, when we mention $\mathrm{acl}^{eq}(\emptyset)$ and $\mathrm{dcl}^{eq}(\emptyset)$, we will be talking about algebraic closure and definable closure in $\mathcal{M}^{eq}$. We refer the reader to \cite[Chapter 16]{Poizat} on imaginaries and weak elimination of imaginaries and to \cite[Chapter 4]{Kimsimp} on hyperimaginaries.\\

In Subsection \ref{FOtime}, we explore the relation between two notions of smallness for definable sets in a structure. One of them consists in a definable set $X$ being \textbf{universally measure zero}, i.e. it is assigned measure zero by every invariant Keisler measure in $\mathbb{M}$. The other consists in a definable set forking over $\emptyset$. We define this below:

\begin{definition} We say that a formula $\phi(x,b)$ \textbf{divides} over $A$ if there is some $k$ and a sequence $(b_i|i<\omega)$ of realisations of $\mathrm{tp}(b/A)$ such that $\{\phi(x, b_i)\vert i<\omega\}$ is $k$-inconsistent (i.e. all $k$-subsets of $\{\phi(x, b_i)\vert i<\omega\}$ are inconsistent). We say that A partial type $\pi(x)$ (or a formula) \textbf{forks} over $A$ if it implies a disjunction of formulas dividing over $A$. The 
definable subsets  of $\mathbb{M}^{|x|}$ whose defining formula forks over $A$ form an ideal in the Boolean algebra $\mathrm{Def}_x(\mathbb{M})$. We write $F_x(A)$ for the \textbf{ideal of definable sets forking over} $A$ in the variable $x$.
\end{definition}

The idea behind dividing is that a definable set which is "tiny" in the model may be moved around by automorphisms so that it does not overlap with itself. Since we expect the union of two tiny sets to also be tiny and the disjunction of two dividing formulas does not necessarily divide, we define forking to close this notion of tininess under disjunction. From the point of view of Boolean algebra, $F_x(A)$ consists of the ideal generated by formulas dividing over over $A$ in $\mathrm{Def}_x(\mathbb{M})$. Forking yields a notion of independence which is central to model theory:

\begin{definition} For $a, b, A$ small sets from $\mathbb{M}$, we write $a\indep{A}{}b$ if $\mathrm{tp}(a/Ab)$ does not fork over $A$. We say that $a$ is non-forking independent from $b$
 over $A$ and call this notion of independence non-forking independence.
 \end{definition}

In particular, forking independence is central to the study of \textbf{simple} theories. These are theories for which forking independence is symmetric, i.e. $a\indep{B}{} c$ if and only if $c\indep{B}{} a$. More generally, by the Kim-Pillay theorem \cite{KPsimp}, simple theories can be characterised by a notion of independence satisfying various nice conditions, including the independence theorem over models, which we may consider as a weaker form of disjoint $3$-amalgamation, where disjointness has been replaced by independence. The simple theories we study in this paper satisfy some of the most desirable properties in this class. In particular, they are all supersimple of $\mathrm{SU}$-rank $1$ (see \cite{Kimsimp} for the relevant definitions). Moreover, forking independence is trivial for them:

\begin{definition} We say that a theory has \textbf{trivial forking} if
\[A\indep{C}{}B \text{ if and only if } (A\cup C)\cap (B\cup C)=C.\]
\end{definition}
A weaker property than trivial forking is being \textbf{one-based}, which, under elimination of hyperimaginaries means that non-forking independence corresponds to \textbf{weak algebraic independence}: $A\indep{C}{} B$ if and only if $\mathrm{acl}^{eq}(AC)\cap\mathrm{acl}^{eq}(BC)=\mathrm{acl}^{eq}(C)$.

Every $\omega$-categorical structure with disjoint amalgamation and $3$-DAP has trivial forking and the other aforementioned tameness conditions:

\begin{fact}[{\cite[Theorem 3.14]{Kruck}}]\label{trivialforkfact} Suppose that $\mathcal{C}$ has $k$-DAP for $k\leq 3$ and $\mathcal{M}=\mathrm{Flim}(\mathcal{C})$ is $\omega$-categorical. Then, $T$ is supersimple, of $\mathrm{SU}$-rank $1$ and with trivial forking.
\end{fact}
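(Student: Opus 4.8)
\textbf{Proof plan for Fact \ref{trivialforkfact}.} The statement asserts that for an $\omega$-categorical Fra\"{i}ss\'{e} limit $\mathcal{M} = \mathrm{Flim}(\mathcal{C})$ whose age has disjoint $k$-amalgamation for $k \le 3$, the theory $T = \mathrm{Th}(\mathcal{M})$ is supersimple of $\mathrm{SU}$-rank $1$ with trivial forking. Since this is quoted from \cite[Theorem 3.14]{Kruck}, the proof is essentially bookkeeping with the Kim--Pillay characterisation of simplicity; I would organise it as follows. First I would define a candidate independence relation on small subsets of the monster model $\mathbb{M}$ by declaring $A \indep{C}{} B$ if and only if $\mathrm{acl}(AC) \cap \mathrm{acl}(BC) = \mathrm{acl}(C)$ (equivalently, since disjoint amalgamation gives trivial algebraic closure over any set, $A \indep{C}{} B \iff (A \cup C) \cap (B \cup C) = C$ after replacing sets by their algebraic closures). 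The plan is to verify that this relation satisfies the axioms of the Kim--Pillay theorem: invariance under $\mathrm{Aut}(\mathbb{M})$, symmetry, transitivity/monotonicity, finite character, existence/extension, local character, and — the crucial one — the independence theorem over models (a.k.a. the type-amalgamation or $3$-existence axiom).

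The key steps, in order: (1) Observe that disjoint $2$-amalgamation (= DAP) for $\mathcal{C}$ is equivalent to $\mathcal{M}$ having trivial algebraic closure, so $\mathrm{acl}(A)$ is just the substructure generated by $A$ (which for a relational language is $A$ itself); this makes the combinatorial independence relation above well-defined and immediately gives invariance, symmetry, monotonicity, transitivity and finite character essentially for free. (2) Use $\omega$-categoricity together with homogeneity to get \emph{extension}: given a type $p$ over $C$ and a set $B \supseteq C$, one can realise $p$ by an element disjoint from $B$ over $C$; here one uses that quantifier-free type determines type (homogeneity) and that disjoint amalgamation lets us place a fresh realisation freely. (3) Establish \emph{local character} — in fact $\mathrm{SU}$-rank $1$: since $\mathrm{acl}$ is trivial, for any $a$ and any $B$, $a \notindep{\emptyset}{} B$ can only happen when $a \in \mathrm{acl}(B) = B$, i.e. forking is as trivial as possible, so every type has $\mathrm{SU}$-rank at most $1$, giving supersimplicity at once. (4) Prove the \emph{independence theorem over models}: given a model $N \prec \mathbb{M}$, tuples $a_1 \equiv_N a_2$, and sets $B_1 \indep{N}{} B_2$ with $a_i \indep{N}{} B_i$, one must find $a \indep{N}{} B_1 B_2$ with $a \equiv_{B_i} a_i$. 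This is exactly where disjoint $3$-amalgamation is used: the two quantifier-free types of $a_1$ over $N B_1$ and of $a_2$ over $N B_2$ agree over $N$ (after identifying $a_1, a_2$) and the three sets $N B_1$, $N B_2$, and the fresh point can be glued by a disjoint $3$-amalgamation problem (upgraded from the basic case via Fact \ref{basicndap}) whose solution lies in $\mathcal{C}$, hence embeds into $\mathbb{M}$ by homogeneity and $\omega$-saturation. (5) Invoke Kim--Pillay \cite{KPsimp} to conclude $T$ is simple and that the combinatorial relation \emph{is} non-forking independence; since by construction it is the trivial-forking relation and has $\mathrm{SU}$-rank $1$, we get all the claimed conclusions, and trivial forking implies one-basedness.

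The main obstacle is step (4), the independence theorem: one has to be careful that the amalgamation problem built from $\mathrm{tp}(a_1/NB_1)$ and $\mathrm{tp}(a_2/NB_2)$ is genuinely a \emph{disjoint} $3$-amalgamation problem in the precise sense of the definition — the overlaps must be exactly as prescribed, which requires first reducing to the case where $B_1, B_2$ are algebraically closed and intersect in (the algebraic closure of) $N$, and then checking that the relational structure is determined on the pairwise unions with no hidden constraints, so that a solution in $\mathcal{C}$ exists by $3$-DAP. A secondary subtlety is that $3$-DAP as stated is about finite structures with singleton (or small) new parts, so one needs Fact \ref{basicndap} and a compactness/direct-limit argument to amalgamate the possibly infinite parameter sets $B_1, B_2$ inside the monster; $\omega$-categoricity keeps this manageable since finitely many orbits control everything. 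Everything else is routine verification of the Kim--Pillay axioms using triviality of $\mathrm{acl}$ and homogeneity.
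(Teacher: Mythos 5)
The paper gives no proof of this statement: it is imported verbatim as a Fact from \cite[Theorem 3.14]{Kruck}, so there is nothing in the text to compare against beyond the citation. Your reconstruction via the Kim--Pillay characterisation — trivial algebraic closure from 2-DAP giving the disjointness independence relation, disjoint 3-amalgamation (upgraded through partial problems and compactness) giving the independence theorem over models, and triviality of the relation giving $\mathrm{SU}$-rank $1$ — is correct and is essentially the argument of the cited source.
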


This also holds for the Fra\"{i}ss\'{e} limits of classes omitting $3$-irreducible structures:

\begin{fact}[{Theorem 7.22 in \cite{Freeam}, see \cite[Remark 7.3]{Kopconstr}}] \label{fact:3irred} Let $\mathcal{L}$ be a finite relational language. Let $\mathcal{C}=\mathrm{Forb}(\mathcal{F})$ and $\mathcal{M}$ be the associated Fra\"{i}ss\'{e} limit. If every structure in $\mathcal{F}$ is $3$-irreducible, then $\mathcal{M}$ is supersimple, with $\mathrm{SU}$-rank $1$ and trivial independence.
\end{fact}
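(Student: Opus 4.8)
The plan is to reduce the statement to Fact~\ref{trivialforkfact} by checking that $\mathcal{C}$ has disjoint $k$-amalgamation for every $k\leq 3$ and that $\mathcal{M}$ is $\omega$-categorical. The latter is immediate: $\mathcal{M}$ is homogeneous in a finite relational language, so there are only finitely many quantifier-free types in each finite tuple of variables, and by homogeneity these are all the types. For disjoint $2$-amalgamation, observe that $3$-irreducibility implies $2$-irreducibility (in the definition of $2$-irreducibility for a pair $a_1,a_2$, apply $3$-irreducibility to $a_1,a_2$ together with a repeat of one of them), so every $F\in\FF$ is $2$-irreducible. Passing to the minimal subfamily of $\FF$ defining the same class --- its members are exactly the minimal forbidden structures, so they all lie in $\FF$ and remain $3$-irreducible --- Fact~\ref{fact:2irrfree} gives that $\mathcal{C}$ has free amalgamation, hence disjoint amalgamation ($=$ disjoint $2$-amalgamation), and in particular $\mathcal{M}$ has trivial algebraic closure.

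The main step is disjoint $3$-amalgamation. Given a disjoint $3$-amalgamation problem over $A\in\mathcal{C}[m]$ with new parts $k_1,k_2,k_3$ and coherent data $(B_I\mid I\subsetneq [3])$, define the candidate solution $B$ on $[m]\cup k_1\cup k_2\cup k_3$ by declaring a relation tuple $\overline b$ to hold in $B$ exactly when $\overline b$ is contained in $[m]\cup\bigcup\{k_i\mid i\in I\}$ for some $I$ with $|I|\leq 2$ and $\overline b$ holds in $B_I$ (this is unambiguous by the coherence of the data), and to fail whenever $\overline b$ meets all three of $k_1,k_2,k_3$. By construction $B\upharpoonright ([m]\cup\bigcup\{k_i\mid i\in I\})=B_I$ for each $|I|\leq 2$, so it remains to check $B\in\mathcal{C}$. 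Suppose some $F\in\FF$ embeds into $B$. If the image of $F$ contained a point from each part, say $a\in k_1$, $b\in k_2$, $c\in k_3$, then by $3$-irreducibility $\{a,b,c\}$ would lie in a relation tuple of $F$, hence in a relation tuple of $B$ meeting all three parts, contradicting the construction. So the image of $F$ meets at most two parts, hence lies inside $[m]\cup\bigcup\{k_i\mid i\in I\}$ for some $|I|\leq 2$, so $F$ embeds into $B_I\in\mathcal{C}=\mathrm{Forb}(\FF)$ --- a contradiction. Thus $B$ solves the problem, and $\mathcal{C}$ has disjoint $3$-amalgamation.

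With disjoint $k$-amalgamation for all $k\leq 3$ and $\omega$-categoricity in hand, Fact~\ref{trivialforkfact} yields that $T=\mathrm{Th}(\mathcal{M})$ is supersimple of $\mathrm{SU}$-rank $1$ with trivial forking, which is exactly the asserted trivial independence. The main obstacle is the verification in the middle step that the free completion of a disjoint $3$-amalgamation problem omits all of $\FF$: everything turns on the dichotomy that a forbidden copy either meets all three parts --- and then $3$-irreducibility produces a relation tuple crossing all three, which the construction has excluded --- or meets at most two parts, and then already lives in a region known to be $\FF$-free. (An essentially equivalent and more hands-on route, which is what underlies Fact~\ref{trivialforkfact}, is to define $A\indep{C}{}B$ to mean $(A\cup C)\cap(B\cup C)=C$ together with the absence of any relation of $\mathcal{M}$ meeting both $AC\setminus C$ and $BC\setminus C$, verify the Kim--Pillay axioms --- symmetry, transitivity and finite character being immediate, extension coming from free amalgamation, and the independence theorem over models coming from the disjoint $3$-amalgamation argument above --- and then read off $\mathrm{SU}$-rank $1$ and triviality from this explicit description of non-forking.)
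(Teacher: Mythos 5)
Your proof is correct. Note first that the paper does not actually prove this statement: it is imported as a black box from Conant \cite{Freeam} (Theorem 7.22) and Koponen \cite{Kopconstr} (Remark 7.3). What you have done is give a self-contained derivation from the paper's Fact \ref{trivialforkfact} (Kruckman's theorem that $\omega$-categoricity plus disjoint $k$-amalgamation for $k\leq 3$ yields supersimplicity of $\mathrm{SU}$-rank $1$ with trivial forking), and the two verifications you supply are sound: $3$-irreducibility implies $2$-irreducibility (via the repeated-entry trick, which is legitimate since the definition of $k$-irreducibility does not require the $a_i$ to be distinct), the constraints of $\mathrm{Forb}(\FF)$ are all isomorphic to members of $\FF$ and hence remain $3$-irreducible, and the free completion of a disjoint $3$-amalgamation problem avoids every $F\in\FF$ by exactly the dichotomy you state --- a copy meeting all three new parts would force, by $3$-irreducibility, a relation tuple crossing all three parts, which the free completion excludes, while a copy meeting at most two parts lives inside some $B_I\in\CC$. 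This is essentially the argument underlying Koponen's Remark 7.3, and it is the natural one given the surrounding text: it makes explicit that the real content is disjoint $3$-amalgamation, which is also the property the paper exploits later (Fact \ref{3dapfree} is its partial converse, and Lemma \ref{syntacticfact} and Theorem \ref{IKMdichotomy} turn on where $n$-DAP fails). The only cost of your route relative to citing \cite{Freeam} directly is that you still lean on Fact \ref{trivialforkfact} as a black box for the Kim--Pillay verification; your closing parenthetical correctly identifies what a fully from-scratch argument would have to check there.
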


More generally, Conant \cite{Freeam} studies model theoretic properties in structures with free amalgamation and proves that various complex model theoretic concepts trivialise in this context. In particular, all such structures are either supersimple of $\mathrm{SU}$-rank $1$ with trivial forking, or strictly $\mathrm{NSOP}_4$. The latter model theoretic property is a strengthening of $\mathrm{NSOP}$ which belongs to a hierarchy of $\mathrm{NSOP}_n$ properties. The model theory of $\mathrm{NSOP}_4$ is not yet fully understood. Some of the examples we discuss, such as the generic $K_n$-free graphs and the $n$-petal-free homogeneous $3$-hypergraphs, belong to this class.

\begin{fact}[Theorem 1.1 in {\cite{Freeam}}]\label{freefact} Let $\mathcal{M}$ be a countable homogeneous structure in a finite relational language whose age has free amalgamation and let $T=\mathrm{Th}(M)$. Then, $T$ has weak elimination of imaginaries. Moreover, 
\begin{itemize}
    \item EITHER: $T$ is supersimple of $\mathrm{SU}$-rank $1$ and with trivial forking;
    \item OR: $T$ is strictly $\mathrm{NSOP}_4$, i.e. it is $\mathrm{NSOP}_4$ and $\mathrm{SOP}_3$.
\end{itemize}
\end{fact}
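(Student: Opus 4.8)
The plan is to run everything through a ternary independence relation built directly from free amalgamation. Since the age has free amalgamation it has disjoint amalgamation, so $M$ (and its monster model $\mathbb{M}$) has trivial algebraic closure: $\mathrm{acl}(C)=\mathrm{dcl}(C)=C$ for every small $C$. On $\mathbb{M}$ define the \emph{free independence relation} $\ind^{\mathrm{f}}$ by declaring $A\ind^{\mathrm{f}}_C B$ to hold iff $A\cap B\subseteq C$ and there is no instance of any $\mathcal{L}$-relation with one argument in $A\setminus C$ and another in $B\setminus C$. First I would check this is a \emph{stationary independence relation}: invariance and monotonicity are immediate and symmetry is built in; \emph{full existence} over any $C$ comes from free amalgamation, since all minimal members of $\mathcal{F}$ are $2$-irreducible, so the free superposition of a realisation of $\mathrm{tp}(a/C)$ with $B$ over $C$ omits $\mathcal{F}$; and \emph{stationarity} follows because the free amalgam of two $\mathcal{L}$-structures over a common part is unique up to isomorphism, together with strong homogeneity of $\mathbb{M}$. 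Feeding $\ind^{\mathrm{f}}$ into the standard canonical-base argument for stationary independence relations (in the style of Tent and Ziegler) then yields that $T$ has weak elimination of imaginaries. This part is unconditional and is the most routine step.

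Next I would establish, again unconditionally, that $T$ is $\mathrm{NSOP}_4$. The point is that $\ind^{\mathrm{f}}$ already satisfies the weak amalgamation property that characterises $\mathrm{NSOP}_4$: from a putative $\mathrm{SOP}_4$ witness $\varphi(\bar x,\bar y)$, with an infinite $\varphi$-chain and an inconsistent $\varphi$-$4$-cycle, one freely amalgamates two suitably shifted copies of the chain over a common small model and folds the result so as to realise the forbidden $4$-cycle, a contradiction. Hence $T$ is $\mathrm{NSOP}_4$.

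The dichotomy then turns on whether the bare \emph{disjointness} relation $\ind^{\mathrm{d}}$, defined by $A\ind^{\mathrm{d}}_C B \iff (A\cup C)\cap(B\cup C)=C$, satisfies the Independence Theorem over models --- equivalently, whether the relevant type $3$-amalgamation problems for $\mathcal{C}$ have solutions inside $\mathcal{C}$; Conant pins down the exact combinatorial condition on the minimal $\mathcal{F}$. If it holds, then $\ind^{\mathrm{d}}$ satisfies every axiom in the Kim--Pillay characterisation (local and finite character being trivial for a relation that is just disjointness), so $T$ is simple and $\ind^{\mathrm{d}}$ coincides with non-forking independence; thus forking is trivial, and since $a\notin B$ forces $a\ind^{\mathrm{d}}_\emptyset B$ while $\mathrm{acl}$ is trivial, $T$ is supersimple of $\mathrm{SU}$-rank $1$. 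If it fails, I would extract from a failing amalgamation instance an explicit $\mathrm{SOP}_3$ pattern for a suitable formula (an infinite $\varphi$-chain together with an inconsistent $\varphi$-$3$-cycle), following Conant; combined with the previous paragraph this gives that $T$ is strictly $\mathrm{NSOP}_4$, i.e.\ $\mathrm{NSOP}_4$ and $\mathrm{SOP}_3$. The main obstacle is exactly this last step: determining which shapes of $2$-irreducible forbidden structures obstruct $3$-amalgamation, and, in the obstructed case, converting that obstruction into an honest $\mathrm{SOP}_3$ formula --- one must simultaneously keep the chain consistent and force $n$-inconsistency of the cycle, and it is precisely here that $2$-irreducibility of the members of $\mathcal{F}$ is used delicately; the folding argument behind $\mathrm{NSOP}_4$ is of comparable difficulty, whereas the weak elimination of imaginaries is essentially bookkeeping.
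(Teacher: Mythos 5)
This statement is imported into the paper as a \textbf{Fact} --- it is Conant's Theorem 1.1 (and the surrounding results of \cite{Freeam}) --- and the paper does not reprove it. The only original proof content attached to it is a one-line deduction: Conant states \emph{one-basedness} in the simple case, and the authors upgrade this to \emph{trivial forking} using elimination of hyperimaginaries, weak elimination of imaginaries, and trivial algebraicity. Your proposal, by contrast, is a from-scratch reconstruction of Conant's argument, and its architecture is essentially the right one: a stationary independence relation $\ind^{\mathrm{f}}$ built from free amalgamation (full existence via $2$-irreducibility of the minimal forbidden structures, stationarity via uniqueness of the free amalgam), weak elimination of imaginaries from stationarity, a direct amalgamation argument for $\mathrm{NSOP}_4$, and a dichotomy governed by whether the independence theorem over models holds for the disjointness relation, with Kim--Pillay giving simplicity and the identification of $\ind^{\mathrm{d}}$ with non-forking in the positive case (whence trivial forking and $\mathrm{SU}$-rank $1$ directly, bypassing the one-basedness detour the paper takes).

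Two cautions on the sketch itself. First, there is no clean ``weak amalgamation property that characterises $\mathrm{NSOP}_4$'' to appeal to; Conant's $\mathrm{NSOP}_4$ proof is a bespoke construction using the free amalgamation axioms, so that step is a genuine argument rather than an instance of a known equivalence (the folding idea you describe is the right shape, but it must be carried out by hand). Second, the step you correctly flag as the main obstacle --- converting a failure of $3$-amalgamation into an honest $\mathrm{SOP}_3$ formula, keeping the chain consistent while forcing inconsistency of the cycle --- is where almost all of the work in \cite{Freeam} lives, and your proposal only gestures at it. As a blind reconstruction of the cited proof the outline is sound; as a self-contained proof it leaves the two hardest constructions unexecuted.
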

\begin{proof} The observation that forking is trivial in the simple context follows from the fact that in $T$ is one-based \cite[Theorem 1.1]{Freeam}, which under elimination of hyperimaginaries, weak elimination of imaginaries and trivial algebraicity implies actual triviality of forking. 
\end{proof}

Theorem 1.1 in \cite{Freeam} actually yields even more model theoretic information about these structures which is not needed for this paper. A nice consequence is that simple structures with free amalgamation satisfy disjoint $3$-amalgamation:

\begin{fact}\label{3dapfree} Suppose that $\mathcal{M}$ is homogeneous in a finite relational language, with free amalgamation and simple. Then, $\mathrm{Age}(\MM)$ satisfies disjoint $3$-amalgamation.
\end{fact}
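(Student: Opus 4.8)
The plan is to use the independence theorem for simple theories together with the structural dichotomy of Fact~\ref{freefact}. Since $\mathcal{M}$ is homogeneous in a finite relational language it is $\omega$-categorical, so we may pass to a monster model $\mathbb{M}\models T:=\mathrm{Th}(\mathcal{M})$. As $\mathcal{M}$ has free amalgamation and is simple, Fact~\ref{freefact} places it in the first case of the dichotomy: $T$ is supersimple of $\mathrm{SU}$-rank $1$ with trivial forking and weak elimination of imaginaries, and (again by Conant's analysis, \cite[Theorem 1.1]{Freeam}) it eliminates hyperimaginaries; moreover free amalgamation gives disjoint amalgamation and hence trivial algebraic closure. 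Consequently $\mathrm{acl}^{eq}(A)=\mathrm{dcl}^{eq}(A)$ for every finite $A\subseteq\mathbb{M}$, so the independence theorem is available over any such finite parameter set, and for finite tuples $\overline{b},\overline{c}$ the relation $\overline{b}\indep{A}{}\overline{c}$ holds precisely when $\overline{b}$ and $\overline{c}$ are disjoint from each other over $A$.

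Now fix a disjoint $3$-amalgamation problem over a finite $A\in\mathcal{C}$: pairwise disjoint ``legs'' $k_1,k_2,k_3$, each disjoint from $A$, together with structures $B_{\{1,2\}},B_{\{1,3\}},B_{\{2,3\}}\in\mathcal{C}$ on $A\cup k_i\cup k_j$ agreeing on the common restrictions to $A\cup k_i$. First I would realise $B_{\{1,2\}}$ inside $\mathbb{M}$ over $A$, obtaining copies $c_1,c_2$ of $k_1,k_2$, chosen disjoint from $A$. The consistency condition gives $A c_1\cong B_{\{1,3\}}\upharpoonright(A\cup k_1)$, so by the extension property of the homogeneous $\mathbb{M}$ there is $c_3^1$ with $A c_1 c_3^1\cong B_{\{1,3\}}$; symmetrically pick $c_3^2$ with $A c_2 c_3^2\cong B_{\{2,3\}}$, with all of $c_1,c_2,c_3^1,c_3^2$ chosen pairwise disjoint and disjoint from $A$ (possible since orbits are infinite).

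The key checks are then the hypotheses of the independence theorem over $A$, applied with $\overline{a}=c_3^1$, $\overline{a}'=c_3^2$, $\overline{b}=c_1$, $\overline{c}=c_2$. We have $\mathrm{tp}(c_3^1/A)=\mathrm{tp}(c_3^2/A)$ because $B_{\{1,3\}}$ and $B_{\{2,3\}}$ induce the same structure on $A\cup k_3$; and $c_1\indep{A}{}c_2$, $c_3^1\indep{A}{}c_1$, $c_3^2\indep{A}{}c_2$ all hold simply because the relevant sets are disjoint and forking is trivial. The independence theorem then yields $c_3$ with $c_3\equiv_{Ac_1}c_3^1$, $c_3\equiv_{Ac_2}c_3^2$ and $c_3\indep{A}{}c_1c_2$; in particular $c_3$ is disjoint from $Ac_1c_2$. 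Taking $B$ to be the $\mathcal{L}$-structure induced by $\mathbb{M}$ on $Ac_1c_2c_3$, after renaming $c_i$ back to $k_i$, we get $B\upharpoonright(A\cup k_1\cup k_2)\cong B_{\{1,2\}}$, $B\upharpoonright(A\cup k_1\cup k_3)\cong B_{\{1,3\}}$, $B\upharpoonright(A\cup k_2\cup k_3)\cong B_{\{2,3\}}$, and $B\in\mathrm{Age}(\mathbb{M})=\mathcal{C}$, so $B$ solves the amalgamation problem.

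I expect the only delicate point to be the bookkeeping around applying the independence theorem over a finite parameter set $A$ rather than over a model or an $\mathrm{acl}^{eq}$-closed set: this is exactly where weak elimination of imaginaries, elimination of hyperimaginaries, and triviality of $\mathrm{acl}$ are used, to guarantee that $A$ behaves like an adequate base. Everything else is routine — in particular the observation that in these structures forking-independence coincides with disjointness, which trivialises all the independence hypotheses of the theorem. Note also that although the argument is phrased for a single disjoint $3$-amalgamation problem over a base $A$, no reduction to the $1$-point case or to the case $A=\emptyset$ is needed, since the legs $k_i$ are treated as tuples throughout.
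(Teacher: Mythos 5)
Your proof is correct and follows essentially the same route as the paper: both arguments invoke the independence theorem for simple theories (the paper cites it as ``independent $3$-amalgamation over Lascar strong types'') and then use weak elimination of imaginaries, elimination of hyperimaginaries, trivial algebraic closure, and trivial forking to reduce the base to a finite set and replace independence by disjointness. The only difference is that you carry out the application of the independence theorem explicitly where the paper cites the general amalgamation statement, and you correctly flag the same delicate point (the adequacy of a finite base) that the paper's reduction chain addresses.
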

\begin{proof} We use Fact \ref{freefact}. Simple theories satisfy independent $3$-amalgamation over Lascar strong types. As mentioned earlier, this can be seen as a weak form of disjoint $3$-amalgamation where disjointness is replaced by independence and quantifier-free types are replaced by Lascar strong types. However, in our context, these notions trivialise so that $\mathrm{Age}(\MM)$ does indeed satisfy disjoint $3$-amalgamation. In fact, by $\omega$-categoricity and \cite[Corollary 5.3.5]{Kimsimp}, $\mathcal{M}$ satisfies independent $3$-amalgamation over strong types. By weak elimination of imaginaries, $\mathcal{M}$ satisfies it over algebraically closed sets. By triviality of algebraic closures, $\mathcal{M}$ satisfies independent $3$-amalgamation over finite sets. By triviality of forking, $\mathrm{Age}(\MM)$ satisfies disjoint $3$-amalgamation.
\end{proof}



\begin{remark}[Simplicity of some of the hypergraphs from Example \ref{ex:hypergraphs}]\label{rem:simplicityex} For $2<r<n$, $\mathcal{R}_n, \mathcal{H}_n^r, \mathcal{H}_{r+1}^{-}, \mathcal{G}_{(r+1)},$ and $\mathcal{P}_{n}^{r+1}$ are supersimple of $\mathrm{SU}$-rank $1$ and with trivial forking. For  $\mathcal{R}_n, \mathcal{P}_{n}^{r+1}, \mathcal{H}_{r+1}^{-}$ and $\mathcal{H}_n^r$ this just follows from Fact \ref{fact:3irred}. There is some history of interest in the hypergraphs $\mathcal{H}_n^r$ as homogeneous supersimple one-based structures failing disjoint $n$-amalgamation \cite{HrushPseud, BinKop}. The universal homogeneous parity $k$-hypergraphs $\mathcal{G}_k$ are simple with trivial forking since, as noted in  Remark \ref{rem:ndaphyper}, they have disjoint amalgamation and disjoint $3$-amalgamation. It is also possible to see these structures are supersimple of $\mathrm{SU}$-rank $1$ and with trivial algebraic closure since  since $\mathcal{G}_{k}$ is a reduct of $\mathcal{R}_{(k-1)}$ \cite[Fact 2.7]{Kopconstr}, by considering the definable $k$-hyperedge relation in $\mathcal{R}_{(k-1)}$ which holds of $k$ many vertices if and only if they have a number of $(k-1)$-hyperedges with the same parity as $k$ \cite{Thomashyp}. Parity $k$-hypergraphs are also further studied in \cite{ApproxRamsey} (where they are called kay-graphs).
\end{remark}

Whilst for a homogeneous structure with free amalgamation, forbidding $3$-irreducible substructures implies simplicity, the converse is not necessarily the case. In Theorem 7.22 of \cite{Freeam}, Conant proves that if $\mathrm{Age}(\MM)$ is obtained by forbidding injective homomorphisms from some class $\mathcal{F}$ chosen to be minimal with respect to this notion, then all structures in $\mathcal{F}$ being $3$-irreducible is equivalent to simplicity of $\mathcal{M}$ when $\mathcal{M}$ has free amalgamation. Still, this is a more restricted class than that simple homogeneous structures with free amalgamation. Hence, proving a homogeneous structure with free amalgamation is not simple might require a bit more effort. We give an example of this by proving the hypergraphs $\mathcal{P}_n^3$ are strictly $\mathrm{NSOP}_4$.

\begin{lemma}\label{lem:petalnsop} For $n\geq 3$, the $n$-petal free $3$-hypergraph $\mathcal{P}_n^3$ does not satisfy disjoint $3$-amalgamation and so is strictly $\mathrm{NSOP}_4$  by Facts \ref{freefact} and \ref{3dapfree}.
\end{lemma}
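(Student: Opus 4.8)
The plan is to exhibit an explicit disjoint $3$-amalgamation problem over $\emptyset$ with one-point parts whose solution would force a copy of the forbidden petal $P_n^3$, thereby contradicting $\mathcal{P}_n^3 \in \mathrm{Flim}(\mathrm{Forb}(P_n^3))$. The desired conclusion then follows immediately: by Fact \ref{freefact} every free amalgamation homogeneous structure in a finite relational language is either supersimple of $\mathrm{SU}$-rank $1$ with trivial forking or strictly $\mathrm{NSOP}_4$; and by Fact \ref{3dapfree} the first case entails disjoint $3$-amalgamation. Since $\mathcal{P}_n^3$ has free amalgamation (its forbidden structure $P_n^3$ is $2$-irreducible, being a petal in which every outer pair sits in a hyperedge with the centre, so Fact \ref{fact:2irrfree} applies), failure of disjoint $3$-amalgamation puts it in the strictly $\mathrm{NSOP}_4$ case.

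First I would set up the amalgamation problem. Take the base $A = \emptyset$ and three new vertices, say on singleton parts $k_1 = \{1\}$, $k_2 = \{2\}$, $k_3 = \{3\}$. One must specify the three two-vertex structures $B_{\{i,j\}}$ (which here carry no relation, since $P_n^3$ has no hyperedges among its outer vertices, so the overlaps are just the empty $3$-hypergraph on two points). A solution would be a $3$-hypergraph on $\{1,2,3\}$ restricting correctly; here any solution works for so few vertices, so three points alone are not enough to obtain a contradiction. The correct move, in the spirit of Remark \ref{rem:ndaphyper} where a similar $4$-amalgamation problem is used for $\mathcal{H}_4^3$ and $\mathcal{G}_2$, is to use a larger disjoint amalgamation problem. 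I would instead consider an $(n+1)$-point problem: a base consisting of the $n$-vertex independent set $D = \{v_1,\dots,v_n\}$ (which is in $\mathrm{Age}(\mathcal{P}_n^3)$ since the independent set embeds), together with a single new vertex $c$, and specify via a partial disjoint amalgamation problem over $D$ the structures on $D \cup \{c\}$ restricted to each $n$-subset: namely, on $D \cup \{c\}$ minus $v_i$ one declares all the $(n-2)$-subsets of the remaining outer vertices together with $c$ to be hyperedges. These are consistent on overlaps and each such structure, being a petal on $n-1$ outer vertices, lies in $\mathrm{Age}(\mathcal{P}_n^3)$ because it omits $P_n^3$. But a solution on all of $D \cup \{c\}$ would have, for every $(n-2)$-subset $S$ of $D$, the hyperedge $S \cup \{c\}$ --- wait, one checks that assembling these forces all $(n-2)$-element-plus-$c$ tuples to be hyperedges over the full independent set $D$, producing exactly $P_n^3$ (or a structure embedding it), which is forbidden. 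By Fact \ref{basicndap}, if $\mathcal{P}_n^3$ had disjoint $3$-amalgamation (and disjoint $2$-amalgamation, which it does, having free amalgamation) then every partial disjoint $3$-amalgamation problem would have a solution; iterating to handle the $(n+1)$-point problem, or directly using that disjoint $3$-amalgamation for all pairs bootstraps via Fact \ref{basicndap} to all such partial problems, we reach a contradiction.

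The main obstacle is getting the bookkeeping of the amalgamation problem exactly right: one must verify that the specified pieces $B_I$ genuinely restrict compatibly on overlaps (this amounts to checking the hyperedge patterns on $(n-2)$-subsets of $D$ agree, which they do by construction since each piece simply says "every $(n-2)$-subset of the present outer vertices forms a hyperedge with $c$"), that each piece avoids $P_n^3$ (true because a petal with only $n-1$ outer vertices does not contain $P_n^3$, which needs $n$), and that any solution is forced to contain $P_n^3$ (true because the solution's restriction to $D \cup \{c\}$ has, for every $(n-2)$-subset $S \subseteq D$, the hyperedge $S \cup \{c\}$, and $D$ has $n$ vertices, so this is precisely a copy of $P_n^3$). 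I would also double-check, via Fact \ref{basicndap} and the Remark following it, that reducing to a $1$-point / basic problem or to partial $3$-amalgamation problems is legitimate; the cleanest route is to observe that a class with disjoint amalgamation and disjoint $3$-amalgamation has every partial disjoint $3$-amalgamation problem solvable, and the above is such a problem in disguise after padding $D$ into the base. Once the contradiction is in hand, the dichotomy of Fact \ref{freefact} together with Fact \ref{3dapfree} closes the argument with no further work.
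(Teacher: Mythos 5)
There is a genuine gap: your construction lives at the wrong amalgamation level. The problem you build --- an independent set $D=\{v_1,\dots,v_n\}$ plus one new vertex $c$, with faces obtained by deleting one $v_i$ at a time and declaring every $(n-2)$-subset of the remaining outer vertices to form a hyperedge with $c$ --- is (once recast in the paper's format, with empty base and singleton parts $v_1,\dots,v_n,c$) a \emph{basic disjoint $(n+1)$-amalgamation problem}, namely the one whose faces are the proper restrictions of $P_n^3$ itself. Its unsolvability is the trivial observation that a minimal forbidden structure on $n+1$ vertices witnesses failure of disjoint $(n+1)$-amalgamation. It does not witness failure of disjoint $3$-amalgamation, which is what the lemma asserts. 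Your proposed bridge via Fact \ref{basicndap} runs in the wrong direction: that fact says disjoint $k$-amalgamation for all $k\le m$ solves partial disjoint $m$-amalgamation problems; it does not let you ``iterate'' disjoint $3$-amalgamation to solve an $(n+1)$-part problem. Indeed, higher disjoint amalgamation is strictly stronger than lower: $\mathcal{H}_4^3$ has disjoint $3$-amalgamation yet fails disjoint $4$-amalgamation (Remark \ref{rem:ndaphyper}), so nothing prevents, a priori, a class from having $3$-DAP while failing $(n+1)$-DAP. Your argument therefore leaves open exactly the possibility the lemma is meant to exclude. (Your framing of the pieces as ``$D\cup\{c\}$ minus $v_i$'' also does not match the definition of a partial disjoint amalgamation problem, in which every face must contain the entire base; this is symptomatic of the same level confusion.)

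The paper's proof produces a genuine disjoint $3$-amalgamation problem: the three parts are $c_1$, $c_2$, and the block $\{c_3,\dots,c_n\}$, over a carefully chosen base containing a distinguished vertex $a_0$ and auxiliary independent blocks $B_1,\dots,B_{n-2}$ of size $n-2$. The face on $Ac_1c_2$ makes $a_0c_1c_2$ a hyperedge; the faces on $Ac_j(c_i\mid 3\le i\le n)$ make $a_0$ adjacent to every pair from $\{c_1,\dots,c_n\}$ while pairing each $c_i$ ($i\ge 3$) with the block $B_{i-2}$. Any solution must then place a hyperedge among the $c_i$'s (else $a_0c_1\cdots c_n$ is a copy of $P_n^3$), that hyperedge must contain $c_1c_2$ (all other triples are already decided), and then $B_1c_1c_2c_i$ becomes a copy of $P_n^3$ centred at $c_i$. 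This two-stage forcing is the content you are missing. The surrounding reasoning in your proposal (free amalgamation via $2$-irreducibility of $P_n^3$, and the deduction of strict $\mathrm{NSOP}_4$ from Facts \ref{freefact} and \ref{3dapfree}) is correct, but the combinatorial core needs to be replaced.
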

\begin{proof} Note that $P_n^3$ is $2$-irreducible and so $\mathcal{P}_n^3$ has free amalgamation. We show that it has a disjoint $3$-amalgamation problem with no solution. Consider a base $A$ of $n-2$ vertices $a_0, B_1, \dots, B_{n-2}$, where $B_k=\{b_k^1, \dots, b_k^{n-2}\}$ for $1\leq k\leq n-3$. Consider extra vertices $c_1, c_2,\dots, c_n$ and let 
\[A_{12}=A c_1 c_2, A_{13}=A c_1 (c_i\vert 3\leq i\leq n), \text{ and } A_{23}=A c_2 (c_i\vert 3\leq i\leq n).\]
We ask that $A_{12}$ has exactly one hyperedge consisting of $a_0c_1c_2$. Meanwhile, for $j\in\{1,2\}$, $A_{j3}$ has exactly the following hyperedges: for each $3\leq i\leq n$, $c_i$ forms a hyperedge with every pair in $B_{i-2}c_j$, and $a_0$ forms a hyperedge with each pair in $c_j(c_i\vert 3\leq i\leq n)$. Now, $(A_I\vert I\in [3]^2)$ forms a disjoint $3$-amalgamation problem. We claim that it has no solution. Any solution to the problem must have a $3$-hyperedge between some of the $c_i$ for $i\leq n$ since otherwise, $a_0(c_i\vert i\leq n)$ forms a copy of $P_n^3$. This hyperedge must contain $c_1c_2$ since the amalgamation problem has already specified whether there are relations or not for triplets not containing both of these vertices. So, without loss of generality, $c_1c_2c_3$ form a hyperedge in the solution to the $3$-amalgamation problem. However, now, $c_3$ forms a relation with every pair in $B_1c_1c_2$ and the amalgamation problem has specified that there are no other relations in $B_1c_1c_2$, yielding that $B_1c_1c_2c_3$ forms a copy of $P_n^3$ contradicting that we found a solution to the disjoint $3$-amalgamation problem. Hence, the $3$-amalgamation problem given by $(A_I\vert I\in [3]^2)$ has no solution, meaning that $\mathcal{P}_n^3$ does not have disjoint $3$-amalgamation and so is strictly $\mathrm{NSOP}_4$ by Facts \ref{freefact} and \ref{3dapfree}.
\end{proof}

We conclude this section by briefly mentioning two other model theoretic properties: stability and NIP.

\begin{definition} A formula $\phi(x,y)$ is stable if there is no infinite sequence $(a_i b_i)_{i<\omega}$ such that $\mathbb{M}\vDash \phi(a_i, b_j)$ if and only if $i<j$. A theory is stable if all of its formulas are stable.
\end{definition}

Stability is generally considered the ideal model theoretic property and model theorists have several results both for stable theories and for stable formulas in arbitrary theories \cite{Geomstab}. For an invariant Keisler measure $\mu$, the relation $R(a,b):=\mu(\phi(x, a)\wedge\psi(x, b))=\alpha$ is stable as an $\mathrm{Aut}(\mathbb{M})$-invariant relation \cite[Proposition 2.25]{approxsubg}, and so as a definable relation in an $\omega$-categorical setting. This means that local stability has several applications to the study of invariant Keisler measures \cite{GroupStableReg, PillayStar}. A problem we address in this paper is that of understanding measures of more complex intersections such as $\mu(\phi(x, a,b)\wedge \psi(x, a,c)\wedge \chi(x, bc))$ for which fewer tools are currently available. This is further discussed in the introduction to Section \ref{sec:consequences}.\\

Both simplicity and NIP are generalisations of stability which try to preserve some of its results. Simple theories generalise the particularly nice theory of non-forking independence of stable theories whilst allowing some amount of randomness. NIP theories preserve the "non-randomness" of stable theories, but allow some order to exist in the structure. Below we give the definition of NIP, though the only property of NIP theories we will need in this paper is isolated in Corollary \ref{cor:NIPIEP}.

\begin{definition}\label{def:ip} A formula $\phi(x, y)$ has the independence property $\mathrm{IP}$, if there are $(a_i)_{i<\omega}\allowbreak (b_I)_{I\subseteq \omega}$ such that 
\[\mathbb{M}\vDash \phi(a_i, b_I) \text{ if and only if } i\in I.\]
We say that a formula is $\mathrm{NIP}$ otherwise. A theory is $\mathrm{NIP}$ if all of its formulas are $\mathrm{NIP}$.
\end{definition}

In addition to the Examples given in Figure \ref{fig:universe}, real closed fields and algebraically closed valued fields are $\mathrm{NIP}$. Some additional homogeneous examples are given by $B, C,$ and $D$-relations (see \cite{Infpermnotes}). 




In Section \ref{sec:nip}, we provide some results describing invariant Keisler measures in NIP $\omega$-categorical structures which complement our work in simple theories and extend previous work of Ensley \cite{Ensley}.

\subsection{How to view a type as an expansion}\label{howto}
In this section, we show how we may consider invariant Keisler measures as a special case of invariant random expansions. We begin with a brief discussion of invariant Keisler measures and proceed by explaining how we may view the space of non-realised types $S'_x(M)$ as a space of expansions of $\mathcal{M}$. Throughout this section we denote by $\mathcal{M}$ a
strongly $\omega$-homogeneous $\mathcal{L}$-structure not assumed to be countable. The results in this and the following section hold in general with variable $\overline{x}$ of arbitrary arity. Nevertheless, it will be easier for notational convenience to work with a singleton variable.\\

As a motivating example, recall the universal homogeneous graph $\mathcal{R}_2$. A 1-type $p(x)$ over $\mathcal{R}_2$ describes which points of $\mathcal{R}_2$ are adjacent to $x$. This may be encoded by an expansion of $\mathcal{R}_2$ by a unary predicate $U$, where $U(a)$ holds if $p(x)$ makes $x$ adjacent to $a$. Similarly, for $\mathcal{R}_3$, 1-types can be encoded by expanding by a binary (graph) relation. Given this correspondence between types and expansions, we also obtain a correspondence between invariant measures on these spaces, i.e. between invariant Kiesler measures and invariant random expansions. In this subsection and the next, we formalize this correspondence and verify that it behaves as expected. But these examples (and those listed at the end of Example \ref{hypexample}) might already make the correspondence sufficiently clear, in which case the reader can skip to the more interesting results of the next section.

\begin{notation} We denote by $S'_x(M)$ the space of complete non-realised types in the variable $x$ over $M$.
\end{notation}

\begin{definition} An invariant Keisler measure on $\MM$ in the variable $x$ is a regular Borel probability measure\footnote{When $\MM$ is countable, any $\mathrm{Aut}(\MM)$-invariant Borel probability measure on $S_x(M)$ is regular \cite[Theorem 7.1.7]{Boga}.} on $S_x(M)$. We denote by $\mathfrak{M}_x(M)$ be the space of invariant Keisler measures on $M$ in the variable $x$.  We denote by $\mathfrak{M}'_x(M)$ the space of invariant Keisler measures on $M$ in the variable $x$ whose support contains no realised type. These are just $\mathrm{Aut}(\MM)$-invariant regular Borel probability measures on $S'_x(M)$. 
\end{definition}

\begin{remark} Invariant Keisler measures correspond to finitely additive $\mathrm{Aut}(\MM)$-invariant probability measures on the Boolean algebra of definable subsets of $M$ in the variable $x$ with parameters from $M$, $\mathrm{Def}_x(M)$ \cite[\S 7.1]{NIP}. Indeed, for a Keisler measure $\mu$ and an $\mathcal{L}(M)$-formula $\phi(x, \overline{a})$, we will write $\mu(\phi(x, \overline{a}))$ for the measure $\mu$ assigns to the clopen set 
\[[\phi(x, \overline{a})]:=\{p\in S_x(M)\vert \phi(x, \overline{a})\in p\}.\]
It makes sense to focus the study of invariant Keisler measures to $\omega$-saturated and strongly $\omega$-homogeneous models. Strong $\omega$-homogeneity yields that $\overline{a}\equiv a'$ implies $\mu(\phi(x, \overline{a}'))=\mu(\phi(x, \overline{a}))$. Hence, any two $\omega$-saturated strongly $\omega$-homogeneous models will have essentially the same spaces of invariant Keisler measures.
\end{remark}

\begin{remark}\label{rem:ergodic} For $\mathcal{M}$ countable, it is often convenient to focus on ergodic measures when studying invariant Keisler measures. We say that an invariant Keisler measure is \textbf{ergodic} when for all Borel $B\subseteq S_x(M)$, we have that if for all $\sigma \in\mathrm{Aut}(\MM)$, $\mu(B\triangle \sigma B)=0$, then $\mu(B)\in\{0,1\}$. Any invariant Keisler measure can be decomposed as an integral average of ergodic measures \cite{Choquet, me2}.  Moreover, ergodic measures concentrate on orbits: this is because $\mathrm{Aut}(\MM)$-orbits are Borel (since the action of $\mathrm{Aut}(\MM)$ on $S_x(M)$ is Borel \cite[Theorem 15.14]{KechrisDST}), and so they must be assigned value $0$ or $1$ by an ergodic measure since they are $\mathrm{Aut}(\MM)$-invariant.
\end{remark}

\begin{remark} In this section we will be showing how we can think of invariant Keisler measures in $\mathfrak{M}'_x(M)$ as invariant measures on spaces of expansions of $M$ which represent the space of non-realised types. Everything we do also works with $\mathfrak{M}_x(M)$, though we think that more clarity is achieved by looking at measures with no realised types in their support: this makes the choice of language of the expansions more natural when looking at examples. Moreover, our main focus in this paper is on countable structures for which there is little reason to focus on measures containing realised types in their support: as noted in Remark \ref{rem:ergodic}, for $\mathcal{M}$ countable we can focus on ergodic measures. If an ergodic measure $\mu$ contains a type $p$ realised by $a\in M$ in their support, $\mu$ must concentrate on the orbit of $p$, $\mathrm{Orb}(p)$, since $[x=a]=\{p\}\subseteq \mathrm{Orb}(p)$ is assiged positive measure. Since $M$ is countable and $p$ is realised, $\mathrm{Orb}(p)$ is at most countable. If it is countably infinite, by $\sigma$-additivity and $\mathrm{Aut}(\MM)$-invariance, there cannot be any ergodic measure (and so no measure) containing $p$ in their support. If $\mathrm{Orb}(p)=\{p_1, \dots, p_n\}$ for some $n\in\mathbb{N}$, then, by additivity and $\mathrm{Aut}(\MM)$-invariance, 
\[\mu=\frac{1}{n}\sum_{i=1}^n p_i.\]
Hence, the ergodic measures with realised types in their support can be fully understood. 
\end{remark}

For $\mathcal{L}'$ a relational language distinct from $\mathcal{L}$, we denote by $\mathrm{Struc}_{\mathcal{L}'}(M)$ the space of expansions of $\mathcal{M}$ to $\mathcal{L}^*:=\mathcal{L}\cup\mathcal{L}'$. As in Definition \ref{def:strucl}, $\mathrm{Struc}_{\mathcal{L}'}(M)$ is equipped with a natural topology induced from the product topology and $\mathrm{Aut}(\MM)$ acts on $\mathrm{Struc}_{\mathcal{L}'}(M)$ via a continous relativised logic action.\\




    

\begin{definition} Let $X$ and $Y$ be compact Hausdorff topological spaces and let $G$ be a topological group acting continously on each of them. A $G$-map $\Gamma:X\to Y$ is a continuous map such that for all $x\in X$ and $g\in G$, 
\[\Gamma(g\cdot x)=g\cdot \Gamma(x).\]
\end{definition}

\begin{definition}\label{repres} Let $\mathcal{M}$ be an $\mathcal{L}$-structure with quantifier elimination. Let $\mathrm{Struc}_{\mathcal{L}'}(M)$ be the space of expansions of $\mathcal{M}$ to $\mathcal{L}^*:=\mathcal{L}\cup\mathcal{L}'$.

We say that the the type space $S_x'(M)$ is \textbf{representable in} $\mathrm{Struc}_{\mathcal{L}'}(M)$ if there is an injective $\mathrm{Aut}(\MM)$-map $\Gamma:S_x'(M)\to \mathrm{Struc}_{\mathcal{L}'}(M)$.

For $S\subseteq \mathrm{Struc}_{\mathcal{L}'}(M)$, we say that $S'_x(M)$ is \textbf{represented by} $S$ in $\mathrm{Struc}_{\mathcal{L}'}(M)$ whenever $S'_x(M)$ is representable in $\mathrm{Struc}_{\mathcal{L}'}(M)$ via an $\mathrm{Aut}(\MM)$-map $\Gamma$ with range $S$.
\end{definition}

Since $\Gamma$ is continuous, $S$ is compact and so closed in $\mathrm{Struc}_{\mathcal{L}'}(M)$. Hence, $\Gamma$ is a homeomorphism 
between $S_x'(M)$ and $S$.\\

As we shall see in Lemma \ref{Gembedding}, for any $\mathcal{L}$-structure $\mathcal{M}$, we can  represent $S'_x(M)$ in a particular space of expansions of $\mathcal{M}$, which we will call expansions by the projection language. Usually, there are more natural choices of languages $\mathcal{L}'$ such that we may represent $S_x'(M)$ by expansions to $\mathcal{L}^*=\mathcal{L}\cup\mathcal{L}'$. Indeed, thinking in terms of these can be helpful in building an intuition as to what representations of non-realised types in spaces of expansions look like (and why we should want to think in terms of them). Hence, we begin with a natural example of how to represent the spaces of non-realised types of homogeneous hypergraphs.

\begin{example}[homogeneous hypergraphs]\label{hypexample} Suppose that $\mathcal{M}$ is a homogeneous uniform $k$-hypergraph for $k\geq 2$, where $R(x_1, \dots, x_k)$ denotes the hypergraph relation. We argue that we can represent non-realised types over $M$  as expansions by a $(k-1)$-ary uniform hypergraph.\\

Let $\{E\}$ be the language consisting of a single $(k-1)$-ary relation $E(x_1,\dots, x_{k-1})$. The type space $S'_x(M)$ is representable in $\mathrm{Struc}_{\{E\}}(M)$ by some $S\subseteq \mathrm{Graph}_E(M)$, where $\mathrm{Graph}_E(M)$ is the space of expansions of $M$ where $E$ forms a uniform $(k-1)$-ary hypergraph. To see this, we define a map $\Gamma:S'_x(M)\to \mathrm{Graph}_E(M)$, given by $p\mapsto \mathcal{M}^*_p$, where the latter is an expansion of $\mathcal{M}$ by a $(k-1)$-uniform hypergraph $E$, where for a $k-1$-tuple $\overline{a}$,
\[ \mathcal{M}_p^*\vDash E(\overline{a}) \text{ if and only if } R(x, \overline{a})\in p.\] 
Since $R$ is symmetric and uniform, so is $E$, yielding that we do indeed have that $\mathrm{Range}(\Gamma)\subseteq \mathrm{Graph}_E(M)$. The map is injective since two distict types will disagree over whether $R(x, \overline{a})$ for some tuple. It is also easy to see that $\Gamma$ is also a topological embedding and an $\mathrm{Aut}(\MM)$-map. For a proof of this, we refer the reader to the proof of Lemma \ref{Gembedding}, which deals with the more general case. \\

Inspecting different examples of homogeneous graphs and hypergraphs one can deduce the following conclusions:
\begin{itemize}
    \item For $\mathcal{R}_2$ the countable model of the random graph, $S'_x(\mathcal{R}_2)$ is represented by the space of  expansions of $\mathcal{R}_2$ by a unary predicate $P$, i.e. by $\mathrm{Struc}_{\{P\}}(\mathcal{R}_2)$;
    \item For $\mathcal{H}^2_3$ the countable model of the generic triangle-free graph, $S'_x(\mathcal{H}^2_3)$ is represented by the space of  expansions of $\mathcal{H}^2_3$ by a unary predicate $P$ such that for any $a,b\in\mathcal{H}^2_3$ sharing an edge we do not have that $P$ holds for both $a$ and $b$. This space lives inside of $\mathrm{Struc}_{\{P\}}(\mathcal{H}^2_3)$;
    \item For $\mathcal{R}_3$ the countable model of the universal homogeneous $3$-hypergraph, $S'_x(\mathcal{R}_3)$ is represented by the space of  expansions of $\mathcal{R}$ by a graph given by the binary relation $E$. This space lives inside $\mathrm{Struc}_{\{E\}}(\mathcal{R}_3)$;
    \item For $\mathcal{H}^3_4$ the universal homogeneous tetrahedron-free $3$-hypergraph, $S'_x(\mathcal{H}^3_4)$ is represented by the space of expansions of $\mathcal{H}^3_4$ by a graph relation $E$ such that whenever $a,b,c\in\mathcal{H}^3_4$ form a hyperedge, we have that they cannot form a triangle with respect to $E$;
    \item for $\mathcal{G}_3$ the universal homogeneous parity $3$-hypergraph, $S'_x(\mathcal{G}_3)$ is represented by the space of expansions of $\mathcal{G}_3$ by graphings (i.e. by a graph relation $E$ such that for any three points $a,b,c\in\mathcal{G}_3$, we have that there is an odd number of edges in $\{a,b,c\}$ if and only if $a,b,c$ form a hyperedge in $\mathcal{G}_3$). This follows from the parity condition on the $4$-element subsets of the parity $3$-hypergraph.
\end{itemize}
\end{example}

\begin{definition}[Projection language] For each $\mathcal{L}$-formula $\phi(x;\overline{y})$ let $R_{\phi(x;\overline{y})}$ be a relation of arity $|\overline{y}|$. Let $\mathcal{L}^{\mathrm{s}}$ be a relational language consisting of the relations $R_{\phi(x;\overline{y})}$ for each formula $\phi(x;\overline{y})$ a formula containing the variable $x$.
\end{definition}

The idea behind definition Definition \ref{Mpstar} is that we can represent types over $M$ as a particular class of expansions of $M$. As noted before Example \ref{hypexample}, usually, reflecting on the structure one is working with, there are better choices of language than $\mathcal{L}^{\mathrm{s}}$ and better choices of spaces of expansions. However, our construction has the advantage of working in a fully general setting. The general construction is similar to Example \ref{hypexample}. 

\begin{definition}\label{Mpstar} Let $\mathcal{M}$ be a $\mathcal{L}$-structure and $p\in S'_x(M)$. Consider the $\mathcal{L}^{\mathrm{s}}$-expansion of $\mathcal{M}$, $\mathcal{M}_p^*$, where for $\phi(x;\overline{y})$ an $\mathcal{L}$-formula containing the variable $x$, 
\[ \mathcal{M}_p^*\vDash R_{\phi(x;\overline{y})}(\overline{a}) \text{ if and only if } \phi(x;\overline{a})\in p.\]
\end{definition}

The construction of $\mathcal{M}^*_p$ follows a similar route to that of the Shelah expansion $\mathcal{M}^{\mathrm{Sh}}$ \cite{shelah2009dependent}. Given $\mathcal{M}\preceq \mathcal{N}$ sufficiently saturated, the Shelah expansion adds relations to $\mathcal{M}$ for each externally definable set (i.e. for each set of the form $X\cap M$, where $X$ is defined by an $\mathcal{L}(N)$-formula). Given  $|M|^+$-saturated $\mathcal{N}\succeq \mathcal{M}$ and $a\in \mathcal{N}$ realising $p\in S'_x(M)$, our expansion $\mathcal{M}^*_p$ adds relations to $\mathcal{M}$ for each externally definable set of $M$ defined over $a$.

\begin{lemma} \label{Gembedding} Let $\mathcal{M}$ be an $\mathcal{L}$-structure. Then, $S_x'(M)$ is representable in $\mathrm{Struc}_{\mathcal{L}^{\mathrm{s}}}(M)$ via the injective $\mathrm{Aut}(\MM)$-map $\Gamma:S'_x(M)\to \mathrm{Struc}_{\mathcal{L}^{\mathrm{s}}}(M)$ given by $p\mapsto \mathcal{M}_p^*$, where $\mathcal{M}_p^*$ is as in Definition \ref{Mpstar}.
\end{lemma}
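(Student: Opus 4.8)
The plan is to verify the three properties that make $\Gamma$ a representation: that it lands in the right space, that it is injective, and that it is an $\mathrm{Aut}(M)$-map which is continuous. Since $S_x'(M)$ is compact Hausdorff and $\Gamma$ is to be shown continuous and injective, it then automatically follows (as noted just before Example \ref{hypexample}) that $\Gamma$ is a homeomorphism onto its image, so no separate argument for that is needed.

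First I would check injectivity. Suppose $p \neq q$ are distinct non-realised types in $S'_x(M)$. Then there is some $\mathcal{L}$-formula $\phi(x;\overline{a})$ with parameters $\overline{a}$ from $M$ with $\phi(x;\overline{a}) \in p$ but $\phi(x;\overline{a}) \notin q$. By construction, $\mathcal{M}^*_p \vDash R_{\phi(x;\overline{y})}(\overline{a})$ while $\mathcal{M}^*_q \not\vDash R_{\phi(x;\overline{y})}(\overline{a})$, so $\mathcal{M}^*_p \neq \mathcal{M}^*_q$, i.e. $\Gamma(p) \neq \Gamma(q)$.

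Next I would check the $\mathrm{Aut}(M)$-equivariance. Fix $g \in \mathrm{Aut}(M)$ and $p \in S'_x(M)$. We must show $\Gamma(g \cdot p) = g \cdot \Gamma(p)$, i.e. $\mathcal{M}^*_{g\cdot p} = g \cdot \mathcal{M}^*_p$, where the action on types is the usual one ($\psi(x) \in g\cdot p$ iff $g^{-1}\psi(x) \in p$, equivalently for a formula $\phi(x;\overline{a})$ we have $\phi(x;\overline{a}) \in g\cdot p$ iff $\phi(x;g^{-1}\overline{a}) \in p$, using that $g$ is an automorphism so preserves satisfaction of $\mathcal{L}$-formulas) and the action on $\mathrm{Struc}_{\mathcal{L}^{\mathrm{s}}}(M)$ is the relativised logic action from Definition \ref{def:strucl}. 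Unwinding both sides: for a tuple $\overline{a}$ from $M$ of the appropriate length, $\mathcal{M}^*_{g\cdot p} \vDash R_{\phi(x;\overline{y})}(\overline{a})$ iff $\phi(x;\overline{a}) \in g\cdot p$ iff $\phi(x;g^{-1}(\overline{a})) \in p$ iff $\mathcal{M}^*_p \vDash R_{\phi(x;\overline{y})}(g^{-1}(\overline{a}))$ iff (by the definition of the relativised logic action) $g \cdot \mathcal{M}^*_p \vDash R_{\phi(x;\overline{y})}(\overline{a})$. Since $\phi$ and $\overline{a}$ were arbitrary, the two $\mathcal{L}^{\mathrm{s}}$-expansions agree on every relation, so they are equal.

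Finally I would check continuity. A subbasic clopen set of $\mathrm{Struc}_{\mathcal{L}^{\mathrm{s}}}(M)$ is of the form $\llbracket R_{\phi(x;\overline{y})}(\overline{a}) \rrbracket$ (or its complement), and $\Gamma^{-1}$ of this set is exactly $\{p \in S'_x(M) \mid \phi(x;\overline{a}) \in p\} = [\phi(x;\overline{a})] \cap S'_x(M)$, which is clopen in $S'_x(M)$; similarly for complements and finite intersections of subbasic sets. Hence $\Gamma$ is continuous. I do not expect any step here to be a genuine obstacle — the content is entirely bookkeeping about the definitions of the two logic actions and of the type-space topology; the only point requiring a little care is getting the direction of the $g^{-1}$ consistent between the action on types and the relativised logic action, which the computation above pins down.
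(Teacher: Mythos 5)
Your proposal is correct and follows essentially the same route as the paper's proof: injectivity via a separating formula, continuity via preimages of the subbasic clopen sets $\llbracket R_{\phi(x;\overline{y})}(\overline{a})\rrbracket$ being the clopen sets $[\phi(x;\overline{a})]$, and equivariance by unwinding the two actions (which the paper merely asserts is ``easy to see''; your explicit computation with the $g^{-1}$ conventions fills that in correctly).
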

\begin{proof} Firstly, note that $\Gamma$ is injective since if two non-realised types disagree this is witnessed by some $\mathcal{L}(M)$-formula $\phi(x;\overline{a})$ and so they will be sent to distinct expansions.\\

We know that the sets of the form $\left\llbracket R_{\phi(x;\overline{y})}(\overline{a}) \right\rrbracket$ form a subbasis of clopen sets for the topology on $\mathrm{Struc}_{\mathcal{L}^{\mathrm{s}}}(M)$ for $R_{\phi(x;\overline{y})}$ a relation in $\mathcal{L}^{\mathrm{s}}$ and $\overline{a}$ a tuple from $\mathcal{M}$ of arity $\overline{y}$. To prove continuity of $\Gamma$, it is sufficient to check that the preimages of these clopen sets are open. By definition of $\Gamma$, the preimage of $\left\llbracket R_{\phi(x;\overline{y})}(\overline{a}) \right\rrbracket$ is precisely the clopen set in $S_x'(M)$ given by
\[[\phi(x;\overline{a})]=\{p\in S_x'(M) \vert \phi(x;\overline{a})\in p\}. \]
Hence $\Gamma$ is continuous. Finally, it is easy to see that $\Gamma$ is an $\mathrm{Aut}(\MM)$-map.
\end{proof}

\begin{remark} There are many reasonable choices of $\mathcal{L}'$ such that we may represent $S_x'(M)$ as a space of expansions of $\mathcal{M}$ to $\mathcal{L}'$ as in Lemma \ref{Gembedding}. If $\mathcal{M}$ has quantifier elimination, it is sufficient to take $\mathcal{L}'$ containing relations $R_{\phi(x;\overline{y})}$ for the atomic $\mathcal{L}$-formulas. In Definition \ref{Mclass} we define a natural choice of $\mathcal{L}'$ when $\mathcal{M}$ is homogeneous in a finite relational language, which has the advantage of being finite when $\mathcal{L}$ is, thus making it easier to apply the results of Section \ref{sec:invexch}.
\end{remark}




\begin{remark} Recall that for a measure space $(X,\mu)$ and a measurable map $f$ from $X$ to some measurable space $Y$, the pushforward of $\mu$ by $f$ is defined to be the measure $f_\sharp(\mu)$ on $Y$ such that,
\[f_\sharp(\mu)(A)=\mu(f^{-1}(A)).\]
\end{remark}

\begin{notation} Below, by an $\mathrm{IRE}_{\mathcal{L}'}(M)$ we mean an $\mathrm{Aut}(\MM)$-invariant regular Borel probability measure on $\mathrm{Struc}_{\mathcal{L}'}(M)$. When $\mathcal{M}$ is countable, this corresponds to Definition \ref{def:IRE}. 
\end{notation}

The following is a direct consequence of Definition \ref{repres}:

\begin{corollary}\label{meascorr}  Let $S_x'(M)$ be representable by $S$ in $\mathrm{Struc}_{\mathcal{L}'}(M)$ via $\Gamma$. Then, $\Gamma$ induces a bijection $\gamma$ between $\mathfrak{M}'_x(M)$ and the space of $\mathrm{IRE}_{\mathcal{L}'}(M)$ concentrating on $S$, where for each $\mu\in\mathfrak{M}'_x(M)$, $\Gamma$ induces an isomorphism of measure spaces between $(S_x'(M), \mu)$ and $(S, \Gamma_\sharp(\mu))$, where $\Gamma_\sharp(\mu)$ is the pushforward measure induced by $\Gamma$.
\end{corollary}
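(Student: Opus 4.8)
The plan is to observe that $\Gamma$ is not merely an injective continuous $\Aut(M)$-map but in fact a homeomorphism onto $S$, and then simply to transport measures back and forth along it by pushforward, checking that all the relevant properties (probability, regularity, $\Aut(M)$-invariance, concentration on $S$) are preserved in both directions.

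First I would record the reduction already noted right after Definition \ref{repres}: since $S_x'(M)$ is compact and $\mathrm{Struc}_{\mathcal{L}'}(M)$ is Hausdorff, the continuous injection $\Gamma$ restricts to a homeomorphism $\bar\Gamma\colon S_x'(M)\to S$ whose inverse $\bar\Gamma^{-1}\colon S\to S_x'(M)$ is again continuous and, being a restriction (resp. inverse) of an $\Aut(M)$-map, is itself $\Aut(M)$-equivariant. The bijection $\gamma$ will be $\mu\mapsto\Gamma_\sharp(\mu)$, with candidate inverse $\nu\mapsto(\bar\Gamma^{-1})_\sharp(\nu\restriction_S)$, where $\nu\restriction_S$ denotes the restriction of $\nu$ to the Borel subspace $S$.

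Second I would check that $\gamma$ lands in the correct target space. Given $\mu\in\mathfrak{M}'_x(M)$, the pushforward $\Gamma_\sharp(\mu)$ is a Borel probability measure on $\mathrm{Struc}_{\mathcal{L}'}(M)$; it is regular (automatic when $M$ is countable as recalled earlier, and in general because pushing a regular measure forward along a homeomorphism of compact Hausdorff spaces preserves regularity); it concentrates on $S$ since $\Gamma_\sharp(\mu)(S)=\mu(\Gamma^{-1}(S))=\mu(S_x'(M))=1$; and it is $\Aut(M)$-invariant by the one-line computation $\Gamma_\sharp(\mu)(g\cdot B)=\mu(\Gamma^{-1}(g\cdot B))=\mu(g\cdot\Gamma^{-1}(B))=\mu(\Gamma^{-1}(B))=\Gamma_\sharp(\mu)(B)$, using equivariance of $\Gamma$ in the middle step and invariance of $\mu$ at the end. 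The same computation, run with $\bar\Gamma^{-1}$ in place of $\Gamma$, shows the candidate inverse carries an $\Aut(M)$-invariant (regular Borel probability) measure concentrating on $S$ to an element of $\mathfrak{M}'_x(M)$.

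Third I would verify that the two assignments are mutually inverse, which is immediate from functoriality of pushforward: $(\bar\Gamma^{-1})_\sharp\circ\bar\Gamma_\sharp=(\bar\Gamma^{-1}\circ\bar\Gamma)_\sharp=\mathrm{id}$ in one direction, and in the other direction one uses that $\nu$ concentrates on $S$, so that $\bar\Gamma_\sharp(\nu\restriction_S)$ agrees with $\nu$ on every Borel subset of $\mathrm{Struc}_{\mathcal{L}'}(M)$. The final clause, that $\Gamma$ induces an isomorphism of measure spaces $(S_x'(M),\mu)\cong(S,\Gamma_\sharp(\mu))$, is then just a restatement of the fact that $\bar\Gamma$ is a measurable bijection with measurable inverse sending $\mu$ to its pushforward. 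I do not expect any genuine obstacle here; the only points needing a little care are the bookkeeping attached to the phrase ``concentrates on $S$'' and the preservation of regularity in the uncountable case, both of which are dispatched by the homeomorphism property of $\bar\Gamma$.
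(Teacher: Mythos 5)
Your proof is correct and follows the same route the paper takes: the paper's own proof is the one-line remark that everything is automatic because $\Gamma_\sharp(\mu)$ is the pushforward along a bijection (having already noted after Definition \ref{repres} that $\Gamma$ is a homeomorphism onto the compact set $S$). You have simply written out in full the bookkeeping — equivariance, regularity, concentration on $S$, and mutual inverseness — that the paper leaves implicit.
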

\begin{proof}
    This is automatic since $\Gamma_\sharp(\mu)$ is the pushforward of $\mu$ by a bijection.
\end{proof}

\subsection{Keisler measures and IREs in a homogeneous context}\label{homcont}

We now focus on how the correspondence from the previous section specialises to a homogeneous context.

\begin{definition}
Let $\mathcal{L}$ be a relational language. Let $(P_u\vert u<v)$ be an enumeration of the unary predicates of $\mathcal{L}$. We consider the expansion of $\mathcal{L}$ to the language $\mathcal{L}^*:=\mathcal{L}\cup\mathcal{L}^{\mathrm{pr}}$ as follows: for each relation $R_i$ of arity $r_i, u< v, K\subsetneq [r_i]$, $\mathcal{L}^{\mathrm{pr}}$ has a relation $R_i^{(K,u)}$ of arity $r_i-|K|$. 
\end{definition}

 \begin{definition}\label{Mclass} Let $\mathcal{F}$ be a Fra\"{i}ss\'{e} class. Let $(P_u\vert u<v)$ be the unary relations in $\mathcal{L}$. Let $Ab\in\mathcal{F}[n+1]$, where $A$ is the induced structure on $[n]$ and $b$ is the vertex corresponding to $n+1$. We consider the $\mathcal{L}^{\mathrm{pr}}$-expansion $A^b$ of $A$ obtained as follows:

For $K\subsetneq[r_i]$ and $m_1, \dots, m_{r_i-|K|}\leq n$, let $\overline{m}=(m_1, \dots, m_{r_i-|K|})$ and $\overline{m}^K$ be the $r_i$-tuple consisting of $b$ in each position in $K$ and the $m_j$ in the other positions. Hence, we set that, for $u<v$,
\[ A^b\vDash R_i^{(K,u)}(\overline{m}) \text{ if and only if } R_i(\overline{m}^K)\wedge P_u(b).\]
We define the $\mathrm{MC}(\mathcal{F})$, \textbf{measuring class} of $\mathcal{F}$,  as the class of finite $\mathcal{L}^*$-structures isomorphic to some $A^b$ for $Ab\in\mathcal{F}$. For $\mathcal{M}=\mathrm{Flim}(\mathcal{F})$, we write  $\mathrm{MC}(\mathcal{M})$ for the measuring class of $\mathcal{F}$.
\end{definition}

 Note that for $\mathcal{M}$ a homogeneous structure, we may represent $S'_x(M)$ by the space of $\mathcal{L}^{\mathrm{pr}}$-expansions of $\mathcal{M}$, $\mathcal{M}_p^*$, where for $\overline{m}$ from $M$,
\[ \mathcal{M}_p^*\vDash R_i^{(K,u)}(\overline{m}) \text{ if and only if } R_i(\overline{m}^K)\wedge P_u(x)\in p.\]
Our choice of language and space of expansions is superficially different from that of Definition \ref{Mpstar}, though an analogue of Lemma \ref{Gembedding} also holds in this context by the same argument. We rely on quantifier elimination to consider $S'_x(M)$ as representable in a space of expansions of $\mathcal{M}$ by a language $\mathcal{L}^{\mathrm{pr}}$ which has only finitely many relations when $\mathcal{L}$ also does. Indeed, the following is a direct consequence of Lemma \ref{Gembedding}:

\begin{lemma} Let $\mathcal{M}$ be an homogeneous $\mathcal{L}$-structure. The space $S'_x(M)$ is representable by
\[\mathrm{ST}(M):=\left\{M^*\in\mathrm{Struc}_{\mathcal{L}^{\mathrm{pr}}}(M) \vert \ \mathrm{Age}(\MM^*)\subseteq \mathrm{MC}(\mathcal{M})\right\}.\]
\end{lemma}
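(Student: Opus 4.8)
The plan is to reduce this lemma to Lemma \ref{Gembedding}, which we have already established in the fully general setting of the projection language $\mathcal{L}^{\mathrm{s}}$, and then observe that for a homogeneous $\mathcal{M}$ quantifier elimination lets us pass to the finite sublanguage $\mathcal{L}^{\mathrm{pr}}$ without losing any information. Concretely, I would first define the map $\Gamma : S'_x(M) \to \mathrm{Struc}_{\mathcal{L}^{\mathrm{pr}}}(M)$ by $p \mapsto \mathcal{M}^*_p$, where $\mathcal{M}^*_p$ is the $\mathcal{L}^{\mathrm{pr}}$-expansion described just before the statement: for each relation $R_i$ of arity $r_i$, each $K \subsetneq [r_i]$, each $u < v$, and each tuple $\overline{m}$ from $M$ of length $r_i - |K|$, set $\mathcal{M}^*_p \models R_i^{(K,u)}(\overline{m})$ iff $R_i(\overline{m}^K) \wedge P_u(x) \in p$. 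The first task is to check that $\Gamma$ is well-defined and injective: injectivity holds because two distinct non-realised types must disagree on some atomic $\mathcal{L}$-formula $\phi(x;\overline{a})$ (using quantifier elimination, which a homogeneous relational structure enjoys), and every atomic formula in $x$ with parameters is, up to the trivial reshuffling of variables, of the form $R_i(\overline{m}^K)$ or $P_u(x)$, hence recorded by some $R_i^{(K,u)}$.

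Next I would verify that $\Gamma$ is a continuous $\mathrm{Aut}(M)$-map, exactly as in the proof of Lemma \ref{Gembedding}: the subbasic clopen sets $\llbracket R_i^{(K,u)}(\overline{m}) \rrbracket$ of $\mathrm{Struc}_{\mathcal{L}^{\mathrm{pr}}}(M)$ pull back under $\Gamma$ to the clopen sets $[R_i(\overline{m}^K) \wedge P_u(x)] \subseteq S'_x(M)$, which gives continuity; and $\Gamma(g \cdot p) = g \cdot \Gamma(p)$ is immediate from unwinding the definition of the relativised logic action and the $\mathrm{Aut}(M)$-action on types. Since $S'_x(M)$ is compact and $\Gamma$ is continuous and injective into a Hausdorff space, $\Gamma$ is a homeomorphism onto its image, so it only remains to identify that image with $\mathrm{ST}(M)$.

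The heart of the argument — and the step I expect to require the most care — is showing $\mathrm{Range}(\Gamma) = \mathrm{ST}(M)$, i.e.\ that an $\mathcal{L}^{\mathrm{pr}}$-expansion $M^*$ of $M$ equals $\mathcal{M}^*_p$ for some $p \in S'_x(M)$ if and only if $\mathrm{Age}(M^*) \subseteq \mathrm{MC}(\mathcal{M})$. For the forward inclusion: if $M^* = \mathcal{M}^*_p$ and $a$ realises $p$ in some elementary extension, then for any finite $B \subseteq M$ the induced $\mathcal{L}^{\mathrm{pr}}$-structure on $B$ is by construction isomorphic to $(Ba)^a$ in the notation of Definition \ref{Mclass} (with $A = B$, $b = a$), and since $Ba$ is a finite substructure of an elementary extension of $\mathcal{M}$, hence in $\mathrm{Age}(\mathcal{M}) = \mathcal{F}$, its $\mathcal{L}^{\mathrm{pr}}$-trace lies in $\mathrm{MC}(\mathcal{M})$. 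For the reverse inclusion: given $M^*$ with $\mathrm{Age}(M^*) \subseteq \mathrm{MC}(\mathcal{M})$, define a candidate type $p$ by declaring $R_i(\overline{m}^K) \wedge P_u(x) \in p$ according to whether $M^* \models R_i^{(K,u)}(\overline{m})$, closing off under the deductive consequences in $\mathcal{L}$; one must check $p$ is a consistent, complete, non-realised type over $M$. Consistency of every finite fragment of $p$ amounts precisely to the fact that each finite restriction of $M^*$ is of the form $A^b$ for some $Ab \in \mathcal{F}$ — this is exactly the age condition — so that the corresponding finite $\mathcal{L}$-configuration with a new point $x$ embeds into $\mathcal{M}$ by homogeneity and the amalgamation property; completeness and non-realisation follow because $\mathcal{F}$ is defined by forbidding substructures and $x$ can always be taken fresh. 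Having matched the ranges, the lemma follows. The subtlety to watch is bookkeeping around the variable-position set $K$ (making sure that recording $R_i(\overline{m}^K)$ for all $K \subsetneq [r_i]$ and the unary $P_u(x)$ really captures every atomic formula in $x$, including the degenerate cases where $x$ appears in several coordinates of $R_i$), together with the harmless assumption, as elsewhere in the paper, that relations never hold on tuples with repeated entries.
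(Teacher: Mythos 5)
Your proof is correct and takes essentially the same route as the paper, which obtains this lemma as a direct consequence of Lemma \ref{Gembedding} together with the observation that quantifier elimination lets one replace the full projection language $\mathcal{L}^{\mathrm{s}}$ by the finite sublanguage $\mathcal{L}^{\mathrm{pr}}$. You are in fact more thorough than the paper, which leaves implicit the identification of the range of $\Gamma$ with $\mathrm{ST}(M)$; your compactness-and-homogeneity argument for the reverse inclusion (every expansion whose age lies in $\mathrm{MC}(\mathcal{M})$ arises from a consistent, complete, non-realised type, with the witness always available outside any prescribed finite set) is exactly the verification the paper omits.
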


\begin{remark}\label{rk:lowerarity} Recall that the projection language $\mathcal{L}^{\mathrm{pr}}$ has for every $\mathcal{L}$-relation of arity $k+1$ a set of relations of arity $\leq k$. In particular, if all relations of $\mathcal{M}$ have the same arity $k+1$, the space $\mathrm{ST}(M)$ is a space of expansions of $M$ by relations of arity $\leq k$.
\end{remark}

\begin{theorem}\label{thm:IKMareSI} Let $\mathcal{M}$ be homogeneous $k$-transitive in a finite $(k+1)$-ary language whose age is $k$-overlap closed. Then, any invariant Keisler measure of $\mathcal{M}$ in the variable $x$ is exchangeable.
\end{theorem}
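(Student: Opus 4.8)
The plan is to combine the correspondence between invariant Keisler measures and invariant random expansions (Corollary \ref{meascorr}, as specialised in the preceding Lemma via the representation $\mathrm{ST}(M)$) with the exchangeability result of Theorem \ref{thm:appliedforIKM}. First I would recall the setup: since $\mathcal{M}$ is homogeneous in a finite relational language $\mathcal{L}$ all of whose relations have arity $k+1$, the preceding Lemma gives that $S'_x(M)$ is representable by $\mathrm{ST}(M) \subseteq \mathrm{Struc}_{\mathcal{L}^{\mathrm{pr}}}(M)$, and by Remark \ref{rk:lowerarity} the projection language $\mathcal{L}^{\mathrm{pr}}$ consists entirely of relations of arity $\leq k$, with $\mathcal{L}^{\mathrm{pr}}$ finite because $\mathcal{L}$ is. By Corollary \ref{meascorr}, the map $\Gamma$ induces a bijection between $\mathfrak{M}'_x(M)$ and the space of $\mathrm{Aut}(M)$-invariant measures on $\mathrm{Struc}_{\mathcal{L}^{\mathrm{pr}}}(M)$ concentrating on $\mathrm{ST}(M)$; that is, each such invariant Keisler measure with no realised type in its support corresponds to an invariant random expansion of $\mathcal{M}$ by $\mathcal{L}^{\mathrm{pr}}$-structures.

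Next I would invoke Theorem \ref{thm:appliedforIKM}: its hypothesis is that $\CC = \mathrm{Age}(M)$ is $k$-overlap closed (given) and that $\CC'$ is a hereditary class of $\LL'$-structures whose language has relations of arity $\leq k$. Here $\LL' = \mathcal{L}^{\mathrm{pr}}$, and $\CC' = \mathrm{MC}(\mathcal{M})_{\upharpoonright \mathcal{L}^{\mathrm{pr}}}$ (the $\mathcal{L}^{\mathrm{pr}}$-reducts of the measuring class), which is hereditary. Since $M$ is $k$-transitive, $\mathcal{M}$ is homogeneous and the relevant notion of consistent random expansion and invariant random expansion coincide; hence every invariant random expansion of $\mathcal{M}$ by $\CC'$ is exchangeable. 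Transporting this back along the bijection $\gamma$ of Corollary \ref{meascorr}, every measure in $\mathfrak{M}'_x(M)$ is $S_\infty$-invariant, i.e.\ exchangeable. Finally I would dispatch the general case $\mu \in \mathfrak{M}_x(M)$ (allowing realised types in the support) by the decomposition discussed in the Remark after Definition \ref{Mpstar}: by ergodic decomposition it suffices to treat ergodic measures, and an ergodic measure with a realised type $p$ in its support concentrates on the finite orbit $\mathrm{Orb}(p) = \{p_1, \dots, p_n\}$ and equals $\frac{1}{n}\sum_{i=1}^n p_i$, which is manifestly $S_\infty$-invariant since $S_\infty$ permutes realisations of $x$ among the orbit; combining with the exchangeability just proved for the non-realised part yields that every $\mu \in \mathfrak{M}_x(M)$ is exchangeable.

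The only genuinely delicate point is bookkeeping around the correspondence: one must check that the $\mathcal{L}^{\mathrm{pr}}$-expansions arising as $\mathcal{M}_p^*$ indeed have age contained in $\mathrm{MC}(\mathcal{M})$ and that $k$-transitivity of $M$ (together with homogeneity) ensures $\mathrm{IRE}(M,\CC^*) = \mathrm{REX}(M,\CC^*) = \mathrm{CRE}(\CC,\CC^*)$, so that Theorem \ref{thm:appliedforIKM}'s statement about consistent random expansions applies verbatim; this is handled by the Remark following Definition \ref{def:IRE}. I would also remark that exchangeability of the Keisler measure means concretely that for any $\mathcal{L}$-formula $\phi(x;\overline y)$ and tuples $\overline a, \overline a'$ from $M$ of the same length with the same quantifier-free $\mathcal{L}$-type, $\mu(\phi(x;\overline a)) = \mu(\phi(x;\overline a'))$ — but in fact exchangeability says more, namely that $\mu$ is invariant under the full $S_\infty$ action on the expansion space, which is what $k$-overlap closedness buys us beyond mere $\mathrm{Aut}(M)$-invariance. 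I do not expect any serious obstacle here; the work is entirely in assembling results already proved.
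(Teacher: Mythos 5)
Your proposal is correct and follows exactly the paper's (much terser) proof, which simply cites Remark \ref{rk:lowerarity} together with Theorem \ref{thm:appliedforIKM} and relies on the correspondence of Corollary \ref{meascorr} set up in the preceding subsection. One small inaccuracy in your treatment of realised types: since $\mathcal{M}$ is $k$-transitive (hence transitive), the $\mathrm{Aut}(M)$-orbit of a realised type is countably infinite rather than finite, so by the paper's remark no invariant Keisler measure has a realised type in its support at all and that case is vacuous rather than handled by the averaging formula $\frac{1}{n}\sum_{i=1}^n p_i$.
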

\begin{proof} $S_\infty$-invariance of the Keisler measure follows from Remark \ref{rk:lowerarity} and Theorem \ref{thm:appliedforIKM}.  
\end{proof}

\begin{definition} Let $\mathcal{C}^*$ be a class of $\mathcal{L}^*$-structures where $\mathcal{L}^*=\mathcal{L}\cup\mathcal{L}'$. We define the \textbf{exchangeable} $\mathcal{L}'$-class for $\mathcal{C}^*$ to be the following class of finite $\mathcal{L}'$-structures 
\[\mathrm{Exc}(\mathcal{C}^*):=\{A'\in\mathcal{C}^*\upharpoonright_{\mathcal{L}'} \vert \text{ for all } A\in\mathcal{C}^*\upharpoonright_{\mathcal{L}} \; A*A'\in\mathcal{C^*}\}.\]
%
    
\end{definition}

\begin{remark} In Example \ref{hypexample}, we showed that the type space of the universal homogeneous tetrahedron-free $3$-hypergraph $\mathcal{H}_4^3$ is represented by the space of graph expansions of $\mathcal{H}_4^3$ omitting a triangle on top of a hyperedge. Let $\mathcal{L}$ consist of a ternary relation and $\mathcal{L}'$ consist of a binary relation and let $\mathcal{C}^*$ denote the class of tetrahedron-free $\mathcal{L}$ $3$-hypergraphs with an $\mathcal{L}'$-graph such that there is no $\mathcal{L}'$-triangle on top of an $\mathcal{L}$-hyperedge. We can see that $\mathrm{Exc}(\mathcal{C}^*)$ is the class of triangle-free graphs in the language $\mathcal{L}'$.
\end{remark}


\begin{lemma}\label{invcorr} Let $\mathcal{M}$ be a homogeneous $\mathcal{L}$-structure and $\mathcal{C}^*$ a class of finite $\mathcal{L}^*$-structures such that $\mathcal{C}^*\upharpoonright_{\mathcal{L}}=\mathrm{Age}(\MM)$. Suppose that all IREs in $\mathrm{IRE}(\mathcal{M}, \allowbreak \mathcal{C}^*)$ are exchangeable. 
Consider the injection $\Delta:\mathrm{Struc}(M, \mathcal{C}^*)\to\allowbreak \mathrm{Struc}(\mathbb{N}, \mathcal{C}^*\upharpoonright_{\mathcal{L}'})$ given by $M*M'\mapsto\mathbb{N}*M'$. Then, $\Delta$ induces a bijection $\delta$ between the spaces $\mathrm{IRE}(\mathcal{M}, \mathcal{C}^*)$ and $S_\infty(\mathrm{Exc}(\mathcal{C}^*))$ where, for each $\mu\in \mathrm{IRE}(\mathcal{M}, \allowbreak \mathcal{C}^*)$, $\Delta$ induces an isomorphism of the measure space $(\mathrm{Struc}(M, \allowbreak \mathcal{C}^*), \mu)$ with the space $(\mathrm{Range}(\Delta), \Delta_\sharp(\mu))$, where $\Delta_\sharp(\mu)$ is the pushforward of $\mu$ by $\Delta$, and it concentrates on $\mathrm{Struc}(\mathbb{N}, \allowbreak \mathrm{Exc}(\mathcal{C}^*))$.
\end{lemma}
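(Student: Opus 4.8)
The plan is to verify directly that $\Delta$ is a continuous injection with closed image, that the pushforward $\mu\mapsto\Delta_\sharp(\mu)$ maps $\mathrm{IRE}(\mathcal{M},\mathcal{C}^*)$ into $S_\infty(\mathrm{Exc}(\mathcal{C}^*))$, and to exhibit an explicit inverse. First I would note that an element of $\mathrm{Struc}(M,\mathcal{C}^*)$ is of the form $M*M'$ for a unique $\mathcal{L}'$-structure $M'$ on $\mathbb{N}$, and $\Delta$ simply replaces the $\mathcal{L}$-reduct by $(\mathbb{N},=)$; this is clearly injective, with set-theoretic inverse $\mathbb{N}*M'\mapsto M*M'$ on $\mathrm{Range}(\Delta)$. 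Both maps identify the subbasic clopen sets $\llbracket\phi(\overline a)\rrbracket$ cut out by quantifier-free $\mathcal{L}'$-formulas, so $\Delta$ is a homeomorphism onto its image, and $\mathrm{Range}(\Delta)=\{\mathbb{N}*M' : \mathrm{Age}(M*M')\subseteq\mathcal{C}^*\}$ is closed (being defined by the quantifier-free $\mathcal{L}'$-conditions forcing $\mathrm{Age}(M*M')\subseteq\mathcal{C}^*$), hence compact. Consequently, for any Borel probability measure $\mu$ on $\mathrm{Struc}(M,\mathcal{C}^*)$, the pushforward $\Delta_\sharp(\mu)$ is a Borel probability measure on $\mathrm{Struc}_{\mathcal{L}'}(\mathbb{N})$ concentrating on $\mathrm{Range}(\Delta)$, and $\Delta$ restricts to a Borel isomorphism $(\mathrm{Struc}(M,\mathcal{C}^*),\mu)\cong(\mathrm{Range}(\Delta),\Delta_\sharp(\mu))$; this already gives the asserted measure-space isomorphism, independently of exchangeability.

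Next I would bring in the hypothesis. For $\mu\in\mathrm{IRE}(\mathcal{M},\mathcal{C}^*)=\mathrm{REX}(M,\mathcal{C}^*)$, Remark \ref{Kolmocorr} identifies $\mu$ with its finite marginals $(\mathbb{P}^\mu_A)_{A\subset M\text{ finite}}$, and exchangeability says $\mathbb{P}^\mu_{A_1}(A_1*A')=\mathbb{P}^\mu_{A_2}(A_2*A')$ whenever $A_1,A_2\in\mathrm{Age}(M)[r]$ and $A'\in\mathrm{Struc}_{\mathcal{L}'}[r]$; write $q_r(A')$ for this common value (well-defined since $\mathrm{Age}(M)[r]\neq\emptyset$ by homogeneity). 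Then $\Delta_\sharp(\mu)$ has marginal on each finite $D\subseteq\mathbb{N}$ given by $A'\mapsto q_{|D|}(A'_D)$; the system $(q_r)_r$ is relabelling-invariant and consistent, so by Hahn--Kolmogorov $\Delta_\sharp(\mu)\in S_\infty(\mathcal{L}')$. To get concentration on $\mathrm{Struc}(\mathbb{N},\mathrm{Exc}(\mathcal{C}^*))$ — which, crucially, is a \emph{smaller} set than $\mathrm{Range}(\Delta)$ and is where exchangeability is actually used — observe that if $A'\in\mathrm{Struc}_{\mathcal{L}'}[r]\setminus\mathrm{Exc}(\mathcal{C}^*)$, there is $A\in\mathrm{Age}(M)[r]$ with $A*A'\notin\mathcal{C}^*$, hence $\mathbb{P}^\mu_A(A*A')=0$ and therefore $q_r(A')=0$. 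So the event that the induced $\mathcal{L}'$-structure on a fixed $r$-subset of $\mathbb{N}$ is isomorphic to such an $A'$ is $\Delta_\sharp(\mu)$-null; a countable union over all such $A'$ and all finite subsets of $\mathbb{N}$ shows $\Delta_\sharp(\mu)$-a.e.\ $\mathcal{L}'$-structure has age in $\mathrm{Exc}(\mathcal{C}^*)$. Set $\delta(\mu):=\Delta_\sharp(\mu)$.

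For the inverse, given $\nu\in S_\infty(\mathrm{Exc}(\mathcal{C}^*))$ I would let $\delta^{-1}(\nu)$ be the pushforward of $\nu$ along $\mathbb{N}*M'\mapsto M*M'$, i.e.\ the measure on $\mathrm{Struc}_{\mathcal{L}'}(M)$ whose $\mathcal{L}'$-part is distributed as $\nu$ and whose $\mathcal{L}$-reduct is deterministically $M$. It is a Borel probability measure; it is $\mathrm{Aut}(M)$-invariant since $\mathrm{Aut}(M)\le S_\infty$ acts on the $\mathcal{L}'$-reduct as the corresponding permutation and $\nu$ is $S_\infty$-invariant; and it concentrates on $\mathrm{Struc}(M,\mathcal{C}^*)$ because $\nu$-a.s.\ $\mathrm{Age}(M')\subseteq\mathrm{Exc}(\mathcal{C}^*)$, so every finite substructure of $M*M'$ has the form $A*A'$ with $A\in\mathrm{Age}(M)$ and $A'\in\mathrm{Exc}(\mathcal{C}^*)$ of matching size, whence $A*A'\in\mathcal{C}^*$ by the very definition of $\mathrm{Exc}$. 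Thus $\delta^{-1}(\nu)\in\mathrm{IRE}(\mathcal{M},\mathcal{C}^*)$. That $\delta$ and $\delta^{-1}$ are mutually inverse follows because $\Delta$ and $\mathbb{N}*M'\mapsto M*M'$ are mutually inverse bijections $\mathrm{Struc}(M,\mathcal{C}^*)\leftrightarrow\mathrm{Range}(\Delta)\supseteq\mathrm{Struc}(\mathbb{N},\mathrm{Exc}(\mathcal{C}^*))$, so the pushforward operations compose to the identity on either side.

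The step I expect to be the main point rather than routine bookkeeping is the concentration claim: showing that exchangeability pins $\Delta_\sharp(\mu)$ down to $\mathrm{Struc}(\mathbb{N},\mathrm{Exc}(\mathcal{C}^*))$ and not merely to $\mathrm{Range}(\Delta)$, which in general is strictly larger. Everything else — continuity, Borel measurability, the Hahn--Kolmogorov consistency check, and the equivalence $\mathrm{IRE}=\mathrm{REX}$ in the homogeneous case — is standard, as is the care needed to keep $\mathrm{Struc}_{\mathcal{L}'}(M)$ and $\mathrm{Struc}_{\mathcal{L}'}(\mathbb{N})$ identified (they are literally the same space once $M$ is forgotten, which is exactly why the two pushforwards are inverse to one another).
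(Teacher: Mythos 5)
Your proposal is correct and follows essentially the same route as the paper's own (very terse, two-sentence) proof: exchangeability forces the pushforward to concentrate on $\mathrm{Struc}(\mathbb{N},\mathrm{Exc}(\mathcal{C}^*))$, and bijectivity of $\delta$ follows from $\Delta$ being a bijection with the evident inverse. You have simply filled in the details the paper leaves implicit — the marginal computation via $q_r$, the countable-union concentration argument, and the verification that the inverse pushforward lands in $\mathrm{IRE}(\mathcal{M},\mathcal{C}^*)$ — all of which are sound.
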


\begin{proof}
   Since all IREs in $\mathrm{IRE}(\mathcal{M}, \mathcal{C}^*)$ are exchangeable, $\Delta$ has indeed range in $S_\infty(\mathrm{Exc}(\mathcal{C}^*))$. Finally, we nees to prove that $\Delta$ is a bijection, which is easy since $\Delta$ is a bijection.
\end{proof}

\begin{corollary}\label{cor:correspondence} Let $\mathcal{M}$ be a homogeneous $\mathcal{L}$-structure and suppose that every measures in $\mathrm{IRE}(M,\allowbreak \mathrm{MC}(\mathcal{M}))$ is exchangeable. Then, there is a map $\Theta\colon S'_x(M) \to \mathrm{IRE}(\mathcal{M}, \allowbreak \mathrm{MC}(\mathcal{M}))$ such that for every $\mu \in\mathfrak{M}'_x(M)$, $\Theta$ induces an isomorphism of measure spaces between $(S'_x(M), \mu)$ and $(\mathrm{Range}(\Theta), \Theta_\sharp(\mu))$, where, $\Theta_\sharp(\mu)$ is the pushforward of $\mu$ by $\Theta$.
Moreover, $\Theta_\sharp(\mu)$ concentrates on $\mathrm{Struc}(\mathbb{N}, \allowbreak \mathrm{Exc}(\mathrm{MC}(\mathcal{M})))$.
    
\end{corollary}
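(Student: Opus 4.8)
The plan is to assemble \(\Theta\) by composing the two correspondences established in this subsection. First I would invoke the representation result immediately preceding this corollary: for a homogeneous \(\mathcal{L}\)-structure \(\mathcal{M}\), the space \(S'_x(M)\) is representable by \(\mathrm{ST}(M)\subseteq\mathrm{Struc}_{\mathcal{L}^{\mathrm{pr}}}(M)\) via an injective \(\mathrm{Aut}(M)\)-map \(\Gamma\) (the homogeneous specialization of Lemma~\ref{Gembedding}). By Corollary~\ref{meascorr}, \(\Gamma\) then induces a bijection \(\gamma\) between \(\mathfrak{M}'_x(M)\) and the space of \(\mathrm{IRE}_{\mathcal{L}^{\mathrm{pr}}}(M)\) concentrating on \(\mathrm{ST}(M)\), and for each \(\mu\in\mathfrak{M}'_x(M)\) it gives an isomorphism of measure spaces \((S'_x(M),\mu)\cong(\mathrm{ST}(M),\Gamma_\sharp(\mu))\). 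The key point is that \(\mathrm{ST}(M)=\mathrm{Struc}(M,\mathrm{MC}(\mathcal{M}))\) with \(\mathcal{C}^*=\mathrm{MC}(\mathcal{M})\) and \(\mathcal{C}^*\upharpoonright_{\mathcal{L}}=\mathrm{Age}(M)\), so an IRE concentrating on \(\mathrm{ST}(M)\) is exactly an element of \(\mathrm{IRE}(M,\mathrm{MC}(\mathcal{M}))\).

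Next I would apply Lemma~\ref{invcorr} with \(\mathcal{C}^*=\mathrm{MC}(\mathcal{M})\). The hypothesis of that lemma — that all IREs in \(\mathrm{IRE}(\mathcal{M},\mathrm{MC}(\mathcal{M}))\) are exchangeable — is precisely the standing assumption of the corollary. So \(\Delta\colon\mathrm{Struc}(M,\mathrm{MC}(\mathcal{M}))\to\mathrm{Struc}(\mathbb{N},\mathrm{MC}(\mathcal{M})\upharpoonright_{\mathcal{L}'})\), \(M*M'\mapsto\mathbb{N}*M'\), induces a bijection \(\delta\) between \(\mathrm{IRE}(\mathcal{M},\mathrm{MC}(\mathcal{M}))\) and \(S_\infty(\mathrm{Exc}(\mathrm{MC}(\mathcal{M})))\), and for each such IRE \(\nu\) it gives an isomorphism of measure spaces \((\mathrm{Struc}(M,\mathrm{MC}(\mathcal{M})),\nu)\cong(\mathrm{Range}(\Delta),\Delta_\sharp(\nu))\), with \(\Delta_\sharp(\nu)\) concentrating on \(\mathrm{Struc}(\mathbb{N},\mathrm{Exc}(\mathrm{MC}(\mathcal{M})))\).

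Then I would set \(\Theta:=\Delta\circ\Gamma\colon S'_x(M)\to\mathrm{IRE}(\mathcal{M},\mathrm{MC}(\mathcal{M}))\) — more precisely, \(\Theta\) lands in \(\mathrm{Struc}(\mathbb{N},\mathrm{Exc}(\mathrm{MC}(\mathcal{M})))\) on points, and on the level of measures the composite pushforward \(\mu\mapsto\Delta_\sharp(\Gamma_\sharp(\mu))=\Theta_\sharp(\mu)\) is the desired map. Since both \(\Gamma\) and \(\Delta\) are injective (indeed \(\Gamma\) a homeomorphism onto its image and \(\Delta\) a bijection onto its range), the composite is an isomorphism of measure spaces \((S'_x(M),\mu)\cong(\mathrm{Range}(\Theta),\Theta_\sharp(\mu))\), and the "moreover" clause transfers directly from Lemma~\ref{invcorr}: \(\Theta_\sharp(\mu)\) concentrates on \(\mathrm{Struc}(\mathbb{N},\mathrm{Exc}(\mathrm{MC}(\mathcal{M})))\). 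Functoriality of pushforward along composition, \((\Delta\circ\Gamma)_\sharp=\Delta_\sharp\circ\Gamma_\sharp\), is what glues the two isomorphisms of measure spaces into one.

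I do not anticipate a serious obstacle here: the corollary is a formal concatenation of Corollary~\ref{meascorr} and Lemma~\ref{invcorr}, and the only thing requiring a line of care is checking that the target space \(\mathrm{ST}(M)\) of the first correspondence literally equals the source space \(\mathrm{Struc}(M,\mathrm{MC}(\mathcal{M}))\) of the second — i.e. matching up the languages \(\mathcal{L}^{\mathrm{pr}}\) and the class \(\mathrm{MC}(\mathcal{M})\) as used in the two statements — together with noting that the exchangeability hypothesis is stated in exactly the form Lemma~\ref{invcorr} needs. Given those identifications, everything else is bookkeeping about injections and pushforwards.
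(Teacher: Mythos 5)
Your proposal is correct and follows exactly the paper's argument: the paper also defines $\Theta:=\Delta\circ\Gamma$ and concludes by invoking Lemma \ref{invcorr} with $\mathcal{C}^*=\mathrm{MC}(\mathcal{M})$. Your write-up simply spells out the bookkeeping (matching $\mathrm{ST}(M)$ with $\mathrm{Struc}(M,\mathrm{MC}(\mathcal{M}))$ and functoriality of pushforwards) that the paper leaves implicit.
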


\begin{proof}

Consider the map $\Theta:=\Delta\circ\Gamma$, where $\Gamma$ is the map giving that $S'_x(M)$ is representable in $\mathrm{ST}(M)$, and $\Delta$ is the map defined in Lemma \ref{invcorr}, where $\mathcal{C}^*$ is taken to be $\mathrm{MC}(\mathcal{M})$.  The conclusion follows from Lemma~\ref{invcorr}.
\end{proof}

\section{Consequences}\label{sec:consequences}
The correspondence between invariant Keisler measures and invariant random expansions explored in the previous section in Corollary \ref{meascorr} and \ref{cor:correspondence} can be used together with Theorem \ref{thm:appliedforIKM} to describe the spaces of invariant Keisler measures of many homogeneous structures which were previously not understood.\\

With a modern perspective, we can see that the main tool behind
Albert's description of the spaces of invariant Keisler measures of the random graph and the generic $K_n$-free graphs lies in a connection between model theoretic notions of independence and probabilistic independence when looking at the measure of the intersection of two sets defined over independent parameters. In particular, from \cite[Proposition 2.25]{approxsubg}, we know that the relation $\mu(\phi(x,a)\wedge\psi(x,b))=\alpha$ is stable (as an invariant relation) for any invariant Keisler measure (see also \cite[Proposition 4]{taopost}). Under the additional assumption that $\mathrm{acl}^{eq}(\emptyset)=\mathrm{dcl}^{eq}(\emptyset)$, one can use basic stability theory to derive that  the value of $\mu(\phi(x,a)\wedge\psi(x,b))$ only depends on $\mathrm{tp}(a)$ and $\mathrm{tp}(b)$ and not on $\mathrm{tp}(ab)$ when $a$ and $b$ are independent (and we are working in a strongly $\omega$-homogeneous model). A version of this phenomenon was observed at various points in the development of model theory: see \cite[Lemma 8.4.2-Proposition 8.4.3]{FSFT} for smoothly approximable structures and, \cite{TomInd} for pseudofinite fields. The dependence of $\mu(\phi(x,a)\wedge\psi(x,b))$ only on $\mathrm{tp}(a)$ and $\mathrm{tp}(b)$ was the main ingredient in Pillay and Starchenko's proof of the algebraic regularity lemma \cite[Lemma 1.1]{PillayStar}. Stability of $\mu(\phi(x,a)\wedge\psi(x,b))=\alpha$ can also be deduced from stability of probability algebras, which was proven in \cite{ben2006schrodinger}, but can be seen implicitly in Ryll-Nardzewski's theorem (about invariant random expansions of $(\mathbb{Q},<)$ by unary predicates) \cite{ryll1957stationary} and in \cite{krivine1981espaces}. \\

In an $\omega$-categorical context this relation between model-theoretic and probabilistic independence can be strengthened. Firstly, it is sufficient to focus on the ergodic measures in the space of invariant Keisler measures since any other measure can be written as an integral average of them \cite{Choquet} (see \cite{me2}). From \cite{JahelT}, we get that for ergodic invariant Keisler measures over the countable model of an $\omega$-categorical theory, weak algebraic independence of the parameters $a$ and $b$ yields actual probabilistic independence \cite{me2} (cf. \cite{PieceInt}):
\begin{equation}\label{eq:indeptransf}
    \mu(\phi(x,a)\wedge\psi(x,b))=\mu(\phi(x,a))\mu(\psi(x,b)).
\end{equation}
This transfer of independence together with the ergodic decomposition, yields easy proofs of Albert's results in \cite{Albert}. It can also be used to study invariant Keisler measures in other examples such as vector spaces with bilinear forms over finite fields, which we leave to the reader as a fun exercise.\\

When one moves to ternary structures, there is no analogue of the transfer of independence of \ref{eq:indeptransf} for $\mu(\phi(x, a, b)\wedge \psi(x, b,c)\wedge\xi(x, a, c))$ (cf. \cite[Theorem B.11]{AER}). This can be clearly seen even in the random $3$-hypergraph as we point out in Remark \ref{rem:randomhypIKM}. However, 
our exchangeability results suggest that under suitable conditions, independence between the parameters $abc$ may imply that that $\mu(\phi(x, a, b)\wedge \psi(x, b,c)\wedge\xi(x, a, c))$ only depends on the types of pairs and not on the type of the triplet. A similar result is obtained under the assumption of higher amalgamation in \cite[Theorem B.8]{AER}, elaborating on the hypergraph regularity lemma of Chevalier and Levi \cite[p.7 and Corollary 4.1.3]{AlexisARL}. However, we know from the classification of the invariant Keisler measures for the universal homogeneous parity $3$-hypergraph that for simple structures this is not in general the case even under further model theoretic tameness assumptions (see Remark \ref{rem:twographIKM}). Clearly, there is more to be studied regarding the measures of more complex intersections of formulas and what criteria imply that $\mu(\phi(x, a, b)\wedge \psi(x, b,c)\wedge\xi(x, a, c))$ only depends on the types of the pairs in $\{a,b,c\}$ rather than the type of the triplet.

\subsection{Classifying spaces of measures in examples}\label{classtime}
In this section we describe the spaces of invariant Keisler measures of the homogeneous hypergraphs from Example \ref{ex:hypergraphs}. In particular, in most cases we give the space of $S_\infty$-invariant measures corresponding to the space of invariant Keisler measures for $M$ in the singleton variable $x$ according to Corollary \ref{cor:correspondence}. We use the description of the spaces of expansions representing the type spaces $S'_x(M)$ given in Example \ref{hypexample}.

\begin{corollary}\label{cor:randomhypIKM}  For $r\geq 2$, the space of invariant Keisler measures for the universal homogeneous $r$-hypergraph $\mathcal{R}_r$ corresponds to the space of $S_\infty$-invariant random $(r-1)$-hypergraphs.  
\end{corollary}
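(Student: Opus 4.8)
The plan is to apply Corollary \ref{cor:correspondence} together with Theorem \ref{thm:IKMareSI} and the explicit description of $\mathrm{ST}(\mathcal{R}_r)$ from Example \ref{hypexample}. First I would note that $\mathcal{R}_r$ is homogeneous in the finite relational language $\mathcal{L}=\{R\}$ with $R$ of arity $r$, that it is $(r-1)$-transitive, and that $\mathrm{Age}(\mathcal{R}_r)$ is $(r-1)$-overlap closed by Corollary \ref{cor:k+1 irred}\ref{it3} (it has disjoint $n$-amalgamation for all $n$, being a random structure, and the minimum arity is $r>(r-1)$). Hence Theorem \ref{thm:IKMareSI} applies with $k=r-1$ and every invariant Keisler measure of $\mathcal{R}_r$ in the singleton variable $x$ is exchangeable; equivalently, $\mathrm{IRE}(\mathcal{R}_r,\mathrm{MC}(\mathcal{R}_r))$ consists entirely of exchangeable measures, so the hypothesis of Corollary \ref{cor:correspondence} is satisfied.

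Next I would unwind the correspondence. By Example \ref{hypexample}, $S'_x(\mathcal{R}_r)$ is represented by $\mathrm{ST}(\mathcal{R}_r)=\mathrm{Graph}_E(\mathcal{R}_r)$, the space of expansions of $\mathcal{R}_r$ by a single $(r-1)$-ary symmetric relation $E$ with $E(\overline a)\iff R(x,\overline a)\in p$. Here there are no unary predicates in $\mathcal{L}$ and $\mathcal{R}_r$ has no forbidden configurations, so the measuring class $\mathrm{MC}(\mathcal{R}_r)$ imposes no constraint beyond ``$E$ is a $(r-1)$-uniform hypergraph'': every finite $\mathcal{L}^{\mathrm{pr}}$-structure whose $\mathcal{L}$-reduct is in $\mathrm{Age}(\mathcal{R}_r)$ and whose $E$-reduct is a $(r-1)$-hypergraph lies in $\mathrm{MC}(\mathcal{R}_r)$. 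Consequently $\mathrm{Exc}(\mathrm{MC}(\mathcal{R}_r))$ is exactly the class of all finite $(r-1)$-uniform hypergraphs, since for any such $A'$ and any $A\in\mathrm{Age}(\mathcal{R}_r)$ on the same vertex set the superposition $A*A'$ is again in $\mathrm{MC}(\mathcal{R}_r)$. Therefore Corollary \ref{cor:correspondence} gives an isomorphism of measure spaces between $(\mathfrak{M}'_x(\mathcal{R}_r),\cdot)$ and $S_\infty$-invariant measures concentrating on $\mathrm{Struc}(\mathbb{N},\mathrm{Exc}(\mathrm{MC}(\mathcal{R}_r)))$, i.e.\ the space of $S_\infty$-invariant random $(r-1)$-uniform hypergraphs on $\mathbb{N}$; finally I would observe that the realised types contribute only the finitely many trivial point masses already described in Section~\ref{howto}, so passing from $\mathfrak{M}'_x$ to $\mathfrak{M}_x$ changes nothing essential in the classification.

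The only mild subtlety — and the step I would be most careful about — is verifying that the map $\Gamma$ of Lemma \ref{Gembedding} specialised to this language is genuinely \emph{surjective} onto $\mathrm{Graph}_E(\mathcal{R}_r)$, i.e.\ that every $(r-1)$-uniform hypergraph expansion of $\mathcal{R}_r$ arises from some non-realised type. This is where $(r-1)$-transitivity and the randomness of $\mathcal{R}_r$ are used: given any symmetric $(r-1)$-uniform hypergraph structure $E$ on the vertex set, the prescription ``$R(x,\overline a)$ holds iff $E(\overline a)$'' defines a consistent complete type over $\mathcal{R}_r$ because $\mathcal{R}_r$ has no forbidden configurations, so no finite set of such conditions can be inconsistent — extendability of partial configurations is exactly the extension property of $\mathcal{R}_r$. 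So the representing subspace $S$ is all of $\mathrm{Graph}_E(\mathcal{R}_r)$, and the identification is with the full space of $S_\infty$-invariant random $(r-1)$-hypergraphs rather than a proper subspace. Everything else is bookkeeping through the chain $\Gamma$ then $\Delta$.
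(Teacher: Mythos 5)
Your proposal is correct and follows essentially the same route the paper intends (the corollary is stated without proof precisely because it is the direct combination of Example \ref{hypexample}, Corollary \ref{cor:k+1 irred}, Theorem \ref{thm:IKMareSI} and Corollary \ref{cor:correspondence} that you spell out, including the genuinely necessary surjectivity check for $\Gamma$ via the extension property of $\mathcal{R}_r$). One small correction to your last step: since $\mathcal{R}_r$ is transitive with countably infinite domain, the orbit of any realised type is countably infinite, so by the $\sigma$-additivity argument in Section \ref{howto} no invariant Keisler measure has a realised type in its support at all --- there are no ``finitely many trivial point masses'' to add, and in fact $\mathfrak{M}_x(\mathcal{R}_r)=\mathfrak{M}'_x(\mathcal{R}_r)$ exactly.
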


\begin{remark}\label{rem:randomhypIKM} Our result contradicts the original conjecture of Ensley \cite{EnsleyPhD}, who conjectured 
that all ergodic invariant Keisler measures for $\mathcal{R}_3$ would be Bernoulli. Moreover, it is easy to construct measures with some counterintuitive properties. For example, from Petrov and Vershik \cite{PetrovVershik} there is an $S_\infty$-invariant measure concentrating on the isomorphism type of the generic triangle-free graph. This corresponds to an invariant Keisler measure $\mu$ for $\mathcal{R}_3$ such that for all $a,b,c$ distinct,
\[\mu(R(x,a,b))>0 \text{ but } \mu(R(x, a, b)\wedge R(x, a, c)\wedge R(x, b, c))=0,\]
where $R$ is the $3$-hyperedge relation on $\mathcal{R}_3$. This gives a clear counterexample to any hope for a decomposition of the measure of the intersection $R(x, a, b)\wedge R(x, a, c)\wedge R(x, b, c)$ into the measures of the individual definable sets in disanalogy with the measure of the intersection of two sets defined over independent parameters we mentioned in the introduction to this section. Such decomposition seems possible under further assumptions of the measure such as a version of Fubini's Theorem (cf. \cite[Remark 7.2.3]{myPhD} and \cite[Theorem B.11]{AER}).
\end{remark}

The following is just a consequence of Corollary \ref{cor:tetIRE}:

\begin{corollary}\label{cor:tetfreehypIKM} For $2< r< n$, the space of invariant Keisler measures for $\mathcal{H}_n^r$ corresponds to the space of $S_\infty$-invariant random $K_{n-1}^{r-1}$-free $(r-1)$-hypergraphs.
\end{corollary}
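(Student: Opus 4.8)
The plan is to derive Corollary~\ref{cor:tetfreehypIKM} as a direct specialization of the general correspondence machinery (Corollary~\ref{cor:correspondence}) together with the concrete description of the type-space expansion for $\mathcal{H}_n^r$ worked out in Example~\ref{hypexample} and the exchangeability input from Corollary~\ref{cor:tetIRE}. First I would recall that $\mathcal{H}_n^r$ is homogeneous in the finite $r$-ary relational language $\{R\}$, and that by Corollary~\ref{cor:irehyper} it is $(r-1)$-overlap closed, so Theorem~\ref{thm:appliedforIKM} applies: every invariant random expansion of $\mathcal{H}_n^r$ by a class of structures whose relations have arity $\leq r-1$ is exchangeable. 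Since the projection language $\mathcal{L}^{\mathrm{pr}}$ attached to a purely $r$-ary language consists only of relations of arity $\leq r-1$ (Remark~\ref{rk:lowerarity}), in particular every measure in $\mathrm{IRE}(\mathcal{H}_n^r, \mathrm{MC}(\mathcal{H}_n^r))$ is exchangeable, so the hypothesis of Corollary~\ref{cor:correspondence} is satisfied.

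Next I would identify the relevant classes concretely. Example~\ref{hypexample} tells us that $S'_x(\mathcal{H}_n^r)$ is represented by the space of expansions of $\mathcal{H}_n^r$ by a single $(r-1)$-ary symmetric uniform hyperedge relation $E$, with the sole constraint that whenever $a_1,\dots,a_r$ form an $R$-hyperedge, they do not form a copy of $K_r^{r-1}$ under $E$; more precisely, avoiding a tetrahedron/$\mathcal{K}_n^r$ in the $R$-part means the type must avoid completing a $K_{n-1}^r$ in $R$ together with $R(x,\cdot)$, which under the representation becomes: the $(r-1)$-hypergraph $E$ must not expand any $R$-copy of $K_{n-1}^r$ to a copy of $K_{n-1}^{r-1}$. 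Thus $\mathrm{MC}(\mathcal{H}_n^r)$, in the notation of Corollary~\ref{cor:tetIRE}, is exactly the class of $r$-hypergraphs in $\mathrm{Age}(\mathcal{H}_n^r)$ superposed with $(r-1)$-hypergraphs so that no copy of $K_{n-1}^r$ is expanded by a copy of $K_{n-1}^{r-1}$. Then Corollary~\ref{cor:tetIRE} directly identifies $\mathrm{IRE}(\mathcal{H}_n^r, \mathrm{MC}(\mathcal{H}_n^r))$ with the space of exchangeable $K_{n-1}^{r-1}$-free $(r-1)$-uniform hypergraphs, i.e.\ $\mathrm{Exc}(\mathrm{MC}(\mathcal{H}_n^r))$ is the class of finite $K_{n-1}^{r-1}$-free $(r-1)$-uniform hypergraphs.

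Finally I would invoke Corollary~\ref{cor:correspondence}: the map $\Theta = \Delta \circ \Gamma$ gives, for each $\mu \in \mathfrak{M}'_x(\mathcal{H}_n^r)$, an isomorphism of measure spaces between $(S'_x(\mathcal{H}_n^r),\mu)$ and $(\mathrm{Range}(\Theta), \Theta_\sharp(\mu))$, with $\Theta_\sharp(\mu)$ concentrating on $\mathrm{Struc}(\mathbb{N}, \mathrm{Exc}(\mathrm{MC}(\mathcal{H}_n^r)))$, which by the previous paragraph is the space of $S_\infty$-invariant random $K_{n-1}^{r-1}$-free $(r-1)$-uniform hypergraphs. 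Conversely, since $\Gamma$ and $\Delta$ are bijections onto their (closed) ranges and the constraint defining $\mathrm{Exc}(\mathrm{MC}(\mathcal{H}_n^r))$ is exactly $K_{n-1}^{r-1}$-freeness, every such $S_\infty$-invariant measure pulls back to an invariant Keisler measure; so the correspondence is a bijection, which is the assertion of the corollary. I do not expect a genuine obstacle here — the work is entirely bookkeeping, checking that the syntactic translation in Example~\ref{hypexample} of ``$\mathcal{K}_n^r$-free'' into the expansion language is precisely ``never expand $K_{n-1}^r$ by $K_{n-1}^{r-1}$'' and that this matches the statement of Corollary~\ref{cor:tetIRE}; the one point deserving a careful sentence is that the constraint on $E$ is genuinely local to $R$-hyperedges (so that no completion step is hidden), which is why Corollary~\ref{cor:tetIRE} applies verbatim.
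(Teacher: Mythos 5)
Your proposal is correct and follows exactly the route the paper intends: the paper's entire proof is the remark that the statement ``is just a consequence of Corollary \ref{cor:tetIRE}'', relying implicitly on the representation of $S'_x(\mathcal{H}_n^r)$ as hypergraph expansions from Example \ref{hypexample} and the correspondence of Corollary \ref{cor:correspondence}, which is precisely the bookkeeping you carry out. Your translation of the forbidden $K_n^r$ into the constraint ``never expand an $R$-copy of $K_{n-1}^r$ by an $E$-copy of $K_{n-1}^{r-1}$'' is the correct one (the initial parenthetical about $K_r^{r-1}$ is only the $n=r+1$ case, but you immediately state the general version), so nothing is missing.
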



\begin{corollary}\label{cor:kayIKM} Let $k\geq 3$. The space of invariant Keisler measures for the universal homogeneous parity $k$-hypergraph $\mathcal{G}_{k}$ consists of a unique invariant Keisler measure corresponding to the unique $\mathrm{Aut}(\mathcal{G}_{k})$-invariant random $(k-1)$-hypergraphing of $\mathcal{G}_k$ according to the correspondence described in Corollary \ref{meascorr}. In particular, for $d, a_1, \dots, a_n\in \mathcal{G}_k$ and $n\geq k-1$,
\[\mu(d/a_1, \dots, a_n)= 2^{-{n-1 \choose k-2}},\]
where $\mu(d/a_1, \dots, a_n)$ is the measure of the formula isolating $\mathrm{tp}(d/a_1, \dots, a_n)$.
\end{corollary}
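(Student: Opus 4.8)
The plan is to read Corollary \ref{cor:kayIKM} off from the correspondence of Corollary \ref{meascorr} together with Theorem \ref{thm:kaygraphs}. First, I would record the representation of the type space. Since $\mathcal{G}_k$ is a homogeneous $(k+1)$-uniform hypergraph, the argument of Example \ref{hypexample} (carried out there for $\mathcal{G}_2$, and for general homogeneous $r$-hypergraphs) shows that $S'_x(\mathcal{G}_k)$ is represented by the space $S$ of $k$-uniform hypergraphings of $\mathcal{G}_k$ sitting inside $\mathrm{Struc}_{\{R'\}}(\mathcal{G}_k)$, where $R'$ is a fresh $k$-ary relation: a type $p$ is sent to the expansion with $R'(\overline a)$ iff $R(x,\overline a)\in p$, and the parity condition on the $(k+2)$-subsets of $\{x\}\cup\mathcal{G}_k$ is exactly the compatibility condition of Definition \ref{def:hypergraphing} that makes this expansion a hypergraphing. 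By Corollary \ref{meascorr}, the associated $\mathrm{Aut}(\mathcal{G}_k)$-map $\Gamma$ induces a bijection between $\mathfrak{M}'_x(\mathcal{G}_k)$ and the space of $\mathrm{Aut}(\mathcal{G}_k)$-invariant random $k$-hypergraphings of $\mathcal{G}_k$.

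Next I would invoke Theorem \ref{thm:kaygraphs}. As noted directly after it, its estimate, fed into the argument of Lemma \ref{lem:si} (equivalently that of \cite[Lemma 2.1]{AKL}), forces any two $\mathrm{Aut}(\mathcal{G}_k)$-invariant random hypergraphings to assign the same probability to every finite configuration, hence to coincide; such a measure exists by the $p=1/2$ construction preceding Theorem \ref{thm:kaygraphs}, so there is exactly one. Thus $\mathfrak{M}'_x(\mathcal{G}_k)$ is a singleton. To upgrade this to $\mathfrak{M}_x(\mathcal{G}_k)$, I would note that $\mathcal{G}_k$ has disjoint amalgamation (Remark \ref{rem:ndaphyper}), hence trivial algebraic closure, and in particular a transitive automorphism group; so the $\mathrm{Aut}(\mathcal{G}_k)$-orbit of a realised type $\mathrm{tp}(a/\mathcal{G}_k)$ with $a\in\mathcal{G}_k$ is countably infinite, and by $\sigma$-additivity and invariance no invariant Keisler measure gives it positive mass. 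This is exactly the argument of the remark preceding Example \ref{hypexample}, and it yields $\mathfrak{M}_x(\mathcal{G}_k)=\mathfrak{M}'_x(\mathcal{G}_k)$. Hence there is a unique invariant Keisler measure on $\mathcal{G}_k$ in $x$, corresponding under Corollary \ref{meascorr} to the unique $\mathrm{Aut}(\mathcal{G}_k)$-invariant random $k$-hypergraphing of $\mathcal{G}_k$.

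Finally, for the explicit value: by $\omega$-categoricity the type $\mathrm{tp}(d/a_1,\dots,a_n)$ is isolated by the formula $\chi(x)$ asserting, for each $k$-subset $\{a_{i_1},\dots,a_{i_k}\}$ of $\{a_1,\dots,a_n\}$, whether $R(x,a_{i_1},\dots,a_{i_k})$ holds, the truth values being read off from $\mathcal{G}_k$ via the vertex $d$. Under the representation $\mu(\chi(x))=(\Gamma_\sharp\mu)(\llbracket\chi\rrbracket)$ is the probability that the unique invariant random $k$-hypergraphing $\mathbf{X}$, restricted to $\{a_1,\dots,a_n\}$, equals the $k$-uniform hypergraph $S_0$ prescribed by $\chi$ — which is the ``link of $d$'' and therefore, by the kay-graph parity identity, a genuine hypergraphing of the induced sub-kay-graph $\mathbf{A}$ on $\{a_1,\dots,a_n\}$ (one checks $\mathrm{red}_{\mathcal{C}_k}(S_0)=\mathbf{A}$ using the parity condition on the $(k+2)$-subsets $\{d\}\cup T$ with $|T|=k+1$). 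Theorem \ref{thm:kaygraphs} says precisely that, conditioned on the reduct of the restriction of $\mathbf{X}$ to $\{a_1,\dots,a_n\}$ being $\mathbf{A}$, the conditional distribution over the hypergraphings of $\mathbf{A}$ assigns each atom mass $\rho(\mathbf{A})=2^{-\binom{n-1}{k-1}}$, the value $\rho(\mathbf{A})$ being independent of which hypergraphing is chosen; equivalently the distribution is uniform, since the number of hypergraphings of a kay-graph on $n$ vertices is $2^{\binom{n-1}{k-1}}$, as computed in the proof of Theorem \ref{thm:kaygraphs}. Hence $\mu(d/a_1,\dots,a_n)=\mu(\chi(x))=2^{-\binom{n-1}{k-1}}$.

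The main obstacle I anticipate is bookkeeping rather than conceptual: making precise that the asymptotic ratio estimate $|N(\mathbf{A}^*,\mathbf{B}^*)|/|N(\mathbf{A},\mathbf{B})|\to\rho(\mathbf{A})$ of Theorem \ref{thm:kaygraphs} translates, for the resulting unique invariant measure, into the exact statement that finite restrictions of $\mathbf{X}$ are uniform over the hypergraphings of the induced sub-kay-graph (this is the Lemma \ref{lem:si} argument, applied conditionally), and verifying via the parity identity that the link of $d$ is exactly such a hypergraphing, so that the vertex count $n$ enters the exponent correctly. Once these translations are in place the formula follows immediately.
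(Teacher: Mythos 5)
Your proposal is correct and follows essentially the same route as the paper: represent $S'_x(\mathcal{G}_k)$ by the space of $k$-hypergraphings as in Example \ref{hypexample}, invoke the uniqueness of the invariant random hypergraphing from Theorem \ref{thm:kaygraphs} via the Lemma \ref{lem:si}/\cite{AKL} argument, and dispose of realised types by the countable-orbit remark. The paper's own proof is only a two-sentence sketch of this; your write-up supplies the details it omits, including the parity check that the link of $d$ is a hypergraphing of the induced sub-kay-graph and the resulting uniform-distribution computation giving $2^{-\binom{n-1}{k-1}}$.
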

\begin{proof}
   Similarly to the case of the parity $3$-hypergraph discussed in Example \ref{hypexample}, the type space $S_x(\mathcal{G}_k)$ is representable by the space of $k$-hypergraphings of $\mathcal{G}_k$. In Subsection \ref{sec:kaygraphs}, we argued there is a unique $\mathrm{Aut}(\mathcal{G}_k)$-invariant measure on such space.
\end{proof}

\begin{remark} It is easy to see that $\mathcal{G}_k$ has no $\mathrm{Aut}(\mathcal{G}_k)$-invariant types. Hence $\mathcal{G}_k$ gives an example of a structure with $\mathrm{acl}^{eq}(\emptyset)=\mathrm{dcl}^{eq}(\emptyset)$ that has a unique invariant Keisler measure and no invariant types. Without the first condition, it is easy to build such examples, e.g. with the theory of an equivalence relation with two infinite classes. 
{A vector space over a finite field with a symplectic bilinear form also exhibits this behaviour: it has no invariant types, and only two invariant Keisler measures, one of which concetrates on the type realised by the $0$ in the vector space. This can be proven using the techniques described in the introduction to Section \ref{sec:consequences}.}
\end{remark}

\begin{remark}\label{rem:twographIKM} In the universal homogeneous parity $3$-hypergraph $\mathcal{G}_3$, given an independent triplet  $abc$, the measure $\mu(R(x,a,b)\wedge R(x, a,c)\wedge R(x, b, c))$ still depends on the type of the whole triplet rather than just on the types of pairs. For $abc$ not forming a hyperedge, the formula $R(x,a,b)\wedge R(x, a,c)\wedge R(x, b, c)$ is inconsistent in $\mathcal{G}_3$, and so must be assigned measure zero. However, for $a'b'c'$ forming a hyperedge,  $R(x,a',b')\wedge R(x, a',c')\wedge R(x, b', c')$  is consistent and by Corollary \ref{cor:kayIKM}, it has measure $1/4$. In both cases the triplets are independent, since independence is trivial, and they agree on the types of pairs since $\mathcal{G}_3$ is $2$-transitive. This provides a challenge to generalising to higher arity the fact that $\mu(\phi(x,a)\wedge\psi(x,b))$ only depends on $\mathrm{tp}(a)$ and $\mathrm{tp}(b)$ for an independent pair when $\mathrm{acl}^{eq}(\emptyset)=\mathrm{dcl}^{eq}(\emptyset)$ in an $\omega$-categorical context. 
\end{remark}

Interestingly, the moment that we move out of simple structures, the "random" invariant Keisler measures (i.e. those which are not weighted sums of invariant types) easily disappear, at least in our examples.

\begin{corollary}\label{cor:strangemeas} The space of invariant Keisler measures in the singleton variable for the homogeneous $3$-hypergraphs of the form $\mathcal{H}_4^{-}$ and $\mathcal{P}_n^3$ consists of the unique invariant type asserting that $x$ forms no $3$-hyperedge with any pair of vertices from the model.
\end{corollary}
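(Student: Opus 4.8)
The plan is to apply the correspondence from Corollary~\ref{cor:correspondence}, so that describing $\mathfrak{M}'_x(M)$ for $M \in \{\mathcal{H}_4^{-}, \mathcal{P}_n^3\}$ reduces to describing the $S_\infty$-invariant measures on the space of graph expansions representing $S'_x(M)$, as worked out in Example~\ref{hypexample}. First I would record what that representing space is: for $\mathcal{H}_4^{-}$, the type space $S'_x(\mathcal{H}_4^{-})$ is represented by graph expansions $E$ of $\mathcal{H}_4^{-}$ in which no $3$-hyperedge $abc$ carries exactly the edges making $\{x,a,b,c\}$ into a copy of $\mathcal{K}_4^{-}$ once $x$ is added — i.e.\ $x$ may not be $E$-adjacent to two points $a,b$ that together with some $c$ satisfy $R(a,b,c)$ in a way that completes a forbidden configuration; more precisely the forbidden expanded configurations are exactly those four-element sets of the $3$-hypergraph together with $x$ that would embed $\mathcal{K}_4^{-}$. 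Similarly for $\mathcal{P}_n^3$, the representing space consists of graph expansions where $x$ is not allowed to be $E$-adjacent to all $n$ vertices of an independent set (this would create a copy of $P_n^3$ with $x$ as the centre), nor to be involved in the more complex petal configurations; I would spell these out carefully from the definition of $P_n^3$.

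The heart of the argument is then a purely combinatorial claim: the only $S_\infty$-invariant measure on this restricted space of graph expansions is the one concentrating on the empty graph (equivalently, the measure concentrating on the single type $p_0$ asserting $\neg R(x,a,b)$ for every pair $a,b$ from $M$, which is indeed $\mathrm{Aut}(M)$-invariant since it is fixed setwise by all automorphisms, and is a complete type because $\mathrm{Forb}$-ing the relevant structures still allows a vertex joined to nothing). To prove this I would invoke exchangeability: by Theorem~\ref{thm:cre si} (using that $\mathrm{Age}(\mathcal{H}_4^{-})$ and $\mathrm{Age}(\mathcal{P}_n^3)$ are $2$-overlap closed — the former by Corollary~\ref{cor:k+1 irred}\ref{it1} since $\mathcal{K}_4^{-}$ is $3$-irreducible, the latter by Corollary~\ref{cor:k+1 irred}\ref{it2}) every invariant Keisler measure of $M$ in $x$ is $S_\infty$-invariant; equivalently, via Lemma~\ref{invcorr}, the corresponding measure lives in $S_\infty(\mathrm{Exc}(\mathrm{MC}(M)))$. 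So it suffices to identify $\mathrm{Exc}(\mathrm{MC}(M))$: a finite graph $G'$ on $[m]$ lies in $\mathrm{Exc}(\mathrm{MC}(M))$ iff for \emph{every} $A \in \mathrm{Age}(M)$ on $[m]$, the superposition $A * G'$ avoids the forbidden expanded configurations. The key point is that since $\mathrm{Age}(M)$ contains structures realising \emph{every} pattern of $3$-hyperedges among any $\leq m$ vertices — in particular the complete one — a single $E$-edge $\{i,j\}$ of $G'$ already yields a forbidden configuration once we pick $A$ so that $\{i,j\}$ lies in a suitable $3$-hyperedge (for $\mathcal{H}_4^{-}$, extend $ij$ to $ijk$ with all three hyperedges containing $x$; for $\mathcal{P}_n^3$, likewise one realises the petal configuration). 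Hence $\mathrm{Exc}(\mathrm{MC}(M))$ consists only of edgeless graphs, so the unique $S_\infty$-invariant measure on $\mathrm{Struc}(\mathbb{N}, \mathrm{Exc}(\mathrm{MC}(M)))$ is the Dirac measure on the empty graph, which pulls back along $\Theta$ to the Dirac measure on $p_0$.

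The step I expect to be the main obstacle is the careful verification that an \emph{arbitrary} nonempty graph expansion in the representing space fails invariance — or rather, the equivalent and cleaner claim that $\mathrm{Exc}(\mathrm{MC}(M))$ is edgeless. For $\mathcal{H}_4^{-}$ this is essentially immediate once one unwinds which $4$-vertex expanded configurations are forbidden, but for $\mathcal{P}_n^3$ one must be attentive: the forbidden configuration $P_n^3$ has $x$ as the petal centre joined to $n$ independent vertices, but also (from Lemma~\ref{lem:petalnsop}-style analysis) other embeddings of $P_n^3$ into the expanded structure where $x$ plays a non-central role could in principle be the binding constraint, and one needs to check that forbidding \emph{all} of these still leaves exactly the edgeless graphs as the exchangeable class, and in particular that a single edge $ij$ in $G'$ can always be completed to a copy of $P_n^3$ by a suitable choice of $A \in \mathrm{Age}(\mathcal{P}_n^3)$ and of the remaining vertices. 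I would handle this by taking $A$ to realise, on $\{i,j\}$ together with fresh vertices, exactly the hypergraph structure needed: namely $i$ (or $j$) as the centre of a petal on the other $n-1$ vertices and $x$ forced to be $E$-adjacent to all of them — or, more simply, observe that in $\mathrm{Age}(\mathcal{P}_n^3)$ we can find $A$ with a $3$-hyperedge on $i,j,k$ and then the expanded configuration on $\{x,i,j,k\}$ with the single edge $\{i,j\}$ and appropriate additional edges to $x$ already embeds $\mathcal{K}_4^{-} = P_3^3 \subseteq P_n^3$, closing the argument uniformly. Writing the full statement then just requires combining these observations with Corollary~\ref{cor:correspondence} and noting the invariant type $p_0$ is the unique point of $S'_x(M)$ in the support.
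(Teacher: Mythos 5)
Your overall strategy (represent $S'_x(M)$ by graph expansions, get exchangeability from $2$-overlap closedness via Theorem \ref{thm:IKMareSI}, then identify the exchangeable class) is the same as the paper's, but the key combinatorial step is wrong: $\mathrm{Exc}(\mathrm{MC}(M))$ is \emph{not} edgeless. The flaw is in the claim that a single $E$-edge $\{i,j\}$ of $G'$ can be completed to a forbidden configuration by choosing $A$ suitably ``with all three hyperedges containing $x$'': the hyperedges containing $x$ are not free parameters --- $xab$ is a hyperedge precisely when $ab$ is an edge of $G'$. If $G'$ has the single edge $ij$, then on $\{x,i,j,k\}$ the only hyperedge through $x$ is $xij$, so even choosing $R(i,j,k)$ to hold yields at most two hyperedges on four vertices, and no copy of $\mathcal{K}_4^{-}$ arises. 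Indeed, for $\mathcal{H}_4^{-}$ the exchangeable class is the class of graphs omitting a triangle (from non-hyperedges $ijk$) and omitting a triple carrying exactly two edges (from hyperedges $ijk$), i.e.\ the disjoint unions of edges and isolated vertices; a single edge, and any matching, lies in this class. Your fallback for $\mathcal{P}_n^3$ is also broken: from $\mathcal{K}_4^{-}=P_3^3\subseteq P_n^3$ you cannot conclude that a copy of $\mathcal{K}_4^{-}$ is forbidden in $\mathrm{Forb}(\{P_n^3\})$ --- the containment goes the wrong way, and $\mathcal{K}_4^{-}$ is an allowed substructure there for $n\geq 4$.

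Consequently your proof is missing the step that actually finishes the argument in the paper: one must show that the only $S_\infty$-invariant measure concentrating on graphs whose age consists of matchings (respectively, for $\mathcal{P}_n^3$, a degree-bounded class) is the Dirac measure on the empty graph. The paper does this by a density argument: if an edge had positive probability $p>0$, then by exchangeability every pair containing a fixed vertex would be an edge with probability $p$, so the expected degree of that vertex would be infinite, hence with positive probability some vertex would have degree at least $2$, contradicting membership in the exchangeable class. This is the same phenomenon recorded earlier in the paper (nonempty exchangeable graphs are dense). With that step inserted, and with the exchangeable class correctly computed for each of $\mathcal{H}_4^{-}$ and $\mathcal{P}_n^3$, your argument goes through; without it, the proposal does not establish uniqueness.
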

\begin{proof} We prove this for  $\mathcal{H}_4^{-}$. The proof for  $\mathcal{P}_n^3$ is essentially identical. As usual, we represent the type space of $\mathcal{H}_4^{-}$ in a singleton variable by a graph expansion of $\mathcal{H}_4^{-}$. We know that the age of $\mathcal{H}_4^{-}$ is $2$-overlap closed by Corollary \ref{cor:k+1 irred}. So we get exchangeability from Theorem \ref{thm:IKMareSI}. The associated exchangeable class consists of the class of graphs omitting a triangle and a path of length $2$. This is the class of graphs which consists of disjoint copies of edges and vertices. However, if an edge were to be assigned positive measure, the average degree of a vertex would be infinite, and therefore greater than $1$ with positive probability.
Hence, there is a unique $S_\infty$-invariant measure concentrating on the infinite empty graph $\overline{K_\omega}$. This measure corresponds to the unique invariant type in the variable $x$ for $\mathcal{H}_4^{-}$ asserting that $x$ does not form a hyperedge with any pair of vertices.
\end{proof}

\begin{remark} It is easy to see that the space of invariant Keisler measures for $\mathcal{H}_{r+1}^{-}$ for $r>3$ corresponds to the space of $S_\infty$-invariant measures concentrating on the age of $\mathcal{P}_{r}^{r-1}$. In this case the space of invariant Keisler measures is again quite rich since  $\mathcal{P}_{r}^{r-1}$ is homogeneous with trivial algebraic closure. 
\end{remark}

\subsection{Forking and universally measure zero formulas}\label{FOtime}

In the model theoretic preliminaries we noted how forking gives a natural notion of smallness for a definable set. Keisler measures also yield a natural notion of smallness: a definable set is \textbf{universally measure zero} if it is assigned measure zero by every invariant Keisler measure. Just like forking formulas,  definable sets in the variable $x$ which are universally measure zero form an ideal in $\mathrm{Def}_x(\mathbb{M})$, which we denote by $\mathcal{O}_x(\emptyset)$. We also write $\mathcal{O}(\emptyset)$ for the space of universally measure zero definable sets in an arbitrary (finite) variable. \\

The relation between the sets $F(\emptyset)$ and $\mathcal{O}(\emptyset)$ has been studied in many recent papers on Keisler measures \cite{lots_of_authors, AtticusPill, me2}.
Forking formulas are always universally measure zero, and in stable theories we have that $F(\emptyset)=\mathcal{O}(\emptyset)$. It was unclear whether this equality continued to hold in either simple or NIP theories until recent counterexamples in \cite{lots_of_authors} and \cite{AtticusPill} respectively. In \cite{me2}, the third author showed that for some simple $\omega$-categorical Hrushovski constructions $F(\emptyset)\subsetneq \mathcal{O}(\emptyset)$, whilst Lemma \ref{F=O} yields that $F(\emptyset)=\mathcal{O}(\emptyset)$ in NIP $\omega$-categorical theories. All counterexamples mentioned are built specifically to witness that $F(\emptyset)\subsetneq\mathcal{O}(\emptyset)$.\\

In this section we show instead that this phenomenon is very common in the simple $\omega$-categorical world, even in very model-theoretically tame structures (i.e. supersimple, of $\mathrm{SU}$-rank $1$, one-based with trivial forking, with elimination of hyperimaginaries and weak elimination of imaginaries). In particular, in Theorem \ref{IKMdichotomy} we show that $k$-overlap closed structures with disjoint $n$-amalgamation for $n\leq 3$ have $F(\emptyset)\subsetneq \mathcal{O}(\emptyset)$ as long as they are not random. This is easy to see for the universal homogeneous tetrahedron-free $3$-hypergraph: from Corollary \ref{cor:tetfreehypIKM} the measure $\mu(R(x, a, b)\wedge R(x, a, c)\wedge R(x, b,c))$ is $0$ when $abc$ form a hyperedge, since the formula is inconsistent and must also be $0$ when $abc$ do not form a hyperedge by exchangeability. But in the latter case, the formula $R(x, a, b)\wedge R(x, a, c)\wedge R(x, b,c)$ is consistent and so does not fork over $\emptyset$ since forking is trivial, yielding that $F(\emptyset)\subsetneq\mathcal{O}(\emptyset)$ in $\mathcal{H}^3_4$. Below we generalise this idea.

\begin{lemma}\label{syntacticfact} Let $\mathcal{C}$ be a hereditary class of relational structures with disjoint $2$-amalgamation. Suppose that disjoint $k$-amalgamation fails for some $k>2$. Then, for some $n$ there are $1$-point disjoint $n$-amalgamation problems $(A_I\vert I\subsetneq [n])$ and $(A'_I\vert I\subsetneq [n])$ over the same set $A$, where $(A_I\vert I\subsetneq [n])$ has no solution, $(A'_I\vert I\subsetneq [n])$ has a solution and $A_I=A'_I$ for $n\in I$.
\end{lemma}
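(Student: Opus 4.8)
Lemma \ref{syntacticfact} says: if $\mathcal{C}$ has disjoint $2$-amalgamation (DAP) but fails disjoint $k$-amalgamation for some $k > 2$, then — taking $n$ to be the least such $k$ — there exist two $1$-point disjoint $n$-amalgamation problems over a common base $A$, one solvable and one not, which agree on all the "top-dimensional" pieces $A_I$ with $n \in I$ (equivalently, agreeing on all $A_I$ except possibly $A_{[n]\setminus\{n\}}$, which is the unique maximal subset not containing $n$). Intuitively: the only obstruction to solving the problem lives in the single face indexed by $[n]\setminus\{n\}$, and we can toggle that one face between a "good" and a "bad" configuration while leaving everything else fixed.

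**Plan of proof.** The plan is to first take $n$ minimal such that $n$-DAP fails; by Fact \ref{basicndap} (which says $k$-DAP for all $k < n$ implies every \emph{partial} disjoint $n$-amalgamation problem has a solution, and that $n$-DAP for all $n$ is equivalent to basic $n$-DAP for all $n$), and by the remark following it, $\mathcal{C}$ has $1$-point $k$-DAP for all $k < n$, so the failure of $n$-DAP can be witnessed by a $1$-point disjoint $n$-amalgamation problem $(A_I \mid I \subsetneq [n])$ over some base $A$ (with each $|k_i| = 1$) that has no solution. Now I would consider the \emph{partial} disjoint $n$-amalgamation problem obtained by deleting the single face $I_0 := [n]\setminus\{n\}$, i.e. the family $(A_I \mid I \in \mathcal{P}([n])^- \setminus \{I_0\})$, which is still downward closed. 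By Fact \ref{basicndap} this partial problem \emph{does} have a solution $B \in \mathcal{C}$. Define $A'_{I_0} := B\upharpoonright([m] \cup \{k_i \mid i \in I_0\})$, the structure induced by this solution on the relevant vertices, and set $A'_I := A_I$ for all other $I \subsetneq [n]$. Then $(A'_I \mid I \subsetneq [n])$ is by construction a $1$-point disjoint $n$-amalgamation problem: compatibility of $A'_{I_0}$ with the other faces on overlaps is automatic since $A'_{I_0}$ is a restriction of the common solution $B$, which restricts correctly to every $A_J = A'_J$. And it is solvable, witnessed by $B$ itself. Finally, $(A'_I \mid I \subsetneq [n])$ and $(A_I \mid I \subsetneq [n])$ agree on every face except possibly $I_0$, and $I_0 = [n]\setminus\{n\}$ does not contain $n$, so they agree on all $A_I$ with $n \in I$, as required. (Here I should also double-check that $A_{I_0}$ and $A'_{I_0}$ genuinely \emph{can} differ — if they happened to always be equal, the original problem would have had a solution, contradicting our choice; so at least for \emph{some} such pair they differ, though the statement as phrased does not even require them to differ.)

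**Where care is needed.** The main subtlety is making sure I am extracting the witness of failure at exactly the right "level": I need the unsolvable problem to be a genuine \emph{full} $1$-point disjoint $n$-amalgamation problem (every proper face specified) rather than merely a partial one, and the reduction to the $1$-point case must be justified. For the former, I use minimality of $n$: if $(A_I \mid I \in \mathcal{F})$ were a partial problem with $\mathcal{F} \subsetneq \mathcal{P}([n])^-$ witnessing failure, Fact \ref{basicndap} would already give a contradiction, so the witness must be a full disjoint $n$-amalgamation problem; and then since all lower-level amalgamations and $1$-point reductions are available, one standard argument (splitting larger $k_i$ into singletons and amalgamating stepwise, using $k$-DAP for $k \le n$ on the sub-problems) reduces to the $1$-point case — this is exactly the content of the remark after Fact \ref{basicndap} and I would cite it rather than reprove it. For the latter (that the face-deleted partial problem is solvable), the cited Fact \ref{basicndap} applies verbatim. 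So the proof is essentially a bookkeeping argument built on Fact \ref{basicndap}; the only real obstacle is stating the combinatorial indexing cleanly enough that "agree on all $A_I$ with $n \in I$" is manifestly equivalent to "differ at most in the face $[n]\setminus\{n\}$", which holds because $[n]\setminus\{n\}$ is the unique maximal element of $\mathcal{P}([n])^-$ omitting $n$ and every other $I \not\ni n$ is contained in it, hence also potentially modified — wait, that last point needs attention: $A'_I = A_I$ for $I \ne I_0$ \emph{by definition}, so faces $I \subsetneq I_0$ are unchanged too, and only $I_0$ itself is (possibly) changed, which is cleaner than I feared. I would present the indexing carefully to avoid any confusion on this point.
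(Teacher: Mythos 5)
Your overall strategy coincides with the paper's: take $n$ minimal so that some $1$-point disjoint $n$-amalgamation problem fails, discard the single face $I_0=[n]\setminus\{n\}$, solve what remains, and let $A'_I$ be the restriction of that solution. But the step where you solve the face-deleted problem has a genuine gap. You justify it by invoking Fact \ref{basicndap} (``every partial disjoint $n$-amalgamation problem has a solution''), and you even say the fact ``applies verbatim''. It does not: its hypothesis is that $\mathcal{C}$ has disjoint $k$-amalgamation for \emph{all} $2\le k\le n$, including $k=n$, and $n$-DAP is exactly what fails in your situation. Indeed the fact cannot possibly hold at level $n$ here, since the unsolvable problem $(A_I\mid I\subsetneq[n])$ is itself a partial disjoint $n$-amalgamation problem (the case $\mathcal{F}=\mathcal{P}([n])^-$), so ``every partial problem at level $n$ is solvable'' is flatly false under your standing assumptions.

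The face-deleted problem is nevertheless solvable, but this requires an argument that uses amalgamation only below level $n$. The paper's device is to observe that the faces containing $n$, namely $(A_{I\cup\{n\}}\mid I\subsetneq[n-1])$, constitute a $1$-point disjoint $(n-1)$-amalgamation problem over the enlarged base $Aa_n$; by minimality of $n$ this has a solution $B_{[n]}$ on all $n$ points. The restrictions of $B_{[n]}$ to the faces not containing $n$ (other than $I_0$) then agree with the original $A_I$ automatically, because each such $I$ is contained in some $J=I\cup\{n\}$ and the compatibility conditions of the original problem give $A_J\upharpoonright I=A_I$. With that substitution for your appeal to Fact \ref{basicndap}, the rest of your proof goes through: the definition of $A'_I$ as the restriction of the solution, solvability of $(A'_I\mid I\subsetneq[n])$ by $B_{[n]}$ itself, and the agreement $A'_I=A_I$ for all $I$ with $n\in I$ are all correct as you wrote them.
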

\begin{proof}
    From Fact \ref{basicndap}, we know disjoint $n$-amalgamation for all $n\geq 2$ holds if and only if $1$-point disjoint $n$-amalgamation for all $n\geq 2$ holds. So let $n$ be minimal such that some $1$-point disjoint $n$-amalgamation problem $(A_I\vert I\subsetneq [n])$ over $A$ fails. $(A_{I\cup\{n\}}\vert I\subsetneq [n-1])$ is an $(n-1)$-amalgamation problem over $a_n$. By minimality of $n$, it has a solution $B_{[n]}$. For $I\subsetneq [n]$, let $A'_I=B_{[n]}\upharpoonright I$. Then, $B_{[n]}$ is a solution to $(A'_I\vert I\subsetneq [n])$, and $A'_I=A_I$ for $n\in I$, as desired.
\end{proof}

The following can be proven essentially in the same way as Lemma \ref{Gembedding}:

\begin{lemma}\label{homrep} Let $\mathcal{M}$ be homogeneous structure. For each quantifier-free $(k+1)$-type $q$ let $R_q$ be a $k$-ary relation and let $\mathcal{L}'$ consist of all these relations. To each type $p\in S'_x(M)$, we associate the $\mathcal{L}^*=\mathcal{L}\cup\mathcal{L}'$-expansion $\mathcal{M}^*_p$ where for $b_1, \dots, b_k\in M$, 
\[\mathcal{M}^*_p\vDash R_q(b_1, \dots, b_k) \text{ if and only if } p\vDash q(x, b_1, \dots, b_k).\]
Then, the type space $S_x'(M)$ is represented by the space $S$ of $\mathcal{L}'$-expansions of $M$ of the form $\mathcal{M}^*_p$.
\end{lemma}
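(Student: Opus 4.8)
The plan is to mimic the proof of Lemma \ref{Gembedding} essentially verbatim, since the construction of $\mathcal{M}^*_p$ here is a special case of the projection-language construction: each quantifier-free $(k+1)$-type $q(x, y_1, \dots, y_k)$ plays the role of an $\mathcal{L}$-formula $\phi(x; \overline{y})$ with $|\overline{y}| = k$, and the relation $R_q$ has arity $k$ exactly as the $R_\phi$ in Definition \ref{repres}. So the first step is to define the map $\Gamma: S'_x(M) \to \mathrm{Struc}_{\mathcal{L}'}(M)$ by $p \mapsto \mathcal{M}^*_p$, and then check the four things needed for $S'_x(M)$ to be \emph{represented by} $S := \mathrm{Range}(\Gamma)$ in the sense of Definition \ref{repres}: that $\Gamma$ is well-defined (lands in $\mathrm{Struc}_{\mathcal{L}'}(M)$), injective, continuous, and an $\mathrm{Aut}(M)$-map. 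The claim that $S'_x(M)$ is \emph{represented by} $S$ is then immediate, since $S$ is by definition the range of $\Gamma$.

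For well-definedness, each $\mathcal{M}^*_p$ is plainly an $\mathcal{L}'$-expansion of $M$: for every $k$-tuple $(b_1, \dots, b_k)$ and every quantifier-free $(k+1)$-type $q$, either $p \vDash q(x, b_1, \dots, b_k)$ or not, so $R_q$ gets a well-defined interpretation. For injectivity, if $p_1 \neq p_2$ are distinct non-realised types over $M$, then since $M$ is homogeneous (hence has quantifier elimination) the disagreement is witnessed by a quantifier-free formula, i.e. there is a $k'$-tuple $\overline{b}$ from $M$ and a quantifier-free $(k'+1)$-type $q$ with $q(x, \overline{b}) \in p_1$ but $q(x, \overline{b}) \notin p_2$; here one should note that although the statement fixes arity $k+1$, the same argument handles tuples of any length by taking the appropriate relation symbol. (If the language is genuinely $(k+1)$-ary and $M$ homogeneous, quantifier-free $(k+1)$-types already determine everything, so one can restrict attention to those.) Then $\mathcal{M}^*_{p_1} \vDash R_q(\overline{b})$ while $\mathcal{M}^*_{p_2} \not\vDash R_q(\overline{b})$, so $\Gamma(p_1) \neq \Gamma(p_2)$. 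For continuity, the sets $\llbracket R_q(\overline{b}) \rrbracket$ form a subbasis of clopen sets for $\mathrm{Struc}_{\mathcal{L}'}(M)$ (as in Definition \ref{def:strucl}), and by construction
\[
\Gamma^{-1}\!\left(\llbracket R_q(\overline{b}) \rrbracket\right) = \{p \in S'_x(M) \mid q(x, \overline{b}) \in p\} = [q(x, \overline{b})],
\]
which is clopen in $S'_x(M)$; hence $\Gamma$ is continuous. Finally, $\Gamma$ is an $\mathrm{Aut}(M)$-map: for $\sigma \in \mathrm{Aut}(M)$ one checks directly from the definitions of the action on types and of the relativised logic action that $\mathcal{M}^*_{\sigma \cdot p} \vDash R_q(\overline{b})$ iff $q(x, \overline{b}) \in \sigma \cdot p$ iff $q(x, \sigma^{-1}\overline{b}) \in p$ iff $\mathcal{M}^*_p \vDash R_q(\sigma^{-1}\overline{b})$ iff $\sigma \cdot \mathcal{M}^*_p \vDash R_q(\overline{b})$, so $\Gamma(\sigma \cdot p) = \sigma \cdot \Gamma(p)$.

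There is essentially no obstacle here: this lemma is, as the excerpt says, proved "essentially in the same way as Lemma \ref{Gembedding}", and the only mild point to be careful about is the dependence on quantifier elimination (used in injectivity and in the claim that the $R_q$ for quantifier-free $(k+1)$-types really do capture all of $S'_x(M)$ when $\mathcal{L}$ is $(k+1)$-ary), which is available since $M$ is homogeneous. One could phrase the whole proof as: "apply Lemma \ref{Gembedding} with the projection language replaced by $\mathcal{L}'$, observing that by quantifier elimination each quantifier-free $(k+1)$-type is a conjunction of atomic and negated-atomic formulas, so the relations $R_q$ suffice to recover the map $\Gamma$ of Lemma \ref{Gembedding} up to a quantifier-free interdefinable change of language." I would then remark that as in the paragraph after Definition \ref{repres}, since $\Gamma$ is a continuous injection from the compact space $S'_x(M)$, it is a homeomorphism onto its image $S$, which is therefore closed in $\mathrm{Struc}_{\mathcal{L}'}(M)$.
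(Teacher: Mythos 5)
Your proposal is correct and follows exactly the route the paper intends: the paper gives no separate argument for this lemma, stating only that it "can be proven essentially in the same way as Lemma \ref{Gembedding}", and your verification of well-definedness, injectivity (via quantifier elimination from homogeneity), continuity on the subbasic clopen sets, and the $\mathrm{Aut}(M)$-equivariance is precisely that adaptation. Your side remarks on the arity of the tuples and on $\Gamma$ being a homeomorphism onto its closed image are also consistent with the discussion following Definition \ref{repres}.
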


Using standard techniques, we can show that if $\mathcal{M}$ has disjoint $n$-amalgamation for all $n$ and is in a finite relational language,  then there is an invariant Keisler measure $\mu$ in the variable $x$ assigning positive measure to every non-forking formula:

\begin{lemma}\label{nDAPlemma} Suppose $\mathcal{M}$ is a homogeneous structure in a finite relational language whose age has disjoint $n$-amalgamation for all $n$. Then, there is an invariant Keisler measure $\mu$ in the variable $x$ assigning positive measure to every non-forking formula.
\end{lemma}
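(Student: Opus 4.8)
The plan is to build the measure $\mu$ as a ``random type'' obtained by a natural coin-flipping construction on the space $\mathrm{ST}(M)$ of expansions representing $S'_x(M)$ via Lemma \ref{homrep}, using disjoint $n$-amalgamation at every stage to guarantee consistency. Concretely, fix an enumeration $m_1, m_2, \dots$ of $M$ and build a random type $p$ over $M$ by deciding, step by step, the quantifier-free $(k+1)$-type of $(x, m_{i_1}, \dots, m_{i_k})$ for each new $k$-tuple of parameters. At each step we are handed a partial disjoint amalgamation problem: the types on all proper subtuples have already been chosen, and by Fact \ref{basicndap} (every partial disjoint $n$-amalgamation problem has a solution, given disjoint $k$-amalgamation for all $k \le n$) the set of admissible completions is non-empty; we choose one uniformly at random among the admissible ones (or, more carefully, among a fixed finite set of ``basic moves'' weighted so the construction is exchangeable-consistent). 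This yields a random element of $S'_x(M)$ whose law is the desired Keisler measure $\mu$.

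The key steps, in order, would be: (1) set up the construction precisely as a sequence of conditional choices indexed by finite tuples from $M$, and verify using Fact \ref{basicndap} that at each stage the conditional distribution is well-defined and supported on a non-empty set; (2) check $\mathrm{Aut}(M)$-invariance of the resulting law — here one must be careful that the order of the enumeration does not matter, which is exactly where a symmetric (exchangeable) choice rule is needed, so that the law of the random type is independent of the enumeration and hence fixed by every automorphism; (3) show $\mu$ is a regular Borel probability measure on $S_x(M)$ concentrating on $S'_x(M)$ (regularity is automatic for $M$ countable by the footnote citing \cite{Boga}, and in the saturated setting one restricts to the relevant reduct); (4) finally, prove the positivity statement: given a non-forking formula $\phi(x, \bar b)$, use triviality of forking (which holds here by Fact \ref{trivialforkfact}, since disjoint $n$-amalgamation for all $n$ in particular gives $k$-DAP for $k \le 3$) to conclude $\phi(x, \bar b)$ is consistent and then trace through the construction to show that with positive probability the random type realizes a completion compatible with $\phi$. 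The point is that at the finitely many steps where the type on a sub-tuple of $\bar b$ gets decided, the admissible set always contains a choice consistent with (the relevant part of) $\phi$, because the amalgamation problem of realizing $\phi$ has a solution in the class; since each such step has only finitely many options, each gets positive conditional probability, and multiplying through gives $\mu(\phi(x,\bar b)) > 0$.

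The main obstacle I expect is step (2), establishing $\mathrm{Aut}(M)$-invariance — equivalently, that the construction does not secretly depend on the enumeration of $M$. The naive ``pick uniformly among admissible completions'' rule is \emph{not} consistent under reordering, because the set of admissible completions at a given tuple depends on which other tuples have already been decided, and this set shrinks as more constraints accumulate. The fix is to phrase the construction instead as an $S_\infty$-invariant (exchangeable) measure on the space $\mathrm{Struc}(\mathbb{N}, \mathrm{Exc}(\mathrm{MC}(\mathcal M)))$ using an Aldous--Hoover-style representation (Fact \ref{AldousHoover}): one builds i.i.d.\ $\mathrm{Uniform}[0,1]$ seeds $\xi_I$ for each finite $I \subseteq \mathbb{N}$ of size $\le k$ and a Borel function turning the seeds on $\{x\} \cup \bar b$ into a choice of quantifier-free type, where the function is designed (using Fact \ref{basicndap} as a combinatorial backbone) so that the global output always lands in the age $\mathrm{MC}(M)$. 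Such a representation is manifestly exchangeable, hence pushes forward to an $\mathrm{Aut}(M)$-invariant Keisler measure via Corollary \ref{cor:correspondence}. Getting the measurable selection function to simultaneously (i) be symmetric in the parameter coordinates in the way the language $\mathcal L'$ demands, (ii) always produce a completable configuration, and (iii) spread positive mass over every admissible local choice, is the technically delicate part; but all the ingredients — consistency from disjoint amalgamation, exchangeability from the seed construction, positivity from finiteness of the local option sets and triviality of forking — are already available, so this is bookkeeping rather than a genuinely new difficulty.
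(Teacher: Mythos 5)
Your core construction is the paper's: build the random expansion level by level, using disjoint $n$-amalgamation to guarantee that the set of admissible completions at each tuple is non-empty, and get positivity from trivial forking plus the fact that each admissible option receives positive conditional probability. But the obstacle you single out in step (2) is an artifact of your setup, and your proposed repair would actually break the lemma. The paper organizes the construction by arity rather than by an enumeration of $M$: at stage $k+1$ the choice for a $(k+1)$-tuple $\overline{a}$ is a uniformly random solution of the basic disjoint $(k+2)$-amalgamation problem determined by $\mathrm{qftp}(\overline{a})$ and the choices already made on the $k$-subtuples of $\overline{a}$, independently across tuples. Because $\mathcal{M}$ is a random structure, every constraint has size at most the maximal arity $r$, so global consistency of the resulting expansion reduces to consistency of each configuration $x\overline{c}$ with $|\overline{c}|\leq r-1$, and that is guaranteed tuple by tuple; there is no accumulation of constraints across distinct tuples and hence no enumeration order to worry about. $\mathrm{Aut}(M)$-invariance is then immediate, since the conditional law at $\overline{a}$ depends only on $\mathrm{qftp}(\overline{a})$ and on the identically distributed choices on its subtuples.

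The repair via an $S_\infty$-invariant, Aldous--Hoover-style construction is where the proposal genuinely fails. An exchangeable measure must concentrate on $\mathrm{Exc}(\mathrm{MC}(\mathcal{M}))$, i.e.\ on configurations admissible over \emph{every} parameter tuple of the given length; but the lemma only assumes a finite relational language with disjoint $n$-amalgamation for all $n$, and in that generality a quantifier-free type $q(x,\overline{y})$ can be consistent over one tuple $\overline{b}$ and inconsistent over another $\overline{b}'$ (for instance when the language has unary predicates, or relations of mixed arities with a size-$3$ constraint mixing a binary and a ternary relation). Any exchangeable measure then assigns $q(x,\overline{b})$ measure zero even though that formula is consistent and hence, by triviality of forking, non-forking --- so positivity is lost. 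Relatedly, Corollary \ref{cor:correspondence} is only available under the hypothesis that all IREs to $\mathrm{MC}(\mathcal{M})$ are exchangeable, which is not given here. The measure you need is $\mathrm{Aut}(M)$-invariant but in general not $S_\infty$-invariant, and the arity-level construction delivers exactly that.
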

\begin{proof} By Fact \ref{trivialforkfact},  $\mathcal{M}$ is supersimple with trivial forking. Using the representation in Lemma \ref{homrep}, we need to build an $\mathrm{Aut}(\MM)$-invariant measure on $S$ which assigns each $R_q$ for $q$ not implying that $x=y_k$ positive measure. We may assume without loss of generality that $\mathcal{M}$ is transitive: if $\mathcal{M}$ is not transitive, one can just run the same construction as below separately for each orbit and take a weighted sum of the resulting measures\\

We build a measure on $S$ inductively. for each quantifier-free $2$-type $q(x, y)$ we have a relation $R_q(y)$ in $\mathcal{L}'$. Choose independently at random for each vertex $a\in M$ which $R_q$ holds for it. Suppose that at step $k$ we have assigned each $k$-tuple from $M$ a relation $R_q$ for $q$ a quantifier-free $(k+1)$-type. Now, at step $k+1$ consider a $k+1$-tuple $(a_1, \dots, a_{k+1})\in M^{k+1}$. Each $k$-subset of $\overline{a}$ is assigned some relation $R_q$ for some $k+1$-type $q$. In particular, $\mathrm{qftp}(a_1, \dots, a_{k+1})$ forms a basic disjoint $k+2$-amalgamation problem together with $(q(x_{k+2}, \dots x_I)\vert I\subsetneq [k+1])$ and this problem will have some solutions, $q_1, \dots, q_m$. Choose independently at random between these which $R_q$ holds for $\overline{a}$. We continue this process for all $k$ (once we reach the maximal arity in $\mathcal{L}$ we don't need to do this anymore, though there would be no problem in running this process also if the language was countable but with finitely many relations in each arity). The resulting measure on $\mathrm{Struc}_{\mathcal{L}'}(M)$ is $\mathrm{Aut}(\MM)$-invariant, and it concentrates on $S$ since every quantifier-free $\mathcal{L}'$-formula over a tuple $\overline{a}$ which is given positive measure corresponds to a formula over $\overline{a}$ in the variable $x$ contained in some consistent $p\in S'(M)$ by disjoint $n$-amalgamation. Moreover, any consistent $\mathcal{L}$-formula with parameters from $\mathcal{M}$ is assigned positive measure by the construction. This completes the proof.
\end{proof}

\begin{theorem}\label{IKMdichotomy} Let $\mathcal{M}$ be a $k$-transitive homogeneous structure in a finite $(k+1)$-ary language which is $k$-overlap closed and has disjoint $n$-amalgamation for $n\leq 3$. Then, any invariant Keisler measure for $\mathcal{M}$ in $x$ is exchangeable. Moreover, 
\begin{enumerate}
    \item\label{case1} EITHER: $\mathrm{Age}(\MM
    )$ has disjoint $n$-amalgamation for all $n$. In this case there is an IKM $\mu$ assigning positive measure to every non-forking formula;
    \item\label{case2} OR: $\mathrm{Age}(\MM)$ fails disjoint $n$-amalgamation for some $n$. In this case,
    \[F(\emptyset)\subsetneq \mathcal{O}(\emptyset).\]
\end{enumerate}
    
\end{theorem}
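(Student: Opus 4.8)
The plan is to handle the two cases separately, with case \ref{case1} being essentially a direct appeal to Lemma \ref{nDAPlemma} and case \ref{case2} being the substantive part. First, the exchangeability of every invariant Keisler measure for $\mathcal{M}$ in $x$ is immediate from Theorem \ref{thm:IKMareSI}, since $\mathcal{M}$ is $k$-transitive homogeneous in a finite $(k+1)$-ary language whose age is $k$-overlap closed. For case \ref{case1}, if $\mathrm{Age}(M)$ has disjoint $n$-amalgamation for all $n$, then Lemma \ref{nDAPlemma} directly provides an invariant Keisler measure $\mu$ in the variable $x$ assigning positive measure to every non-forking formula, which is exactly what is claimed.

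The work is in case \ref{case2}. We always have $F(\emptyset) \subseteq \mathcal{O}(\emptyset)$ (a forking formula is universally measure zero), so the task is to exhibit a definable set that is universally measure zero but does not fork over $\emptyset$. Since $\mathrm{Age}(M)$ has disjoint $2$-amalgamation but fails disjoint $k'$-amalgamation for some $k' > 2$, apply Lemma \ref{syntacticfact}: for some $n$ there are $1$-point disjoint $n$-amalgamation problems $(A_I \mid I \subsetneq [n])$ and $(A'_I \mid I \subsetneq [n])$ over the same base $A$ with $A_I = A'_I$ for $n \in I$, where the primed problem has a solution and the unprimed one does not. Since $\mathcal{M}$ has disjoint $n$-amalgamation for $n \leq 3$ and (by Fact \ref{fact:3irred}-style reasoning, or directly from the hypotheses in Fact \ref{trivialforkfact}, using $2$-DAP and $3$-DAP) is supersimple of $\mathrm{SU}$-rank $1$ with trivial forking, the key point is that a quantifier-free formula over a finite tuple $\bar b$ in the variable $x$ forks over $\emptyset$ if and only if it is inconsistent (by triviality of forking and $\mathrm{SU}$-rank $1$: any consistent formula in one variable over a finite set either is algebraic — impossible here by trivial algebraic closure — or does not fork).

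So I would set up the following. Let $\bar a$ enumerate $A$ together with the points of the amalgamation problems in the first $n-1$ ``coordinate blocks'', i.e.\ take the tuple over which the failing $n$-amalgamation problem is posed but \emph{omit} the last vertex $a_n$; write $q(x, \bar a)$ for the quantifier-free type in the variable $x$ asserting exactly that $x$ plays the role of the missing $n$-th vertex, so that $\{q(x,\bar a)\}$ together with the data $(A_I \mid I \subsetneq [n])$ would constitute the failing amalgamation problem. Because $(A'_I)$ \emph{does} have a solution and agrees with $(A_I)$ on all $I$ containing $n$, the formula $q(x, \bar a)$ is consistent in $\mathcal{M}$ (realised by the solution's $n$-th vertex there), hence does not fork over $\emptyset$. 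On the other hand, I claim $q(x,\bar a)$ is universally measure zero: by exchangeability, any invariant Keisler measure $\mu$ is $S_\infty$-invariant, so via the correspondence of Corollary \ref{cor:correspondence} (and Lemma \ref{homrep}) $\mu$ corresponds to an exchangeable $\mathcal{L}'$-structure whose age omits the configuration ``$x$ realises $q$ over a copy of the base of the failing problem''. The failure of the $n$-amalgamation problem means precisely that in \emph{every} structure in $\mathrm{Age}(M)$, whenever a copy of the base $\bar a$ appears together with the specified configuration $(A_I)_{n \notin I}$ on other vertices, there is \emph{no} vertex realising $q$; in the exchangeable/random world this forces, by a measure-concentration argument identical in spirit to Corollary \ref{cor:strangemeas} (count expected copies; a positive measure would force an impossible configuration with positive probability, contradicting that $\mu$ concentrates on $\mathrm{Struc}(\mathbb{N}, \mathrm{Exc}(\mathrm{MC}(\mathcal{M})))$), that $\mu(q(x,\bar a)) = 0$.

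The main obstacle I expect is bookkeeping: translating ``the $n$-amalgamation problem $(A_I)$ has no solution'' into a clean statement of the form ``the definable set $q(x,\bar a)$ is universally measure zero'' requires carefully identifying which parts of the configuration $(A_I)$ are recorded by the representing expansion (relations among the base vertices, which are $\mathcal{L}$-relations and hence fixed) versus which are the ``expanded'' relations encoding the hypothetical type $q$, and then checking that exchangeability plus the non-solvability genuinely kills the measure. One has to be careful that $n$ could be larger than $k+1$, so the relevant forbidden configuration in the exchangeable class is among the $\mathcal{L}'$-relations of the measuring class $\mathrm{MC}(\mathcal{M})$ of arity $\leq k$, and that the concentration argument (expected number of realisations tends to infinity if the measure is positive, but the class forbids even one) goes through; this is where I would lean on the pattern already established in Corollary \ref{cor:strangemeas} rather than reproving it from scratch. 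A secondary point needing care is justifying ``consistent $\Rightarrow$ non-forking'' in the variable $x$ cleanly from $\mathrm{SU}$-rank $1$ and triviality of forking and algebraic closure, but this is standard simple-theory bookkeeping given Fact \ref{trivialforkfact}.
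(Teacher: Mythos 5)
Your overall architecture matches the paper's: exchangeability from Theorem \ref{thm:IKMareSI}, case (1) from Lemma \ref{nDAPlemma}, and case (2) launched from Lemma \ref{syntacticfact}. But the execution of case (2) has a genuine error at its centre. You work with a single parameter tuple $\bar a$, explicitly taken to enumerate the base of the \emph{failing} problem $(A_I\mid I\subsetneq[n])$, and you claim that $q(x,\bar a)$ is simultaneously consistent (realised by the $n$-th vertex of the solution to $(A'_I)$) and universally measure zero. These cannot both hold for the same formula. Since $(A_I\mid I\subsetneq[n])$ has no solution, $q(x,\bar a)$ with $\mathrm{qftp}(\bar a)=\mathrm{qftp}(A_{[n-1]})$ is outright \emph{inconsistent}: a realisation of $q$ over such a tuple would be a solution to the failing problem. (This also makes your measure-concentration argument in the style of Corollary \ref{cor:strangemeas} unnecessary; an inconsistent formula has measure zero for free.) The solution to $(A'_I)$ realises $q$ only over a tuple of quantifier-free type $\mathrm{qftp}(A'_{[n-1]})$, which is a \emph{different} labelled $\LL$-type from $\mathrm{qftp}(A_{[n-1]})$ --- if the two coincided, the two amalgamation problems would be identical, since they already agree on all faces containing $n$. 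An inconsistent formula forks over every set, so your $q(x,\bar a)$ cannot witness $F(\emptyset)\subsetneq\mathcal{O}(\emptyset)$.

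The missing idea --- and the actual point of the paper's proof --- is to use \emph{two} tuples and let exchangeability carry the measure across them. Take $\bar a\cong A_{[n-1]}$ and $\bar b\cong A'_{[n-1]}$ in $M$; the formula $q(x,\bar y)$ is built only from the faces $A_{I\cup\{n\}}=A'_{I\cup\{n\}}$, which the two problems share, so it makes sense over both tuples. Then $\mu(q(x,\bar a))=0$ because $q(x,\bar a)$ is inconsistent; exchangeability of $\mu$ --- precisely the statement that the measure of a fixed $\LL'$-configuration on the parameters does not depend on their $\LL$-quantifier-free type --- gives $\mu(q(x,\bar b))=\mu(q(x,\bar a))=0$; and $q(x,\bar b)$ is consistent (witnessed by the $n$-th vertex of the solution to $(A'_I)$), hence non-forking by triviality of forking. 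That consistent but universally measure zero instance is the witness. Your remaining steps (exchangeability of all invariant Keisler measures, case (1) via Lemma \ref{nDAPlemma}, and the ``consistent implies non-forking'' bookkeeping from trivial forking) are correct and agree with the paper.
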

\begin{proof} The first statement is Theorem \ref{thm:IKMareSI}. By Fact \ref{basicndap}, $\mathcal{M}$ is simple with trivial forking.  The first case is already dealt with in Lemma \ref{nDAPlemma}. So, suppose that disjoint $n$-amalgamation fails for some $n$. By Lemma \ref{syntacticfact}, there are $1$-point disjoint $n$-amalgamation problems $(A_I\vert I\subsetneq [n])$ and $(A'_I\vert I\subsetneq [n])$ over the same set $A$, where $(A_I\vert I\subsetneq [n])$ has no solution, $(A'_I\vert I\subsetneq [n])$ has a solution and $A_I=A'_I$ for $n\in I$. Now, take $\overline{a}\cong A_{[n-1]}$ and $\overline{b}\cong A'_{[n-1]}$ in $M$ and let $q(x, \overline{y})$ be the conjunction for $I\subsetneq [n-1]$ of the formulas isolating the quantifier-free types of $A_{I\cup\{n\}}$, where the variable $x$ denotes $a_n$. Since $(A_I\vert I\subsetneq [n])$ fails disjoint $n$-amalgamation, for any invariant Keisler measure,
\[\mu\left(q(x, \overline{a})\right)=0.\]
But by $S_\infty$-invariance of $\mu$, 
\[\mu\left(q(x, \overline{b})\right)=0.\]
The latter formula does not fork over $\emptyset$ since it has realisations and forking is trivial. Hence, it yields a non-forking formula which is universally measure zero.
\end{proof}

Theorem \ref{IKMdichotomy} suggests that most simple homogeneous structures have $F(\emptyset)\subsetneq\mathcal{O}(\emptyset)$. In fact, for each finite relational language $\mathcal{L}$, there are only finitely many $\mathcal{L}$-structures with disjoint $n$-amalgamation for all $n$. However, as long as there are relations with arity $\geq 2$, there will be uncountably many $k$-overlap closed $\mathcal{L}$-structures with disjoint $n$-amalgamation for $n\in\{2,3\}$.

\begin{corollary}\label{manycounterex} There are $2^{\aleph_0}$ homogeneous ternary structures which are supersimple, with $\mathrm{SU}$-rank $1$, one-based and with trivial forking but for which $F(\emptyset)\subsetneq\mathcal{O}(\emptyset)$.
\end{corollary}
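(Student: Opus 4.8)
The plan is to invoke Theorem \ref{IKMdichotomy} together with a supply of uncountably many structures meeting its hypotheses and landing in case \eqref{case2}. The natural candidates are the structures $\mathbb{S}_I$ from Example \ref{Kopconstr}: for each $I \subseteq \mathbb{N}\setminus[3]$, the class $\mathcal{C}_I = \mathrm{Forb}(S_0 \cup \{S_n \mid n \in I\})$ is Fra\"{i}ss\'{e} with free amalgamation, $\mathbb{S}_I$ is homogeneous and supersimple of $\mathrm{SU}$-rank $1$, and by Koponen's non-embeddability result (\cite[Lemma 7.4]{Kopconstr}, quoted in the example) distinct $I$ give pairwise non-isomorphic limits, so there are $2^{\aleph_0}$ of them.

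First I would check the model-theoretic tameness: by Fact \ref{fact:3irred} (or Fact \ref{freefact}), since the structures $S_n$ and those in $S_0$ are built to be $3$-irreducible (the ternary relation $R$ fails only on tuples with repeated entries in $S_0$, and the $S_n$ are $3$-irreducible by construction), each $\mathbb{S}_I$ is supersimple of $\mathrm{SU}$-rank $1$ with trivial forking, and by Fact \ref{freefact} it has weak elimination of imaginaries; trivial forking implies one-based. Ternariness and $2$-transitivity (here $k=2$) hold by construction since $\mathcal{L}$ is a single ternary relation and the forbidden structures never impose constraints on pairs. Next I would verify $k$-overlap closedness with $k=2$: the minimal forbidden structures are all $2$-irreducible ($R$ is a ternary relation holding on a genuine hyperedge pattern), so part \eqref{it1} of Corollary \ref{cor:k+1 irred} applied with the forbidden family $\{S_n \mid n\in I\}\cup S_0$ — note these are $3$-irreducible hence in particular satisfy the hypothesis of Lemma \ref{lem:gettingclosed} with $k=2$ — gives that $\mathcal{C}_I$ is $2$-overlap closed. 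Then I would confirm disjoint $n$-amalgamation for $n \le 3$: disjoint $2$-amalgamation is immediate from free amalgamation, and disjoint $3$-amalgamation follows from Fact \ref{3dapfree} since $\mathbb{S}_I$ is simple with free amalgamation.

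The final point is that each $\mathbb{S}_I$ with $I \neq \emptyset$ falls into case \eqref{case2} of Theorem \ref{IKMdichotomy}, i.e.\ $\mathrm{Age}(\mathbb{S}_I)$ fails disjoint $n$-amalgamation for some $n$. This is because forbidding the $3$-irreducible structure $S_n$ (for $n \in I$) obstructs disjoint $n$-amalgamation in essentially the way observed for $\mathcal{H}_4^3$ in Remark \ref{rem:ndaphyper}: one sets up the basic $1$-point disjoint amalgamation problem on the vertex set of $S_n$ where each proper subconfiguration is specified to be as in $S_n$, and any solution would reconstruct a copy of the forbidden $S_n$. (If $I = \emptyset$ one simply discards that one structure; this loses nothing since we still have $2^{\aleph_0}$ many.) Applying Theorem \ref{IKMdichotomy}\eqref{case2} then yields $F(\emptyset) \subsetneq \mathcal{O}(\emptyset)$ for each such $\mathbb{S}_I$, giving the claimed $2^{\aleph_0}$ examples.

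The main obstacle I anticipate is the careful bookkeeping in showing the forbidden structures $S_n$ are genuinely $3$-irreducible and that forbidding them really does kill an $n$-amalgamation problem — in particular making sure the relevant amalgamation problem is a legitimate \emph{disjoint} (indeed $1$-point) amalgamation problem over the right base, so that Lemma \ref{syntacticfact} and the machinery of Theorem \ref{IKMdichotomy} apply. The combinatorics of the cyclic "non-edge" pattern defining $\mathcal{S}_n$ in Example \ref{Kopconstr} needs to be unpacked to confirm $3$-irreducibility (equivalently, that $R$, viewed as a ternary hyperedge relation, covers every pair); alternatively one could replace $\mathcal{S}_n$ by any explicitly $3$-irreducible family with the non-embeddability property, which is the approach I would fall back on if the original family proves awkward, and is presumably why the excerpt says "a slight modification" of Koponen's construction. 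Everything else is a direct citation of the facts assembled above.
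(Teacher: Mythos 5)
Your proposal is correct and takes essentially the same approach as the paper: apply Theorem \ref{IKMdichotomy} to Koponen's family $\mathbb{S}_I$ from Example \ref{Kopconstr} and observe that all but the trivial one land in the second case. The only cosmetic difference is that the paper dispatches that last point by noting these structures have constraints of size $>3$ and hence are not random structures (so disjoint $n$-amalgamation must fail for some $n$), whereas you build an explicit failing $1$-point disjoint amalgamation problem from a forbidden $S_n$; both amount to the same observation, and your extra verifications of $3$-irreducibility, $2$-overlap closedness, and disjoint $3$-amalgamation are exactly the checks the paper leaves implicit.
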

\begin{proof} Just apply Theorem \ref{IKMdichotomy} to the $2^{\aleph_0}$ ternary homogeneous supersimple structures constructed by Koponen from Definition \ref{Kopconstr}. None of them, except for the trivial one,  satisfy disjoint $n$-amalgamation for all $n$ since they have constraints of size $>3$.
\end{proof}

\begin{remark}
    The structures in Theorem \ref{IKMdichotomy} and Corollary \ref{manycounterex} also offercounterexamples to a question of Elwes and Macpherson \cite{EM}, who asked whether $\omega$-categorical supersimple structures of finite $\mathrm{SU}$-rank are MS-measurable in the sense of Macpherson and Steinhorn \cite{MS}. A structure is MS-measurable when it is equipped with a dimension operation and a system of invariant Keisler measures satisfying various desirable properties, including a version of Fubini's Theorem. Pseudofinite fields \cite{CVDMI} and smoothly approximable structures are MS-measurable \cite{FSFT}. Previous counterexamples to the question of Elwes and Macpherson using Hrushovski constructions were given in \cite{Measam} and \cite{me}. Since these structures are not particularly model theoretically tame, being not one-based, it is natural to ask whether one-based simple $\omega$-categorical structures are MS-measurable. Since MS-measurable structures are such that $F(\emptyset)=\mathcal{O}(\emptyset)$ \cite{me2}, all the examples in Theorem \ref{IKMdichotomy} satisfying $F(\emptyset)\subsetneq\mathcal{O}(\emptyset)$ are not MS-measurable. In \cite{myPhD} the third author proved that $\mathcal{H}_4^3$ is not MS-measurable using different techniques.
\end{remark}

\section{The Invariant Extension Property and the \texorpdfstring{$\omega$}{omega}-categorical NIP setting}\label{sec:nip}

Most of our results in previous sections concern structures with the independence property. In this section we focus on invariant Keisler measures for NIP $\omega$-categorical structures.  We identify a property of NIP theories shared also by some of the $\mathrm{NSOP}_4$ examples we study (i.e. the generic $K_n$-free graphs and $\mathcal{P}_n^3$, see Corollary \ref{cor:strangemeas}), which we call the invariant extension property (IEP): every formula which is not universally measure zero is contained in an $\mathrm{Aut}(\MM/\mathrm{acl}^{eq}(\emptyset))$-invariant type. Under this assumption, we show that all ergodic invariant Keisler measures concentrate on the $\mathrm{Aut}(\MM)$-orbit of an $\mathrm{Aut}(\MM/\mathrm{acl}^{eq}(\emptyset))$-invariant type. This gives a particularly nice description of the invariant Keisler measures as integrals of weighted averages of invariant types as long as no $\mathrm{Aut}(\MM/\mathrm{acl}^{eq}(\emptyset))$-invariant type has uncountably many $\mathrm{Aut}(\MM)$-conjugates, a condition satisfised by all known examples of NIP $\omega$-categorical theories and all known examples of homogeneous structures in a finite language.\\

We build on the work \cite{Ensley} of Ensley, who gives a classification of invariant Keisler measures in NIP $\omega$\-/categorical structures under two additional technical assumptions: (i) that forking agrees with dividing and (ii) a strengthening of the statement that there are only finitely many types $p\in S_x(M)$ which do not fork over $\emptyset$. Since his assumptions are satisfied in stable $\omega$-categorical structures, Ensley already provides a full classification of invariant Keisler measures there. However, examples of NIP $\omega$\-/categorical structures where forking and dividing disagree are well-known (\cite[Example 3.12]{Ensley} and \cite[Exercise 7.1.6]{TZ}).\\





We also note that in NIP $\omega$-categorical structures, $F(\emptyset)=\mathcal{O}(\emptyset)$. This is different from the general NIP setting, where \cite{AtticusPill} gave an example where the containment is strict. Some of the results in this section can be extracted from \cite{NIPinv}. Nevertheless, we include them to clarify the behaviour of invariant Keisler measures which is exhibited in both NIP and and many strictly NSOP $\omega$-categorical structures.\\

\begin{definition} We say that $\mathcal{M}$ has the \textbf{invariant extension property} (IEP) if every $\mathcal{L}(M)$-formula which is not universally measure zero is contained in an $\mathrm{Aut}(\MM/\allowbreak\mathrm{acl}^{eq}(\emptyset))$\-/invariant type in $S_x(M)$. We say that the $\omega$\-/categorical theory $T$ has the IEP if its countable model has the IEP.
\end{definition}

In addition to the strictly $\mathrm{NSOP}_4$ examples (i.e. the $K_n$-free generic graphs, and the $n$-petal-free $3$-hypergraphs), it is easy to see that NIP $\omega$\-/categorical theories have the invariant extension property \cite{CasNIP}:

\begin{fact}\label{nonfNIP} Let $\mathcal{M}$ be $\omega$\-/categorical NIP, and $p\in S_x(M)$. Then, the following are equivalent:
\begin{itemize}
\item $p$ does not fork over $\emptyset$;
\item $p$ is $\mathrm{Aut}(\MM/\allowbreak\mathrm{acl}^{eq}(\emptyset))$-invariant.
\end{itemize}
\end{fact}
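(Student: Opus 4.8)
The plan is to establish the two implications separately. Throughout I would write $A = \mathrm{acl}^{eq}(\emptyset)$ and work inside a monster model $\mathfrak C \succeq M$. Since every element of $A$ is algebraic over $\emptyset$, a formula (or partial type) forks over $A$ if and only if it forks over $\emptyset$ (algebraic base extensions neither create nor destroy forking), so it suffices to show: $p$ does not fork over $A$ if and only if $p$ is $\mathrm{Aut}(M/A)$-invariant.

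For the direction ``$\mathrm{Aut}(M/A)$-invariant $\Rightarrow$ non-forking'', which needs no NIP hypothesis: using that $M$ is $\omega$-saturated and strongly $\omega$-homogeneous, invariance of $p$ under $\mathrm{Aut}(M/A)$ says exactly that membership of $\phi(x,\bar a)$ in $p$ depends only on $\mathrm{stp}(\bar a) = \mathrm{tp}(\bar a/A)$. One then extends $p$ to a global type $q \in S_x(\mathfrak C)$ that is $\mathrm{Aut}(\mathfrak C/A)$-invariant (defining $q$ via strong types over $A$ and checking consistency by compactness from $\omega$-saturation of $M$). A global type invariant over a small parameter set does not divide over that set: if $\phi(x,\bar c)\in q$ and $(\bar c_i)_{i<\omega}$ is $A$-indiscernible with $\bar c_0 = \bar c$, then $\bar c_i \equiv_A \bar c$ and so $\phi(x,\bar c_i)\in q$ for all $i$, whence $\{\phi(x,\bar c_i)\}_{i<\omega}$ is consistent. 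Since dividing can always be witnessed by an $A$-indiscernible sequence, $\phi$ does not divide over $A$; as this holds for all $\phi\in q$, the type $q$, and therefore $p\subseteq q$, does not fork over $A$.

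For the converse ``non-forking $\Rightarrow$ $\mathrm{Aut}(M/A)$-invariant'', I would extend $p$ to a global type $q$ not forking over $\emptyset$ and invoke NIP: in an NIP theory forking equals dividing, and a global type that does not divide over $\emptyset$ is invariant under the Lascar strong automorphism group $\mathrm{Autf}(\mathfrak C/\emptyset)$, i.e. membership depends only on the Lascar strong type over $\emptyset$ of the parameters. The crucial point is then that in an $\omega$-categorical NIP theory the Lascar strong type over $\emptyset$ coincides with the strong type over $\emptyset$, equivalently $\mathrm{Autf}(\mathfrak C/\emptyset) = \mathrm{Aut}(\mathfrak C/A)$; this forces $q$, hence its restriction $p$, to be invariant under $\mathrm{Aut}(\mathfrak C/A)$ and in particular under $\mathrm{Aut}(M/A)$.

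The main obstacle is precisely this last coincidence: controlling the gap between Lascar strong types, Kim–Pillay strong types, and strong types over $\emptyset$. Over a model all three agree in NIP, but over $\emptyset$ one must exploit $\omega$-categoricity — the finiteness of $A$ on each sort together with boundedness considerations that collapse the Lascar group over $\emptyset$ — in order to identify them; this is where the real work lies, and I would follow \cite{CasNIP} for it. The remaining ingredients (invariance of forking under algebraic base extensions, existence of global non-forking extensions, forking $=$ dividing in NIP, and the non-dividing of invariant global types) are routine.
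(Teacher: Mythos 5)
Your proposal is correct in outline and, once unpacked, follows essentially the same route as the paper, whose proof consists of two citations: \cite[Proposition 3]{CasNIP}, giving that over an $\omega$-saturated model of an NIP theory a type does not fork over $\emptyset$ if and only if it is $\mathrm{bdd}(\emptyset)$-invariant, and \cite[Theorem 18.14.3]{CasSimp}, giving that in small theories Kim--Pillay strong types over finite sets coincide with Shelah strong types, so that $\mathrm{bdd}(\emptyset)$-invariance becomes $\mathrm{acl}^{eq}(\emptyset)$-invariance. Your easy direction (invariant $\Rightarrow$ non-forking, via a global $\mathrm{acl}^{eq}(\emptyset)$-invariant extension that cannot divide) and your reduction of forking over $\emptyset$ to forking over $\mathrm{acl}^{eq}(\emptyset)$ are both fine and standard.

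The one step where your write-up needs care is the passage from Lascar-invariance to $\mathrm{acl}^{eq}(\emptyset)$-invariance. You assert that in an $\omega$-categorical NIP theory Lascar strong types over $\emptyset$ coincide with strong types over $\emptyset$. That conflates two separate collapses: Lascar to Kim--Pillay (i.e.\ $G$-compactness over $\emptyset$), and Kim--Pillay to Shelah (which is what smallness over a finite base gives you). $\omega$-categoricity alone does \emph{not} yield $G$-compactness -- there are $\omega$-categorical theories that are not $G$-compact -- so the first collapse cannot be waved through on the strength of $\omega$-categoricity. The correct patch is the one implicit in the paper's first citation: in NIP, a global type that does not fork over $A$ is not merely Lascar-invariant but $\mathrm{bdd}(A)$-invariant (via Borel definability of invariant types, as in Hrushovski--Pillay), and then only the Kim--Pillay-to-Shelah collapse from smallness is needed. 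With that substitution your argument goes through; as written, the claimed identity of Lascar and Shelah strong types is stronger than what you can justify and stronger than what you need.
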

\begin{proof} From \cite[Proposition 3]{CasNIP}, we have that for $M$ $\omega$-saturated over $\emptyset$, $p$ does not fork over $\emptyset$ if and only if it is $\mathrm{bdd}(\emptyset)$-invariant. Since $\mathcal{M}$ is $\omega$\-/categorical, it is $\omega$-saturated over $\emptyset$. By \cite[Theorem 18.14.3]{CasSimp}, in small theories, over finite sets, Kim-Pillay types coincide with strong types (i.e. types over bounded closures coincide with types over imaginary algebraic closures). Hence, the result follows. 
\end{proof}

We know from \cite{lots_of_authors} that any formula which is not universally measure zero is non-forking. Hence, Fact \ref{nonfNIP} implies that NIP $\omega$\-/categorical structures have the IEP:

\begin{corollary} \label{cor:NIPIEP} If $\mathcal{M}$ is the countable model of a NIP $\omega$\-/categorical theory, then $\mathcal{M}$ has the invariant extension property.
\end{corollary}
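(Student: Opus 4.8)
\textbf{Proof plan for Corollary \ref{cor:NIPIEP}.}

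The plan is to combine the two facts stated immediately before the corollary. First I would recall the definition: $\mathcal{M}$ has the invariant extension property if every $\mathcal{L}(M)$-formula $\phi(x)$ which is not universally measure zero is contained in some $\mathrm{Aut}(M/\mathrm{acl}^{eq}(\emptyset))$-invariant type $p \in S_x(M)$. So fix such a $\phi(x)$, i.e.\ $\phi(x) \notin \mathcal{O}_x(\emptyset)$, meaning there is an invariant Keisler measure $\mu$ with $\mu(\phi(x)) > 0$.

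The key step is to extract from $\phi$ a non-forking type. By \cite{lots_of_authors} (invoked in the sentence just before the corollary), any formula that is not universally measure zero does not fork over $\emptyset$; equivalently, the ideal $F_x(\emptyset)$ of forking formulas is contained in $\mathcal{O}_x(\emptyset)$. Hence $\phi(x)$ does not fork over $\emptyset$. Now extend $\phi(x)$ to a complete global type $p \in S_x(M)$ that does not fork over $\emptyset$: this is possible because the non-forking formulas (over $\emptyset$) form a proper ideal, so by a standard compactness / Zorn argument one can find a complete type containing $\phi$ and avoiding the forking ideal, i.e.\ a complete non-forking extension of $\phi$. (Concretely: the family of formulas $\{\phi(x)\} \cup \{\neg\psi(x) : \psi \text{ forks over } \emptyset\}$ is finitely satisfiable since a finite disjunction of forking formulas forks, so $\phi$ is not covered by finitely many forking formulas; extend to a complete type by Zorn, keeping it off every forking formula.) Since $\mathcal{M}$ is $\omega$-categorical, hence $\omega$-saturated, such a complete non-forking global type exists over $M$.

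Finally, apply Fact \ref{nonfNIP}: since $\mathcal{M}$ is NIP and $\omega$-categorical, the complete type $p \in S_x(M)$ does not fork over $\emptyset$ if and only if $p$ is $\mathrm{Aut}(M/\mathrm{acl}^{eq}(\emptyset))$-invariant. Thus $p$ is an $\mathrm{Aut}(M/\mathrm{acl}^{eq}(\emptyset))$-invariant type containing $\phi(x)$, which is exactly what the invariant extension property requires. As $\phi$ was an arbitrary formula not universally measure zero, $\mathcal{M}$ has the IEP.

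I do not expect any real obstacle here — the statement is genuinely a corollary, and the only point requiring a little care is the middle step of passing from a single non-forking formula to a complete non-forking type, which is routine given that the forking formulas form a (proper) ideal in $\mathrm{Def}_x(\mathbb{M})$ and $\mathcal{M}$ is sufficiently saturated; everything model-theoretically substantive is already packaged into Fact \ref{nonfNIP} and the cited result of \cite{lots_of_authors}.
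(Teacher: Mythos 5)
Your proposal is correct and follows exactly the route the paper takes: the paper's entire proof is the sentence preceding the corollary, namely that non--universally-measure-zero formulas are non-forking by \cite{lots_of_authors}, combined with Fact \ref{nonfNIP}. The only thing you add is an explicit justification of the (standard, and implicitly assumed in the paper) step that a non-forking formula extends to a complete non-forking type over $M$ via the ideal/compactness argument, which is a correct filling-in rather than a deviation.
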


\begin{definition} Let $\mu$ be a Borel probability measure on $X$. The \textbf{support} of $\mu$ consists of all points $p\in X$ such that every open neighbourhood of $p$ is assigned positive measure:
\[\mathrm{Supp}(\mu):=\{p\in X \vert \text{for all open } U \text{ such that } p\in U, \mu(U)>0\}.\]
\end{definition}

We show below that the IEP implies that any invariant Keisler measure is supported by $\mathrm{Aut}(\MM/\allowbreak\mathrm{acl}^{eq}(\emptyset))$-invariant types in $S_x(M)$. 

\begin{lemma}\label{invsupport}
     Let $\mathcal{M}$ be a countable $\omega$\-/categorical structure with the IEP. Let $\mu$ be an invariant Keisler measure on $\mathcal{M}$ in the variable $x$. Then, any $p\in\mathrm{Supp}(\mu)$ is $\mathrm{Aut}(\MM/\allowbreak\mathrm{acl}^{eq}(\emptyset))$-invariant.
\end{lemma}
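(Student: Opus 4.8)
The plan is to argue by contraposition: I will show that if $p \in S_x(M)$ is not $\mathrm{Aut}(M/\mathrm{acl}^{eq}(\emptyset))$-invariant, then $p \notin \mathrm{Supp}(\mu)$, i.e. $p$ has an open neighbourhood of measure zero. So suppose $p$ is not $\mathrm{Aut}(M/\mathrm{acl}^{eq}(\emptyset))$-invariant. Then there is an $\mathcal{L}(M)$-formula $\phi(x,\bar a) \in p$ together with some $\sigma \in \mathrm{Aut}(M/\mathrm{acl}^{eq}(\emptyset))$ such that $\neg\phi(x,\sigma(\bar a)) \in p$ — equivalently, $\phi(x,\bar a)$ and $\phi(x,\sigma(\bar a))$ are not the same formula-up-to-equivalence even though $\bar a$ and $\sigma(\bar a)$ have the same type over $\mathrm{acl}^{eq}(\emptyset)$, and $p$ separates them. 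I will take the clopen neighbourhood $U = [\phi(x,\bar a)]$ of $p$ and show $\mu(U) = 0$.

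The key step is to invoke the IEP in contrapositive form: if $\mu([\phi(x,\bar a)]) > 0$ then $\phi(x,\bar a)$ is not universally measure zero, hence by the IEP it is contained in some $\mathrm{Aut}(M/\mathrm{acl}^{eq}(\emptyset))$-invariant type $q \in S_x(M)$. Now I use invariance of $q$ to derive a contradiction: since $\bar a$ and $\sigma(\bar a)$ realise the same type over $\mathrm{acl}^{eq}(\emptyset)$, there is $\tau \in \mathrm{Aut}(M/\mathrm{acl}^{eq}(\emptyset))$ with $\tau(\bar a) = \sigma(\bar a)$ (using $\omega$-categoricity, so that types over the finite imaginary algebraic closure are realised by a single orbit of the relevant pointwise stabiliser — this is where strong $\omega$-homogeneity relative to $\mathrm{acl}^{eq}(\emptyset)$ is used). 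Then $\phi(x,\bar a) \in q$ implies $\phi(x,\tau(\bar a)) = \phi(x,\sigma(\bar a)) \in q$ by $\mathrm{Aut}(M/\mathrm{acl}^{eq}(\emptyset))$-invariance of $q$. But this only tells me $\phi(x,\sigma(\bar a)) \in q$, not something about $p$ directly; to close the loop I should instead run the argument with the neighbourhood $U = [\phi(x,\bar a) \wedge \neg\phi(x,\sigma(\bar a))]$, which still contains $p$. If $\mu(U) > 0$, then the conjunction $\psi(x) := \phi(x,\bar a) \wedge \neg\phi(x,\sigma(\bar a))$ is not universally measure zero, so by IEP lies in an $\mathrm{Aut}(M/\mathrm{acl}^{eq}(\emptyset))$-invariant type $q$; but applying the automorphism $\tau$ above (fixing $\mathrm{acl}^{eq}(\emptyset)$ pointwise, sending $\bar a \mapsto \sigma(\bar a)$ and $\sigma(\bar a) \mapsto \bar a'$ for some $\bar a' \equiv_{\mathrm{acl}^{eq}(\emptyset)} \bar a$) to $\psi \in q$ and using invariance of $q$ forces both $\phi(x, \sigma(\bar a))$ and $\neg\phi(x,\sigma(\bar a))$ into $q$ after iterating, a contradiction. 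Hence $\mu(U) = 0$ and $p \notin \mathrm{Supp}(\mu)$.

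The main obstacle is bookkeeping the automorphisms correctly so that a single application of invariance of $q$ to the conjunction $\psi$ already produces an inconsistency, rather than needing an iteration. The cleanest route is: since $\bar a \equiv_{\mathrm{acl}^{eq}(\emptyset)} \sigma(\bar a)$, pick $\tau \in \mathrm{Aut}(M/\mathrm{acl}^{eq}(\emptyset))$ with $\tau(\bar a) = \sigma(\bar a)$; then $\psi(x) \in q$ and invariance give $\tau \cdot \psi = \phi(x,\sigma(\bar a)) \wedge \neg\phi(x,\tau(\sigma(\bar a))) \in q$, so in particular $\phi(x,\sigma(\bar a)) \in q$, while $\psi \in q$ gives $\neg\phi(x,\sigma(\bar a)) \in q$ — immediate contradiction. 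This works and needs only one application of invariance, so no iteration is required; I should double-check that $q$ being a complete type makes $\phi(x,\sigma(\bar a))$ and $\neg\phi(x,\sigma(\bar a))$ genuinely incompatible in $q$, which is clear. I would also remark that $\omega$-categoricity is used twice: once to guarantee $\mathrm{acl}^{eq}(\emptyset)$ is "small enough" that homogeneity over it makes sense (finitely many elements in each imaginary sort realising each type), and once implicitly via the ambient framework in which $\mathrm{Supp}(\mu)$ and the IEP are formulated. The write-up will be short — essentially the contrapositive plus the one-line automorphism argument.

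\begin{proof}
We prove the contrapositive: suppose $p \in S_x(M)$ is not $\mathrm{Aut}(M/\mathrm{acl}^{eq}(\emptyset))$-invariant; we show $p \notin \mathrm{Supp}(\mu)$. By assumption there is an $\mathcal{L}(M)$-formula $\phi(x,\bar a) \in p$ and some $\sigma \in \mathrm{Aut}(M/\mathrm{acl}^{eq}(\emptyset))$ with $\neg\phi(x,\sigma(\bar a)) \in p$. Let $\psi(x) := \phi(x,\bar a) \wedge \neg\phi(x,\sigma(\bar a))$, so that $\psi(x) \in p$ and hence $p \in [\psi(x)]$, a clopen neighbourhood of $p$.

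We claim $\mu([\psi(x)]) = 0$. Suppose not. Then $\psi(x)$ is not universally measure zero, so by the invariant extension property there is an $\mathrm{Aut}(M/\mathrm{acl}^{eq}(\emptyset))$-invariant type $q \in S_x(M)$ with $\psi(x) \in q$. Since $\sigma$ fixes $\mathrm{acl}^{eq}(\emptyset)$ pointwise, $\bar a$ and $\sigma(\bar a)$ realise the same type over $\mathrm{acl}^{eq}(\emptyset)$; by $\omega$-categoricity $M$ is strongly $\omega$-homogeneous over $\mathrm{acl}^{eq}(\emptyset)$, so there is $\tau \in \mathrm{Aut}(M/\mathrm{acl}^{eq}(\emptyset))$ with $\tau(\bar a) = \sigma(\bar a)$. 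Applying $\tau$-invariance of $q$ to $\psi(x) \in q$ gives $\phi(x,\sigma(\bar a)) \wedge \neg\phi(x,\tau\sigma(\bar a)) \in q$, whence $\phi(x,\sigma(\bar a)) \in q$. But $\psi(x) \in q$ also gives $\neg\phi(x,\sigma(\bar a)) \in q$, contradicting that $q$ is a (consistent, complete) type.

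Therefore $\mu([\psi(x)]) = 0$, so $p \in \mathrm{Supp}(\mu)$ fails. Contrapositively, every $p \in \mathrm{Supp}(\mu)$ is $\mathrm{Aut}(M/\mathrm{acl}^{eq}(\emptyset))$-invariant.
\end{proof}
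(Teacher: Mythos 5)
Your proof is correct and follows essentially the same route as the paper's: pass to the clopen set $[\phi(x,\bar a)\wedge\neg\phi(x,\sigma(\bar a))]\ni p$, observe via the contrapositive of the IEP that this formula would otherwise lie in an $\mathrm{Aut}(M/\mathrm{acl}^{eq}(\emptyset))$-invariant type, and derive a contradiction by applying the automorphism. The only (harmless) redundancy is your introduction of $\tau$ with $\tau(\bar a)=\sigma(\bar a)$ — you can simply apply $\sigma$-invariance of $q$ directly, since $\sigma$ already lies in $\mathrm{Aut}(M/\mathrm{acl}^{eq}(\emptyset))$.
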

\begin{proof}
We prove that any type which is not $\mathrm{Aut}(\MM/\allowbreak\mathrm{acl}^{eq}(\emptyset))$-invariant cannot be in $\mathrm{Supp}(\mu)$. Suppose that $p\in S_x(M)$ is not $\mathrm{Aut}(\MM/\allowbreak\mathrm{acl}^{eq}(\emptyset))$-invariant. Then, there is $\xi(x,d)$ and $\tau\in \mathrm{Aut}(\MM/\allowbreak\mathrm{acl}^{eq}(\emptyset))$ such that
\[\xi(x,d)\wedge \neg\xi(x,\tau(d))\in p.\]
This formula cannot be contained in any $\mathrm{Aut}(\MM/\allowbreak\mathrm{acl}^{eq}(\emptyset))$-invariant type. Hence, by the IEP it is universally measure zero. Hence, $p\not\in \mathrm{Supp}(\mu)$ since it is contained in an open set of measure zero.
\end{proof}

\begin{remark}\label{rem:IEPmeas} Since the action of $\mathrm{Aut}(\MM)$ on $ S_x(M)$ is Borel, being continous, $\mathrm{Aut}(\MM)$-orbits are Borel \cite[Theorem 15.14]{KechrisDST} and so ergodic measures concentrate on orbits. Hence, by Lemma \ref{invsupport}, an ergodic measure $\mu$ must concentrate on the $\mathrm{Aut}(\MM)$-orbit of a single $\mathrm{Aut}(\MM/\allowbreak \mathrm{acl}^{eq}(\emptyset))$-invariant type $p$. If such type $p$ has only finitely many $\mathrm{Aut}(\MM)$-conjugates $\{p_1,\allowbreak \dots, p_n\}$. Then, 
\[\mu=\frac{1}{n}\sum_{i=0}^n p_i,\]
    where $p_i$ is the Dirac measure concentrated on $p_i$.
Note that it is impossible for $\mu$ to concentrate on a countable orbit: it would assign every type the same measure $0$, and so by countable additivity the entire orbit would have measure $0$.

\end{remark}

Hence, we get the following result:

\begin{corollary}\label{cor:IEPchar} Let $\mathcal{M}$ be a countable $\omega$-categorical structure with the IEP. Suppose that no $\mathrm{Aut}(\MM/\allowbreak\mathrm{acl}^{eq}(\emptyset))$-invariant type has uncountably many $\mathrm{Aut}(\MM)$-conjugates. Let $\mu$ be an invariant Keisler measures on $\mathcal{M}$. Let $S$ be the set of $\mathrm{Aut}(\MM)$-conjugacy classes of $\mathrm{Aut}(\MM/\allowbreak\mathrm{acl}^{eq}(\emptyset))$-invariant types in $S_x(M)$ with finitely many conjugates. Then, there is a unique Borel probability measure $\nu$ on $S$ such that 
\[\mu=\int_{\overline{p}\in S} \mu_{\overline{p}} \ \mathrm{d}\nu,\]
where $\mu_{\overline{p}}$ is the probability measure given by
\[\mu_p=\frac{1}{n}\sum_{i=0}^n p_i,\]
where the $p_i$ are the $\mathrm{Aut}(\MM)$-conjugates in the conjugacy class $\overline{p}\in S$.
\end{corollary}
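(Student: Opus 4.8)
\textbf{Proof proposal for Corollary \ref{cor:IEPchar}.}

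The plan is to deduce the statement by combining the ergodic decomposition of $\mu$ with the structural description of ergodic measures already obtained in Remark \ref{rem:IEPmeas}. First I would invoke the ergodic decomposition theorem for the continuous action of the Polish group $\mathrm{Aut}(M)$ on the compact metrizable space $S_x(M)$ (cf. Remark \ref{rem:ergodic} and \cite{Choquet, me2}): the space of $\mathrm{Aut}(M)$-invariant Borel probability measures on $S_x(M)$ is a Choquet simplex whose extreme points are exactly the ergodic invariant Keisler measures, these form a Borel subset $\mathcal{E}$ of the standard Borel space $P(S_x(M))$, and $\mu$ is the barycentre $\mu=\int_{\mathcal{E}} e\,d\lambda(e)$ of a \emph{unique} Borel probability measure $\lambda$ on $\mathcal{E}$ (uniqueness holds because distinct ergodic measures are mutually singular, or directly by Choquet--Meyer uniqueness in a simplex).

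Next I would identify $\mathcal{E}$ with $S$. By Lemma \ref{invsupport} and Remark \ref{rem:IEPmeas}, under the IEP every ergodic invariant Keisler measure $e$ concentrates on the $\mathrm{Aut}(M)$-orbit of a single $\mathrm{Aut}(M/\mathrm{acl}^{eq}(\emptyset))$-invariant type $p\in S_x(M)$; by the standing hypothesis this orbit is at most countable, and by the last observation in Remark \ref{rem:IEPmeas} it cannot be countably infinite, so $p$ has finitely many $\mathrm{Aut}(M)$-conjugates, i.e.\ its conjugacy class lies in $S$, and $e=\mu_{\overline p}$ is the normalised counting measure on that class. Conversely, each $\mu_{\overline p}$ with $\overline p\in S$ is ergodic: $\mathrm{Aut}(M)$ permutes the finite set $\overline p$ transitively, so any $\mathrm{Aut}(M)$-invariant Borel set meets $\overline p$ in either $\emptyset$ or all of $\overline p$. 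Since distinct conjugacy classes are disjoint $\mathrm{Aut}(M)$-orbits, the assignment $\overline p\mapsto\mu_{\overline p}$ is therefore a bijection from $S$ onto $\mathcal{E}$. Transporting $\lambda$ through this bijection yields a Borel probability measure $\nu$ on $S$ with $\mu=\int_S \mu_{\overline p}\,d\nu(\overline p)$, and uniqueness of $\nu$ is inherited from uniqueness of $\lambda$.

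The main obstacle is the descriptive-set-theoretic bookkeeping needed to make "Borel probability measure on $S$" precise and to justify the transport in the last step: one must equip $S$ with the Borel structure induced from $\mathcal{E}\subseteq P(S_x(M))$ via the above bijection, check this agrees with the naive quotient Borel structure on conjugacy classes, and verify that $\overline p\mapsto\mu_{\overline p}(B)$ is Borel for every Borel $B\subseteq S_x(M)$ so that the integral is meaningful. This is routine but requires care; it is made tractable by the fact that all $\mathrm{Aut}(M)$-orbits on $S_x(M)$ are Borel (\cite[Theorem 15.14]{KechrisDST}, already used in this section) and that $\mathcal{E}$ is a Borel subset of a standard Borel space. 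Alternatively, uniqueness of $\nu$ can be re-derived by hand: for any Borel $T\subseteq S$ the saturation $\hat T:=\bigcup_{\overline p\in T}\overline p$ is an $\mathrm{Aut}(M)$-invariant Borel subset of $S_x(M)$ with $\mu_{\overline q}(\hat T)=\mathbf 1_{[\overline q\in T]}$, so that $\nu(T)=\int_S \mu_{\overline p}(\hat T)\,d\nu(\overline p)=\mu(\hat T)$ is forced, which pins $\nu$ down completely.
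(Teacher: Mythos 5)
Your proposal is correct and takes essentially the same route as the paper: the paper's proof is exactly the combination of the ergodic decomposition with the characterisation of ergodic measures in Remark \ref{rem:IEPmeas}, which is what you carry out (in somewhat more detail, including the measurability bookkeeping and an explicit uniqueness argument that the paper leaves implicit).
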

\begin{proof} The proof just follows from the characterisation of ergodic measures in Remark \ref{rem:IEPmeas} and from the ergodic decomposition \cite[p.77]{Choquet}.
\end{proof}


\begin{remark} In stable $\omega$-categorical theories, by the finite equivalence relation theorem \cite[Ch. III, Theorem 2.8]{shelah1990classification}, any 
$\mathrm{Aut}(\MM/\mathrm{acl}^{eq}(\emptyset))$-invariant type has finitely many $\mathrm{Aut}(\MM)$-conjugates, and so Corollary \ref{cor:IEPchar} already describes all the possible invariant Keisler measures, as was already proved by Ensley \cite[$\S$ 4.2]{Ensley}. We do not know any example of a NIP $\omega$-categorical theory with an $\mathrm{Aut}(\MM/\allowbreak\mathrm{acl}^{eq}(\emptyset))$-invariant type with uncountably many $\mathrm{Aut}(\MM)$-conjugates. Indeed, it is easy to see that any $\mathrm{Aut}(\MM/\mathrm{acl}^{eq}(\emptyset))$-invariant type will have only finitely many $\mathrm{Aut}(\MM)$-conjugates as long as $\mathrm{Aut}(\MM)/\allowbreak\mathrm{Aut}(\MM/\mathrm{acl}^{eq}(\emptyset))$ is finite, which in an $\omega$-categorical context corresponds to the theory of $M$ being $G$-finite (over $\emptyset$) \cite{CatMod, FOAmen}. There are no known non-$G$-finite homogeneous structures in a finite language and no known NIP $\omega$-categorical examples. So, it is possible the classification in Corollary \ref{cor:IEPchar} holds for all NIP $\omega$-categorical structures. It is also easy to see that the standard example of a non-$G$-finite $\omega$-categorical theory (the Fra\"{i}ss\'{e} limit of finite structures with an equivalence relation $E_n$ on $n$-tuples with two classes for each $n<\omega$ \cite{CatMod}) does have $\mathrm{Aut}(\MM/\mathrm{acl}^{eq}(\emptyset))$-invariant types with uncountably many conjugates.
\end{remark}

From the above characterisation of invariant Keisler measures we can see that $\omega$-categorical structures with the IEP satisfy a rather extreme version of $\mu(\phi(x, ab)\wedge\psi(x, ac)\wedge\xi(x, bc))$ only depending on the types of the pairs in $abc$:

\begin{corollary} Let $\mathcal{M}$ be $\omega$-categorical $\mathcal{L}$-structure with the IEP and $\mathrm{acl}^{eq}(\emptyset)=\mathrm{dcl}^{eq}(\emptyset)$. Let $\{\phi_i(x, \overline{y}_i) \vert i\leq n\}$ be a set of $\mathcal{L}$-formulas and $(\overline{a}_i\vert i\leq n)$, $(\overline{a}_i'\vert i\leq n)$ be tuples in the variables $y_i$ such that $\overline{a}_i\equiv \overline{a}_i'$ for $i\leq n$. Then, for any invariant Keisler measure $\mu$
\[\mu\left(\bigwedge_{i\leq n}\phi_i(x, \overline{a}_i)\right)=\mu\left(\bigwedge_{i\leq n}\phi_i(x, \overline{a}_i')\right).\]
In particular, if $\mathcal{M}$ is homogeneous $k$-transitive and all of its relations are of arity $k+1$, all of its invariant Keisler measures are exchangeable.
\end{corollary}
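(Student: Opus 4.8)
The plan is to derive the first (displayed) equality straight from the definition of the IEP, and then obtain exchangeability in the homogeneous case by unwinding the correspondence of Section~\ref{sec:IKMconnection}. Begin with an elementary reduction: since $\mathrm{acl}^{eq}(\emptyset)=\mathrm{dcl}^{eq}(\emptyset)$ and every automorphism of $\mathcal{M}$ fixes $\mathrm{dcl}^{eq}(\emptyset)$ pointwise, we have $\mathrm{Aut}(M/\mathrm{acl}^{eq}(\emptyset))=\mathrm{Aut}(M)$, so the IEP says that any $\mathcal{L}(M)$-formula which is not universally measure zero lies in some $\mathrm{Aut}(M)$-invariant type in $S_x(M)$. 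Now set $\psi(x):=\bigwedge_{i\leq n}\phi_i(x,\overline a_i)$, $\psi'(x):=\bigwedge_{i\leq n}\phi_i(x,\overline a_i')$, and $\chi(x):=\psi(x)\wedge\neg\psi'(x)$.

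I claim $\chi$ lies in no $\mathrm{Aut}(M)$-invariant type. Suppose $p\in S_x(M)$ is $\mathrm{Aut}(M)$-invariant with $\chi\in p$. Then $\phi_i(x,\overline a_i)\in p$ for every $i$; since $\mathcal{M}$ is $\omega$-categorical, $\overline a_i\equiv\overline a_i'$ gives $\sigma_i\in\mathrm{Aut}(M)$ with $\sigma_i\overline a_i=\overline a_i'$, and invariance of $p$ yields $\phi_i(x,\overline a_i')=\phi_i(x,\sigma_i\overline a_i)\in\sigma_i p=p$, so $\psi'(x)\in p$, contradicting $\neg\psi'(x)\in p$. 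By the contrapositive of the IEP, $\chi$ is universally measure zero, hence $\mu(\chi)=0$ for every invariant Keisler measure $\mu$; symmetrically $\mu(\psi'\wedge\neg\psi)=0$. Since $\mu(\psi)=\mu(\psi\wedge\psi')+\mu(\psi\wedge\neg\psi')$ and likewise for $\psi'$, we conclude $\mu(\psi)=\mu(\psi\wedge\psi')=\mu(\psi')$, which is the first assertion.

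For the ``in particular'' clause, recall (Remark~\ref{rk:lowerarity}) that when $\mathcal{M}$ is homogeneous with all relations of arity $k+1$, the type space $S'_x(M)$ is represented by expansions of $\mathcal{M}$ of arity $\leq k$, and by Corollary~\ref{cor:correspondence} a Keisler measure is exchangeable exactly when the associated $\mathrm{IRE}$ pushes forward to an $S_\infty$-invariant measure on the corresponding space of $\mathcal{L}'$-structures on $\mathbb{N}$; concretely this means that for tuples $\overline a,\overline a'$ from $M$ of equal length, $\mu$ assigns the same value to the formula isolating the quantifier-free type of $x$ over $\overline a$ and to the correspondingly labelled formula over $\overline a'$. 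By homogeneity such a formula has the form $\bigwedge_i\phi_i(x,\overline a_{J_i})$, where each $\phi_i$ is either a unary condition on $x$ or records which positions of some $(k+1)$-ary relation are filled by $x$ and which by elements of a subtuple $\overline a_{J_i}$; as $x$ occupies at least one slot, $|J_i|\leq k$. Since $k$-transitivity implies $j$-transitivity for all $j\leq k$ (extend a $j$-tuple of distinct points to a $k$-tuple of distinct points and move it), and since all relations have arity $k+1$ and never hold with repeated entries, any $\leq k$ distinct elements carry no relation, so $\overline a_{J_i}\equiv\overline a'_{J_i}$ for every $i$. The first assertion of the corollary then gives $\mu\big(\bigwedge_i\phi_i(x,\overline a_{J_i})\big)=\mu\big(\bigwedge_i\phi_i(x,\overline a'_{J_i})\big)$, i.e.\ $\mu$ is exchangeable.

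The step I expect to be the main obstacle is the last paragraph's bookkeeping: one must make precise that ``exchangeability of an invariant Keisler measure'' unwinds, through the representation of $S'_x(M)$ by expansions of arity $\leq k$, into an equality of measures of $\mathcal{L}$-formulas $\bigwedge_i\phi_i(x,\overline a_{J_i})$ with each $\overline a_{J_i}$ of length $\leq k$. Once that translation is set up carefully, $k$-transitivity supplies the hypothesis $\overline a_{J_i}\equiv\overline a'_{J_i}$ and all the genuine content is already in the first assertion, whose proof is the short IEP argument above.
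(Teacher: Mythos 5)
Your proof is correct, and the first (and only substantive) half takes a genuinely different route from the paper. The paper deduces the displayed equality from Corollary \ref{cor:IEPchar}: since $\mathrm{acl}^{eq}(\emptyset)=\mathrm{dcl}^{eq}(\emptyset)$ forces every $\mathrm{Aut}(M/\mathrm{acl}^{eq}(\emptyset))$-invariant type to be $\mathrm{Aut}(M)$-invariant (hence to have a single conjugate), every invariant Keisler measure is an integral average of $\mathrm{Aut}(M)$-invariant types, and one checks that $\bigwedge_i\phi_i(x,\overline a_i)$ and $\bigwedge_i\phi_i(x,\overline a_i')$ belong to exactly the same invariant types. You instead apply the IEP directly to the difference formula $\psi\wedge\neg\psi'$: it lies in no $\mathrm{Aut}(M)$-invariant type, hence is universally measure zero, and the equality follows by additivity. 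This bypasses the ergodic decomposition and the support analysis of Lemma \ref{invsupport} entirely; it is more elementary and in fact only uses the IEP itself, not the structural consequence recorded in Corollary \ref{cor:IEPchar} (your argument is essentially a localized version of the proof of Lemma \ref{invsupport}, applied to the one formula you care about rather than to the whole support). What the paper's route buys is that the decomposition into averages of invariant types is already available and is the statement one actually wants for the classification results; what your route buys is independence from the countability/conjugacy hypotheses underlying Corollary \ref{cor:IEPchar} and a shorter chain of dependencies. For the ``in particular'' clause the paper says only that it follows from quantifier elimination and $k$-transitivity; your unwinding of exchangeability through the projection-language representation, the observation that each conjunct of the formula isolating $\mathrm{qftp}(x/\overline a)$ involves at most $k$ parameters, and the derivation of $\overline a_{J_i}\equiv\overline a'_{J_i}$ from $k$-transitivity is exactly the intended argument, just made explicit.
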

\begin{proof} Since $\mathrm{acl}^{eq}(\emptyset)=\mathrm{dcl}^{eq}(\emptyset)$, by Corollary \ref{cor:IEPchar} every invariant Keisler measure of $\mathcal{M}$ is an integral average of $\mathrm{Aut}(\MM)$-invariant types. Consider an $\mathrm{Aut}(\MM)$-invariant type $p$. Since $\overline{a}_i\equiv \overline{a}_i'$ for each $i\leq n$, we have that
\[\bigwedge_{i\leq n}\phi_i(x, \overline{a}_i)\in p \text{ if and only if } \bigwedge_{i\leq n}\phi_i(x, \overline{a}_i')\in p,\]
yielding the first part of the statement. The second part is just a consequence of quantifier elimination and $k$-transitivity.
\end{proof}
So, for example, we can see that, if they exist, the invariant Keisler measures for various homogeneous NIP structures such as $(\mathbb{Q}, <)$, and homogeneous $B, C,$ and $D$-relations are all exchangeable. Of course, these may not exist: for $(\mathbb{Q}, \mathrm{cyc})$, the circular order, it is easy to see that it has no invariant types and so no invariant Keisler measures.\\

Regardless of whether the description of invariant Keisler measures in Corollary \ref{cor:IEPchar} is true of all $\omega$-categorical NIP theories, we can still show of them that for every non-forking formula $\phi(x)$, there is an invariant Keisler measure assigning it positive measure. The proof of this follows \cite[Proposition 4.7]{NIPinv}, with the help of the following fact:

\begin{fact}[{\cite[Lemma 2.4]{EvansTsk}}]\label{aclfact} Let $\mathcal{M}$ be $\omega$\-/categorical. Fix the variable $x$. Then, there is finite $A\subseteq \mathrm{acl}^{eq}(\emptyset)$ such that for any $b, b'\in M^{x}$, we have
\[ b\equiv_{A} b' \text{ if and only if } b\equiv_{\mathrm{acl}^{eq}(\emptyset)} b'.\]
In particular, for any finite variable $x$, there are only finitely many types in the variable $x$ over $\mathrm{acl}^{eq}(\emptyset)$. The set $A$ can be also taken to be $\mathrm{Aut}(\MM^{eq})$-invariant setwise. 
\end{fact}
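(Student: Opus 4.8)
The plan is to reduce everything to the single assertion that the relation $\equiv_{\mathrm{acl}^{eq}(\emptyset)}$ has only finitely many classes on $M^x$ (equivalently, that $S_x(\mathrm{acl}^{eq}(\emptyset))$ is finite), and then to extract the finite invariant set $A$ by a routine separating-formulas argument. Granting this, let $C_1,\dots,C_m$ be the classes; for $i\neq j$ pick an $\mathcal{L}$-formula $\phi_{ij}(x,\bar a_{ij})$ with parameter tuple $\bar a_{ij}$ from $\mathrm{acl}^{eq}(\emptyset)$ that holds on $C_i$ and fails on $C_j$ (such a formula exists precisely because $C_i$ and $C_j$ are distinct $\equiv_{\mathrm{acl}^{eq}(\emptyset)}$-classes). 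Let $A$ be the union of the (finitely many, and each finite by $\omega$-categoricity) $\mathrm{Aut}(M^{eq})$-orbits of the components of all the $\bar a_{ij}$; then $A\subseteq\mathrm{acl}^{eq}(\emptyset)$ is finite and setwise $\mathrm{Aut}(M^{eq})$-invariant. Since $\mathrm{tp}(b/A)$ decides every $\phi_{ij}(x,\bar a_{ij})$, it determines which $C_i$ contains $b$, so $b\equiv_A b'$ implies $b\equiv_{\mathrm{acl}^{eq}(\emptyset)}b'$; the converse is trivial. The ``in particular'' clause then follows because $S_x(A)$ is finite by Ryll--Nardzewski, and surjects onto $S_x(\mathrm{acl}^{eq}(\emptyset))$.

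So the task is to bound the number of $\equiv_{\mathrm{acl}^{eq}(\emptyset)}$-classes on $M^x$. First I would record, via Ryll--Nardzewski applied to $M^{eq}$, that each element of $\mathrm{acl}^{eq}(\emptyset)$ lies in a finite $\mathrm{Aut}(M^{eq})$-orbit, which is moreover $\emptyset$-definable; fixing an enumeration of these orbits and letting $A_n$ be the union of the first $n$ of them gives an increasing exhaustion $\mathrm{acl}^{eq}(\emptyset)=\bigcup_n A_n$ by finite, $\mathrm{Aut}(M^{eq})$-invariant sets. Writing $G=\mathrm{Aut}(M)$ and $G_0=\mathrm{Aut}(M/\mathrm{acl}^{eq}(\emptyset))$ (the pointwise stabiliser, a closed normal subgroup since every $g\in G$ fixes $\mathrm{acl}^{eq}(\emptyset)$ setwise), a back-and-forth over the countable set $\mathrm{acl}^{eq}(\emptyset)$ (using $\omega$-saturation of $M$ for types over finite sets) shows that the $\equiv_{\mathrm{acl}^{eq}(\emptyset)}$-classes on $M^x$ are exactly the $G_0$-orbits. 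So it suffices to prove that $G_0$ has finitely many orbits on $M^x$.

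For this I would argue group-theoretically. The key point is that $G/G_0$ is \emph{profinite}: an element of $G_0$ is precisely an automorphism acting trivially on every $A_n$, and conversely any automorphism of the induced structure on $\mathrm{acl}^{eq}(\emptyset)$ is a partial elementary map with domain $\mathrm{acl}^{eq}(\emptyset)$, hence (again by a back-and-forth using $\omega$-saturation) extends to $\mathrm{Aut}(M)$; therefore $G/G_0\cong\varprojlim_n\bigl(G\text{ acting on }A_n\bigr)$ is an inverse limit of finite groups. Now Ryll--Nardzewski gives finitely many $G$-orbits on $M^x$, so it is enough to see that each splits into finitely many $G_0$-orbits. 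Within the $G$-orbit of $b\in M^x$, the $G_0$-orbits are indexed by $G/(G_0G_b)$; the stabiliser $G_b$ of the finite tuple $b$ is open in $G$, so (the quotient map being open, and $G_0$ normal) the image of $G_0G_b$ in the profinite group $G/G_0$ is an \emph{open} subgroup, hence of finite index. This bounds the number of $G_0$-orbits, completing the argument; the final sentence about $A$ being $\mathrm{Aut}(M^{eq})$-invariant was already arranged in the first paragraph.

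The step I expect to be most delicate is establishing that $G/G_0$ is genuinely profinite — in particular that the comparison map to $\varprojlim_n(G$ acting on $A_n)$ is surjective, which is exactly the extension-of-partial-elementary-maps point — together with the correspondence between $\equiv_{\mathrm{acl}^{eq}(\emptyset)}$-classes and $G_0$-orbits; these are the technical heart, carried out in \cite[Lemma 2.4]{EvansTsk}. One could alternatively try to bound the inverse system $\bigl(S_x(A_n)\bigr)_n$ directly by a pigeonhole argument exploiting $A_{n+1}\subseteq\mathrm{acl}^{eq}(A_n)$, but controlling the algebraicity multiplicities uniformly seems harder than the group-theoretic route, so I would favour the latter.
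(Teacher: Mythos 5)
The paper does not prove this statement --- it is imported verbatim from Evans--Tsankov --- so I am judging your argument against the standard one. Your first paragraph (extracting the finite invariant $A$ once one knows $S_x(\mathrm{acl}^{eq}(\emptyset))$ is finite, via separating formulas and closing their parameters under the finitely many relevant orbits) is correct. The gap is in the reduction of the finiteness assertion to group theory. The two steps you flag as delicate are not merely delicate: as you have set things up they are essentially \emph{equivalent} to the statement being proved, and the justification you sketch for them fails. Both ``the $\equiv_{\mathrm{acl}^{eq}(\emptyset)}$-classes are exactly the $\mathrm{Aut}(M/\mathrm{acl}^{eq}(\emptyset))$-orbits'' and ``$G/G_0\to\varprojlim_n(G/G_{(A_n)})$ is surjective'' require, at each step of the back-and-forth, realizing \emph{in the countable model} a complete type over the infinite set $\mathrm{acl}^{eq}(\emptyset)$ together with finitely many new points. $\omega$-saturation only realizes types over finite sets; the only way to realize a type over $\mathrm{acl}^{eq}(\emptyset)$ in $M$ is to know in advance that it is implied by its restriction to a finite subset --- which is exactly the Fact. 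The same circularity infects compactness of $G/G_0$ directly: $G/G_0$ is totally bounded if and only if every open subgroup of $G$ containing $G_0$ has finite index, which is precisely the statement that $G_0$ has finitely many orbits on each $M^n$. So the group-theoretic route restates the problem rather than solving it. (A side remark: for your purposes only the trivial inequality ``number of classes $\le$ number of $G_0$-orbits'' is needed, so the first back-and-forth is superfluous anyway; but the second one is load-bearing and unproved.)

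The statement does have a short direct proof that avoids all of this, and I suspect it is close to what Evans--Tsankov do. By Ryll--Nardzewski there are finitely many $\mathrm{Aut}(M)$-orbits $P_1,\dots,P_m$ on $M^x\times M^x$. Since $\mathrm{acl}^{eq}(\emptyset)$ is setwise $\mathrm{Aut}(M)$-invariant, the relation $b\equiv_{\mathrm{acl}^{eq}(\emptyset)}b'$ is constant on each $P_i$. If it fails on $P_i$, fix one pair $(b,b')\in P_i$ and a formula $\phi(x,\bar a)$ with $\bar a\subseteq\mathrm{acl}^{eq}(\emptyset)$ such that $\phi(b,\bar a)\wedge\neg\phi(b',\bar a)$ holds, and let $A_i$ be the finite, setwise invariant union of the orbits of the entries of $\bar a$; for any other pair $(c,c')=g(b,b')$ in $P_i$ one then has $c\not\equiv_{A_i}c'$, because $g^{-1}A_i=A_i$. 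Taking $A$ to be the union of the $A_i$ over the finitely many orbits on which the relation fails gives: $b\equiv_A b'$ forces $(b,b')$ to lie in an orbit on which $\equiv_{\mathrm{acl}^{eq}(\emptyset)}$ holds, hence $b\equiv_{\mathrm{acl}^{eq}(\emptyset)}b'$; the converse is trivial. Finiteness of $S_x(\mathrm{acl}^{eq}(\emptyset))$ and the invariance of $A$ then follow exactly as in your first paragraph. I would replace your second and third paragraphs with this pigeonhole argument.
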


\begin{lemma}\label{F=O} Let $\mathcal{M}$ be the countable model of an $\omega$\-/categorical theory. If $\phi(x,b)$ is contained in an $\mathrm{Aut}(\MM/\allowbreak\mathrm{acl}^{eq}(\emptyset))$-invariant type, then it is not universally measure zero.\\

In particular, if $\mathcal{M}$ is NIP, an $\mathcal{L}(M)$-formula $\phi(x,b)$ forks over $\emptyset$ if and only if it is universally measure zero.
\end{lemma}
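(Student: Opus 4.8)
The plan is to prove the first assertion directly, by constructing from the given $\mathrm{Aut}(M/\mathrm{acl}^{eq}(\emptyset))$-invariant type $p \ni \phi(x,b)$ an invariant Keisler measure $\mu$ with $\mu(\phi(x,b)) > 0$; the "in particular" then follows formally. Write $G = \mathrm{Aut}(M)$, $E = \mathrm{acl}^{eq}(\emptyset)$, and $K = \mathrm{Aut}(M/E)$. Since $M$ is countable and $\omega$-categorical, $E$ is a countable, setwise $G$-invariant set of imaginaries on which $G$ acts with all orbits finite; hence $K$ is a closed normal subgroup of $G$, and $G/K$ is identified with a closed subgroup of the compact group $\prod_O \mathrm{Sym}(O)$ (product over the finite $G$-orbits $O \subseteq E$). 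Thus $G/K$ is a compact (profinite) group and carries a Haar probability measure $\eta$, invariant under left translation by $G$.

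First I would record two consequences of the hypothesis. (i) Since $p$ is $K$-invariant, by strong homogeneity of $M^{eq}$ over $E$, whether a formula $\psi(x,d) \in p$ depends on the parameter tuple $d$ only through $\mathrm{tp}(d/E)$. (ii) By Fact \ref{aclfact}, for each fixed finite parameter-variable $\bar y$ there is a \emph{finite} $A_{\bar y} \subseteq E$ such that $\mathrm{tp}(d/A_{\bar y})$ determines $\mathrm{tp}(d/E)$; so the dependence in (i) is in fact through finitely much of $E$. From (i), the orbit map $G \to S_x(M)$, $g \mapsto g\cdot p$, is constant on left cosets of $K$ and hence factors through a well-defined map $\pi \colon G/K \to S_x(M)$. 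From (ii), the preimage under $g \mapsto g\cdot p$ of any basic clopen $\llbracket \psi(x,c)\rrbracket$ equals $\{g : \psi(x,g^{-1}c)\in p\}$, which is determined by finitely much data about $g$ and is therefore clopen in $G$; so $\pi$ is continuous.

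Now set $\mu := \pi_\sharp \eta$, a regular Borel probability measure on the compact Hausdorff space $S_x(M)$. It is $G$-invariant: for $h \in G$ one has $h\cdot(g\cdot p) = (hg)\cdot p = \pi(hgK)$, so $h\cdot\mu = \pi_\sharp (L_h)_\sharp \eta = \pi_\sharp \eta = \mu$ by left-invariance of $\eta$. Moreover $\pi^{-1}\llbracket \phi(x,b)\rrbracket = \{gK : \phi(x,g^{-1}b)\in p\}$ is a nonempty (it contains the identity coset, since $\phi(x,b)\in p$) clopen subset of the compact group $G/K$, so $\mu(\phi(x,b)) = \eta(\pi^{-1}\llbracket\phi(x,b)\rrbracket) > 0$. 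Hence $\phi(x,b)$ is not universally measure zero. For the "in particular": any forking formula is universally measure zero by the standard argument (a $k$-inconsistent sequence of $\mathrm{Aut}(M)$-conjugates of a dividing formula must, by invariance of any Keisler measure, consist of measure-zero sets, and forking formulas follow by finite subadditivity) — equivalently, the implication from \cite{lots_of_authors} that a non-universally-measure-zero formula does not fork. Conversely, if $\phi(x,b)$ does not fork over $\emptyset$ then by Fact \ref{nonfNIP} (using NIP and $\omega$-categoricity) it is contained in an $\mathrm{Aut}(M/\mathrm{acl}^{eq}(\emptyset))$-invariant type, so by the first part it is not universally measure zero. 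Thus in the NIP $\omega$-categorical case, forking over $\emptyset$ coincides with being universally measure zero.

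The main point to get right — the only place something beyond bookkeeping happens — is that $\mathrm{Aut}(M/\mathrm{acl}^{eq}(\emptyset))$ may have infinite, even uncountable, index in $\mathrm{Aut}(M)$, so one cannot simply average $p$ over its conjugates as in the $G$-finite case; compactness of the quotient $G/K$, and hence the availability of a Haar probability measure, is precisely what rescues the averaging construction. I expect this observation, together with the verification (via Fact \ref{aclfact}) that the quotient map $\pi$ is continuous, to be the only delicate steps; the rest is routine.
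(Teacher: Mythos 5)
Your proof is correct, and it reaches the same measure as the paper by a genuinely different (more global) route. The paper's proof, following Proposition 4.7 of \cite{NIPinv}, is purely finitary: for each formula $\psi(x,m)$ it defines $\mu(\psi(x,m))$ as the fraction of the finitely many extensions of $\mathrm{tp}(m/\emptyset)$ to a complete type over $\mathrm{acl}^{eq}(\emptyset)$ (finitely many by Fact \ref{aclfact}) whose realisations $m'$ satisfy $\psi(x,m')\in p$; invariance, finite additivity, and $\mu(\phi(x,b))>0$ are then immediate, and no topology on the quotient group is ever invoked. You instead push forward Haar measure on the compact quotient $G/K$ along the orbit map $gK\mapsto g\cdot p$. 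The two constructions agree: $G$ acts transitively on the extensions of $\mathrm{tp}(m/\emptyset)$ over $\mathrm{acl}^{eq}(\emptyset)$ and this action factors through a finite quotient of $G/K$, so the pushforward of Haar measure to that finite set is uniform, which recovers exactly the paper's ratio. What your route buys is a clean global description of $\mu$ as an average of the conjugates of $p$; what it costs is the one nontrivial topological input you assert without proof, namely that the image of $G$ in $\prod_O\mathrm{Sym}(O)$ is closed, so that $G/K$ is compact and carries a Haar probability measure. This is in fact true for countable $\omega$-categorical $M$ (a back-and-forth using Fact \ref{aclfact} at the ``forth'' step shows that any elementary permutation of $\mathrm{acl}^{eq}(\emptyset)$ whose finite restrictions are all induced by automorphisms is itself induced by one), but it deserves justification; alternatively you can sidestep it by taking Haar measure on the closure of the image, since your continuity argument shows $\pi$ extends there. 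With that point addressed, the rest of your argument --- well-definedness and continuity of $\pi$ via Fact \ref{aclfact}, invariance from left-invariance of Haar measure, positivity on the nonempty clopen set $\pi^{-1}\llbracket\phi(x,b)\rrbracket$, and both directions of the ``in particular'' via Fact \ref{nonfNIP} --- is sound, and you are somewhat more explicit than the paper about the easy direction of the final equivalence.
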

\begin{proof}
    We follow the construction from \cite[Proposition 4.7]{NIPinv}. Pick an $\mathrm{Aut}(\MM/\mathrm{acl}^{eq}(\emptyset))$-invariant type $p$ containing $\phi(x;b)$. For any formula $\psi(x,m)$, count the different extensions of $\mathrm{tp}(m/\emptyset)$ to a type over $\mathrm{acl}^{eq}(\emptyset)$ (note there are only finitely many by Fact \ref{aclfact}), and count how many such types are represented in the parameters of a formula $\psi(x,m') \in p$. Take the ratio to be $\mu(\psi(x,m))$. $Aut(M)$-invariance and additivity follow,  and we have $\mu(\phi(x;b)) > 0$ since $\phi(x;b) \in p$.
\end{proof}


\section{Final questions}\label{sec:finq}

The notions and results from this paper offer many directions for further research. Firstly, our main results can be seen as working towards a more general problem:


\begin{problem} Let $H\leq G$ be closed subgroups of $S_\infty$ and $\mathcal{C}'$ a hereditary class of structures. Under what conditions on $H, G,$ and $\mathcal{C}'$ can we say that all $H$-invariant probability measures on the space of countable structures with age in $\mathcal{C}'$ are indeed $G$-invariant?
\end{problem}

The main aim of this paper was studying meaningful classes of (oligomorphic) permutation groups $H$ and classes $\mathcal{C}'$ for which we would get such results for $G=S_\infty$. It is clear that more can be said for other interesting choices of $G$ (cf. \cite{AckerAut, CraneTown} and \cite[Lemma 10.15]{chernikov2020hypergraph}). An analogue of this problem can also be asked regarding consistent random expansions of hereditary classes.\\

At the moment, much remains open even choosing $G$ to be $S_\infty$. In particular, our techniques only work under adequate assumptions of $k$-overlap closedness. It would be interesting to understand whether one could obtain similar results under the assumption of free amalgamation, as suggested by Proposition \ref{prop:edgecase}:


\begin{question}\label{q:freeam}
Suppose that $\mathcal{M}$ is a $k$-transitive homogeneous structure in a language of arity $>k$ whose age has free amalgamation. Is every expansion by a hereditary class of $\LL'$-structures with labelled growth rate $O(e^{n^{k+\delta}})$ for every $\delta > 0$ exchangeable?     
\end{question}

There is the stronger question that asks, under the assumptions above on $\mathcal{M}$, whether its age is \closed. The class of 4-uniform hypergraphs forbidding the structure described in Remark \ref{rem:4aryExample} is a concrete open case that seems to require addressing hypergraph Tur\'an-type problems forbidding multiple configurations, possibly a separate such problem for each arity $r$ of the desired $r$-hypergraph $\mathbb{K}$. Most literature on such problems focuses on obtaining the maximal number of hyperedges and omitting a single configuration \cite{keevash2011hypergraph}. On top of offering potentially interesting (and hard) problems in asymptotic combinatorics, this directions should be of independent interest since it is really asking for which hereditary classes $\mathcal{C}$ one can obtain a denser version of the random placement construction of Ne{\v{s}}et{\v{r}}il and R{\"o}dl \cite{nesetril1978probabilistic} (which corresponds to the construction obtained in the case of $1$-overlap closedness).\\

Alternatively, one may ask whether model theoretic techniques may aid in answering Question \ref{q:freeam} whilst circumventing the potentially hard combinatorial problems that arise from our techniques. In this case, we would expect higher arity model theoretic properties to play a role. In particular, the notion of $k$-dependence has emerged as a way of characterising $k$-arity of a theory (or formula) \cite{ndependence, chernikov2020hypergraph}. Perhaps, what underlies Theorem \ref{theorem:Chernoff} is that if $\mathcal{M}$ is $k$-transitive and has no "hidden" $k$-ary structure (like the case of the parity $k+1$-hypergraphs) then invariant random expansions by classes with growth rate $O(e^{n^{k+\delta}})$ for every $\delta > 0$ are exchangeable. In this case, one may expect a positive answer to some version of the following question:



\begin{question}\label{q:ipk} Suppose that $\mathcal{M}$ is a $k$-transitive homogeneous structure. Suppose further that any $\emptyset$-definable relation of $\mathcal{M}$ with $\mathrm{NIP}_k$ is already $\emptyset$-definable in $(\mathbb{N}, =)$. Is every expansion by a hereditary class of $\LL'$-structures with labelled growth rate $O(e^{n^{k+\delta}})$ for every $\delta > 0$ exchangeable?
\end{question}

Note this is strictly more general than Question \ref{q:freeam} since a homogeneous structure with free amalgamation in a language of arity $>k$ has $\mathrm{IP}_k$ (essentially by the definition of $k$-independence and the characterisation of free amalgamation in terms of omitted substructures being $2$-irreducible). The idea behind Question \ref{q:ipk} is that $\mathrm{IP}_k$ would capture the absence of "hidden" $k$-ary structure which obstructed exchangeability in the case of the parity $k$-hypergraphs in Section \ref{sec:kaygraphs}. A difficulty with this question for $k>1$ is that relatively few examples of homogeneous structures in high arity are known, so more pathological behaviour may only be exhibited beyond the few known examples (c.f. \cite{Kopconstr, cherlin2021ramsey}). A positive answer in the case of $k=1$ seems plausible given the positive results for invariant expansions by unary predicates and linear orders in \cite{JahelT}, though even in this case the question seems non-trivial.\\

It would be good to gain a more systematic understanding of non-trivial sources of failure of exchangeability. In \cite{CRO}, it is shown that bounded degree classes of graphs admit non-exchangeable order expansions. Like the parity $k$-hypergraphs case, this can be seen as another instance where expanding by a class with a similar labelled growth rate causes a failure of exchangeability. It sounds reasonable to ask the following:

\begin{question} Can we understand more systematically in which ways non-exchangeable consistent random expansions of $\mathcal{C}$ by $\mathcal{C}'$ may arise when $\mathcal{C}$ and $\mathcal{C}'$ have similar growth rates?
\end{question}

A related problem to what we study in this paper is that of obtaining Aldous-Hoover-like representations for the invariant random expansions of different structures $\mathcal{M}$. In this paper, once we get exchangeability we can use the actual Aldous-Hoover theorem to represent the relevant invariant random expansions. However, in doing this, we must restrict the growth-rate of the class of structures we are expanding by. For example, it is obvious that the random graph $\mathcal{R}_2$ has non-exchangeable invariant graph expansions: consider the $\mathrm{Aut}(\mathcal{R}_2)$-invariant measure concentrating on a graph isomorphic to the underlying copy of $\mathcal{R}_2$. Still, \cite{AckerAut, CraneTown} obtain a representation theorem for invariant random expansions by relations of arbitrary arity for homogeneous structures with disjoint $n$-amalgamation for all $n$. Compared to the Aldous-Hoover Theorem (Fact \ref{AldousHoover}), in their representation, the Borel function $f$ determining whether a $k$-ary relation $R$ holds of a tuple $\overline{a}$ is allowed to depend also on the induced substructure of $\mathcal{M}$ on the tuple $\overline{a}$ on top of depending on the i.i.d. $\mathrm{Uniform}[0,1]$ random variables $(\xi_I)_{I\subseteq\overline{a}}$. Hence, for example, we can see exchangeability of invariant graph expansions of $\mathcal{R}_3$ as a consequence of the representation theorem of \cite{AckerAut, CraneTown} and $2$-transitivity. Thus, a natural question arises of whether the exchangeability results in our paper are also a shadow of a similar representation theorem holding under more general conditions than disjoint $n$-amalgamation for all $n$. In particular, we ask the following:

\begin{question}
    Can we prove a representation theorem of the form of Crane and Towsner's \cite[Theorem 3.2]{CraneTown}, and Ackerman's \cite{AckerAut} for invariant random expansions of homogeneous structures with free amalgamation by structures of arbitrary arity? In particular, is there such a representation for invariant random expansions of the generic triangle-free graph?
\end{question}

The case of the generic triangle-free graph $\mathcal{H}_3^2$ offers a good case study. If we had an Aldous-Hoover-like representation for its invariant random expansions such as in \cite{AckerAut, CraneTown}, we would have the somewhat surprising result that all of its invariant random expansions extend to an invariant random expansion of the random graph. Still, it looks unclear how measures not coming from such a representation may arise.\\

Finally, it would be interesting to gain a better understanding of invariant random expansions in a general model theoretic setting. In Section \ref{sec:IKMconnection} we proved that invariant Keisler measures can be seen as a special case of invariant random expansions. However, whilst invariant Keisler measures are well-understood in a $\mathrm{NIP}$ setting, this is not the case for invariant random expansions. Indeed, most of our results work specifically for structures with the independence property. It would be interesting to understand what the invariant random expansions of various stable or $\mathrm{NIP}$ structures look like. Two $\omega$-categorical case studies would be infinite dimensional vector spaces over finite fields and homogeneous $\mathrm{C}$-relations. Moreover, it would be interesting to study invariant random expansions of sufficiently saturated and strongly homogeneous models of some model-theoretically tame theories which are not necessarily $\omega$-categorical. Whilst the general problem of describing the spaces of invariant random expansions of a given structure looks very challenging \cite{Kallbooksym}, at least some techniques from model theory, especially those exploiting stability of probability spaces \cite{approxsubg, AER, chernikov2020hypergraph}, may be of help in the case of invariant random expansions beyond that of invariant Keisler measures. 

 \printbibliography

@article{nguyen2023induced,
  title={{Induced subgraph density. VI. Bounded VC-dimensio}},
  author={Nguyen, Tung and Scott, Alex and Seymour, Paul},
  journal={arXiv preprint arXiv:2312.15572},
  year={2023}
}

@article{kingman1978uses,
  title={Uses of exchangeability},
  author={Kingman, John F. C. },
  journal={The Annals of Probability},
  volume={6},
  number={2},
  pages={183--197},
  year={1978},
  publisher={Institute of Mathematical Statistics}
}

@article{diaconis2013interval,
  title={Interval graph limits},
  author={Diaconis, Persi and Holmes, Susan and Janson, Svante},
  journal={Annals of combinatorics},
  volume={17},
  pages={27--52},
  year={2013},
  publisher={Springer}
}

@misc{brown1973some,
  title={Some extremal problems on r-graphs, New directions in the theory of graphs (Proc. Third Ann Arbor Conf., Univ. Michigan, Ann Arbor, Mich, 1971)},
  author={Brown, William G. and Erd{\H{o}}s, P{\'a}l and T. S{\'o}s, Vera},
  year={1973},
  publisher={Academic Press, New York}
}

@article{shangguan2020degenerate,
  title={Degenerate Tur{\'a}n densities of sparse hypergraphs},
  author={Shangguan, Chong and Tamo, Itzhak},
  journal={Journal of Combinatorial Theory, Series A},
  volume={173},
  pages={105228},
  year={2020},
  publisher={Elsevier}
}

@article{terry2018VC,
  title={$VC_\ell$-dimension and the jump to the fastest speed of a hereditary $\mathcal{L}$-property},
  author={Terry, Caroline},
  journal={Proceedings of the American Mathematical Society},
  volume={146},
  number={7},
  pages={3111--3126},
  year={2018}
}

@inproceedings{ryll1957stationary,
  title={On stationary sequences of random variables and the de Finetti's equivalence},
  author={Ryll-Nardzewski, Czes{\l}aw},
  booktitle={Colloquium Mathematicum},
  volume={4},
  number={2},
  pages={149--156},
  year={1957},
  organization={Polska Akademia Nauk. Instytut Matematyczny PAN}
}

@article{delcourt2022finding,
  title={Finding an almost perfect matching in a hypergraph avoiding forbidden submatchings},
  author={Delcourt, Michelle and Postle, Luke},
  journal={arXiv preprint arXiv:2204.08981},
  year={2022}
}

@article{glock2024conflict,
  title={Conflict-free hypergraph matchings},
  author={Glock, Stefan and Joos, Felix and Kim, Jaehoon and K{\"u}hn, Marcus and Lichev, Lyuben},
  journal={Journal of the London Mathematical Society},
  volume={109},
  number={5},
  pages={e12899},
  year={2024},
  publisher={Wiley Online Library}
}

@article{hubicka2017ramsey,
  title={Ramsey Classes with Closure Operations},
  author={Hubicka, Jan and Ne{\v{s}}etril, Jaroslav},
  journal={arXiv preprint arXiv:1705.01924},
  year={2017}
}

@article{nesetril1978probabilistic,
  title={On a probabilistic graph-theoretical method},
  author={Ne{\v{s}}et{\v{r}}il, Jaroslav and R{\"o}dl, Vojt{\v{e}}ch},
  journal={Proceedings of the American Mathematical Society},
  volume={72},
  number={2},
  pages={417--421},
  year={1978}
}

@incollection{Wagner:ST,
	author = {Wagner, Frank O.~},
	booktitle = {Automorphisms of first-order structures},
	date-added = {2021-08-07 11:38:13 +0200},
	date-modified = {2021-08-07 11:50:47 +0200},
	editor = {Kaye, R. and Macpherson, D.},
	pages = {153-180},
	publisher = {Clarendon Press},
	title = {Relational structures and dimensions},
	year = {1994}}

@inbook{EM, place={Cambridge}, series={London Mathematical Society Lecture Note Series}, title={A survey of asymptotic classes and measurable structures}, volume={2}, booktitle={Model Theory with Applications to Algebra and Analysis}, publisher={Cambridge University Press}, author={Elwes, Richard and Macpherson, Dugald}, editor={Chatzidakis, Z. and Macpherson, D. and Pillay, A. and Wilkie, A.}, year={2008}, pages={125-160}, DOI={10.1017/CBO9780511735219.004},  collection={London Mathematical Society Lecture Note Series}}

@article{MS,
copyright = {Copyright 2007, American Mathematical Society},
issn = {0002-9947},
journal = {Transactions of the American Mathematical Society},
keywords = {Algebra ; Coordinate systems ; Definable sets ; Equivalence relation ; Exact sciences and technology ; Integers ; Logic and foundations ; Mathematical logic, foundations, set theory ; Mathematical theorems ; Mathematics ; Model theory ; Sciences and techniques of general use ; Ultraproducts ; Vector spaces ; Vertices},
number = {1},
issn = {0002-9947}, 
pages = {411-448},
publisher = {American Mathematical Society},
title = {One-dimensional asymptotic classes of finite structures},
volume = {360},
year = {2008},
abstract = {A collection {\mathcal C} of finite \mathcal{L}-structures is a {\em 1-dimensional asymptotic class\/} if for every m \in {\mathbb N} and every formula \varphi(x,\bar{y}), where \bar{y}=(y_1,\ldots,y_m): \begin{itemize} \item[(i)] There is a positive constant C and a finite set E\subset {\mathbb R}^{>0} such that for every M\in {\mathcal C} and \bar{a}\in M^m, either |\varphi(M,\bar{a})|\leq C, or for some \mu\in E, \big||\varphi(M,\bar{a})|-\mu |M|\big| \leq C|M|^{\frac{1}{2}}. \item[(ii)] For every \mu\in E, there is an \mathcal{L}-formula \varphi_{\mu}(\bar{y}), such that \varphi_{\mu}(M^m) is precisely the set of \bar{a}\in M^m with \big||\varphi(M,\bar{a})|-\mu |M|\big| \leq C|M|^{\frac{1}{2}}. \end{itemize} One-dimensional asymptotic classes are introduced and studied here. These classes come equipped with a notion of dimension that is intended to provide for the study of classes of finite structures a concept that is central in the development of model theory for infinite structures. Connections with the model theory of infinite structures are also drawn.},
author = {Macpherson, Dugald and Steinhorn, Charles},
address = {Providence, RI},
}

@article{Measam,
  title={Higher amalgamation properties in measured structures},
  author={Evans, David M.},
  journal={Model Theory},
  volume={2},
  number={2},
  pages={233--253},
  year={2023},
  publisher={Mathematical Sciences Publishers}
}

@book{TZ, place={Cambridge}, series={Lecture Notes in Logic}, title={A Course in Model Theory}, DOI={10.1017/CBO9781139015417}, publisher={Cambridge University Press}, author={Tent, Katrin and Ziegler, Martin}, year={2012}, collection={Lecture Notes in Logic}}

@book{Hodges, place={Cambridge}, series={Encyclopedia of Mathematics and its Applications}, title={Model Theory}, DOI={10.1017/CBO9780511551574}, publisher={Cambridge University Press}, author={Hodges, W.~}, year={1993}, collection={Encyclopedia of Mathematics and its Applications}}

@article{lots_of_authors,
  title={Invariant measures in simple and in small theories},
  author={Chernikov, Artem and Hrushovski, Ehud and Kruckman, Alex and Krupi{\'n}ski, Krzysztof and Moconja, Slavko and Pillay, Anand and Ramsey, Nicholas},
  journal={Journal of Mathematical Logic},
  volume={23},
  number={02},
  pages={2250025},
  year={2023},
  publisher={World Scientific}
}

@article{Homogeneous,
title = {A survey of homogeneous structures},
journal = {Discrete Mathematics},
volume = {311},
number = {15},
pages = {1599-1634},
year = {2011},
note = {Infinite Graphs: Introductions, Connections, Surveys},
issn = {0012-365X},
doi = {https://doi.org/10.1016/j.disc.2011.01.024},
url = {https://www.sciencedirect.com/science/article/pii/S0012365X11000422},
author = {Macpherson, Dugald},
keywords = {Omega-categorical, Homogeneous structure, Polish group, Ramsey class, Constraint satisfaction, Oligomorphic group, Permutation group},
abstract = {A relational first order structure is homogeneous if it is countable (possibly finite) and every isomorphism between finite substructures extends to an automorphism. This article is a survey of several aspects of homogeneity, with emphasis on countably infinite homogeneous structures. These arise as FraissÃ© limits of amalgamation classes of finite structures. The subject has connections to model theory, to permutation group theory, to combinatorics (for example through combinatorial enumeration, and through Ramsey theory), and to descriptive set theory. Recently there has been a focus on connections to topological dynamics, and to constraint satisfaction. The article discusses connections between these topics, with an emphasis on examples, and on special properties of an amalgamation class which yield important consequences for the automorphism group.}
}

@article{Kruck,
author = {Kruckman, Alex},
title = {{Disjoint $n$-Amalgamation and Pseudofinite Countably Categorical Theories}},
volume = {60},
journal = {Notre Dame Journal of Formal Logic},
number = {1},
publisher = {Duke University Press},
pages = {139 -- 160},

year = {2019},
doi = {10.1215/00294527-2018-0025},
URL = {https://doi.org/10.1215/00294527-2018-0025}
}

@book{NIP, place={Cambridge}, series={Lecture Notes in Logic}, title={A Guide to NIP Theories}, DOI={10.1017/CBO9781107415133}, publisher={Cambridge University Press}, author={Simon, Pierre}, year={2015}, collection={Lecture Notes in Logic}}

@article{KPsimp,
title = {Simple Theories},
journal = {Annals of Pure and Applied Logic},
volume = {88},
number = {2},
pages = {149-164},
year = {1997},
note = {Joint AILA-KGS Model Theory Meeting},
issn = {0168-0072},
doi = {https://doi.org/10.1016/S0168-0072(97)00019-5},
url = {https://www.sciencedirect.com/science/article/pii/S0168007297000195},
author = {Byunghan Kim and Anand Pillay}
}

@book{Kimsimp,
  title={Simplicity theory},
  author={Kim, Byunghan},
  year={2014},
  publisher={Oxford University Press}
}

@article{approxsubg,
 ISSN = {08940347, 10886834},
 URL = {http://www.jstor.org/stable/23072155},
 author = {Ehud Hruhsovski},
 journal = {Journal of the American Mathematical Society},
 number = {1},
 pages = {189--243},
 publisher = {American Mathematical Society},
 title = {Stable Group Theory and Approximate Subgroups},
 volume = {25},
 year = {2012}
}

@book{Choquet,
abstract = {A well written, readable and easily accessible introduction to "Choquet theory", which treats the representation of elements of a compact convex set as integral averages over extreme points of the set. The interest in this material arises both from its appealing geometrical nature as well as its extraordinarily wide range of application to areas ranging from approximation theory to ergodic theory. Many of these applications are treated in this book. This second edition is an expanded and updated version of what has become a classic basic reference in the subject.},
author = {Phelps, Robert R.},
address = {Berlin, Heidelberg},
edition = {2nd ed. 2001.},
isbn = {9783540487197},
keywords = {Potential theory (Mathematics)},
publisher = {Springer Berlin Heidelberg},
series = {Lecture Notes in Mathematics, 1757},
title = {Lectures on Choquet's Theorem},
year = {2001},
}

@book{Boga,
abstract = {Measure theory is a classical area of mathematics that continues intensive development and has fruitful connections with most other fields of mathematics as well as important applications in physics. This book gives a systematic presentation of modern measure theory as it has developed over the past century and offers three levels of presentation: a standard university graduate course, an advanced study containing some complements to the basic course (the material of this level corresponds to a variety of special courses), and, finally, more specialized topics partly covered by more than 850 exercises. Bibliographical and historical comments and an extensive bibliography with 2000 works covering more than a century are provided. Volume 1 is devoted to the classical theory of measure and integral. Whereas the first volume presents the ideas that go back mainly to Lebesgue, the second volume is to a large extent the result of the later development up to the recent years. The central subjects of Volume 2 are: transformations of measures, conditional measures, and weak convergence of measures. These topics are closely interwoven and form the heart of modern measure theory. The target readership includes graduate students interested in deeper knowledge of measure theory, instructors of courses in measure and integration theory, and researchers in all fields of mathematics. The book may serve as a source for many advanced courses or as a reference.},
author = {Bogachev, Vladimir I.},
address = {Berlin, Heidelberg},
edition = {1st ed. 2007.},
isbn = {1-280-74570-3},
keywords = {Mathematical analysis},
publisher = {Springer Berlin Heidelberg},
series = {Measure Theory;},
title = {Measure Theory},
year = {2007},
}

@book{Poizat,
series = {Universitext},
publisher = {Springer},
isbn = {9780387986555},
year = {2000},
title = {A course in model theory : an introduction to contemporary mathematical logic},
address = {New York ; London},
author = {Poizat, Bruno},
keywords = {Model theory},
lccn = {99053572},
}

@article{JahelT,
     author = {Jahel, Colin and Tsankov, Todor},
     title = {Invariant measures on products and on the~space of linear orders},
     journal = {Journal de l{\textquoteright}\'Ecole polytechnique {\textemdash} Math\'ematiques},
     pages = {155--176},
     publisher = {\'Ecole polytechnique},
     volume = {9},
     year = {2022},
   
}

@misc{me,
  doi = {10.48550/ARXIV.2208.06323},
  
  url = {https://arxiv.org/abs/2208.06323},
  
  author = {Marimon, Paolo},
  
  keywords = {Logic (math.LO), FOS: Mathematics, FOS: Mathematics},
  
  title = {On the non-measurability of $\omega$-categorical Hrushovski constructions},
  
  publisher = {arXiv},
  
  year = {2022},
  
  copyright = {arXiv.org perpetual, non-exclusive license}
}

@article{AER,
  title={Approximate equivalence relations},
  author={Hrushovski, Ehud},
  journal={Model Theory},
  volume={3},
  number={2},
  pages={317--416},
  year={2024},
  publisher={Mathematical Sciences Publishers}
}

@misc{PieceInt,
  doi = {10.48550/ARXIV.2110.05142},
  
  url = {https://arxiv.org/abs/2110.05142},
  
  author = {Chevalier, Alexis and Hrushovski, Ehud},
  
  keywords = {Logic (math.LO), FOS: Mathematics, FOS: Mathematics},
  
  title = {Piecewise Interpretable Hilbert Spaces},
  
  publisher = {arXiv},
  
  year = {2021},
  
  copyright = {Creative Commons Attribution 4.0 International}
}

@article{EvansTsk,
author = {Evans, David M. and Tsankov, Todor},
doi = {10.4064/fm232-1-4},
journal = {Fundamenta Mathematicae},
pages = {49--63},
title = {Free actions of free groups on countable structures and property (T)},
url = {http://dx.doi.org/10.4064/fm232-1-4},
volume = {232},
year = {2016}
}

@misc{FOAmen,
  doi = {10.48550/ARXIV.2004.08306},
  
  url = {https://arxiv.org/abs/2004.08306},
  
  author = {Hrushovski, Ehud and Krupiński, Krzysztof and Pillay, Anand},
  
  keywords = {Logic (math.LO), FOS: Mathematics, FOS: Mathematics, 03C45, 43A07},
  
  title = {On first order amenability},
  
  publisher = {arXiv},
  
  year = {2020},
  
  copyright = {arXiv.org perpetual, non-exclusive license}
}

@article{CasNIP,
  title={Lascar strong types and forking in NIP theories},
  author={Casanovas, Enrique},
  journal={Rendiconti del Seminario Matematico Universit\'{a} Politecnico Torino},
  volume={72},
  number={3},
  pages={195 – 211},
  year={2014}
}

@book{CasSimp, place={Cambridge}, series={Lecture Notes in Logic}, title={Simple Theories and Hyperimaginaries}, DOI={10.1017/CBO9781139003728}, publisher={Cambridge University Press}, author={Casanovas, Enrique}, year={2011}, collection={Lecture Notes in Logic}}

@article{Albert,
  title={Measures on the random graph},
  author={Albert, Michael H.},
  journal={Journal of the London Mathematical Society},
  volume={50},
  number={3},
  pages={417--429},
  year={1994},
  publisher={Oxford University Press}
}

@misc{me2,
      title={Invariant Keisler measures for omega-categorical structures}, 
      author={Paolo Marimon},
      year={2023},
      eprint={2211.14628},
      archivePrefix={arXiv},
      primaryClass={math.LO}
}

@misc{AtticusPill,
      title={Forking and invariant measures in NIP theories}, 
      author={Anand Pillay and Atticus Stonestrom},
      year={2023},
      eprint={2307.11037},
      archivePrefix={arXiv},
      primaryClass={math.LO}
}

@article{KeislerNIP,
  title={Measures and forking},
  author={Keisler, Howard Jerome},
  journal={Annals of Pure and Applied Logic},
  volume={34},
  number={2},
  pages={119--169},
  year={1987},
  publisher={Elsevier}
}

@article{NIPinv,
  title={On NIP and invariant measures},
  author={Hrushovski, Ehud and Pillay, Anand},
  journal={Journal of the European Mathematical Society},
  volume={13},
  number={4},
  pages={1005--1061},
  year={2011},
  publisher={European Mathematical Society Publishing House}
}

@article{Ensley,
  title={Automorphism--invariant measures on $\aleph_0$-categorical structures without the independence property},
  author={Ensley, Douglas E.},
  journal={The Journal of Symbolic Logic},
  volume={61},
  number={2},
  pages={640--652},
  year={1996},
  publisher={Cambridge University Press}
}

@article{Kallsym,
  title={Symmetries on random arrays and set-indexed processes},
  author={Kallenberg, Olav},
  journal={Journal of Theoretical Probability},
  volume={5},
  number={4},
  pages={727--765},
  year={1992},
  publisher={Springer}
}

@book{Kallbooksym,
  title={Probabilistic symmetries and invariance principles},
  author={Kallenberg, Olav},
  volume={9},
  year={2005},
  publisher={Springer}
}

@inproceedings{AFP,
  title={Invariant measures concentrated on countable structures},
  author={Ackerman, Nathanael and Freer, Cameron and Patel, Rehana},
  booktitle={Forum of Mathematics, Sigma},
  volume={4},
  pages={e17},
  year={2016},
  organization={Cambridge University Press}
}

@article{StarchNIP,
  title={NIP, Keisler measures and combinatorics},
  author={Starchenko, Sergei},
  journal={S{\'e}minaire Bourbaki},
  pages={2015--2016},
  year={2016}
}

@phdthesis{EnsleyPhD,
  title={Measures on $\aleph_0$-categorical structures},
  author={Ensley, Douglas E.~},
  year={1993},
  school={Carnegie Mellon University, Pittsburgh, PA}
}

@article{TomInd,
  title={Independence, measure and pseudofinite fields},
  author={Toma\v{s}i\'{c}, Ivan},
  journal={Selecta Mathematica, New Series},
  volume={12},
  number={2},
  pages={271--306},
  year={2006},
  publisher={Springer Alemania}
}

@article{CVDMI,
issn = {0075-4102},
journal = {Journal f\"{u}r die Reine und Angewandte Mathematik},
pages = {107--135},
volume = {427},
publisher = {Walter De Gruyter},
year = {1992},
title = {Definable sets over finite fields},
copyright = {1992 INIST-CNRS},
address = {BERLIN},
author = {Chatzidakis, Zo\'{e} and Van Den Dries, Lou and Macintyre, Angus},
keywords = {Algebra ; Exact sciences and technology ; Field theory and polynomials ; Mathematics ; Physical Sciences ; Science & Technology ; Sciences and techniques of general use},
}

@article{HrushPseud,
  title={Pseudo-finite fields and related structures},
  author={Hrushovski, Ehud},
  journal={Model theory and applications},
  volume={11},
  pages={151--212},
  year={1991},
  publisher={Quaderni di Matematica}
}

@misc{PillayStar,
  doi = {10.48550/ARXIV.1310.7538},
  
  url = {https://arxiv.org/abs/1310.7538},
  
  author = {Pillay, Anand and Starchenko, Sergei},
  
  keywords = {Number Theory (math.NT), FOS: Mathematics, FOS: Mathematics, 11P70, 03C45},
  
  title = {Remarks on Tao's algebraic regularity lemma},
  
  publisher = {arXiv},
  
  year = {2013},
  
  copyright = {arXiv.org perpetual, non-exclusive license}
}

@misc{TaoReg,
  doi = {10.48550/ARXIV.1211.2894},
  
  url = {https://arxiv.org/abs/1211.2894},
  
  author = {Tao, Terence},
  
  keywords = {Combinatorics (math.CO), Algebraic Geometry (math.AG), FOS: Mathematics, FOS: Mathematics, 11T06, 11B30, 05C75},
  
  title = {Expanding polynomials over finite fields of large characteristic, and a regularity lemma for definable sets},
  
  publisher = {arXiv},
  
  year = {2012},
  
  copyright = {arXiv.org perpetual, non-exclusive license}
}

@misc{AlexisARL,
  doi = {10.48550/ARXIV.2204.01158},
  
  url = {https://arxiv.org/abs/2204.01158},
  
  author = {Chevalier, Alexis and Levi, Elad},
  
  keywords = {Combinatorics (math.CO), Logic (math.LO), FOS: Mathematics, FOS: Mathematics},
  
  title = {An Algebraic Hypergraph Regularity Lemma},
  
  publisher = {arXiv},
  
  year = {2022},
  
  copyright = {Creative Commons Attribution 4.0 International}
}

@article{CraneTown,
  title={Relatively exchangeable structures},
  author={Crane, Harry and Towsner, Henry},
  journal={The Journal of Symbolic Logic},
  volume={83},
  number={2},
  pages={416--442},
  year={2018},
  publisher={Cambridge University Press}
}

@article{crane2018relative,
  title={Relative exchangeability with equivalence relations},
  author={Crane, Harry and Towsner, Henry},
  journal={Archive for Mathematical Logic},
  volume={57},
  pages={533--556},
  year={2018},
  publisher={Springer}
}

@article{Palacinrandom,
 ISSN = {00224812, 19435886},
 URL = {https://www.jstor.org/stable/26600291},
 abstract = {In this paper we shall prove that any 2-transitive finitely homogeneous structure with a supersimple theory satisfying a generalized amalgamation property is a random structure. In particular, this adapts a result of Koponen for binary homogeneous structures to arbitrary ones without binary relations. Furthermore, we point out a relation between generalized amalgamation, triviality and quantifier elimination in simple theories.},
 author = {Daniel Palac\'{i}n},
 journal = {The Journal of Symbolic Logic},
 number = {4},
 pages = {1409--1421},
 publisher = {[Association for Symbolic Logic, Cambridge University Press]},
 title = {Generalised amalgamation and homogeneity},
 urldate = {2024-02-08},
 volume = {82},
 year = {2017}
}

@article{Freeam,
 ISSN = {00224812, 19435886},
 URL = {http://www.jstor.org/stable/26358469},
 abstract = {We use axioms of abstract ternary relations to define the notion of a free amalgamation theory. These form a subclass of first-order theories, without the strict order property, encompassing many prominent examples of countable structures in relational languages, in which the class of algebraically closed substructures is closed under free amalgamation. We show that any free amalgamation theory has elimination of hyperimaginaries and weak elimination of imaginaries. With this result, we use several families of well-known homogeneous structures to give new examples of rosy theories. We then prove that, for free amalgamation theories, simplicity coincides with NTP2 and, assuming modularity, with NSOP3 as well. We also show that any simple free amalgamation theory is 1-based. Finally, we prove a combinatorial characterization of simplicity for Fraïssé limits with free amalgamation, which provides new context for the fact that the generic Kn-free graphs are SOP3, while the higher arity generic ${\mathrm{K}}_{\mathrm{n}}^{\mathrm{r}}$-free r-hypergraphs are simple.},
 author = {Conant, Gabriel},
 journal = {The Journal of Symbolic Logic},
 number = {2},
 pages = {648--671},
 publisher = {[Association for Symbolic Logic, Cambridge University Press]},
 title = {An Axiomatic Approach to Free Amalgamation},
 urldate = {2024-02-08},
 volume = {82},
 year = {2017}
}

@article{IREs,
  title={Stabilizers for ergodic actions and invariant random expansions of non-archimedean Polish groups},
  author={Jahel, Colin and Joseph, Matthieu},
  journal={arXiv preprint arXiv:2307.06253},
  year={2023}
}

@book{Descriptive,
  title={The descriptive set theory of Polish group actions},
  author={Becker, Howard and Kechris, Alexander S.},
  volume={232},
  year={1996},
  publisher={Cambridge University Press}
}

@article{PetrovVershik,
  title={Uncountable graphs and invariant measures on the set of universal countable graphs},
  author={Petrov, Fedor and Vershik, Anatoly},
  journal={Random Structures \& Algorithms},
  volume={37},
  number={3},
  pages={389--406},
  year={2010},
  publisher={Wiley Online Library}
}

@book{Taointro,
  title={An introduction to measure theory},
  author={Tao, Terence},
  volume={126},
  year={2011},
  publisher={American Mathematical Soc.}
}

@article{Kopconstr,
  title={On constraints and dividing in ternary homogeneous structures},
  author={Koponen, Vera},
  journal={The Journal of Symbolic Logic},
  volume={83},
  number={4},
  pages={1691--1721},
  year={2018},
  publisher={Cambridge University Press}
}

@article{BinKop,
 ISSN = {00224812, 19435886},
 URL = {http://www.jstor.org/stable/26358443},
 author = {Vera Koponen},
 journal = {The Journal of Symbolic Logic},
 number = {1},
 pages = {183--207},
 publisher = {[Association for Symbolic Logic, Cambridge University Press]},
 title = {Binary primitive homogeneous simple structures},
 urldate = {2024-06-11},
 volume = {82},
 year = {2017}
}

@article{Thomashyp,
title = {Reducts of random hypergraphs},
journal = {Annals of Pure and Applied Logic},
volume = {80},
number = {2},
pages = {165-193},
year = {1996},
issn = {0168-0072},
doi = {https://doi.org/10.1016/0168-0072(95)00061-5},
url = {https://www.sciencedirect.com/science/article/pii/0168007295000615},
author = {Simon Thomas},
abstract = {For each k ⩾ 1, let Γk be the countable universal homogeneous k-hypergraph. In this paper, we shall classify the closed permutation groups G such that Aut(Γk) ⩽ G ⩽ Sym(Γk). In particular, we shall show that there exist only finitely many such groups G for each k ⩾ 1. We shall also show that each of the associated reducts of Γk is homogeneous with respect to a finite relational language.}
}

@misc{ApproxRamsey,
      title={All These Approximate Ramsey Properties}, 
      author={Nadav Meir and Aris Papadopoulos},
      year={2023},
      eprint={2307.14468},
      archivePrefix={arXiv},
      primaryClass={math.CO}
}

@phdthesis{myPhD,
	author = {Marimon, Paolo},
	school = {Department of Mathematics, Imperial College London},
	title = {Measures and amalgamation properties in $\omega$-categorical structures},
	year = {2023}}

@article{Lachlanhyp,
  title={On countable homogeneous 3-hypergraphs},
  author={Akhtar, Reza and Lachlan, Alistair H.},
  journal={Archive for Mathematical Logic},
  volume={34},
  number={5},
  pages={331--344},
  year={1995},
  publisher={Springer}
}

@article{LachWood,
  title={Countable ultrahomogeneous undirected graphs},
  author={Lachlan, Alistair H. and Woodrow, Robert E.},
  journal={Transactions of the American Mathematical Society},
  pages={51--94},
  year={1980},
  publisher={JSTOR}
}

@incollection {McDiarmid,
    AUTHOR = {McDiarmid, Colin},
     TITLE = {On the method of bounded differences},
 BOOKTITLE = {Surveys in combinatorics, 1989 ({N}orwich, 1989)},
    SERIES = {London Math. Soc. Lecture Note Ser.},
    VOLUME = {141},
     PAGES = {148--188},
 PUBLISHER = {Cambridge Univ. Press, Cambridge},
      YEAR = {1989},
      ISBN = {0-521-37823-0},
   MRCLASS = {05C80 (60E15 60F10 60G42)},
  MRNUMBER = {1036755},
MRREVIEWER = {Alan\ M.\ Frieze},
}

@article{AKL,
  title={Random orderings and unique ergodicity of automorphism groups},
  author={Angel, Omer and Kechris, Alexander S. and  Lyons, Russell },
  journal={Journal of the European Mathematical Society},
  volume={16},
  number={10},
  pages={2059--2095},
  year={2014}
}

@InProceedings{Aldous,
author="Aldous, David J.",
editor="Hennequin, P. L.",
title="Exchangeability and related topics",
booktitle="{\'E}cole d'{\'E}t{\'e} de Probabilit{\'e}s de Saint-Flour XIII --- 1983",
year="1985",
publisher="Springer Berlin Heidelberg",
address="Berlin, Heidelberg",
pages="1--198",
isbn="978-3-540-39316-0"
}

@misc{AckerAut,
      title={Representations of Aut(M)-Invariant Measures}, 
      author={Nathanael Ackerman},
      year={2021},
      eprint={1509.06170},
      archivePrefix={arXiv},
      primaryClass={math.LO}
}

@article{AldousT,
title = {Representations for partially exchangeable arrays of random variables},
journal = {Journal of Multivariate Analysis},
volume = {11},
number = {4},
pages = {581-598},
year = {1981},
issn = {0047-259X},
doi = {https://doi.org/10.1016/0047-259X(81)90099-3},
url = {https://www.sciencedirect.com/science/article/pii/0047259X81900993},
author = {David J. Aldous},
keywords = {Exchangeability, spherical matrices},
abstract = {Consider an array of random variables (Xi,j), 1 ≤ i,j < ∞, such that permutations of rows or of columns do not alter the distribution of the array. We show that such an array may be represented as functions f(α, ξi, ηj, λi,j) of underlying i.i.d, random variables. This result may be useful in characterizing arrays with additional structure. For example, we characterize random matrices whose distribution is invariant under orthogonal rotation, confirming a conjecture of Dawid.}
}

@article{HooverT,
  title={Relations on Probability Spaces and Arrays of Random Variables},
  author={Hoover, Douglas N.},
  journal={Preprint. Institute for Advanced Study. Princeton},
url={https://www.stat.berkeley.edu/users/aldous/Research/hoover.pdf},
  year={1979}
}

@inproceedings{deFinetti1,
  title={Funzione caratteristica di un fenomeno aleatorio},
  author={De Finetti, Bruno},
  booktitle={Atti del Congresso Internazionale dei Matematici: Bologna del 3 al 10 de settembre di 1928},
  pages={179--190},
  year={1929}
}

@misc{CRO,
      title={Consistent random vertex-orderings of graphs}, 
      author={Paul Balister and B\'{e}la Bollob\'{a}s and Svante Janson},
      year={2015},
      eprint={1506.03343},
      archivePrefix={arXiv}
}

@Article{AFKwP,
    Author = {Nathanael {Ackerman} and Cameron {Freer} and Aleksandra {Kwiatkowska} and Rehana {Patel}},
    Title = {A classification of orbits admitting a unique invariant measure.},
    FJournal = {{Annals of Pure and Applied Logic}},
    Journal = {{Ann. Pure Appl. Logic}},
    ISSN = {0168-0072},
    Volume = {168},
    Number = {1},
    Pages = {19--36},
    Year = {2017},
    Publisher = {Elsevier (North-Holland), Amsterdam},
    DOI = {10.1016/j.apal.2016.08.003},
    MSC2010 = {03C98 37L40 60G09 20B27}
}

@misc{AFKrP,
      title={Properly ergodic structures}, 
      author={Nathanael Ackerman and Cameron Freer and Alex Kruckman and Rehana Patel},
      year={2017},
      eprint={1710.09336},
      archivePrefix={arXiv}
}

@book{Cranebook,
  title={Probabilistic foundations of statistical network analysis},
  author={Crane, Harry},
  year={2018},
  publisher={Chapman and Hall/CRC}
}

@article{Kallenbergsome,
  title={Some highlights from the theory of multivariate symmetries},
  author={Kallenberg, Olav},
  journal={Rend. Mat. Appl.(7)},
  volume={28},
  pages={19--32},
  year={2008}
}

@article{GroupmeasuresNIP,
  title={Groups, measures, and the NIP},
  author={Hrushovski, Ehud and Peterzil, Ya’acov and Pillay, Anand},
  journal={Journal of the American Mathematical Society},
  volume={21},
  number={2},
  pages={563--596},
  year={2008}
}

@article{Genericallystable,
  title={Generically stable and smooth measures in NIP theories},
  author={Hrushovski, Ehud and Pillay, Anand and Simon, Pierre},
  journal={Transactions of the American Mathematical Society},
  volume={365},
  number={5},
  pages={2341--2366},
  year={2013}
}

@article{CherStarchNIP,
  title={Definable regularity lemmas for NIP hypergraphs},
  author={Chernikov, Artem and Starchenko, Sergei},
  journal={The Quarterly Journal of Mathematics},
  volume={72},
  number={4},
  pages={1401--1433},
  year={2021},
  publisher={Oxford University Press UK}
}

@article{RegularityNIP,
  title={Structure and regularity for subsets of groups with finite VC-dimension},
  author={Conant, Gabriel and Pillay, Anand and Terry, Caroline},
  journal={Journal of the European Mathematical Society},
  volume={24},
  number={2},
  pages={583--621},
  year={2021}
}

@inproceedings{GroupStableReg,
  title={A group version of stable regularity},
  author={Conant, Gabriel and Pillay, Anand and Terry, Caroline},
  booktitle={Mathematical Proceedings of the Cambridge Philosophical Society},
  volume={168},
  number={2},
  pages={405--413},
  year={2020},
  organization={Cambridge University Press}
}

@article{DominationRegularity,
  title={Domination and regularity},
  author={Pillay, Anand},
  journal={Bulletin of Symbolic Logic},
  volume={26},
  number={2},
  pages={103--117},
  year={2020},
  publisher={Cambridge University Press}
}

@article{MalliarisReg,
  title={Regularity lemmas for stable graphs},
  author={Malliaris, Maryanthe and Shelah, Saharon},
  journal={Transactions of the American Mathematical Society},
  volume={366},
  number={3},
  pages={1551--1585},
  year={2014}
}

@article{Keislerwild,
  title={Keisler measures in the wild},
  author={Conant, Gabriel and Gannon, Kyle and Hanson, James},
  journal={Model Theory},
  volume={2},
  number={1},
  pages={1--67},
  year={2023},
  publisher={Mathematical Sciences Publishers}
}

@book{FSFT,
  title={Finite structures with few types},
  author={Cherlin, Gregory and Hrushovski, Ehud},
  year={2003},
  publisher={Princeton University Press}
}

@article{WolfMEC,
  title={Multidimensional exact classes, smooth approximation and bounded 4-types},
  author={Wolf, Daniel},
  journal={The Journal of Symbolic Logic},
  volume={85},
  number={4},
  pages={1305--1341},
  year={2020},
  publisher={Cambridge University Press}
}

@book{Oligomperm, place={Cambridge}, series={London Mathematical Society Lecture Note Series}, title={Oligomorphic Permutation Groups}, publisher={Cambridge University Press}, author={Cameron, Peter J.}, year={1990}, collection={London Mathematical Society Lecture Note Series}}

@book{KechrisDST,
  title={Classical descriptive set theory},
  author={Kechris, Alexander},
  volume={156},
  year={2012},
  publisher={Springer Science \& Business Media}
}

@article{CatMod,
  title={On the category of models of a complete theory},
  author={Lascar, Daniel},
  journal={The Journal of Symbolic Logic},
  volume={47},
  number={2},
  pages={249--266},
  year={1982},
  publisher={Cambridge University Press}
}

@book{Geomstab,
  title={Geometric stability theory},
  author={Pillay, Anand},
  year={1996},
  publisher={Oxford University Press}
}

@article{MacNIPgrowth,
  title={Infinite permutation groups of rapid growth},
  author={Macpherson, Dugald},
  journal={Journal of the London Mathematical Society},
  volume={2},
  number={2},
  pages={276--286},
  year={1987},
  publisher={Oxford University Press}
}

@article{SBM,
title = {Stochastic blockmodels: First steps},
journal = {Social Networks},
volume = {5},
number = {2},
pages = {109-137},
year = {1983},
issn = {0378-8733},
doi = {https://doi.org/10.1016/0378-8733(83)90021-7},
url = {https://www.sciencedirect.com/science/article/pii/0378873383900217},
author = {Paul W. Holland and Kathryn Blackmond Laskey and Samuel Leinhardt},
abstract = {A stochastic model is proposed for social networks in which the actors in a network are partitioned into subgroups called blocks. The model provides a stochastic generalization of the blockmodel. Estimation techniques are developed for the special case of a single relation social network, with blocks specified a priori. An extension of the model allows for tendencies toward reciprocation of ties beyond those explained by the partition. The extended model provides a one degree-of-freedom test of the model. A numerical example from the social network literature is used to illustrate the methods.}
}

@inproceedings{BorgGraphons,
  title={Graphons: A nonparametric method to model, estimate, and design algorithms for massive networks},
  author={Borgs, Christian and Chayes, Jennifer},
  booktitle={Proceedings of the 2017 ACM Conference on Economics and Computation},
  pages={665--672},
  year={2017}
}

@article{Bollobassparse,
  title={Sparse graphs: metrics and random models},
  author={Bollob{\'a}s, B{\'e}la and Riordan, Oliver},
  journal={Random Structures \& Algorithms},
  volume={39},
  number={1},
  pages={1--38},
  year={2011},
  publisher={Wiley Online Library}
}

@article{BorgsparrseI, 
  title={An $L^p$ theory of sparse graph convergence I: Limits, sparse random graph models, and power law distributions},
  author={Borgs, Christian and Chayes, Jennifer and Cohn, Henry and Zhao, Yufei},
  journal={Transactions of the American Mathematical Society},
  volume={372},
  number={5},
  pages={3019--3062},
  year={2019}
}

@article{BorgAH,
  author  = {Christian Borgs and Jennifer Chayes and Henry Cohn and Nina Holden},
  title   = {Sparse Exchangeable Graphs and Their Limits via Graphon Processes},
  journal = {Journal of Machine Learning Research},
  year    = {2018},
  volume  = {18},
  number  = {210},
  pages   = {1--71},
  url     = {http://jmlr.org/papers/v18/16-421.html}
}

@book{Infpermnotes,
  title={Notes on infinite permutation groups},
  author={Bhattacharjee, Meenaxi and M{\"o}ller, R{\"o}gnvaldur G and Macpherson, Dugald and Neumann, Peter M.},
  year={2006},
  publisher={Springer}
}

@book{adeleke1998relations,
  title={Relations Related to Betweenness: Their Structure and Automorphisms},
  author={Adeleke, Samson Adepoju and Neumann, Peter M},
  volume={623},
  year={1998},
  publisher={American Mathematical Soc.}
}

@book{droste1985structure,
  title={Structure of partially ordered sets with transitive automorphism groups},
  author={Droste, Manfred},
  volume={334},
  year={1985},
  publisher={American Mathematical Society}
}

@article{lovasz2006limits,
  title={Limits of dense graph sequences},
  author={Lov{\'a}sz, L{\'a}szl{\'o} and Szegedy, Bal{\'a}zs},
  journal={Journal of Combinatorial Theory, Series B},
  volume={96},
  number={6},
  pages={933--957},
  year={2006},
  publisher={Elsevier}
}

@book{lovasz2012large,
  title={Large networks and graph limits},
  author={Lov{\'a}sz, L{\'a}szl{\'o}},
  volume={60},
  year={2012},
  publisher={American Mathematical Soc.}
}

@article{borgs2008convergent,
  title={Convergent sequences of dense graphs I: Subgraph frequencies, metric properties and testing},
  author={Borgs, Christian and Chayes, Jennifer and Lov{\'a}sz, L{\'a}szl{\'o} and S{\'o}s, Vera T. and Vesztergombi, Katalin},
  journal={Advances in Mathematics},
  volume={219},
  number={6},
  pages={1801--1851},
  year={2008},
  publisher={Elsevier}
}

@article{borgs2012convergent,
  title={Convergent sequences of dense graphs II. Multiway cuts and statistical physics},
  author={Borgs, Christian and Chayes, Jennifer and Lov{\'a}sz, L{\'a}szl{\'o} and S{\'o}s, Vera T. and Vesztergombi, Katalin},
  journal={Annals of Mathematics},
  pages={151--219},
  year={2012},
  publisher={JSTOR}
}

@article{janson2007graph,
  title={Graph limits and exchangeable random graphs},
  author={Janson, Svante and Diaconis, Persi},
  journal={Rendiconti di Matematica e delle sue Applicazioni. Serie VII},
  pages={33--61},
  year={2008}
}

@article{krupinski2019amenability,
  title={Amenability, definable groups, and automorphism groups},
  author={Krupi{\'n}ski, Krzysztof and Pillay, Anand},
  journal={Advances in Mathematics},
  volume={345},
  pages={1253--1299},
  year={2019},
  publisher={Elsevier}
}

@misc{mutchnik2024conantindependencegeneralizedfreeamalgamation,
      title={Conant-independence and generalized free amalgamation}, 
      author={Scott Mutchnik},
      year={2024},
      eprint={2210.07527},
      archivePrefix={arXiv},
      primaryClass={math.LO},
      url={https://arxiv.org/abs/2210.07527}, 
}

@article{chernikov2016model,
  title={On model-theoretic tree properties},
  author={Chernikov, Artem and Ramsey, Nicholas},
  journal={Journal of Mathematical Logic},
  volume={16},
  number={02},
  pages={1650009},
  year={2016},
  publisher={World Scientific}
}

@phdthesis{ColinPhD,
	author = {Jahel, Colin},
	school = {Universit\'{e} Claude Bernard Lyon 1},
	title = {Some progress on the unique ergodicity problem},
	year = {2021}}

@misc{dzamonja2022graphonsarisinggraphsdefinable,
      title={Graphons arising from graphs definable over finite fields}, 
      author={Mirna Džamonja and Ivan Tomašić},
      year={2022},
      eprint={1707.06296},
      archivePrefix={arXiv},
      primaryClass={math.LO},
      url={https://arxiv.org/abs/1707.06296}, 
}

@article{chernikov2020hypergraph,
  title={Hypergraph regularity and higher arity VC-dimension},
  author={Chernikov, Artem and Towsner, Henry},
  journal={arXiv preprint arXiv:2010.00726},
  year={2020}
}

@article{ben2006schrodinger,
  title={Schr{\"o}dinger’s cat},
  author={Ben Yaacov, Ita\"{i}},
  journal={Israel Journal of Mathematics},
  volume={153},
  pages={157--191},
  year={2006},
  publisher={Springer}
}

@article{krivine1981espaces,
  title={Espaces de Banach stables},
  author={Krivine, Jean-Louis and Maurey, Bernard},
  journal={Israel Journal of Mathematics},
  volume={39},
  pages={273--295},
  year={1981},
  publisher={Springer}
}

@online{taopost, 
title={A spectral theory proof of the algebraic regularity lemma}, 
author={Tao, Terence}, 
year={2013},
url={https://terrytao.wordpress.com/2013/10/29/a-spectral-theory-proof-of-the-algebraic-regularity-lemma/}
}

@article{shelah2009dependent,
  title={Dependent first order theories, continued},
  author={Shelah, Saharon},
  journal={Israel Journal of Mathematics},
  volume={173},
  number={1},
  pages={1--60},
  year={2009},
  publisher={Springer}
}

@inbook{Aldous_2010, place={Cambridge}, series={London Mathematical Society Lecture Note Series}, title={More uses of exchangeability: representations of complex random structures}, booktitle={Probability and Mathematical Genetics: Papers in Honour of Sir John Kingman}, publisher={Cambridge University Press}, author={Aldous, David J.}, editor={Bingham, N. H. and Goldie, C. M.Editors}, year={2010}, pages={35–63}, collection={London Mathematical Society Lecture Note Series}}

@article{keevash2011hypergraph,
  title={Hypergraph turan problems},
  author={Keevash, Peter},
  journal={Surveys in combinatorics},
  volume={392},
  pages={83--140},
  year={2011},
  publisher={Cambridge University Press Cambridge}
}

@article{ndependence,
author = {Artem Chernikov and Daniel Palacin and Kota Takeuchi},
title = {{On $n$-Dependence}},
volume = {60},
journal = {Notre Dame Journal of Formal Logic},
number = {2},
publisher = {Duke University Press},
pages = {195 -- 214},
keywords = {generalized indiscernibles, n-dependence, Sauer–Shelah lemma, structural Ramsey theory},
year = {2019},
doi = {10.1215/00294527-2019-0002},
URL = {https://doi.org/10.1215/00294527-2019-0002}
}

@inproceedings{cherlin2021ramsey,
  title={Ramsey expansions of 3-hypertournaments},
  author={Cherlin, Gregory and Hubi{\v{c}}ka, Jan and Kone{\v{c}}n{\`y}, Mat{\v{e}}j and Ne{\v{s}}et{\v{r}}il, Jaroslav},
  booktitle={Extended Abstracts EuroComb 2021: European Conference on Combinatorics, Graph Theory and Applications},
  pages={696--701},
  year={2021},
  organization={Springer}
}

@book{alon2015probabilistic,
  title={The probabilistic method},
  author={Alon, Noga and Spencer, Joel H.},
  year={2015},
  publisher={John Wiley \& Sons}
}

@book{shelah1990classification,
  title={Classification theory: and the number of non-isomorphic models},
  author={Shelah, Saharon},
  year={1990},
  publisher={Elsevier}
}

@article{austin2014hierarchical,
  title={A hierarchical version of the de Finetti and Aldous-Hoover representations},
  author={Austin, Tim and Panchenko, Dmitry},
  journal={Probability Theory and Related Fields},
  volume={159},
  pages={809--823},
  year={2014},
  publisher={Springer}
}

@article{jung2021generalization,
  title={A generalization of hierarchical exchangeability on trees to directed acyclic graphs},
  author={Jung, Paul and Lee, Jiho and Staton, Sam and Yang, Hongseok},
  journal={Annales Henri Lebesgue},
  volume={4},
  pages={325--368},
  year={2021}
}

@article{seidel1991survey,
  title={A survey of two-graphs},
  author={Seidel, Johan Jacob},
  journal={Geometry and Combinatorics},
  pages={146--176},
  year={1991},
  publisher={Elsevier}
}

@article{pemantle1992automorphism,
  title={Automorphism invariant measures on trees},
  author={Pemantle, Robin},
  journal={The Annals of Probability},
  pages={1549--1566},
  year={1992},
  publisher={JSTOR}
}

\end{document}